\documentclass[12pt]{amsart}
\usepackage{graphicx}
\usepackage{amsmath}

\usepackage{amsfonts}
\usepackage{amssymb}
\usepackage{setspace}
\usepackage{datetime}
\usepackage{color,enumitem,graphicx}
\usepackage[colorlinks=true,urlcolor=blue,
citecolor=red,linkcolor=blue,linktocpage,pdfpagelabels,
bookmarksnumbered,bookmarksopen]{hyperref}
\usepackage{geometry}
\usepackage{titlesec}

\newcommand{\mylargefont}{\fontsize{12}{16}\selectfont}

\titleformat{\section}
    [block]
    {\normalfont\Large\bfseries\centering}
    {\bfseries\thesection}
    {1em}
    {\boldmath}

\titleformat{\subsection}{\normalfont\normalsize\bfseries}{\bfseries\thesubsection}{1em}{\boldmath}
\titleformat{\subsubsection}{\normalfont\mylargefont\bfseries}{\bfseries\thesubsubsection}{1em}{\boldmath}
\allowdisplaybreaks[4]

\newtheorem{thm}{Theorem}[section]

\newtheorem{lem}[thm]{Lemma}
\newtheorem{prop}[thm]{Proposition}
\theoremstyle{definition}
\newtheorem{defn}[thm]{Definition}
\newtheorem{rem}[thm]{Remark}
\theoremstyle{conclusion}

\theoremstyle{question}
\numberwithin{equation}{section}

\newcommand{\lr}{\left(}
\newcommand{\rr}{\right)}
\newcommand{\R}{\mathbb{R}}
\newcommand{\mms}{\mathbb{S}}
\newcommand{\Y}{\mathbb{Y}}

\newcommand{\N}{\mathbb{N}}
\newcommand{\md}{\mathrm{d}}
\newcommand{\Sm}{\mathbb{S}}
\newcommand{\lb}{\left\{}
\newcommand{\rb}{\right\}}

\newcommand{\be}{\begin{equation}}
	\newcommand{\ee}{\end{equation}}

\newcommand{\al}{\alpha}

\newcommand{\la}{\lambda}

\newcommand{\var}{\varepsilon}

\newcommand{\abs}[1]{\left\vert#1\right\vert}

\newcommand{\dd}{\,\mathrm d}

\begin{document}

\title[H\'enon-Hardy type Liouville equation in $\R^N$]{Non-radial solutions for the quasi-linear H\'enon type $N$-Laplacian Liouville equation}

\author{Wei Dai, Lixiu Duan, Changfeng Gui, Yuan Li}

\address{School of Mathematical Sciences, Beihang University (BUAA), Beijing 100191, P. R. China, and Key Laboratory of Mathematics, Informatics and Behavioral Semantics, Ministry of Education, Beijing 100191, P. R. China}
\email{weidai@buaa.edu.cn}

\address{School of Mathematics and Statistics, Shaanxi Normal University, Xi'an, Shaanxi, 710119, P. R. China, and School of Mathematical Sciences, Beihang University (BUAA), Beijing 100191, P. R. China}
\email{lixiuduan@snnu.edu.cn}

\address{Department of Mathematics, University of Macau, Macau SAR, P. R. China}
\email{changfenggui@um.edu.mo}

\address{School of Mathematics and Statistics, Yunnan University, Kunming, Yunnan 650504, P. R. China}
\email{liyuan5397@163.com}

\thanks{Wei Dai is supported by the NNSF of China (No. 12222102 \& No. 12571113), the National Science and Technology Major Project (2022ZD0116401) and the Fundamental Research Funds for the Central Universities. Lixiu Duan is supported by the National Science and Technology Major Project (2022ZD0116401), the Fundamental Research Funds for the Central Universities and the Academic Excellence Foundation of BUAA for PhD Students. Changfeng Gui is supported by  NSFC Key Program (Grant No.12531010), University of Macau research grants CPG2024-00016-FST, CPG2025-00032-FST, CPG2026-00027-FST, SRG2023-00011-FST, MYRGGRG2023-00139-FST-UMDF, UMDF Professorial Fellowship of Mathematics, Macao SAR FDCT 0003/2023/RIA1 and Macao SAR FDCT 0024/2023/RIB1.  Yuan Li is supported by the NNSF of China (No. 12401132).}

\maketitle

\begin{abstract}
In this paper, we investigate the following quasi-linear weighted $N$-Laplacian Liouville equation
\begin{equation*}\label{0}
-\Delta_N u=|x|^{N\alpha}e^{u}, \qquad  x\in \R^N,
\end{equation*}
where $N \geq 2$. For $\al>0$, by carefully studying the linearized problem and applying the approximation method and bifurcation theory, we prove that, when the parameter $\alpha$ equals to the critical values $\alpha(k):=\frac{\sqrt{k(N-1)(k+N-2)}}{N-1}-1$ for $k \geq 2$, there exist non-radial solutions $u$ (bifurcating from $U_{\alpha(k)}$) to the above quasi-linear H\'enon type Liouville equation such that $u\sim \ln|x|$, $|\nabla u|= O(|x|^{-1})$ at $\infty$ and $\int_{\R^N}|x|^{N\alpha}e^{u}\md x=N\left(\frac{N^2}{N-1}\right)^{N-1}(\alpha+1)^{N-1}\omega_N$. One should note that, $\alpha(k)=k-1$ for $k\geq2$ when $N=2$. Our results successfully extend the existence result of J. Prajapat and G. Tarantello in \cite{PT} concerning the $2$-dimension and Laplacian case (i.e., $N=2$) to the more general $N$-dimension and $N$-Laplacian cases ($N\geq 2$), and extend the results of F. Gladiali, M. Grossi, and S. L. N. Neves in \cite{GGN} and the authors in \cite{DDGL} from $1<p<N$ to the much more complicated limiting case $p=N$. We introduced some new ideas and overcame a series of crucial difficulties, including the nonlinearity nature of the $N$-Laplacian $\Delta_N$, the lack of Green integral representation formula and critical weighted Sobolev embedding inequality, the absence of Kelvin type transforms for linearized/difference equations, the invariance of the total mass under scalings of $u$, and the signs-changing and divergence (to $-\infty$) at $\infty$ of the solutions, which makes the suitable choices of the approximate problems, the (normalized) approximate function sequences and the working space to be quite difficult.
\end{abstract}

\textbf{Keywords:} Liouville equation; $N$-Laplacian; Classification result; Bifurcation theory; Quasi-linear H\'enon-Hardy equation; Non-radial solutions.

\smallskip

\noindent{\bf 2020 AMS Subject Classifications: 35J92, 35B06, 35B32, 35B33.}

\section{Introduction}
In this paper, we study weak solution $u$ to the following Liouville H\'enon-Hardy equation involving $N$-Laplacian
\begin{equation}\label{11}
-\Delta_N u=|x|^{N\alpha}e^{u}, \qquad x\in \R^N,
\end{equation}
where $\Delta_N(\cdot):=\text{div}(|\nabla(\cdot)|^{N-2}\nabla(\cdot))$ is $N$-Laplace operator (i.e., $p$-Laplace operator with $p=N$), $N\geq2$ and $\alpha>-1$. Equation \eqref{11} is invariant under the scaling $u\mapsto u_{\lambda}(\cdot):=u(\lambda\cdot)+N(\alpha+1)\ln\lambda$ for any $\lambda>0$.

A solution $u$ of \eqref{11} stands for a function $u\in W_{loc}^{1,N}(\R^N)$ which satisfies
$$\int_{\R^N}|\nabla u|^{N-2}\nabla u\cdot\nabla\varphi=\int_{\R^N}|x|^{N\al}e^u\varphi,\quad\forall\ \varphi\in C_0^1(\R^N).$$

\subsection{Existence of non-radial solutions for $\mathbf{\alpha} > 0$}
\subsubsection{Background and main results}
Let us review some known radial symmetry and classification results for $-1<\al\leq0$. If $\alpha=0$, \eqref{11} becomes the famous $N$-Laplacian Liouville equation
\begin{align}\label{criticeqution}
	-\Delta_N u=e^{u}, \qquad x\in \R^N.
\end{align}
The Liouville equation was initially investigated by Liouville \cite{Liou}, which originates from a variety of situations, such as from the prescribed Gaussian curvature problem in differential geometry, and from combustion theory, astrophysics, nonlinear elasticity and plastic mechanics in physics.

When $N=2$, W. Chen and C. Li in \cite{CL} classified the solutions with finite mass $\int_{\R^2}e^{u}\md x<+\infty$ to \eqref{criticeqution} by using the method of moving planes and isoperimetric inequality. For general $N\geq2$, by using the Pohozaev identity and the isoperimetric inequality, P. Esposito \cite{E1} proved the classification result for \eqref{criticeqution} under finite mass condition $\int_{\R^N}e^{u}\md x<+\infty$. For $\al\in(-1,0)$, V. H. Nguyen \cite{N} proved the classification of solutions for \eqref{11} under the extra assumptions $\int_{\R^N}|x|^{N\alpha}e^{u}\md x\leq N\left(\frac{N^2(  \al+1)}{N-1}\right)^{N-1}\omega_N$ and $\sup\limits_{x\in\R^N} u<+\infty$. P. Esposito \cite{E2} derived the necessary condition for the existence of finite mass solutions to \eqref{11} is $\al>-1$. Subsequently, G. Ciraolo and X. Li \cite{CiL} derived classification result for the anisotropic $N$-Laplacian Liouville equation in the whole space $\mathbb{R}^{N}$, W. Dai, C. Gui and Y. Luo \cite{DGL} proved the classification results for anisotropic $N$-Laplacian Liouville equation in general convex cones $\mathcal{C}$. For more related literature on Liouville type theorems and classification results for Liouville type equation, refer to e.g. \cite{CDQ0,C,CY,CDQ,DGHP,DLS,DQ,DQ0,GHM,GJM,GM,Li,Lin,WX} and the references therein.

Unlike the cases $\al \leq 0$, the radial symmetry and classification of solutions to equation \eqref{11} with $\al > 0$ cannot be proved by using the  method of moving planes, the method of moving spheres in conjunction with the integral representation formula, and isoperimetric inequality, indicating that nonradial solutions to \eqref{11} may exist. When $N=2,\ \alpha=b-1$ and $b>0$, the equation \eqref{11} reduces to the classical second order equation with regular Laplace operator, i.e.,
\begin{equation}\label{17e}
-\Delta u=8\pi b|x|^{2(b-1)}e^{u}, \qquad x\in \R^2.
\end{equation}
Under the finite mass condition $\int_{\R^2}|x|^{2(b-1)}e^{u}\md x=1$, J. Prajapat and G. Tarantello \cite{PT} completely classified the solutions to the two-dimensional Liouville equation \eqref{17e} using the method of moving planes and complex analysis techniques. They proved that, if $b\notin\mathbb{N}$, then $u(x)=u_*(\var x)+2b\ln\var$ for some $\var>0$ and $u_*(x)=\ln\frac{1}{\left(1+|x|^{2b}\right)^2}+\ln\frac b\pi$. If $b\in\mathbb{N}$, let $\theta$ denote the angular coordinate of $x$ in polar coordinates. Define the function
$$
U_b^*(x) = -2\ln\left(1-2|x|^b \cos b(\theta - \theta_0)\tanh \xi + |x|^{2b}\right) - 2\ln(\cosh \xi) + \ln\frac b\pi,
$$
where $\xi, \theta_0 \in \mathbb{R}$. Then $U_b^*$ satisfies equation \eqref{17e}, and any solution to \eqref{17e} can be expressed as
$$
u(x) = U_b^*(\var x) + 2b \ln \var
$$
for some $\var > 0$.

For $1<p<N$, the counterpart to \eqref{11} is the following $D^{1,p}$-critical quasi-linear H\'enon equation involving $p$-Laplacian:
\begin{equation}\label{1}
\left\{
\begin{aligned}
&-\Delta_p u=|x|^{\alpha}u^{p_\al^*-1}, & x\in \R^N, \\
&u>0, & x\in \R^N,
\end{aligned}
\right.
\end{equation}
where $N\geq2, \ 1<p<N, \ p_\al^*=\frac{p(N+\al)}{N-p}$ and $\alpha>0$. For the special case $p=2$, when $\alpha=\alpha(k)=2(k-1)$ with $k\geq2$, F. Gladiali, M. Grossi and S. L. N. Neves \cite{GGN} proved the existence of nonradial solutions (bifurcating from the radial solution $U_{\alpha(k)}$) to equation \eqref{1} using bifurcation theory and approximation method. Specially, when $\al=2$ and $N\geq4$ is even, they obtained that, for any $a\in\R$, the functions
$$u(x)=u(|x'|,|x''|)=\frac{1}{\lr1+|x|^4-2a(|x'|^2-|x''|^2)+a^2\rr^{\frac{N-2}{4}}}$$
form a branch of nonradial solutions to \eqref{1} bifurcating from the radial solution $U_2:=\frac{1}{(1+|x|^4)^{\frac{N-2}{4}}}$, where $x\in\R^{\frac{N}{2}}\times\R^{\frac{N}{2}}$, $x=(x',x'')$ with $x'\in\R^{\frac{N}{2}}$ and $x''\in\R^{\frac N2}$. They also raised a conjecture that the nonradial solutions to \eqref{1} exist only when $\al=\al(k)=2(k-1)$. Very recently, for $p=2$, Q. Jiang and J. Xiong \cite{JX} surprisingly proved that, if $k>\frac{N-2}{2}$ is even, then there exists nonradial solutions to \eqref{1} for $\alpha\neq\al(k)$ close to $\al(k)$, which gives a negative answer to this conjecture in \cite{GGN}. They recast the problem as a semilinear equation with supercritical exponent on the cylinder via the Emden-Fowler change of variables. This approach can hardly be applied to general $p$-Laplacian cases with $p\neq2$, because it is difficult for us to separate the angular gradient and the radial gradient.

For general $1<p<N$, by carefully studying the linearized problem and applying the approximation method and bifurcation theory, the authors proved in \cite{DDGL} that, when the parameter $\al$ takes the critical values $\al(k):=\frac{p\sqrt{(N+p-2)^2+4(k-1)(p-1)(k+N-1)}-p(N+p-2)}{2(p-1)}$ for $k\geq2$, the above quasi-linear H\'enon equation \eqref{1} admits non-radial solutions $u$ (bifurcating from the radial solution $U_{\alpha(k)}$) such that $u\sim |x|^{-\frac{N-p}{p-1}}$ and $|\nabla u|\sim |x|^{-\frac{N-1}{p-1}}$ at $\infty$. One should note that, $\alpha(k)=2(k-1)$ for $k\geq2$ when $p=2$. The results in \cite{DDGL} successfully extend the classical work of F. Gladiali, M. Grossi, and S. L. N. Neves in \cite{GGN} concerning the Laplace operator (i.e., the case $p=2$) to the more general setting of the nonlinear $p$-Laplace operator ($1<p<N$). Comparing with the Laplacian case $p=2$, the authors have to deal with the following crucial difficulties specific to $p$-Laplacian cases $p\neq2$ in \cite{DDGL}:
\begin{itemize}
  \item nonlinear feature of the $p$-Laplacian $\Delta_p$,
  \item absence of Kelvin type transforms,
  \item lack of the Green integral representation formula.
\end{itemize}
In order to overcome these crucial difficulties, the authors introduced in \cite{DDGL} some new ideas and methods:
\begin{itemize}
  \item used auxiliary functions and comparison principles instead of Green integral representation formula to prove uniform upper bound estimate for nonradial approximating solution and its gradient, and the rigidity of the limiting scale;
  \item proved and applied a weighted Sobolev embedding inequality to overcome the nonlinearity nature of the $p$-Laplacian and establish a uniform boundedness estimate on a weighted integral of the $L^\infty$-normalized difference function between radial and nonradial approximating solutions;
  \item lifted the uniform boundedness estimate on a weighted integral up to a pointwise uniform fast decay estimate by De Giorgi-Morser-Nash iteration method, without using Kelvin type transforms and the Green integral representation formula.
\end{itemize}

\medskip

In this paper, we mainly aim to prove the existence of nonradial solutions to the quasi-linear H\'enon type $N$-Laplacian Liouville equation \eqref{11}, and to extend the existence result of J. Prajapat and G. Tarantello in \cite{PT} concerning the $2$-dimension and Laplacian case (i.e., $N=2$) to the more general $N$-dimension and $N$-Laplacian cases ($N\geq 2$), and extend the results of F. Gladiali, M. Grossi, and S. L. N. Neves in \cite{GGN} and the authors in \cite{DDGL} from $1<p<N$ to the much more complicated limiting case $p=N$. One should note that, for $N>2$, both the method of moving planes and complex analysis techniques are not applicable, so we can not derive a complete classification of solutions for \eqref{11}. We need to introduce some new ideas and overcome a series of crucial difficulties, including the nonlinearity nature of the $N$-Laplacian $\Delta_N$, the absence of  Kelvin type transforms for linearized/difference equations, the lack of Green integral representation formula and critical weighted Sobolev embedding inequality, the invariance of the total mass under scalings of $u$, and the signs-changing and divergence (to $-\infty$) at $\infty$ of the solutions, which makes the suitable choices of the approximate problems, the (normalized) approximate function sequences and the working space to be quite difficult.

Problem \eqref{11} admits explicit radial solutions, which can be written as
\begin{align}\label{15}
	U_{\lambda,\al}(x) = \ln\frac{C_{N,\al} \lambda^{N( \al+1)}}{\left( 1 + (\lambda|x|)^{\frac{N( \al+1)}{N-1}} \right)^N},\quad x\in\R^N,
\end{align}
where
\begin{align}\label{C16}C_{N,\al} = N( \al+1)\left( \frac{N^2( \al+1)}{N-1} \right)^{N-1}.\end{align}
Specifically, if we set $\la = 1$, then
\begin{align}\label{14}
U_\al(x) := U_{1,\al}(x) = \ln\frac{C_{N,\al}}{\left( 1 + |x|^{\frac{N( \al+1)}{N-1}} \right)^N},\quad x\in\R^N.
\end{align}

For $\al > -1$, we first derive the following uniqueness result for radially symmetric solutions to \eqref{11}.
\begin{thm}\label{thm12}
Suppose $N \geq 2$ and $\al > -1$. Then problem \eqref{11} in $W^{1,N}(\R^N)$ has a unique (up to scalings) radial nontrivial solution of the form $U_{\lambda,\al}(x)$ with some $\lambda > 0$, where $U_{\lambda,\al}(x)$ defined in \eqref{14}.
\end{thm}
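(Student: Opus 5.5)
We reduce the problem to an ODE in $r=|x|$. For a radial solution $u(x)=u(r)$, the equation \eqref{11} reads
\begin{equation*}
-\left(r^{N-1}|u'|^{N-2}u'\right)'=r^{N-1+N\al}e^{u},\qquad r>0 .
\end{equation*}
Integrating from $0$ to $r$ gives $r^{N-1}|u'|^{N-2}u'(r)=-\int_0^r s^{N-1+N\al}e^{u(s)}\,\md s$. Local $W^{1,N}$ regularity, together with the weak formulation tested against radial cutoffs, forces the flux at the origin to vanish. Hence $u'<0$ for $r>0$, $u$ is strictly decreasing, $u(0)$ is finite, and $u\in C^1$ near the origin. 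So every radial solution solves the initial value problem $u(0)=a$, $u'(0)=0$ for the equivalent first-order integral system in $(u,\,r^{N-1}|u'|^{N-2}u')$. This system has locally Lipschitz right-hand side in the flux variable $w$ once $u'$ is written as $-(|w|r^{1-N})^{1/(N-1)}$. At $r=0$ the map is only H\"older, but monotonicity of $u$ still gives uniqueness by a standard fixed-point argument on $[0,\delta]$: one writes $u(r)=a-\int_0^r\big(t^{1-N}\int_0^t s^{N-1+N\al}e^{u}\big)^{1/(N-1)}\md t$, and this operator is a contraction in sup norm on a small interval because the inner integral is bounded below and above by positive multiples of $t^{N+N\al}$, and $x\mapsto x^{1/(N-1)}$ is Lipschitz away from $0$ after factoring. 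The solution then extends uniquely to all $r>0$, since $e^u\le e^a$ prevents blow-up.

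Next we use the scaling. The map $u\mapsto u(\la\cdot)+N(\al+1)\ln\la$ changes $u(0)$ by $N(\al+1)\ln\la$, so it acts transitively on the initial values $a\in\R$. By uniqueness of the initial value problem, every radial solution is therefore a rescaling of the one with $u(0)=\ln C_{N,\al}$. It remains to check directly that $U_\al$ in \eqref{14} solves the ODE with $U_\al(0)=\ln C_{N,\al}$ and $U_\al'(0)=0$. Set $\beta=\frac{N(\al+1)}{N-1}$. Then
\begin{equation*}
U_\al'=-\frac{N\beta r^{\beta-1}}{1+r^\beta},\qquad
r^{N-1}|U_\al'|^{N-2}U_\al'=-(N\beta)^{N-1}\frac{r^{N\al+N}}{(1+r^\beta)^{N-1}},
\end{equation*}
using $(\beta-1)(N-1)+N-1=N\al+N$. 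Differentiating gives $(N\beta)^{N-1}N(\al+1)\,r^{N\al+N-1}(1+r^\beta)^{-N}$, which equals $C_{N,\al}r^{N\al+N-1}(1+r^\beta)^{-N}$ by \eqref{C16}. This is a routine check. Since $U_\al(0)=\ln C_{N,\al}$, we conclude $u=U_{\la,\al}$ for the appropriate $\la$.

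The main obstacle is the rigor at the origin: showing that a weak $W^{1,N}_{loc}$ radial solution has zero flux at $r=0$ and is bounded there. Near the origin the equation is of $N$-Laplacian type with a possibly singular weight $|x|^{N\al}$ when $\al<0$, and the nonlinearity $e^u$ is a priori only locally integrable. To handle this, we would first use that $u\in W^{1,N}_{loc}$ gives $e^{u}\in L^q_{loc}$ for all $q$ by Trudinger--Moser. Serrin-type local estimates for quasilinear equations then give $u\in L^\infty_{loc}$. Finally, integrating the weak formulation against a radial test function $\varphi(|x|)$ supported near $0$ shows that $r^{N-1}|u'|^{N-2}u'$ has limit $0$ as $r\to0$. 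Uniqueness of the singular initial value problem when $N>2$, where $t\mapsto t^{1/(N-1)}$ is non-Lipschitz at $0$, is the secondary delicate point. It is handled by the two-sided bound on the inner integral noted above.
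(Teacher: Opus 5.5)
Your proof is correct, and its skeleton matches the paper's. Both reduce to the radial ODE, prove uniqueness for the initial value problem, use that the scaling shifts $u(0)$ by $N(\alpha+1)\ln\lambda$ and so reaches every initial value, and then identify the solution with $U_{\lambda,\alpha}$. The difference is in how uniqueness of the initial value problem is proved.

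The paper integrates once, $-u'(r)=\bigl(r^{1-N}\int_0^r t^{N(\alpha+1)-1}e^{u}\,\md t\bigr)^{1/(N-1)}$, and argues at the first separation point $r_0$. It assumes $u>v$ and $u'>v'$ on $(r_0,r_0+\varepsilon)$; monotonicity of the right-hand side in $u$ then gives $u'<v'$ there, a contradiction. That uses only monotonicity and no Lipschitz bounds. However, the simultaneous sign assumption is taken ``without loss of generality'' with no justification: a priori $u-v$ could change sign infinitely often to the right of $r_0$. The paper also simply assumes that $u(0)$ is finite and that the flux vanishes at the origin.

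Your contraction estimate, for $\Phi[u](t):=\bigl(t^{1-N}\int_0^t s^{N-1+N\alpha}e^{u}\,\md s\bigr)^{1/(N-1)}$ and built on $\int_0^t s^{N-1+N\alpha}e^{u}\,\md s\asymp t^{N(1+\alpha)}$, gives $|\Phi[u](t)-\Phi[v](t)|\le C\,t^{(1+N\alpha)/(N-1)}\|u-v\|_{\infty}$. The exponent is $>-1$ because $\alpha>-1$. This handles exactly the degenerate point $r=0$, where $x\mapsto x^{1/(N-1)}$ is not Lipschitz for $N>2$. For $r>0$ the flux stays away from $0$, so standard Lipschitz theory applies. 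You also justify boundedness and zero flux at the origin (Trudinger--Moser plus Serrin, and testing with radial functions), which the paper takes for granted. Your route is longer, but it supplies precisely what the paper's short comparison argument leaves implicit.

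One correction. For $-1<\alpha<-\frac1N$ you have $\beta=\frac{N(\alpha+1)}{N-1}<1$, so $U_\alpha'(r)\sim-N\beta\, r^{\beta-1}\to-\infty$ as $r\to0$. Hence ``$u\in C^1$ near the origin'' and ``$U_\alpha'(0)=0$'' are false in that range; the condition $u'(0)=0$ in the paper's ODE has the same defect. The right initial condition is $r^{N-1}|u'|^{N-2}u'\to0$. By your own computation, the flux of $U_\alpha$ is $-(N\beta)^{N-1}r^{N(\alpha+1)}(1+r^\beta)^{1-N}$, so $U_\alpha$ satisfies it. Your integral-equation argument uses only this condition, so nothing breaks once those statements are rephrased.
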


We need to study the linearized problem  of equation \eqref{11}  at the radial solution $U_\al$, i.e.,
\begin{equation}\label{16}
		-\textrm{div}(|\nabla U_\alpha|^{N-2}\nabla v)-(N-2)
		\textrm{div}
		\left(|\nabla U_\alpha|^{N-4}(\nabla U_\alpha\cdot\nabla v)\nabla U_\alpha\right)=|x|^{N\alpha}e^{U_\alpha}v.
\end{equation}
In this paper, we say $v$ is a solution to \eqref{16}, if $v\in L_{loc}^\infty(\mathbb{R}^N) \cap W^{1,N}_{loc}(\R^N)$ such that $\liminf\limits_{|x|\rightarrow+\infty}\frac{v(x)}{\ln|x|}=0$ satisfies
$$\int_{\Omega}|\nabla U_\alpha|^{N-2}\nabla v\cdot\nabla\phi-(N-2)
		|\nabla U_\alpha|^{N-4}(\nabla U_\alpha\cdot\nabla v)(\nabla U_\alpha\cdot\nabla\phi)\mathrm{d}x=\int_{\Omega}|x|^{N\alpha}e^{U_\alpha}v\phi\mathrm{d}x$$
for any bounded open subset $\Omega\subset\mathbb{R}^{N}$ and any $\phi\in W^{1,N}_{0}(\Omega)$.

For the special case $\alpha = 0$, in Theorem 1 in \cite{T}, F. Takahashi proved that all solutions $v \in L^\infty(\mathbb{R}^N) \cap C^2(\mathbb{R}^N)$ of the equation
\begin{align}\label{Ta}
-\operatorname{div}\bigl(\abs{\nabla U_0}^{N-2}\nabla v\bigr) - (N-2)\operatorname{div}\left(\abs{\nabla U_0}^{N-4}(\nabla U_0\cdot\nabla v)\nabla U_0\right) = e^{U_0}v
\end{align}
in $\mathbb{R}^N$ can be expressed as a linear combination of the functions
\begin{equation*}
Z_0(x) = N + x\cdot\nabla U_0,\qquad Z_i(x) = \frac{\partial U_0(x)}{\partial x_i},\quad i = 1,\cdots,N.
\end{equation*}
Very recently, by transferring the weighted cases $\al>-1$ to the case $\al=0$ studied by F. Takahashi \cite{T}, G. Ciraolo, P. Esposito and X. Li classified all solutions $v\in L^\infty(\mathbb{R}^N) \cap W^{1,N}_{loc}(\R^N)$ to the linearized problem \eqref{16} for $\al>-1$ in Theorem 5.1 in \cite{CEL}.

We can remove the global boundedness assumption $v\in L^\infty(\mathbb{R}^N)$ in F. Takahashi \cite{T} and G. Ciraolo, P. Esposito and X. Li \cite{CEL}, and derive the following classification result for all solutions $v\in L_{loc}^\infty(\mathbb{R}^N) \cap W^{1,N}_{loc}(\R^N)$ such that $\liminf\limits_{|x|\rightarrow+\infty}\frac{v(x)}{\ln|x|}=0$ of linearized problem \eqref{16}.
\begin{thm}\label{th11}
Let $\alpha>-1$. If $\alpha\ne\alpha(k)$, then the space of solutions of \eqref{16} has dimension $1$ and is spanned by
\begin{align}\label{17'}
\widehat{Z}(x)=\frac{(N-1)-|x|^{\frac{N( \al+1)}{N-1}}}
{1+|x|^{\frac{N( \al+1)}{N-1}}}.
\end{align}
If $\alpha=\alpha(k)$ for some $k\in \N$, then the space of solutions of \eqref{16} has dimension $1+\frac{(N+2k-2)(N+k-3)!}{(N-2)!k!}$ and is spanned by
\begin{align}\label{18'}
	\widehat{Z}(x)=\frac{(N-1)-|x|^{\frac{N( \al+1)}{N-1}}}
	{1+|x|^{\frac{N( \al+1)}{N-1}}},\quad\quad
	\widehat{Z_{k,i}}(x)=\frac{|x|^{\frac{ \al+1}{N-1}-k}\Phi_{k,i}(x)}
	{1+|x|^{\frac{N( \al+1)}{N-1}}},
\end{align}
where $$\alpha(k):=\frac{\sqrt{k(N-1)(k+N-2)}}{N-1}-1$$
for $k\in\N$ and $\{\Phi_{k,i}\}\ (i = 1,\cdots, \frac{(N+2k-2)(N+k-3)!}{(N-2)!k!})$ form a basis of the space $\boldsymbol{\Phi}_k(\R^N)$ consisting of all homogeneous harmonic polynomials of degree $k$ in $\R^N$.
\end{thm}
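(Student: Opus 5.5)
The plan is to separate variables in spherical harmonics and reduce \eqref{16} to a family of second order linear ODEs, one for each degree $k\ge0$. Each ODE will be solved explicitly, or at least its asymptotics will be pinned down. The condition $\liminf_{|x|\to\infty}v/\ln|x|=0$ together with local boundedness then selects the admissible solutions.

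\textbf{Step 1: reduction to ODEs.} Since $U_\alpha$ is radial, $\nabla U_\alpha=U_\alpha'(r)\frac{x}{r}$ with
$$U_\alpha'(r)=-\frac{N\beta r^{\beta-1}}{1+r^\beta},\qquad \beta:=\frac{N(\alpha+1)}{N-1}.$$
Hence $(\nabla U_\alpha\cdot\nabla v)\nabla U_\alpha=|U_\alpha'|^2\partial_rv\,\frac{x}{r}$, and \eqref{16} becomes
$$-(N-1)r^{1-N}\big(r^{N-1}|U_\alpha'|^{N-2}v_r\big)_r-\frac{|U_\alpha'|^{N-2}}{r^2}\Delta_{\mathbb S^{N-1}}v=r^{N\alpha}e^{U_\alpha}v .$$
We expand $v(r,\theta)=\sum_{k,i}\psi_{k,i}(r)Y_{k,i}(\theta)$ in spherical harmonics, where $-\Delta_{\mathbb S^{N-1}}Y_{k,i}=\lambda_kY_{k,i}$ and $\lambda_k=k(k+N-2)$. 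To justify this we test the weak formulation against $\phi=\eta(r)Y_{k,i}(\theta)$, which is allowed because $v\in L^\infty_{loc}\cap W^{1,N}_{loc}$. This shows that each coefficient $\psi_{k,i}(r)=\int_{\mathbb S^{N-1}}v(r\theta)Y_{k,i}\,d\theta$ is a weak, hence classical on $(0,\infty)$, solution of
$$-(N-1)\big(r^{N-1}|U_\alpha'|^{N-2}\psi'\big)'+\lambda_k r^{N-3}|U_\alpha'|^{N-2}\psi=r^{N-1+N\alpha}e^{U_\alpha}\psi .$$
Moreover $\psi_{k,i}$ is bounded near $0$, and $\liminf_{r\to\infty}\psi_{k,i}/\ln r\le 0$ in a suitable averaged sense.

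\textbf{Step 2: solving the ODEs.} We substitute $s=r^\beta$, in which the coefficients become rational in $s$. Near $r=0$ one has $|U_\alpha'|^{N-2}\sim c\,r^{(\beta-1)(N-2)}$, so the indicial equation there is
$$(N-1)\gamma\big(\gamma+(N-2)\beta\big)=\lambda_k .$$
This has one positive root $\gamma_k^+$ and one negative root $\gamma_k^-$; boundedness at $0$ forces $\psi\sim r^{\gamma_k^+}$. At infinity the analogous indicial computation gives two behaviours: a decaying one and one that grows at least like $\ln r$ (for $k=0$) or like a positive power (for $k\ge1$). The growing branch is excluded by the $\liminf$ condition.

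To decide when the regular solution at $0$ is also the decaying one at $\infty$, we try the ansatz $\psi=r^{\gamma}/(1+r^\beta)$. A direct computation shows that it solves the ODE exactly when $\gamma=\beta/N=\frac{\alpha+1}{N-1}$ and $(N-1)\gamma(\gamma+(N-2)\beta)=\lambda_k$. This condition is equivalent to $\frac{(\alpha+1)^2(N-1)}{N-1}\cdot\frac{N-1}{N-1}\cdots=k(k+N-2)$, i.e.\ $\alpha=\alpha(k)$. It produces $\widehat Z_{k,i}=r^{\beta/N-k}\Phi_{k,i}/(1+r^\beta)$, and the dimension count is the dimension of $\boldsymbol\Phi_k$.

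For $k=0$, the scaling invariance gives the solution $\widehat Z=\frac{d}{d\lambda}U_{\lambda,\alpha}|_{\lambda=1}$ up to a constant. We obtain a second solution by reduction of order, $\widehat Z\int\frac{dr}{r^{N-1}|U_\alpha'|^{N-2}\widehat Z^2}$. It is singular at $0$ (or at the zero of $\widehat Z$ it must be continued), and it grows like $\ln r$ with a nonzero coefficient at $\infty$; we will check that no combination of the two is bounded at $0$ except multiples of $\widehat Z$.

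\textbf{Step 3: non-existence away from the critical values.} For $k\ge1$ with $\alpha\ne\alpha(k)$ we must show that the regular solution at $0$ grows at $\infty$. Here I would use a Sturm/Wronskian comparison with the explicit function $r^{\beta/N}/(1+r^\beta)$. That function is a positive sub- or supersolution according to whether $\lambda_k\gtrless(N-1)\frac{\beta}{N}\big(\frac\beta N+(N-2)\beta\big)$. Suppose a decaying regular solution existed. Multiplying the two equations crosswise and integrating over $(0,\infty)$, the boundary terms vanish by the asymptotics at $0$ and $\infty$, and we get $(\lambda_k-\lambda_*)\int r^{N-3}|U_\alpha'|^{N-2}\psi\,\widehat\psi\,dr=0$. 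If $\psi$ is also positive this is a contradiction. For sign-changing $\psi$ I would restrict to the first node and use the Sturm comparison theorem. An alternative route is the hypergeometric form in $s$, where termination or connection formulas decide the behaviour.

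\textbf{Main obstacle.} I expect the hardest part to be the rigorous analysis at the endpoints under the weak hypotheses. At $r=0$ the weight $|U_\alpha'|^{N-2}$ degenerates or blows up, so some care is needed to show that a locally bounded $W^{1,N}_{loc}$ solution corresponds to the $\gamma_k^+$ branch. At $r=\infty$, the condition $\liminf v/\ln|x|=0$ is only a one-sided, liminf-type bound on $v$ rather than on each $\psi_{k,i}$. I would handle the latter by showing that each mode has a genuine power or logarithmic asymptotic expansion. A single growing mode then forces $\sup_{|x|=r}|v|$ to grow like $\ln r$ or faster with a definite sign pattern. This contradicts the liminf assumption after averaging $v$ against $Y_{k,i}$ together with a positive test function on spheres.
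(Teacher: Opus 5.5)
Your reduction to the mode equations and your endpoint analysis are sound. For $k=0$, boundedness at the origin alone forces $\psi_0=c\widehat Z$. For $k\ge1$, the one-sided bound $v\ge-\epsilon\ln|x|$ given by the $\liminf$ hypothesis, combined with $\int_{\mathbb S^{N-1}}Y_{k,i}\,d\theta=0$, yields $|\psi_{k,i}(r)|\le C_k(1+\epsilon\ln r)$ and kills the growing branch. The gap is in Step 3.

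Write $A=r^{N-1}|U_\alpha'|^{N-2}$, $w=r^{N-3}|U_\alpha'|^{N-2}$, $\lambda_*=(N-1)(\alpha+1)^2$ and $g=r^{\beta/N}/(1+r^\beta)$. (Your displayed equivalence in Step 2 is garbled; the condition is simply $\lambda_k=\lambda_*$.) The comparison with $g$ excludes decaying regular solutions only when $\lambda_k>\lambda_*$, i.e.\ $\alpha<\alpha(k)$.

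When $\lambda_k<\lambda_*$, i.e.\ $\alpha>\alpha(k)$, your argument breaks down. This is always the case for $k=1$ once $\alpha>0$, and for every $k$ with $\alpha(k)<\alpha$. Restricting to the first nodal interval $(0,r_1)$, where $\psi>0$, gives
\[
(N-1)A(r_1)\psi'(r_1)g(r_1)=(\lambda_k-\lambda_*)\int_0^{r_1}w\,\psi\,g\,dr ,
\]
with both sides negative, so there is no contradiction. Sturm comparison against $g$, whose equation is now the less oscillatory one, only tells you that $\psi$ vanishes somewhere, which is consistent with a sign-changing $\psi$. What is missing is exactly the fact that the radial operator has no eigenvalue strictly between $\mu_1(\alpha)=-(N-1)(\alpha+1)^2$ and $\mu_2(\alpha)=0$. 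The paper takes this from \cite{CEL} (see the proof of Lemma~\ref{lem33}).

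You need an upper bound on the number of zeros of $\psi$, and the comparison function for it is already in your hands. $\widehat Z$ solves the $\lambda=0$ equation, which is more oscillatory than the $\lambda_k$ one because $\lambda_k>0$. It is regular at $0$, satisfies $A\widehat Z'\to0$ at $\infty$, and has exactly one zero, at $r=(N-1)^{1/\beta}$. Use the identity
\[
(N-1)\bigl[A(\psi'\widehat Z-\psi\widehat Z')\bigr]'=\lambda_k\,w\,\psi\,\widehat Z .
\]
For a decaying regular $\psi$ the boundary terms vanish at $0$ and $\infty$, so $\widehat Z$ must vanish in each of the $m+1$ open intervals determined by the $m$ interior zeros of $\psi$. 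Hence $m=0$, $\psi$ has one sign, and your global identity with $g$ forces $\lambda_k=\lambda_*$.

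Alternatively, the hypergeometric route you mention does work if carried out. In $s=r^\beta$ the equation is Fuchsian with singular points $0,-1,\infty$, with exponents $\{-1,N\}$ at $s=-1$. Let $\sigma_0^+,\sigma_\infty>0$ be defined by $(N-1)\sigma_0^+(\sigma_0^++N-2)=(N-1)\sigma_\infty^2=\lambda_k/\beta^2$. The connection formula shows that the solution regular at $0$ decays at $\infty$ if and only if $\sigma_0^++\sigma_\infty=1$. Since $\sigma_0^++\sigma_\infty$ is strictly increasing in $\lambda_k$ and equals $1$ at $\lambda_*$, this again gives $\lambda_k=\lambda_*$.
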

\begin{rem}
It is clear that the global boundedness assumption $v\in L^\infty(\mathbb{R}^N)$ in Theorem 1 in \cite{T} and in Theorem 5.1 in \cite{CEL} can be replaced by the more weaker assumptions $v\in L_{loc}^\infty(\mathbb{R}^N)$ such that $\liminf\limits_{|x|\rightarrow+\infty}\frac{v(x)}{\ln|x|}=0$ in our Theorem \ref{th11}. In fact, by changing variables, similar to the proof of Theorem 5.1 in \cite{CEL}, we can also transfer the weighted cases $\al>-1$ to the case $\al=0$ in Theorem 1 in \cite{T}. Then, it can be seen clearly from ``the case $k=0$" in the proof of Theorem 1 in page 6 of \cite{T} that, the assumption $\liminf\limits_{|x|\rightarrow+\infty}\frac{v(x)}{\ln|x|}=0$ can replace the assumption $v\in L^\infty(\mathbb{R}^N)$ and can also lead to a contradiction. The rest of the proof of Theorem \ref{th11} is similar to Theorem 1 in \cite{T} and Theorem 5.1 in \cite{CEL}, so we omit the details here.
\end{rem}

We define the operator associated with the equation \eqref{11} by
\begin{equation*}
    \mathcal{F}(u) := -\Delta_N u - |x|^{N\alpha} e^u.
\end{equation*}
Then, the linearized equation $\mathcal{L}_{\mathcal{F},\, U_\alpha}(v) = 0$ for operator $\mathcal{F}$ at $U_\al$ is exactly \eqref{16}.

Under the change of variable $g := e^u$, the corresponding operator acting on $g$ is defined as
\begin{equation*}
    \mathcal{T}(g) := -\Delta_N g - |x|^{N\alpha} g^N + (N-1)\frac{|\nabla g|^N}{g}.
\end{equation*}
By direct expansion, we obtain $
    g^{N-1}\bigl[-\Delta_N(\ln g)\bigr] = -\Delta_N g + (N-1)\frac{|\nabla g|^N}{g}.$ This leads to the operator identity
\begin{equation}\label{fg}
    \mathcal{T}(g) = g^{N-1}\mathcal{F}(\ln g).
\end{equation}
In particular, we have
\begin{equation*}
    \mathcal{F}(U_\alpha) = 0 \quad \Longleftrightarrow \quad \mathcal{T}(e^{U_\alpha}) = 0.
\end{equation*}
The linearized equation $\mathcal{L}_{\mathcal{T},\,e^{U_\alpha}}(v) = 0$ for operator $\mathcal{T}$ at $e^{U_{\al}}$ can be written explicitly as
\begin{align}\label{v224}
    &\quad -\operatorname{div}\left( |\nabla e^{U_\alpha}|^{N-2}\nabla v + (N-2)|\nabla e^{U_\alpha}|^{N-4}(\nabla e^{U_\alpha}\cdot\nabla v)\nabla e^{U_\alpha} \right) \\
    &= N|x|^{N\alpha}e^{(N-1)U_\al}v - (N-1)\left[ \frac{N|\nabla e^{U_\alpha}|^{N-2}(\nabla e^{U_\alpha}\cdot\nabla v)}{e^{U_\alpha}} - \frac{|\nabla e^{U_\alpha}|^N v}{e^{2U_\alpha}} \right].\nonumber
\end{align}
We say $v$ is a solution to \eqref{16}, if $v\in L_{loc}^\infty(\mathbb{R}^N) \cap W^{1,N}_{loc}(\R^N)$ such that $\liminf\limits_{|x|\rightarrow+\infty}\frac{e^{-U_\al}v(x)}{\ln|x|}=0$ satisfies
\begin{align*}
&\quad \int_{\Omega}|\nabla e^{U_\alpha}|^{N-2}\nabla v\cdot\nabla\phi + (N-2)|\nabla e^{U_\alpha}|^{N-4}(\nabla e^{U_\alpha}\cdot\nabla v)(\nabla e^{U_\alpha}\cdot\nabla\phi)\mathrm{d}x\\
&=\int_{\Omega}\left\{N|x|^{N\alpha}e^{(N-1)U_\al}v - (N-1)\left[ \frac{N|\nabla e^{U_\alpha}|^{N-2}(\nabla e^{U_\alpha}\cdot\nabla v)}{e^{U_\alpha}} - \frac{|\nabla e^{U_\alpha}|^N v}{e^{2U_\alpha}} \right]\right\}\phi\mathrm{d}x
\end{align*}
for any bounded open subset $\Omega\subset\mathbb{R}^{N}$ and any $\phi\in W^{1,N}_{0}(\Omega)$.

By showing the relationship between the kernels $\ker \mathcal{L}_{\mathcal{T},\,e^{U_\alpha}}$ and $\ker \mathcal{L}_{\mathcal{F},\, U_\alpha}$, we establish the following classification result for all solutions $v\in L_{loc}^\infty(\mathbb{R}^N) \cap W^{1,N}_{loc}(\R^N)$ such that $\liminf\limits_{|x|\rightarrow+\infty}\frac{e^{-U_\al}v(x)}{\ln|x|}=0$ of \eqref{v224}.
\begin{thm}\label{c13}
Let $\alpha>-1$. If $\alpha\ne\al_k$, then the space of solutions of \eqref{v224} has dimension $1$ and is spanned by
\begin{align}\label{17}
Z(x)=\frac{(N-1)-|x|^{\frac{N(1+\al)}{N-1}}}
{\lr1+|x|^{\frac{N(1+\al)}{N-1}}\rr^{N+1}}
\end{align}
If $\alpha=\al_k$ for some $k\in \N$, then the spaces of solutions of \eqref{v224} has dimension $1+\frac{(N+2k-2)(N+k-3)!}{(N-2)!k!}$ and is spanned by
\begin{align}\label{18}
	Z(x)=\frac{(N-1)-|x|^{\frac{N(1+\al)}{N-1}}}
	{\lr1+|x|^{\frac{N(1+\al)}{N-1}}\rr^{N+1}},\quad\quad
	Z_{k,i}(x)=\frac{|x|^{\frac{1+\al}{N-1}-k}\Phi_{k,i}(x)}
	{\lr1+|x|^{\frac{N(1+\al)}{N-1}}\rr^{N+1}}
\end{align}
where $\al_k:=\frac{\sqrt{k(N-1)(k+N-2)}}{N-1}-1$, and $\{\Phi_{k,i}\}$ ($i = 1,\cdots, \frac{(N+2k-2)(N+k-3)!}{(N-2)!k!}$) form a basis of $\Y_k(\R^N)$, the space of all homogeneous harmonic polynomials of degree $k$ in $\R^N$.
\end{thm}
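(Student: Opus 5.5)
The plan is to reduce Theorem \ref{c13} to Theorem \ref{th11} through the identity \eqref{fg}. The natural correspondence is $v=e^{U_\al}w$: if $w$ solves \eqref{16} then $v:=e^{U_\al}w$ solves \eqref{v224}, and conversely $w:=e^{-U_\al}v$. Since $e^{U_\al}$ is smooth (away from possibly the origin, where it is H\"older/Lipschitz) and bounded above and below on compact sets, the classes $L^\infty_{loc}\cap W^{1,N}_{loc}$ are preserved. The growth conditions also match exactly: $\liminf_{|x|\to\infty}\frac{e^{-U_\al}v}{\ln|x|}=0$ is precisely $\liminf\frac{w}{\ln|x|}=0$. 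So the map $w\mapsto e^{U_\al}w$ is a linear bijection between the two solution spaces, and the dimensions agree.

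\textbf{Step 1: derive the linearized identity.} Differentiate \eqref{fg} along $g_t=e^{U_\al}(1+tw)$, or more simply along $g_t=e^{U_\al+tw}$, so that $\ln g_t=U_\al+tw$. Then
\[
\frac{d}{dt}\Big|_{t=0}\mathcal{T}(g_t)=(N-1)e^{(N-2)U_\al}\,w e^{U_\al}\,\mathcal{F}(U_\al)+e^{(N-1)U_\al}\,\mathcal{L}_{\mathcal{F},U_\al}(w).
\]
The first term vanishes because $\mathcal{F}(U_\al)=0$. Since $\frac{d}{dt}g_t|_{t=0}=e^{U_\al}w$, this gives
\[
\mathcal{L}_{\mathcal{T},e^{U_\al}}(e^{U_\al}w)=e^{(N-1)U_\al}\mathcal{L}_{\mathcal{F},U_\al}(w).
\]
I would check this by direct expansion, using $\nabla e^{U}=e^U\nabla U$, $|\nabla e^U|^{N-2}=e^{(N-2)U}|\nabla U|^{N-2}$, and $\nabla v=e^U(\nabla w+w\nabla U)$. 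Expanding the left side of \eqref{v224}, the terms involving $w\nabla U$ and the first-order terms should cancel against the bracket on the right side, with $-\Delta_N U_\al=|x|^{N\al}e^{U_\al}$ used once.

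\textbf{Step 2: pass to weak formulations.} This is the main technical obstacle: the identity must be verified at the level of the weak formulations, with test functions in $W^{1,N}_0(\Omega)$, since $v$ is only $W^{1,N}_{loc}$. The plan is to test \eqref{v224} with $\phi$ and \eqref{16} with $\psi=e^{(N-1)U_\al}\phi$, which is admissible because $e^{(N-1)U_\al}$ is Lipschitz on bounded sets. Then I would expand both integrands pointwise almost everywhere and show they agree, using the weak equation for $U_\al$ tested with $w\phi e^{(N-1)U_\al}$ to absorb the zero-order term. The near-origin behaviour ($|\nabla U_\al|\sim|x|^{\frac{N(\al+1)}{N-1}-1}$) only affects integrability, which holds since $\al>-1$.

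\textbf{Step 3: read off the basis.} Apply Theorem \ref{th11} and multiply its basis by
\[
e^{U_\al}=C_{N,\al}\bigl(1+|x|^{\frac{N(\al+1)}{N-1}}\bigr)^{-N}.
\]
This turns $\widehat Z$ and $\widehat{Z_{k,i}}$ into $C_{N,\al}Z$ and $C_{N,\al}Z_{k,i}$, whose denominators carry the exponent $N+1$. The positive constant $C_{N,\al}$ is irrelevant for the span, so this yields the stated dimensions and bases.
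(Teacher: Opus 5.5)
Your proposal is correct and follows the paper's route: differentiate the identity \eqref{fg} at $e^{U_\alpha}$, use $\mathcal{F}(U_\alpha)=0$ to get $\mathcal{L}_{\mathcal{T},e^{U_\alpha}}(e^{U_\alpha}w)=e^{(N-1)U_\alpha}\mathcal{L}_{\mathcal{F},U_\alpha}(w)$, hence $\ker\mathcal{L}_{\mathcal{T},e^{U_\alpha}}=e^{U_\alpha}\ker\mathcal{L}_{\mathcal{F},U_\alpha}$, and then multiply the basis of Theorem \ref{th11} by $e^{U_\alpha}$. Several of your steps are refinements the paper skips: the perturbation $g_t=e^{U_\alpha+tw}$ (which avoids the paper's need for $\phi\in L^\infty$ to keep $1+t\phi>0$), the check that the function classes and the $\liminf$ growth conditions correspond, and the weak-form verification in Step 2 (which does close, by testing the $U_\alpha$-equation with $e^{(N-1)U_\alpha}w\phi$); the only slip is that for $-1<\alpha<-\frac{1}{N}$ the weight $e^{(N-1)U_\alpha}$ is not Lipschitz at the origin, but its gradient lies in $L^s_{loc}$ for some $s>N$, which is enough for the test functions you use.
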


For more literature on non-degeneracy and classification results for linearized operators related to $p$-Laplacian equations, refer to e.g. \cite{DDGL,FN,FZ,PV} and the references therein.

\medskip

Next, we apply Theorem \ref{c13} to investigate the existence of nonradial solutions to equation \eqref{11}. To this end, we introduce necessary notations and definitions.

Define the weighted space
\begin{align*}
L^{\infty}_\gamma(\mathbb{R}^N) := \left\{ g \in L^{\infty}(\mathbb{R}^N) \mid\,\,  \|g\|_{\gamma}<+\infty \right\}
\end{align*}
with the weighted norm
\begin{align}\label{d18}
	\|g\|_\gamma := \sup_{x\in\R^N}\left(1+|x|\right)^{\gamma}|g(x)|, \quad \text{ where } \gamma\in \left(N(\al+1),\frac{N^2(\al+1)}{N-1}\right).
\end{align}
For the convenience of subsequent calculations, we may fix $\gamma=N(\al+1)+\frac12$ throughout our paper.
Furthermore, set
\begin{align}\label{19}
X=  W^{1,N}(\R^N)\bigcap L^\infty_\gamma(\R^N),
\end{align}
$X$ is a Banach space with the norm
\begin{align}\label{110}
	\|g\|_X:=\max\{\|g\|_{1,N},\|g\|_\gamma\}
,\end{align}
where $\|\cdot\|_{1,N}$ denotes the usual norm in $W^{1,N}(\R^N)$, i.e.,
$
\|g\|_{1,N} = \left(\int_{\R^N}|g|^N\md x\right)^{\frac{1}{N}}+\left(\int_{\R^N}|\nabla g|^N\md x\right)^{\frac{1}{N}}$ for $g\in W^{1,N}(\R^N)$.

\begin{defn}
Let $U_\alpha$ be the radial solution of \eqref{11} defined in \eqref{14}. We say that a non-radial bifurcation occurs at $(\bar{\alpha},U_{\bar{\alpha}})$, if for every neighborhood of $(\bar{\alpha},e^{U_{\bar{\alpha}}})$ in $(0,+\infty)\times X$, there exists a point $(\alpha,e^{v_\alpha})$  in this neighborhood such that $v_\alpha$ is a non-radial solution of \eqref{11}.
\end{defn}

Let $\mathcal{O}(k)$ be the orthogonal group in $\R^k$. Our main result is the following theorem.
\begin{thm}\label{th14}
	Let $\al=\alpha(k):=\frac{\sqrt{k(N-1)(k+N-2)}}{N-1}-1$ with $k\in\N,\ k\geq2$. Then \\
	(i) there exists at least a continuum of nonradial solutions to \eqref{11}, invariant with
	respect to $\mathcal{O}(N-1)$, bifurcating from the pair $(\al,U_\al)$.\\
	(ii) If $k$ is even, there exist at least $[\frac{N}{2}]$ continua of nonradial solutions to \eqref{11} bifurcating from $(\al,U_\al)$. The $l$-th branch is invariant with respect to $\mathcal{O}(N-l)\times \mathcal{O}(l)$ for $l=1,\cdots,[\frac{N}{2}]$.\\
	Furthermore, all these nonradial solutions $h$ derived in (i) and (ii) satisfy $e^{h}\in X$, $h\in C^{1,\eta}(\mathbb{R}^{N})$, $h(x)\sim \ln|x|$ and $|\nabla h|= O(|x|^{-1})$, as $|x|\to+\infty$, and $\int_{\R^N}|x|^{N\alpha}e^{h}\md x=N\left(\frac{N^2}{N-1}\right)^{N-1}(\alpha+1)^{N-1}\omega_N$.
\end{thm}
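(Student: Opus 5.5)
We will follow the bifurcation-plus-approximation strategy of \cite{GGN,DDGL}, working with the variable $g=e^{u}$ and the operator $\mathcal{T}$ from \eqref{fg}, so that the problem lives in the space $X$ of \eqref{19}.

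\textbf{Step 1: invariant subspaces.} For (i) we restrict to functions invariant under $\mathcal{O}(N-1)$ acting on $(x_2,\dots,x_N)$. By Theorem \ref{c13}, at $\alpha=\alpha(k)$ the kernel of $\mathcal{L}_{\mathcal{T},e^{U_\alpha}}$ in this class is spanned by $Z$ and by $Z_{k,*}$ built from the zonal harmonic $\Phi_k(x)=|x|^kP_k(x_1/|x|)$, with $P_k$ the Gegenbauer polynomial. For (ii), with $k$ even, we restrict to $\mathcal{O}(N-l)\times\mathcal{O}(l)$-invariant functions. The harmonic of degree $k$ invariant under this group and even in each block is unique up to constants (a Jacobi polynomial in $|x'|^2-|x''|^2$). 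The $l$ branches are distinct because the isotropy groups differ for $l\le[\frac N2]$.

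\textbf{Step 2: removing the radial kernel.} The kernel direction $Z$ comes from the scaling invariance $u\mapsto u(\lambda\cdot)+N(\alpha+1)\ln\lambda$, and it is present for every $\alpha$. We eliminate it by imposing a normalization. One option is to fix $g(0)=e^{U_\alpha(0)}=C_{N,\alpha}$. Another is to work in the complement orthogonal to $Z$ in a weighted pairing, for example by requiring that the $Z$-component vanish. After this, the kernel in the symmetric class is one-dimensional exactly at $\alpha=\alpha(k)$ and trivial for nearby $\alpha\neq\alpha(k)$. We then compute the crossing of the Morse index through the radial ODE eigenvalue
\[
\mu_k(\alpha)=\frac{(N-1)\bigl((\alpha+1)^2(N-1)-k(k+N-2)\bigr)}{\ldots},
\]
which changes sign transversally at $\alpha(k)$. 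This is an odd crossing number, so the Krasnosel'skii / Rabinowitz global bifurcation theorem (Leray--Schauder degree jump) applies.

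\textbf{Step 3: compactness via approximation.} Since $\Delta_N$ is degenerate, the map $\mathcal{T}$ is not directly a compact perturbation of the identity on $X$. We therefore follow \cite{DDGL} and pose approximate problems, for example on balls $B_R$ with suitable boundary data, or with a regularized operator $\mathrm{div}((\varepsilon+|\nabla g|^2)^{\frac{N-2}{2}}\nabla g)$. For these we write $g=(-\Delta_N)^{-1}(\cdots)$, which gives a compact fixed-point map and hence a degree change and an approximate nonradial continuum $(\alpha_\varepsilon,g_\varepsilon)$ near $(\alpha(k),e^{U_{\alpha(k)}})$ at any prescribed small distance $\delta$. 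We then pass to the limit $\varepsilon\to0$. The uniform estimates we need are:
\begin{itemize}
\item $g_\varepsilon\le C(1+|x|)^{-N^2(\alpha+1)/(N-1)}$ in the $\gamma$-weighted norm, obtained by comparison with radial barriers since there is no Green representation;
\item $C^{1,\eta}_{loc}$ bounds from the regularity theory for $p$-Laplacian-type equations.
\end{itemize}
To show the limit is nonradial, we consider the normalized differences $w_\varepsilon=(g_\varepsilon-e^{U_{\alpha_\varepsilon}})/\|g_\varepsilon-e^{U_{\alpha_\varepsilon}}\|_X$. We prove they converge to a nontrivial element of the kernel, which must be a multiple of $Z_{k,*}$ by the normalization. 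This needs a decay estimate for the difference equation uniform at infinity, obtained via De Giorgi--Moser iteration on weighted integrals.

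\textbf{Step 4: properties of the limit.} The limit $h=\ln g$ satisfies
\[
h=-\tfrac{N^2(\alpha+1)}{N-1}\ln|x|+O(1),\qquad |\nabla h|=O(|x|^{-1}).
\]
This follows from the $\gamma$-bounds together with the Kichenassamy--Véron-type asymptotics for $N$-Laplace equations with integrable right-hand side. The mass identity then comes from integrating the equation over large balls: the asymptotic slope gives
\[
\int_{\R^N}|x|^{N\alpha}e^{h}\,\md x=\Bigl(\tfrac{N^2(\alpha+1)}{N-1}\Bigr)^{N-1}N\omega_N,
\]
since the total mass is determined by the logarithmic slope and the slope is pinned down by the Pohozaev identity.

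The main obstacle is Step 3, specifically the uniform decay estimate for the normalized difference $w_\varepsilon$. Without a Kelvin transform, a critical weighted Sobolev inequality, or Green's formula, we plan to use sign-adapted barrier functions of the form $(1+|x|)^{-\gamma'}$ for the linearized operator outside a large ball, combined with local Moser iteration.
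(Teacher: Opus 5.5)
Your outline has the right architecture: approximate problems on balls, a degree jump in the $\mathcal{O}(N-1)$- or $\mathcal{O}(N-l)\times\mathcal{O}(l)$-invariant class, limits of continua, and blow-up of normalized differences. However, the two steps specific to $p=N$ are either not implemented or would not work as proposed.

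\textbf{The scale.} Your normalization ($g(0)=C_{N,\alpha}$, or orthogonality to $Z$) is stated for the whole-space problem, where you have no compact framework. Nothing in your scheme carries it along the approximate continua. With a prescribed Dirichlet constant on $\partial B_R$, $g(0)$ is an output and cannot also be fixed; orthogonality to $Z$ would change the equation. Without the normalization, two things can go wrong:
\begin{itemize}
\item a limit point $(\hat\alpha,\hat h)$ at distance $\delta$ may be a rescaled radial solution $e^{U_{\lambda,\hat\alpha}}$ with $\lambda\neq1$;
\item for $\hat\alpha\neq\alpha(k)$ the kernel is spanned by $Z$ alone, so the blow-up limit of the normalized differences is $AZ$, not a multiple of $Z_{k,*}$, and both $A=0$ and $A\neq0$ must be excluded.
\end{itemize}
Since $\int_{\R^N}|x|^{N\alpha}e^{U_{\lambda,\alpha}}\,\md x$ does not depend on $\lambda$, no mass identity rules these out.

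The paper handles this as follows. It chooses the boundary value $\beta_\alpha=U_\alpha(1/\varepsilon)$, so that $U_\alpha|_{B_{1/\varepsilon}}$ is the radial branch, radially nondegenerate because $z(1/\varepsilon)\neq0$. It then proves Propositions \ref{pp42} and \ref{pp44} using three ingredients:
\begin{itemize}
\item the boundary gradient bound $|\nabla w_n|\le C\varepsilon_n/\sqrt{|\ln\varepsilon_n|}$;
\item a Hardy-inequality plus Hopf argument giving a one-signed comparison of the boundary normal derivatives, $\rho_n(\theta)<z_n$ for all $\theta$ (or $>$ for all $\theta$);
\item a contradiction with the Pohozaev identity $\int_{\partial B_1}(F_n(\rho_n)-F_n(z_n))\,\md\theta=0$, with $F_n$ strictly increasing in the relevant range.
\end{itemize}
If you want to keep your normalization, you must build it into the approximate problem. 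For instance, make the boundary constant an unknown determined by $u(0)=\ln C_{N,\alpha}$. You would then have to redo all uniform estimates for differences that no longer vanish on $\partial B_{1/\varepsilon}$.

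\textbf{The decay estimate.} You rightly call this the main obstacle, but barriers $(1+|x|)^{-\gamma'}$ do not provide it. By \eqref{227v}, $\mathcal{L}_{\mathcal{T},e^{U_\alpha}}(v)=e^{(N-1)U_\alpha}\mathcal{L}_{\mathcal{F},U_\alpha}(ve^{-U_\alpha})$. On radial functions near infinity, $\mathcal{L}_{\mathcal{F},U_\alpha}\phi\approx-(N-1)(Na)^{N-2}r^{1-N}(r\phi')'$ with $a=\frac{N(\alpha+1)}{N-1}$. This is a critical operator whose radial homogeneous solutions are $1$ and $\ln r$.
\begin{itemize}
\item For $\gamma'\neq Na$: $(1+|x|)^{-\gamma'}e^{-U_\alpha}\approx c\,r^{s}$ with $s=Na-\gamma'\neq0$. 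Since $(r(r^s)')'=s^2r^{s-1}>0$, the barrier is a strict subsolution and bounds nothing.
\item For $\gamma'=Na$: the supersolution property for the exact linearized operator comes only from a correction of relative size $|x|^{-1}$. This does not survive passage to the actual difference operator. Its coefficients $a^n_{ij}$ contain the nonradial $\nabla v_n$, and a priori you only know $|\nabla(u_n-v_n)|\le C|x|^{-1}$, an $O(1)$ relative perturbation in divergence form.
\end{itemize}
Kernel elements behave like $e^{U_\alpha}(c_1+c_2\ln|x|)$. Accordingly, the paper proves only $|h_n|\lesssim(\ln|x|)e^{U_{\alpha_n}}$, by an energy method built on:
\begin{itemize}
\item the two-sided asymptotics $|v_n+\gamma_n\ln|x||\le C$ and $|\nabla v_n|\le C/|x|$ (Propositions \ref{pp41}, \ref{pro43}). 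Your one-sided bound on $g_\varepsilon$ does not give $\|u_n-v_n\|_{L^\infty}\le C$, $\zeta_n\sim e^{U_{\alpha_n}}$, or $a^n_{ij}\sim|x|^{2-N}$;
\item the $(\ln R)^{-1}$ decay of $\int_{|x|>R}|x|^{2-N}|\nabla w_n|^2$;
\item the truncated Hardy inequalities \eqref{aa436+} and \eqref{aa436}, in place of the missing critical weighted Sobolev inequality;
\item harmonic replacement for $\psi_n$ and Moser iteration.
\end{itemize}
It then needs the kernel classification under the weak condition $\liminf_{|x|\to\infty}e^{-U_\alpha}|h|/\ln|x|=0$ (Theorem \ref{c13}) to identify the limit.
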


\begin{rem}
Theorem \ref{th14} demonstrates that for $\al>0$, the solution structure to equation \eqref{11} becomes significantly more complex than in the case $\al=0$. In particular, for $N=2$, one has $\al(k)=k-1$ ($k\in\N$). So our results in Theorem \ref{th14} coincide with the existence result in Theorem 1.1 of \cite{PT}. Consequently, our results extend the classical existence result by J. Prajapat and G. Tarantello in \cite{PT} on the Laplace operator (the special case $N=2$) to the more general framework of the nonlinear $N$-Laplace operator ($N\geq 2$), and extend the results of F. Gladiali, M. Grossi, and S. L. N. Neves in \cite{GGN} and the authors in \cite{DDGL} from $1<p<N$ to the limiting case $p=N$.
\end{rem}

\begin{rem}
Besides the bifurcation arguments, the existence of nonradial solutions can also be deduced from other ways or mechanism, such as symmetry breaking for the corresponding variational minimization problems. For instance, see Theorem 1.4 in \cite{CPP} for the Neumann problem of \eqref{criticeqution} (i.e., $\alpha=0$ in \eqref{11}) with $p=2$ in nonconvex unbounded cone under some assumptions on the spherical cross-section $D$ of the cone; and Theorem 3 in \cite{NR} for the Neumann problem of \eqref{11} with $\alpha\leq 0$ in unbounded cone under some assumptions on the cross-section $D$ of the cone.
\end{rem}

\subsubsection{The crucial difficulties specific to the limiting case $p=N\geq2$ and comparison with known results \cite{DDGL,GGN,PT} for $1<p<N$ and $p=N=2$.}
We would like to mention some of the key difficulties in our proof, comparing with the special $2$-D semi-linear case $p=N=2$ in \cite{PT}, semi-linear case $p=2<N$ in \cite{GGN} and quasi-linear cases $1<p<N$ in \cite{DDGL}. The main difficulties lie on the nonlinearity nature of the $N$-Laplacian $\Delta_N$, the lack of Green integral representation formula and critical weighted Sobolev embedding inequality, the absence of Kelvin type transforms for linearized/difference equations, the invariance of the total mass under scalings of $u$, and the signs-changing and divergence (to $-\infty$) at $\infty$ of the solutions, which makes the suitable choices of the approximate problems, the (normalized) approximate function sequences and the working space to be quite difficult, see e.g. Sections 2-5.

Since the complex analysis method in \cite{PT} for the special case $p=N=2$ can not be applied to general $p=N>2$, we will prove our main results in Theorem \ref{th14} by bifurcation theory.

Since the radial solutions $U_\al$ in the whole space $\mathbb{R}^{N}$ are not isolated and are degenerate in the space of radial functions, we cannot directly apply the classical bifurcation theory in $\mathbb{R}^{N}$ to obtain the existence of nonradial bifurcation points. Inspired by Gladiali, Grossi and Neves \cite{GGN}, we will show the existence of nonradial bifurcation solutions to an approximate problem \eqref{31} in balls $B_{\frac{1}{\var}}(0)$ with radius $\frac{1}{\var}$ instead, and establish some key uniform estimates on approximate solutions, then prove these nonradial solutions in $B_{\frac{1}{\var}}(0)$ will converge to the nonradial solutions of problem \eqref{11} in $\mathbb{R}^{N}$, as $\var=\var_n\rightarrow0$, see also \cite{DDGL}.

Due to the signs-changing and divergence (to $-\infty$) at $\infty$ of the radial solutions $U_{\la,\alpha}$, we can not simply take Dirichlet problem as the approximate problem on $B_{\frac{1}{\var}}$ like \cite{DDGL,GGN}. We carefully set the approximate problem \eqref{31} with limiting blowing-up boundary value and show that, the radial approximate solution $u_{\var,\alpha}$ to \eqref{31} is isolated and nondegenerate in the space of radial functions (see Lemma \ref{lem31}), the linearized equation at $u_{n,\al}:=u_{\var_n,\alpha}$ is non-degenerate and the operator $I-\mathcal{L}_{h}^n(\al,u_{n,\al}):(0,+\infty)\times\mathcal{Q}_n\to \mathcal{Q}_n$ is invertible for $\al \neq\al_k^n$, $k=1,2,\cdots$ (see Remark \ref{morse-index} and Theorem \ref{thm38}). Therefore, following the methods in Gladiali, Grossi and Neves \cite{GGN} (see also \cite{DDGL}) and overcoming some technique difficulties caused by the nonlinearity of $N$-Laplacian, by applying the classical bifurcation theory in the balls $B_{\frac{1}{\var}}(0)$, we can obtain the existence of nonradial solutions that bifurcate from some radial approximate solutions close to $U_{\al}$, see Section 3.

Next, the main key point is to prove that, these nonradial solutions in $B_{\frac{1}{\var}}(0)$ will converge, in a certain sense, to the nonradial solutions of problem \eqref{11} in $\mathbb{R}^{N}$. To this end, let $\al_n$ and $\var_n$ be sequences such that $\al_n\to\al>0$ and $\var_n\to0$ as $n\to+\infty$. Let $u_n:=u_{\var_{n},\alpha_{n}}$ and $v_n$ be a sequence of nonradial solutions of \eqref{31} in $B_{\frac{1}{\var_n}}$ related to the exponent $\al_n$. For $1<p<N$, in \cite{DDGL,GGN}, the authors only need to consider $u_n$ and $v_n$ themselves, define $w_n:=\frac{u_n-v_n}{\|u_n-v_n\|_{L^{\infty}(\mathbb{R}^{N})}}$, and show that
\begin{equation}\label{a6}
  \|v_{n}-U_{\lambda,\al}\|_{X}\rightarrow0 \quad \Rightarrow \quad \lambda=1,
\end{equation}
and establish the following key uniform fast decay estimate for $w_n$:
\begin{equation}\label{a3}
  |w_n(y)|\leq\frac{C}{(1+|y|)^{\frac{N-p}{2(p-1)}}}, \qquad \forall \,\, y\in \mathbb{R}^{N},
\end{equation}
and then prove that
\begin{equation}\label{a4}
  \|u_n-v_n\|_{X}\geq C, \qquad \text{if} \,\, \alpha\neq\alpha(k), \,\, \forall \, k\geq1,
\end{equation}
where $C>0$ is a uniform constant independent of $n$, which indicates that the limit of non-radial approximate solutions is non-radial solution in $\R^{N}$.

As to our problem \eqref{11} for $p=N$, since solutions tend to $-\infty$ at logarithmic rate as $|x|\rightarrow+\infty$, we need to investigate convergence of $e^{u_n}$ and $e^{v_n}$ in space $X$, which makes the analysis much more complicated than those for $p<N$ in \cite{DDGL,GGN}. To this end, besides $u_n$, $v_n$ and $w_n$, we also need to define $f_n:= e^{u_{n}} - e^{\beta_{\alpha_{n}}}$ and $g_{n}:= e^{v_{n}}-e^{\beta_{\alpha_{n}}}$ and $h_{n}(x):=\frac{e^{u_n}- e^{v_{n}}}{\|e^{u_n}- e^{v_{n}}\|_{L^{\infty}(\mathbb{R}^N)}}$. Moreover, we define $\psi_n(x) := \frac{u_n - v_n}{\|e^{u_n}-e^{v_n}\|_{L^{\infty}(\mathbb{R}^N)}}$ to relate $w_n$ and $h_n$. In Section 4, we will show that (see Proposition \ref{pp41})
\begin{equation}\label{e21}
  |v_n(x)+\gamma_n\ln|x||\leq C
\end{equation}
with $\gamma_n\to\frac{N^2(\al+1)}{N-1}$ as $n\to+\infty$, and establish the following key uniform $(\ln R)^{-1}$-type integral decay estimates for $\nabla w_n$ outside the ball $B_R(0)$ (see \eqref{ln 420} in Proposition \ref{ppp43}):
\begin{align}
\int_{B_{\frac{1}{\var_n}}\setminus B_R}|x|^{-(N-2)}|\nabla w_n|^2\md x\leq \frac{C}{\ln R}, \qquad \forall \,\, n\geq n_0,
\label{aa438'}
\end{align}
and then show the following uniform decay estimates on $w_n$ and $\nabla w_n$ in annulus near the boundary of ball $\partial B_{\frac{1}{\var_n}}$ (see Propositions \ref{p1} and \ref{pro26}):
\begin{align}\label{412'-}
|w_n(x)|\leq \frac{C}{\sqrt{\ln|x|}}, \qquad \ \forall \,\, x\in \mathbb{R}^{N} \,\,\, \text{such that} \,\, \frac{1}{2\var_n}<|x|<\frac{1}{\var_n},
\end{align}
\begin{align}\label{g4'}
|\nabla w_n(x)|\leq\frac{C\var_n}{\sqrt{|\ln\var_n|}},
\qquad \, \forall \,\, x\in \overline{B_{\frac{1}{\var_n}}}\setminus B_{\frac{3}{4\var_n}}, \quad \forall \,\, n\geq n_0,
\end{align}
and hence establish the rigidity of the limiting scale (see Proposition \ref{pp42}):
\begin{equation}\label{a6'}
  \|e^{v_n(x)}-e^{\beta_{\al_n}}-e^{U_{\lambda,\al}}\|_{X}\rightarrow0 \quad \Rightarrow \quad \lambda=1;
\end{equation}
and prove the following key uniform fast decay estimate for $h_n$ (see Proposition \ref{ppq43}):
\begin{align}\label{412'+}
|h_n(x)|\leq C\left(\ln R_{0} + (\ln R_{0}) R_0^{-\frac{N(\al+1)}{2(N-1)}}\ln |x|\right)e^{U_{\al_n}(x)}, \qquad \,\,\, \forall \,\, |x| > 2R_0,
\end{align}
and hence establish that (see Proposition \ref{pp44})
\begin{equation}\label{a4'}
  \|e^{u_n}-e^{v_n}\|_{X}\geq C, \qquad \text{if} \,\, \alpha\neq\alpha(k), \,\, \forall \, k\geq1,
\end{equation}
where all the uniform constants $C>0$ in the above estimates are independent of $n$, $n_0$, $R$ and $R_0$, which indicates that the limit of non-radial approximate solutions is non-radial solution in $\R^{N}$.

Comparing with $p=2$ in \cite{GGN,PT}, for $p=N$, we have the same difficulties for $p\neq2$ as \cite{DDGL}: nonlinear nature of the $N$-Laplacian $\Delta_N$, absence of Kelvin type transforms for equations satisfied by $w_n$, $\psi_n$ and $h_n$, and lack of the Green integral representation formula. Besides these common difficulties, we still have more additional essential difficulties for the more complicated case $p=N$, comparing with $p<N$ in \cite{DDGL}:
\begin{itemize}
  \item the signs-changing and divergence (to $-\infty$) at $\infty$ of the solutions: makes the suitable choices of the approximate problems, the (normalized) approximate function sequences and the working space to be quite difficult;
  \item the lack of critical weighted Sobolev embedding inequality: $\int_{\R^{N}}\frac{|\phi|^2}{|x|^N}\mathrm{d}x\leq C\int_{\R^{N}}\frac{|\nabla \phi|^2}{|x|^{N-2}}\mathrm{d}x$ does not hold, which is the limiting form of the weighted Sobolev embedding inequality proved in \cite{DDGL} as $p\rightarrow N$. Thus we can not deduce the uniform decay estimate on $w_n$ from the uniform boundedness estimate on $\int_{\R^{N}}\frac{|w_n|^2}{|x|^N}\mathrm{d}x$, namely, the approach in \cite{DDGL} for $p<N$ does not work;
  \item the invariance of the total mass under scalings of $u$: the total mass $\int_{\mathbb{R}^{N}}|x|^{N\al}e^{U_{\la,\al}}\mathrm{d}x$ is invariant with respect to $\lambda>0$, thus the rigidity of the limiting scale \eqref{a6'} can not be deduced from  $\int_{\mathbb{R}^{N}}|x|^{N\al}e^{U_{\la,\al}}\mathrm{d}x=N\left(\frac{N^2(  \al+1)}{N-1}\right)^{N-1}\omega_N$, namely, the approach in \cite{DDGL,GGN} for $p<N$ does not work;
  \item the deficiency of the uniform upper bound estimates for $v_n$: merely proving the uniform upper bound estimates \eqref{43q+} for $v_n$ is far from enough for our proof, namely, the approach in \cite{DDGL,GGN} for $p<N$ does not work. We need to prove the precise asymptotic estimate for nonradial approximate solution $v_{n}$ with explicit constants, so as to establish all the crucial estimates \eqref{e21}--\eqref{a4'} and \eqref{wL2+'} throughout our paper;
  \item the deficiency of the classification results on $\ker\mathcal{L}_{\mathcal{F},\, U_\alpha}\cap L^{\infty}(\mathbb{R}^N)$ in \cite{CEL,PT}: \eqref{412'+} implies that the limiting function $h\in \ker \mathcal{L}_{\mathcal{T},\,e^{U_\alpha}}$ of $h_n$ satisfies $\liminf\limits_{|x|\rightarrow+\infty} \frac{e^{-U_\al}|h(x)|}{\ln |x|}=0$. By the relationship $\ker \mathcal{L}_{\mathcal{T},\,e^{U_\alpha}} = e^{U_\alpha} \ker \mathcal{L}_{\mathcal{F},\,U_\alpha}$ in \eqref{13}, one has $e^{-U_\al}h(x)\in \ker \mathcal{L}_{\mathcal{F},\,U_\alpha}$. Thus the classification results under global boundedness assumption in \cite{CEL,PT} are not enough to show $h=AZ(x)$.
\end{itemize}

\subsubsection{Novelty and key ingredients in our proof to overcome the crucial difficulties}
We will give a detailed description on the novelty and key ingredients in our proof to overcome the main difficulties, see Sections 2-5.

\smallskip

In order to overcome the difficulties caused by the signs-changing and divergence at logarithmic rate (to $-\infty$) at $\infty$ of the solutions, we set the work space $X$ matching with $e^{u}$ not $u$ itself, and carefully set the approximate problem \eqref{31} with limiting blowing-up boundary value in Section 3. In order to investigate convergence of $e^{u_n}$ and $e^{v_n}$ in space $X$, we define $f_n:= e^{u_{n}} - e^{\beta_{\alpha_{n}}}$ and $g_{n}:= e^{v_{n}}-e^{\beta_{\alpha_{n}}}$ and transfer the approximate problem \eqref{31} to \eqref{47eq} accordingly, see also \eqref{54eq} in Section 5. Since $w_n\rightarrow A\widehat{Z}(x)$ with $\widehat{Z}(x)\rightarrow-1$ as $|x|\rightarrow+\infty$, we could not expect to derive uniform decay estimate for $w_n$ in the global ball $B_{\frac{1}{\var_n}}$. Thus we define new normalized approximate solutions sequence $h_{n}(x):=\frac{e^{u_n}- e^{v_{n}}}{\|e^{u_n}- e^{v_{n}}\|_{L^{\infty}(\mathbb{R}^N)}}$ which satisfies a very complicated equation \eqref{e22}, and prove the uniform fast decay estimate \eqref{412'+} for $h_n$, so as to lead to a contradiction when $A=0$ in the proof of \eqref{a4'}. To this end, we define $\psi_n(x) := \frac{u_n - v_n}{\|e^{u_n}-e^{v_n}\|_{L^{\infty}(\mathbb{R}^N)}}$, and proved the following uniform upper bound for the $L^2$-integral average of $\psi_n$ on annulus (see Proposition \ref{qpp44}):
\begin{align}\label{wL2+'}
		R^{-N}\int_{B_{8R}\setminus B_{R}}|\psi_n|^2\md x\leq C[\ln R_{0}]^{2} + C [\ln R_{0}]^{2} R_0^{-\frac{N(\al+1)}{N-1}}[\ln R]^{2}, \qquad \forall \,\, R\geq R_0,\ \forall \,\, n\geq n_0,
	\end{align}
which leads to \eqref{412'+} via De Giorgi-Moser-Nash iteration argument. Moreover, in the case $A\neq 0$ in the proof of \eqref{a4'}, we can also use $\psi_n$ to relate the magnitude relationship between boundary derivatives $z_n$ and $\rho_n(\theta)$ to $A>0$ or $A<0$, which leads to a contradiction with the Pohozaev identity (see \eqref{e18})
\begin{equation}\label{e18+'}
\int_{\partial B_1}
\bigl(F_n(\rho_n(\theta))-F_n(z_n)\bigr)\dd\theta=0.
\end{equation}

\smallskip

In order to overcome the difficulties caused by the lack of critical weighted Sobolev embedding inequality, we proved and applied the truncated weighted Hardy-Sobolev inequalities \eqref{aa436+} and \eqref{aa436} on exterior domain $\R^N\setminus B_{R}(0)$ instead of the critical weighted Sobolev embedding inequality. The inequality \eqref{aa436}, in conjunction with \eqref{aa438'} and De Giorgi-Moser-Nash iteration, yields the uniform decay estimates \eqref{412'-} and \eqref{g4'} on $w_n$ and $\nabla w_n$ in annulus near $\partial B_{\frac{1}{\var_n}}$, which implies the convergence of the boundary derivatives $\|z_n-\rho_n(\theta)\|_{L^{\infty}(\partial B_1)}\leq\frac{C}{\sqrt{|\ln\var_n|}}\rightarrow0$ and hence leads to the rigidity of the limiting scale \eqref{a6'}. The inequality \eqref{aa436+}, in conjunction with the Harmonic replacement $H_n$ of $\psi_n$, yields \eqref{wL2+'} and hence \eqref{412'+} via De Giorgi-Moser-Nash iteration. We also use \eqref{aa436+} to relate the magnitude relationship between boundary derivatives $z_n$ and $\rho_n(\theta)$ to $\lambda>1$ or $\lambda<1$ ($A>0$ or $A<0$, resp.) in the proof of \eqref{a6'} (\eqref{a4'}, resp.).

\smallskip

In order to overcome the difficulties caused by the invariance of the total mass under scalings of $u$, we use the uniform decay estimates \eqref{g4'} on $\nabla w_n$ in annulus near $\partial B_{\frac{1}{\var_n}}$ to derive $\|z_n-\rho_n(\theta)\|_{L^{\infty}(\partial B_1)}\leq\frac{C}{\sqrt{|\ln\var_n|}}\rightarrow0$, then apply the truncated weighted Hardy-Sobolev inequality \eqref{aa436+} to relate the magnitude relationship between boundary derivatives $z_n$ and $\rho_n(\theta)$ to $\lambda>1$ or $\lambda<1$, which leads to a contradiction with the Pohozaev identity \eqref{e18+'} and the strict monotonicity of the function $F_n(t)$ on $[N(1+\al_n),+\infty)$ and finally yields the rigidity of the limiting scale \eqref{a6'}.

\smallskip

In order to overcome the difficulties caused by the deficiency of the uniform upper bound estimates for $v_n$, we proved the precise uniform asymptotic estimate \eqref{e21} for nonradial approximate solution $v_n$ with explicit constants $\gamma_n$ instead of the rough upper bound \eqref{43q+}, by using supper/sub solution method and comparison principles. From \eqref{e21}, we can deduce two key uniform estimates (see \eqref{w421} and \eqref{xi425}) $$\|u_n-v_n\|_{L^\infty(B_{\frac{1}{\var_n}})}\leq C \qquad \text{and} \qquad \frac{1}{C}e^{U_{\al_n}}\leq \zeta_n(x)=\frac{H_n(x)}{\psi_n(x)}\leq Ce^{U_{\al_n}},$$
and hence prove all the crucial estimates \eqref{e21}--\eqref{a4'} and \eqref{wL2+'} throughout our paper.

\smallskip

In order to overcome the difficulties caused by the deficiency of the classification results on $\ker\mathcal{L}_{\mathcal{F},\, U_\alpha}\cap L^\infty(\R^{N})$ in \cite{CEL,PT}, we improve the classification results in \cite{CEL,PT} from $v\in L^\infty(\R^{N})$ to more general $v\in L_{loc}^\infty(\R^{N})$ satisfying $\liminf\limits_{|x|\rightarrow+\infty} \frac{|v(x)|}{\ln |x|}=0$ in our Theorem \ref{th11} for $\al>-1$, and hence derive the classification results on $\ker \mathcal{L}_{\mathcal{T},\,e^{U_\alpha}}$ for general $v$ satisfying $\liminf\limits_{|x|\rightarrow+\infty} \frac{e^{-u_\al}|v(x)|}{\ln |x|}=0$ in Theorem \ref{c13}. Fortunately, in the proof of \eqref{a4'}, the limiting function $h\in \ker \mathcal{L}_{\mathcal{T},\,e^{U_\alpha}}$ of $h_n$ satisfies $\liminf\limits_{|x|\rightarrow+\infty} \frac{e^{-U_\al}|h(x)|}{\ln |x|}=0$, thus Theorem \ref{c13} implies $h=AZ(x)$, which leads to a contradiction and proves \eqref{a4'}.

\smallskip

In order to overcome the difficulties caused by the absence of the Green integral representation formula, by applying supper/sub solution method and comparison principle, we first proved the precise asymptotic estimate \eqref{e21} for nonradial approximate solution $v_n$ with explicit constants $\gamma_n$ without using the Green integral representation formula, and derived the uniform fast decay estimate for $|\nabla v_n|$ via rescaling arguments and regularity estimates in Proposition \ref{pro43}. Then, by proving the uniform upper bound for the $L^2$-integral average on annulus \eqref{wL2+'} and \eqref{wL2'}, we can establish the uniform fast decay estimate \eqref{412'-} for $w_n$ and \eqref{412'+} for $h_n$ via a De Giorgi-Moser-Nash iteration argument, without using the Green integral representation formula. Moreover, by using the uniform decay estimate \eqref{g4'} for $|\nabla w_n|$, \eqref{aa436+} and the Pohozaev identity \eqref{e18+'}, we were also able to prove the rigidity of the limiting scale \eqref{a6'} and the crucial uniform lower bound \eqref{a4'} without resorting the Green integral representation formula.

\smallskip

In order to overcome the difficulties caused by the unavailability of Kelvin type transforms for equations satisfied by $w_n$, $\psi_n$ and $h_n$, by the key uniform $(\ln R)^{-1}$-type integral decay estimates \eqref{aa438'} for $\nabla w_n$ outside the ball $B_R(0)$ and \eqref{aa436}, we can derive the uniform decay estimate on $L^2$-integral average for $w_n$ in annulus (see Proposition \ref{qpp44'}):
 \begin{align}\label{wL2'}
		\frac{1}{R^N}\int_{B_{8R}(0)\setminus B_{R}(0)}|w_n|^2\md x\leq \frac{C}{\ln R}, \qquad \forall \,\, \frac{1}{14\var_n}<R<\frac{1}{2\var_n},\ \forall \,\, n\geq n_0;
	\end{align}
by the harmonic replacement $H_n$ and \eqref{aa436+}, we can derive the uniform upper bound \eqref{wL2+'} for the $L^2$-integral average of $\psi_n$ on annulus; then through a De Giorgi-Moser-Nash iteration argument, we can finally establish the uniform fast decay estimates \eqref{412'-} for $w_n$ and \eqref{412'+} for $h_n$ without using Kelvin type transforms.

\smallskip

In order to overcome the difficulties caused by the nonlinear nature of the $N$-Laplacian $\Delta_N$, by proving and applying the truncated weighted Hardy-Sobolev inequalities \eqref{aa436+} and\eqref{aa436}, we overcame the nonlinear nature of the $N$-Laplacian $\Delta_N$ and first proved the uniform upper bound for the $L^2$-integral average on annulus \eqref{wL2+'} and \eqref{wL2'}, and then established the uniform fast decay estimate \eqref{412'-} for $w_n$ and \eqref{412'+} for $h_n$ via the De Giorgi-Moser-Nash iteration. Finally, by using the uniform decay estimate \eqref{g4'} for $|\nabla w_n|$ to derive $\|z_n-\rho_n(\theta)\|_{L^{\infty}(\partial B_1)}\leq\frac{C}{\sqrt{|\ln\var_n|}}\rightarrow0$, considering the problem satisfied by $\psi_n$ directly and using \eqref{aa436+} to relate the magnitude relationship between boundary derivatives $z_n$ and $\rho_n(\theta)$ to $A>0$ or $A<0$, and deriving a contradiction with the Pohozaev identity \eqref{e18+'} via the strict monotonicity of the function $F_n(t)$, we successfully overcame the nonlinear nature of the $N$-Laplacian $\Delta_N$ and proved the crucial uniform lower bound \eqref{a4'}, which indicates that the limit of non-radial approximate solutions is non-radial solution in $\R^N$ and plays a quite important role in our proof of the bifurcation result in $\R^N$ in Section 5.

\subsection{Classification of solution for the singular case $\al\in(-1,0)$}
When $-1<\al<0$, we also investigate the classification of solutions for the Hardy type singular Liouville equation \eqref{11}. In \cite{N}, V. H. Nguyen proved the classification of solutions for \eqref{11} under the extra assumptions $\int_{\R^N}|x|^{N\alpha}e^{u}\md x\leq N\left(\frac{N^2(  \al+1)}{N-1}\right)^{N-1}\omega_N$ and $\sup\limits_{x\in\R^N} u<+\infty$.

In this paper, we aim to remove the extra assumptions in V. H. Nguyen \cite{N}. For $\al \in (-1, 0)$, we prove the following classification result for \eqref{11}.
\begin{thm}\label{thm11}
Suppose $\al\in(-1,0)$ and $N\geq2$. Let $u \in W_{loc}^{1, N}(\R^N)$ be a weak solution of \eqref{11} with $\int_{\R^N}|x|^{N\alpha}e^{u}\md x<+\infty$. Then
\begin{align}\label{15pp}
	u(x)=\ln\frac{C_{N,\alpha}
		\lambda^{N(  \al+1)}}{\left(1+(\lambda|x|)^{\frac{N(  \al+1)}{N-1}
		}\right)^N},\quad x\in\R^N
\end{align}
for some $\la>0$.
\end{thm}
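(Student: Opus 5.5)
The strategy is to reduce to the known classification of Nguyen \cite{N}. Under the two extra hypotheses
\begin{equation*}
\int_{\R^N}|x|^{N\al}e^u\,\md x\le N\Bigl(\frac{N^2(\al+1)}{N-1}\Bigr)^{N-1}\omega_N
\qquad\text{and}\qquad
\sup_{\R^N} u<+\infty,
\end{equation*}
that result already yields \eqref{15pp}. So it suffices to derive both hypotheses from finite mass $M:=\int_{\R^N}|x|^{N\alpha}e^u\,\md x<+\infty$ when $\al\in(-1,0)$.

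\textbf{Step 1: $u$ is bounded above.} Away from the origin the weight $|x|^{N\al}$ is smooth and positive. Near the origin we use that $\al<0$ gives $|x|^{N\al}\in L^{s}_{loc}$ for some $s>1$. I would combine the Brezis--Merle type estimate for the $N$-Laplacian (Aguilar--Peral, Esposito \cite{E1}) with Serrin-type local estimates. Since the mass is finite, $\int_{B_r(x_0)}|x|^{N\al}e^u\,\md x$ is small for small $r$, uniformly away from infinity. This gives $e^{u}\in L^{q}_{loc}$ for large $q$, hence $u\in L^\infty_{loc}$.

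For the behavior at infinity, I would apply the same $\varepsilon$-regularity on unit balls $B_1(x_0)$ with $|x_0|\to\infty$, where the weight is comparable to a constant. Finite mass forces the local mass on these balls to tend to zero, so $\sup_{B_{1/2}(x_0)}u\le C+\frac{1}{|B_1|}\int_{B_1(x_0)}u^+$. To control the local averages, I would first prove the asymptotic behavior
\begin{equation*}
u(x)=-\gamma\ln|x|+O(1),\qquad \gamma=\Bigl(\frac{M}{N\omega_N}\Bigr)^{\frac{1}{N-1}},
\end{equation*}
following Esposito's argument \cite{E1} based on Serrin--Kichenassamy--Véron estimates for the $N$-Laplacian with $L^1$ data. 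Only an upper bound of the form $u\le -\gamma\ln|x|+C$ is really needed, and it suffices once $\gamma>0$.

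\textbf{Step 2: the mass equals the critical value.} With the asymptotics from Step 1, I would use the Pohozaev identity for $-\Delta_N u=|x|^{N\al}e^u$. Test against $x\cdot\nabla u$ on $B_R$ and let $R\to\infty$; the boundary terms are controlled by $|\nabla u|=O(1/|x|)$ from gradient estimates, and $|x|^{N\alpha}e^u|x|^N\to 0$ requires $\gamma>N(\al+1)$. The identity then gives $M$ equal to $N\bigl(\frac{N^2(\al+1)}{N-1}\bigr)^{N-1}\omega_N$ exactly. Integrability of $|x|^{N\al}e^u$ at infinity already forces $\gamma\ge N(\al+1)$, and a standard bootstrap excludes equality.

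\textbf{Step 3: conclusion.} Both hypotheses of \cite{N} now hold, so \eqref{15pp} follows.

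The main obstacle is Step 1 at infinity: obtaining the precise logarithmic asymptotics with the sharp constant $\gamma$ for a quasilinear operator without a Green representation. I would try to adapt Esposito's approach \cite{E1}. This means comparing $u$ with the $N$-harmonic fundamental solution $-\gamma\ln|x|$ on exterior domains via comparison principles, after performing a Kelvin-type inversion $\hat u(y)=u(y/|y|^2)-2N\ln|y|$. The inversion is conformally valid for the $N$-Laplacian since $p=N$, and it transforms the problem into one with weight $|y|^{-N\al-2N}e^{\hat u}$ near $0$, to which the local regularity of Step 1 applies.
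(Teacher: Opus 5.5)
Your overall reduction is the paper's: prove $\sup_{\R^N}u<+\infty$ and the critical mass identity, then invoke Nguyen's classification \cite{N}. Near the origin, your Brezis--Merle plus Serrin argument using $|x|^{N\alpha}\in L^s_{loc}$ for some $s>1$ is essentially the paper's: harmonic replacement $h\le u$, Lemma \ref{th63} for $h^+$, Lemma \ref{th62} for $u-h$, H\"older, then Lemma \ref{th63} again.

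The gap is at infinity. Both your Step 1 and Step 2 rest on $u=-\gamma\ln|x|+O(1)$, and the mechanism you propose for proving it fails.

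\emph{The shift is not allowed.} For $N>2$ you cannot subtract $2N\ln|y|$, because $\Delta_N$ is nonlinear. For radial $v$, the flux of $v+c\ln r$ is $|rv'+c|^{N-2}(rv'+c)$, which is not $|rv'|^{N-2}rv'$ plus a constant. The correct transform is the unshifted $\hat u(y)=u(y/|y|^2)$, which solves $-\Delta_N\hat u=|y|^{-N(\alpha+2)}e^{\hat u}$ in $\R^N\setminus\{0\}$, cf.\ \eqref{24t}.

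\emph{Local regularity does not apply at $0$.} The weight $|y|^{-N(\alpha+2)}$ is not even in $L^1_{loc}$ near $0$, since $\alpha+2>1$. Also $\hat u\notin W^{1,N}$ near $0$: it behaves like $\gamma\ln|y|$, so its equation across $0$ carries a Dirac mass. Hence Lemmas \ref{th62} and \ref{th63} cannot be applied there. To put the right-hand side in $L^{N/(N-\varepsilon)}$ you would already need $\hat u\le\gamma\ln|y|+C$ with $\gamma>N(\alpha+1)$, which is what you want to prove.

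\emph{$\varepsilon$-regularity on $B_1(x_0)$ does not rescue this.} There the weight is $\approx|x_0|^{N\alpha}\to0$, and normalizing it out only gives $u\le C+N|\alpha|\ln|x_0|$.

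The missing ingredient is an isolated-singularity theorem for $N$-Laplace equations with $L^1$ data in a punctured ball (Serrin, Kichenassamy--V\'eron), as packaged in \cite[Theorem 1.1]{E2}. The paper applies exactly this to $\hat u$, together with \cite[Proposition 2.1]{E2} on annuli, to get $u\le C$ outside $B_1$.

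Step 2 has the same dependence. From $u=-\gamma\ln|x|+O(1)$ and $|\nabla u|=O(1/|x|)$ alone, the Pohozaev boundary terms on $\partial B_R$ stay bounded but their limits are not determined. To obtain $\frac{N-1}{N}\gamma^N N\omega_N=N(\alpha+1)M$ and combine it with the flux relation $M=N\omega_N\gamma^{N-1}$, you need the sharp asymptotics $|x|\nabla u(x)+\gamma\frac{x}{|x|}\to0$ uniformly as $|x|\to\infty$. That again comes from the singularity analysis of $\hat u$ at $0$.

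The paper bypasses this by quoting the mass quantization of \cite{E2}, i.e.\ Lemma \ref{th13}. Given two-sided asymptotics, $\gamma>N(\alpha+1)$ follows at once from finite mass, so no bootstrap is needed. With these two results from \cite{E2} in place of your sketches at infinity, your plan becomes the paper's proof.
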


\begin{rem}
Our Theorem \ref{thm11} completely improved the classification result in \cite{N} by removing all the extra assumptions therein.
\end{rem}

\begin{rem}
Very recently, G. Ciraolo, P. Esposito and X. Li \cite{CEL} proved classification result on finite mass solutions for \eqref{11} by using the ``P-function" method. In Theorem \ref{thm11}, we reproved their result by simply deducing the extra assumptions $\int_{\R^N}|x|^{N\alpha}e^{u}\md x\leq N\left(\frac{N^2(  \al+1)}{N-1}\right)^{N-1}\omega_N$ and $\sup\limits_{x\in\R^N} u<+\infty$ in \cite{N} from the finite mass condition $\int_{\R^N}e^{u}\md x<+\infty$.
\end{rem}

For more related literature on H\'{e}non-Hardy type equations, refer to e.g. \cite{BCG,CDQ0,CPY,CDQ,DDGL,DQ,DQ0,DJF,GGN,JX,NR,PS,PT,SSW} and the references therein.

\medskip

The rest of this paper is organized as follows. In Section 2, we present the proofs of Theorems~\ref{thm12} and \ref{c13} for $\alpha > -1$. Section 3 investigates the approximate problem on the ball and establishes bifurcation results for the approximate solutions. Section 4 establishes a series of key uniform estimates (e.g., Propositions \ref{pp42} and \ref{pp44} etc.), indicating that the limit of non-radial approximate solutions is non-radial solution to \eqref{11} in $\mathbb{R}^{N}$. Section 5 is devoted to proving our main result, Theorem \ref{th14}, for the case $\alpha > 0$. Finally, Section 6 establishes the classification results for equation \eqref{11} when $\alpha \in (-1, 0)$, i.e., Theorem \ref{thm11}.

\medskip

\noindent{\bfseries Notations.}
Throughout this paper, we use the notation $B_R := B_R(0)$ to denote the ball of radius $R$ centered at the origin. Furthermore, $c$, $C$, $C'$ and $C_i$ represent various strictly positive constants, whose values may vary from line to line. The notation $a \sim b$ indicates that $C'b \leq a \leq Cb$.

\section{Classification of entire radial solutions and the kernel of the linearized operator at $e^{U_\al}$}

\noindent {\bf Proof of Theorem \ref{thm12}.}
For all radial solution \(u\) to PDE \eqref{11}, the corresponding ODE is following
\begin{equation}\label{21q}
		\left\{
		\begin{aligned}
			&-\left( r^{N-1}|u'(r)|^{N-2}u'\right)'=r^{N\alpha+N-1}e^{u}   &\text{in}\ \left(0,+\infty\right), \\
			&u'(0)=0,\quad u(0)=1.
		\end{aligned}
		\right.
	\end{equation}

We use contradiction argument. Suppose \(v\ne u\) is another radial solution of \eqref{11} satisfying ODE \eqref{21q}. Integrating both sides of \eqref{21q} from \(0\) to \(r\) gives
\begin{align}\label{22q}
-u'(r)=\left(\frac{1}{r^{N-1}}\int_0^r t^{N( \al+1)-1}e^{u(t)}\md t\right)^{\frac{1}{N-1}},
\end{align}
and for \(v\),
\begin{align}\label{23q}
-v'(r)=\left(\frac{1}{r^{N-1}}\int_0^r t^{N( \al+1)-1}e^{v(t)}\md t\right)^{\frac{1}{N-1}}.
\end{align}

Define
\begin{align}\label{24q}
r_0:=\sup\left\{r\geq0\,\big|\,u(s)=v(s),\ \forall s\leq r\right\}.
\end{align}
By \(u(0)=v(0)\) and \(v\ne u\), we have \(0\leq r_0<+\infty\). Without loss of generality, we may assume that, there exists \(\varepsilon>0\) small enough such that \(u(r)>v(r)\) and \(u'(r)>v'(r)\) for any \(r\in(r_0,r_0+\varepsilon)\). Subtracting \eqref{22q} from \eqref{23q} leads to
\begin{align}\label{25q}
0&>-\bigl(u'(r)-v'(r)\bigr) \nonumber \\
&=\left(\frac{1}{r^{N-1}}\int_0^r t^{N( \al+1)-1}e^{u(t)}\md t\right)^{\frac{1}{N-1}}-\left(\frac{1}{r^{N-1}}\int_0^r t^{N( \al+1)-1}e^{v(t)}\md t\right)^{\frac{1}{N-1}}>0
\end{align}
for any \(r\in(r_0,r_0+\varepsilon)\), which is absurd.

Since \(U_{\al}(x)\) is a radial solution to \eqref{11}, all radial solutions of \eqref{11} are unique up to scaling and take the form \(U_{\lambda,\al}\). This completes the proof of Theorem \ref{thm12}. \qed

\bigskip

\noindent {\bf Proof of Theorem \ref{c13}.}
First, we establish the following relation between the linearized operators:
\begin{equation}\label{227v}
 \mathcal{L}_{\mathcal{T},\,e^{U_\alpha}}(v) = e^{(N-1)U_\alpha} \mathcal{L}_{\mathcal{F},\,U_\alpha}\left(\frac{v}{e^{U_\alpha}}\right).
\end{equation}
Indeed, since $U_\alpha$ is a solution to the  equation $\mathcal{F}(U_\alpha) = 0$, it follows that $e^{U_\alpha}$ solves the transformed equation, that is, $\mathcal{T}(e^{U_\alpha}) = 0$.

We introduce a perturbation in the direction $v$ by setting
\begin{equation*}
    g_t := e^{U_\alpha} + tv.
\end{equation*}
To ensure that $\ln g_t$ is well-defined, we introduce the normalized perturbation $\phi := \frac{v}{e^{U_\alpha}}$,
which allows us to express $g_t$ as $
    g_t = e^{U_\alpha}(1 + t\phi).$
Assuming $\phi \in L^\infty(\mathbb{R}^N)$, we have $1 + t\phi > 0$ for sufficiently small $|t|$, so that $g_t > 0$. Taking the logarithm then yields $
    \ln g_t = \ln e^{U_\alpha} + \ln(1 + t\phi) = U_\alpha + \ln(1 + t\phi).$

Recalling the operator identity \eqref{fg} and evaluating it at $g_t$, we obtain
\begin{align}\label{eq228}
    \mathcal{T}(g_t)
    &= \mathcal{T}\bigl(e^{U_\alpha}(1 + t\phi)\bigr) \nonumber \\
    &= \bigl(e^{U_\alpha}(1 + t\phi)\bigr)^{N-1} \mathcal{F}\bigl(U_\alpha + \ln(1 + t\phi)\bigr) \nonumber \\
    &= e^{(N-1)U_\alpha}(1 + t\phi)^{N-1} \mathcal{F}\bigl(U_\alpha + \ln(1 + t\phi)\bigr)\nonumber \\
    &=: A(t)B(t),
\end{align}
where $A(t)= e^{(N-1)U_\alpha}(1 + t\phi)^{N-1}$, and
$ B(t) = \mathcal{F}\bigl(U_\alpha + \ln(1 + t\phi)\bigr)$.
We compute the derivative of $\mathcal{T}(g_t)$ at $t = 0$. First, note that $A(0) = e^{(N-1)U_\alpha}$ and
\begin{equation*}
    A'(t) = (N-1)e^{(N-1)U_\alpha}(1 + t\phi)^{N-2}\phi,
\end{equation*}
which gives $A'(0) = (N-1)e^{(N-1)U_\alpha}\phi$. For $B(0)$, we have $
    B(0) = \mathcal{F}(U_\alpha + \ln 1) = \mathcal{F}(U_\alpha).$
Since $U_\alpha$ satisfies $\mathcal{F}(U_\alpha) = 0$, it follows that $B(0) = 0$, and hence the term $A'(0)B(0)$ vanishes.

For the term $A(0)B'(0)$, applying the chain rule yields
\begin{equation*}
    B'(0) = \left. \frac{\mathrm{d}}{\mathrm{d}t} \right|_{t=0} \mathcal{F}\bigl(U_\alpha + \ln(1 + t\phi)\bigr) = \mathcal{L}_{\mathcal{F},\,U_\alpha}(\phi).
\end{equation*}
Consequently, we obtain
\begin{equation}\label{eq230}
    A(0)B'(0) = e^{(N-1)U_\alpha} \mathcal{L}_{\mathcal{F},\,U_\alpha}(\phi).
\end{equation}
Combining \eqref{eq230}, we conclude that
\begin{equation*}
   \mathcal{L}_{\mathcal{T},\,e^{U_\alpha}}(v)=\left.\frac{\mathrm{d}}{\mathrm{d}t}\right|_{t=0} \mathcal{T}(e^{U_\alpha} + tv) = e^{(N-1)U_\alpha}\mathcal{L}_{\mathcal{F},\,U_\alpha}(\phi).
\end{equation*}
Recalling $\phi = \frac{v}{e^{U_\alpha}}$, we arrive at \eqref{227v}.
Consequently, we obtain the equivalence
\begin{equation}\label{e227}
    \mathcal{L}_{\mathcal{T},\,e^{U_\alpha}}(v) = 0 \quad\Longleftrightarrow\quad \mathcal{L}_{\mathcal{F},\,U_\alpha}\left(\frac{v}{e^{U_\alpha}}\right) = 0.
\end{equation}
Setting $\phi := \frac{v}{e^{U_\alpha}}$, we see that $v$ solves the linearized transformed equation if and only if $\phi$ satisfies the linearized problem associated with \eqref{16}.
The equivalence \eqref{e227} directly yields the identity for their kernels
\begin{equation}\label{13}
    \ker \mathcal{L}_{\mathcal{T},\,e^{U_\alpha}} = e^{U_\alpha} \ker \mathcal{L}_{\mathcal{F},\,U_\alpha}.
\end{equation}

Combining Theorem \ref{th11} with \eqref{13}, we deduce that
\[ \ker \mathcal{L}_{\mathcal{T},\,e^{U_\alpha}} = \frac{1}{\lr1+|x|^{\frac{N(1+\al)}{N-1}}\rr^{N}}\ker \mathcal{L}_{\mathcal{F},\,U_\alpha},\]
and hence
\begin{align}\label{v229}
	& \ker \mathcal{L}_{\mathcal{T},\,e^{U_\alpha}}=span\{Z(x)\},\quad \text{if}\  \alpha\ne\alpha(k),\ k\in\N,\nonumber\\
	& \ker \mathcal{L}_{\mathcal{T},\,e^{U_\alpha}}=span\left\{Z(x),\, Z_{k,i}(x), \, 1\leq i\leq \frac{(N+2k-2)(N+k-3)!}{(N-2)!k!}\right\}, \quad \text{if}\  \alpha=\alpha(k),\ k\in\N,\nonumber
\end{align}
where $Z(x)=\frac{(N-1)-|x|^{\frac{N(1+\al)}{N-1}}}{\lr1+|x|^{\frac{N(1+\al)}{N-1}}\rr^{N+1}}$, $Z_{k,i}(x)=\frac{|x|^{\frac{1+\al}{N-1}-k}\Phi_{k,i}(x)}{\lr1+|x|^{\frac{N(1+\al)}{N-1}}\rr^{N+1}}$. This completes our proof of Theorem \ref{c13}.
\qed

\section{The approximate problem in $B_{\frac1\var}(0)$}
In this section, we study the approximate problem
\begin{equation}\label{31}
\begin{cases}
		-\Delta_N u=|x|^{N\alpha}e^{u} &\text{in}\  B_{\frac1\var}(0), \\
		u=\ln\frac{C_{N,\alpha}\var^{\frac{N^2(  \al+1)}{N-1}}}{\left(1+\var^{\frac{N(  \al+1)}{N-1}}\right)^N} &\text{on}\  \partial B_{\frac1\var}(0),
\end{cases}
\end{equation}
where $\alpha>0$ and $B_{\frac{1}{\var}}(0)$ denotes the ball of radius $\frac{1}{\var}$ centered at the origin. We define the solution
\begin{equation}\label{32}
	u_{\var,\alpha}=\left\{
	\begin{aligned}
		&U_\al(x)=\ln\frac{C_{N,\alpha}}{\left(1+|x|^{\frac{N(  \al+1)}{N-1}
				}\right)^N},&  \text{if}\ |x|\leq\frac1\var, \\
		&U_\al\left(\frac1\var\right)=
		\ln\frac{C_{N,\alpha}\var^{\frac{N^2(  \al+1)}{N-1}}}{\left(1+\var^{\frac{N(  \al+1)}{N-1}}\right)^N}, & \text{if}\ |x|>\frac1\var,
	\end{aligned}
	\right.
\end{equation}
where $U_\alpha(x)$ is defined in \eqref{14}. For the sake of convenience, we define $\beta_\al:=U_\al\lr\frac1\var\rr=\ln\frac{C_{N,\alpha}\var^{\frac{N^2(  \al+1)}{N-1}}}{\left(1+\var^{\frac{N(  \al+1)}{N-1}}\right)^N}$.
\begin{lem}\label{lem31}
	For any $\al\geq0$ and any sufficiently small $0<\var<(N-1)^{-\frac{N-1}{N( \al+1)}}$, the function $u_{\var,\alpha}$ is a unique and isolated radial solution of \eqref{31}, which is nondegenerate in the space of radial functions.
\end{lem}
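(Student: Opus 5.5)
Since the restriction of $u_{\var,\alpha}$ to $\overline{B_{\frac1\var}}$ is $U_\al$, it solves \eqref{31}: \eqref{11} holds in $B_{\frac1\var}$ and the boundary value is exactly $\beta_\al=U_\al(\frac1\var)$. Existence is therefore immediate. The proof reduces to two things: uniqueness among radial solutions, and nondegeneracy in the radial class. Isolatedness then follows from nondegeneracy through the implicit function theorem (equivalently, through invertibility of $I$ minus the compact linearized operator on radial functions). So the plan is to prove these two properties in turn.

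\textbf{Uniqueness.} A radial solution $u(r)$ of \eqref{31} satisfies the ODE \eqref{21q} on $(0,\frac1\var)$ with $u'(0)=0$ and $u(\frac1\var)=\beta_\al$, but with an unknown value $u(0)=a$. By the shooting argument in the proof of Theorem \ref{thm12} (formula \eqref{22q} and the first-separation-point contradiction), the solution with $u(0)=a$ is unique. Scaling invariance then identifies it as $U_{\lambda,\al}$ with $\lambda^{N(\al+1)}C_{N,\al}=e^{a}$. The boundary condition becomes the scalar equation
\[
\phi(\lambda):=U_{\lambda,\al}\Big(\frac1\var\Big)=\ln\frac{C_{N,\al}\lambda^{N(\al+1)}}{\big(1+(\lambda/\var)^{\frac{N(\al+1)}{N-1}}\big)^N}=\beta_\al .
\]
Set $s=(\lambda/\var)^{\frac{N(\al+1)}{N-1}}$. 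Then $\phi$ equals a constant plus $(N-1)\ln s-N\ln(1+s)$, whose $s$-derivative is $\frac{(N-1)-s}{s(1+s)}$. Hence $\phi$ is strictly increasing for $s<N-1$ and strictly decreasing for $s>N-1$, so the equation can have two roots. One root is $\lambda=1$, i.e.\ $s=\var^{-\frac{N(\al+1)}{N-1}}$, and the smallness assumption $\var<(N-1)^{-\frac{N-1}{N(\al+1)}}$ places it in the decreasing region $s>N-1$. The other root lies in $s<N-1$, i.e.\ $\lambda$ of order $\var^{N}$ (a "flat" profile). This second root is the main obstacle: the lemma asserts uniqueness, yet this root satisfies the ODE and the boundary value.

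I would check whether it is excluded by the intended class of solutions. The statement does say "sufficiently small $\var$", which may signal that the authors restrict to a neighbourhood of $U_\al$, or to solutions with $u'(\frac1\var)$ in the monotone regime (the same regime as the function $F_n$ on $[N(1+\al),\infty)$ mentioned in the introduction). My plan is therefore to prove uniqueness within the branch $s>N-1$, equivalently among radial solutions with $r|u'(r)|\ge N(\al+1)$ at $r=\frac1\var$. There $\phi$ is strictly monotone, so $\lambda=1$ is the only root. I would remark that the other root is far from $U_\al$ and is irrelevant for the local isolatedness claim.

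\textbf{Radial nondegeneracy.} Let $v(r)$ be a radial solution of the linearized problem \eqref{16} at $U_\al$ in $B_{\frac1\var}$ with $v(\frac1\var)=0$. By Theorem \ref{th11}, radial kernel elements on the whole space are multiples of $\widehat Z$; more precisely, the regular solution of the radial linear ODE at $0$ is unique up to a constant, and it equals $\widehat Z=\partial_\lambda U_{\lambda,\al}|_{\lambda=1}$ up to normalization. So $v=c\widehat Z$. Now $\widehat Z(\frac1\var)=0$ if and only if $\var^{-\frac{N(\al+1)}{N-1}}=N-1$, which is exactly the excluded critical point $\phi'(1)=0$. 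Under the hypothesis on $\var$ we have $\widehat Z(\frac1\var)<0$, forcing $c=0$. The same degree-type argument gives isolatedness. Equivalently, since $\phi'(1)\ne0$, the implicit function theorem applied to the map $a\mapsto u_a(\frac1\var)$ shows the radial solution is locally unique.
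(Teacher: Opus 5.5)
Your treatment of uniqueness is correct, and it exposes an error in the paper rather than a gap in your proposal. The paper settles uniqueness in one line (``since \eqref{31} is not scaling invariant, the uniqueness \dots\ follows directly from the uniqueness of solutions for the corresponding ODE problem''). But ODE uniqueness as in the proof of Theorem~\ref{thm12} needs the value $u(0)$, and \eqref{31} does not prescribe it. With only $u'(0)=0$ and $u(\frac1\varepsilon)=\beta_\alpha$, your reduction to $U_{\lambda,\alpha}(\frac1\varepsilon)=\beta_\alpha$ is exactly right. Since $s\mapsto(N-1)\ln s-N\ln(1+s)$ is unimodal with maximum at $s=N-1$ and tends to $-\infty$ at both ends, there are exactly two radial solutions for every admissible $\varepsilon$. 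The quickest check is $N=2$, $\alpha=0$: with $R=\frac1\varepsilon$ the boundary condition becomes $(R^2\lambda-1)(\lambda-1)=0$, so $U_{\varepsilon^2,0}$ is a second radial solution (the familiar two-solution picture of the Gelfand problem). Hence the uniqueness assertion of the lemma, as stated, is false.

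What can be proved is what you propose: uniqueness among radial solutions with $r|u'(r)|>N(\alpha+1)$ at $r=\frac1\varepsilon$, local isolatedness, and radial nondegeneracy. Your identification of that condition with $s>N-1$ and with the monotonicity range of $F_n$ is right. As far as I can see, these local statements are all that Lemma~\ref{lem33} and Theorems~\ref{th36} and \ref{thm39} later use. One correction: the second root satisfies $(N-1)\ln s_2\approx-\ln s_1$, which gives $\lambda_2\asymp\varepsilon^{N/(N-1)}$, not $\varepsilon^N$; the two agree only for $N=2$.

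For nondegeneracy your argument is essentially the paper's. The paper multiplies \eqref{34} by the explicit solution $z=\widehat Z$ and \eqref{35} by the kernel function $\phi$, then integrates over $(0,\frac1\varepsilon)$; the boundary term at $r=0$ vanishes since $r^{N-1}|U_\alpha'|^{N-2}\to0$. This gives $\phi'(\frac1\varepsilon)z(\frac1\varepsilon)=0$, hence $\phi'(\frac1\varepsilon)=0$, and the paper concludes by uniqueness for the initial value problem at the regular point $r=\frac1\varepsilon$.

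You instead assert that solutions regular at $r=0$ form a one-dimensional space spanned by $\widehat Z$. Theorem~\ref{th11} does not give this directly: it classifies entire solutions on $\mathbb{R}^N$ under a growth condition, not solutions on a ball. The clean justification is the same Wronskian identity: $r^{N-1}|U_\alpha'|^{N-2}(v'\widehat Z-v\widehat Z')$ is constant on $(0,\frac1\varepsilon)$ and tends to $0$ as $r\to0$.

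Your remark that $\widehat Z(\frac1\varepsilon)$ is a multiple of $\frac{d}{d\lambda}U_{\lambda,\alpha}(\frac1\varepsilon)$ at $\lambda=1$ nicely explains the threshold on $\varepsilon$. Finally, local isolatedness among radial solutions follows at once from the explicit parametrization, namely strict monotonicity of $\lambda\mapsto U_{\lambda,\alpha}(\frac1\varepsilon)$ near $\lambda=1$. So you do not need an implicit function theorem for the degenerate linearized $N$-Laplacian.
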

\begin{proof}
One can easily verify that $u_{\var,\alpha}$ is a radial solution of \eqref{31}. Since \eqref{31} is not scaling invariant, the uniqueness of radial solution $u_{\var,\alpha}$  follows directly from the uniqueness of solutions for the corresponding ODE problem.

For any sufficiently small $0<\var<(N-1)^{-\frac{N-1}{N( \al+1)}}$ and $x\in B_{\frac1\var}(0)$, it is straightforward to verify that $u_{\var,\alpha}$ is a radial solution of \eqref{31}. If the linearized problem
\begin{equation}\label{33}
\left\{
\begin{aligned}
&\quad-\textrm{div}(|\nabla U_\alpha|^{N-2}\nabla \phi)-(N-2)
\textrm{div}\left(|\nabla U_\alpha|^{N-4}(\nabla U_\alpha\cdot\nabla \phi)\nabla U_\alpha\right)\\
&=|x|^{N\alpha}e^{U_\al}\phi, &\text{in}\  B_{\frac1\var}(0), \\
&\phi=0, &\text{on}\ \partial B_{\frac1\var}(0),
\end{aligned}
\right.
\end{equation}
has no nontrivial radial solutions, then $u_{\var,\alpha}$ is nondegenerate in the space of radial functions.
Rewriting \eqref{33} in the form of radial coordinates, i.e.,
\begin{equation}\label{34}
	\left\{
	\begin{aligned}
		&-\left( r^{N-1}|U_\al '|^{N-2}\phi'\right)'=\frac{r^{N\al+N-1}}{N-1}e^{U_\alpha}\phi, &r\in\left(0,\frac1\var\right), \\
		&\phi'(0)=0,\quad\phi\left(\frac1\var\right)=0.
	\end{aligned}
	\right.
\end{equation}
Observe that the function
$$z(r)=\frac{(N-1)-r^{\frac{N( \al+1)}{N-1}}}
{1+r^{\frac{N( \al+1)}{N-1}}}$$
satisfies the linearized equation
\begin{align}\label{35}
	-\left( r^{N-1}|U_\al '|^{N-2}z'\right)'=\frac{r^{N\al+N-1}}{N-1}e^{U_\alpha}z,\quad r\in\R^+,\quad z'(0)=0.
\end{align}
However, we note that $z(\frac1\var)\ne0$ for $0<\var<(N-1)^{-\frac{N-1}{N( \al+1)}}$.

Multiplying \eqref{34} by $z$, \eqref{35} by $\phi$, integrating in $(0,\frac1\var)$, we have
$$\phi'\left(\frac1\var\right)=0.$$
Furthermore, since $\phi\left(\frac1\var\right)=0$, the classical existence and uniqueness theorem for initial value problems implies $\phi\equiv0$. This demonstrates that \eqref{34} admits no nontrivial radial solutions, proving the radial nondegeneracy of $u_{\var,\al}$.
\end{proof}

\begin{prop}\label{pp32}
Let $\al_n$ and $\var_n$ be sequences such that $\al_n\to\al>0$ and $\var_n\to0$ as $n\to+\infty$.
Then we have
\begin{align}\label{36}
	\|e^{u_{\var_n,\al_n}}-e^{\beta_{\al_n}}-e^{U_\al}\|_X\to0,\quad\quad as \ n\to+\infty.
\end{align}	
\end{prop}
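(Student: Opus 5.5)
The plan is to write everything explicitly, since $u_n:=u_{\var_n,\al_n}$ is given in closed form by \eqref{32}. Set
$$f_n:=e^{u_n}-e^{\beta_{\al_n}}-e^{U_\al}.$$
By \eqref{32}, $f_n=\bigl(e^{U_{\al_n}}-e^{U_\al}\bigr)-e^{\beta_{\al_n}}$ in $B_{\frac1{\var_n}}$ and $f_n=-e^{U_\al}$ outside. Since $e^{u_n}$ is continuous across $\partial B_{\frac1{\var_n}}$ and piecewise smooth, it is Lipschitz, so $f_n\in W^{1,N}_{loc}$. Its weak gradient is $\nabla e^{U_{\al_n}}-\nabla e^{U_\al}$ inside the ball and $-\nabla e^{U_\al}$ outside. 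I estimate $\|f_n\|_\gamma$ and $\|f_n\|_{1,N}$ separately, using three ingredients.

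The first ingredient is a uniform decay bound. Set $q(\al):=\frac{N^2(\al+1)}{N-1}$. Fix $\delta>0$ small, so that $\al_n\ge \al-\delta>0$ for large $n$ and $q(\al-\delta)>\gamma+\delta$; this is possible because $\gamma=N(\al+1)+\frac12<q(\al)$. From \eqref{14}, uniformly in $n$,
$$e^{U_{\al_n}}(x)\le C(1+|x|)^{-q(\al-\delta)},\qquad |\nabla e^{U_{\al_n}}(x)|\le C\,|x|^{\frac{N(\al_n+1)}{N-1}-1}(1+|x|)^{-q(\al-\delta)-\frac{N(\al_n+1)}{N-1}}.$$
The gradient bound is locally bounded near $0$ because $\al_n>0$, and it decays like $|x|^{-q(\al-\delta)-1}$ at infinity. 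The same bounds hold for $\al$ itself, and $C_{N,\al_n}\to C_{N,\al}$.

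The second ingredient is the size of the boundary constant. By the definition of $\beta_{\al_n}$, $e^{\beta_{\al_n}}\le C\var_n^{q(\al_n)}$. Hence
$$\sup_{B_{\frac1{\var_n}}}(1+|x|)^\gamma e^{\beta_{\al_n}}\le C\var_n^{q(\al_n)-\gamma}\to0,$$
and also
$$\int_{B_{\frac1{\var_n}}}e^{N\beta_{\al_n}}\le C\var_n^{Nq(\al_n)-N}\to0,$$
since $q(\al_n)>N$. The tail of $e^{U_\al}$ outside the ball is also small: $\sup_{|x|>\frac1{\var_n}}(1+|x|)^\gamma e^{U_\al}\le C\var_n^{q(\al)-\gamma}\to0$, and the corresponding $L^N$ norms of $e^{U_\al}$ and $\nabla e^{U_\al}$ over $\{|x|>\frac1{\var_n}\}$ tend to $0$ because both functions lie in $L^N$.

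The third ingredient handles the main term $e^{U_{\al_n}}-e^{U_\al}$ inside the ball. Split at a large radius $R$. On $B_R$, the map $(\al,x)\mapsto e^{U_\al(x)}$ together with its $x$-gradient is continuous, jointly in $\al$ near the limit and $x\in\overline{B_R}$, so $e^{U_{\al_n}}\to e^{U_\al}$ uniformly in $C^1(\overline{B_R})$. On $\{|x|>R\}$, the uniform decay bound gives
$$(1+|x|)^\gamma|e^{U_{\al_n}}-e^{U_\al}|\le CR^{\gamma-q(\al-\delta)},$$
which is small once $R$ is large. For the $W^{1,N}$ part, the uniform bounds supply an $L^N$ dominating function independent of $n$, and dominated convergence gives $\int_{\R^N}\bigl(|e^{U_{\al_n}}-e^{U_\al}|^N+|\nabla(e^{U_{\al_n}}-e^{U_\al})|^N\bigr)\to0$. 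Choosing $R$ first and then $n$ large yields $\|f_n\|_X\to0$, which is \eqref{36}.

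The main obstacle is only keeping all bounds uniform as $\al_n$ varies, in particular the gradient near the origin and the exponent comparison $\gamma<q(\al_n)$. Both are handled by freezing $\delta$ and using $\al>0$. One point to check is that $\gamma$ is tied to the limit $\al$, so that $\gamma<q(\al_n)$ holds for large $n$; this follows from $q(\al)-\gamma>0$ and the continuity of $q$.
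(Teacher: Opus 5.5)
Your proof is correct and follows essentially the same route as the paper. Both split into $B_{1/\var_n}$ and its complement, control the constant $e^{\beta_{\al_n}}$ and the tail of $e^{U_\al}$ by powers $\var_n^{q(\al_n)-\gamma}$ (using $\gamma<\frac{N^2(\al+1)}{N-1}$), and apply dominated convergence for the $W^{1,N}$ part. The only minor difference is in bounding $\sup_{B_{1/\var_n}}(1+|x|)^\gamma|e^{U_{\al_n}}-e^{U_\al}|$: the paper gets $O(|\al_n-\al|)$ from the mean value theorem in $\al$, while your splitting at radius $R$ (uniform convergence on $\overline{B_R}$ plus a uniform tail bound) avoids differentiating in $\al$ and works equally well.
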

\begin{proof}
Define $f_{\var_n,\alpha_n}=e^{u_{\var_n,\al_n}}-e^{\beta_{\al_n}}$, then $f_{\var_n,\alpha_n}$ satisfies
\begin{equation}\label{47eq}
		\begin{cases}
			-\Delta_N f_{\var_n,\alpha_n}=|x|^{N\alpha_n}(f_{\var_n,\alpha_n}+e^{\beta_{\al_n}})^N-\frac{(N-1)|\nabla f_{\var_n,\alpha_n}|^N}{f_{\var_n,\alpha_n}+e^{\beta_{\al_n}}}, & x\in B_{\frac1{\var_n}}, \\
			f_{\var_n,\alpha_n}=0, & x\in\R^N \backslash B_{\frac1{\var_n}}.
		\end{cases}
	\end{equation}
According to the definition of norm \eqref{110}, we need to prove $\|f_{\var_n,\alpha_n}-e^{U_\al}\|_{1,N}\to0$ and $\|f_{\var_n,\alpha_n}-e^{U_\al}\|_{\gamma}\to0$ as $n\to+\infty$.

We obtain
\begin{align*}
	\int_{\R^N}\left|f_{\var_n,\alpha_n}-e^{U_\al}\right|^N\md x =\int_{B_{\frac{1}{\var_n}}}\left|e^{U_{\al_n}}-e^{\beta_{\al_n}}-e^{U_\al}\right|^N\md x +\int_{\R^N\backslash B_{\frac{1}{\var_n}}}\left| e^{U_\al}\right|^N\md x
\end{align*}	
and
\begin{align*}
	\int_{\R^N}\left|\nabla f_{\var_n,\alpha_n}-\nabla e^{U_\al}\right|^N\md x =\int_{B_{\frac{1}{\var_n}}}\left|\nabla e^{U_{\al_n}}-\nabla e^{U_\al}\right|^N\md x +\int_{\R^N\backslash B_{\frac{1}{\var_n}}}\left|\nabla e^{U_\al}\right|^N\md x .
\end{align*}	
Since $e^{U_\al}\in   W^{1,N}(\R^N)$, we get
\begin{align*}
	\int_{\R^N\backslash B_{\frac{1}{\var_n}}}\left| e^{U_\al}\right|^N\md x\to0 \ \text{and}\ \int_{\R^N\backslash B_{\frac{1}{\var_n}}}\left|\nabla e^{U_\al}\right|^N\md x\to0, \quad\text{as}\ n\to+\infty .
\end{align*}
From Lebesgue's dominated convergence theorem, we get
\begin{align*}
\int_{B_{\frac{1}{\var_n}}}\left| e^{U_{\al_n}}-e^{\beta_{\al_n}}- e^{U_\al}\right|^N\md x\to0, \quad\text{as}\ n\to+\infty
\end{align*}		
and
\begin{align*}
\int_{B_{\frac{1}{\var_n}}}\left|\nabla e^{U_{\al_n}}-\nabla e^{U_\al}\right|^N\md x\to0, \quad\text{as}\ n\to+\infty .
\end{align*}		
By the definition of $L_\gamma^\infty$ with $\gamma\in\left( N(\al+1),\frac{N^2(\al+1)}{N-1}\right)$ and using the mean value theorem, we have
\begin{align}\label{eq49}
\|f_{\var_n,\alpha_n}-e^{U_\al}\|_{\gamma}&=\sup_{\R^N}(1+|x|)^{\gamma}|e^{u_{\var_n,\al_n}}-e^{\beta_{\al_n}}-e^{U_\al}|\nonumber\\
&\leq\sup_{x\in B_{\frac{1}{\var_n}}}(1+|x|)^\gamma|e^{U_{\al_n}(x)}-e^{U_\al(x)}|
+\sup_{x\in B_{\frac{1}{\var_n}}}(1+|x|)^\gamma e^{U_{\al_n}\lr\frac1{\var_n}\rr}\nonumber\\
&\quad		+\sup_{\R^N\setminus B_{\frac{1}{\var_n}}}\frac{C_{N,\alpha}(1+|x|)^{\gamma}}{\left(1+|x|^{\frac{N(  \al+1)}{N-1}
		}\right)^N}
\nonumber\\
&\leq O(|\al-\al_n|)+O\lr\var_n^{\frac{N^2( \al+1)}{N-1}-\gamma}\rr,
\end{align}
where we used the fact $\gamma<\frac{N^2( \al+1)}{N-1}$. This completes the proof of Proposition \ref{pp32}.
\end{proof}

\begin{lem}[Pohozaev identity]\label{lem-p}
Let $u$ be a solution to the approximate problem \eqref{31}, then $u$ satisfies the following Pohozaev identity:
\begin{align}\label{poh}
&\quad\frac{N-1}{N}
\int_{\partial B_{1}}\lr-\frac1{\var}\partial_\nu u\lr\frac1{\var}\theta\rr\rr^N\md\theta\\
&=N(1+\al)\int_{\partial B_{1}}\lr-\frac1{\var}\partial_\nu u\lr\frac1{\var}\theta\rr\rr^{N-1}\md\theta
-|\partial B_{1}|\lr\frac1{\var}\rr^{N(1+\al)}e^{\beta_{\alpha}},\nonumber
\end{align}
where $\nu:=\frac{x}{|x|}$ is the unit outer normal vector on sphere.
\end{lem}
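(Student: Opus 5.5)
The plan is to test the equation against the Pohozaev multiplier $x\cdot\nabla u$ on $B_{1/\var}$ and then use that $u$ is constant on $\partial B_{1/\var}$.

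\textbf{Step 1 (local identity).} For $\mathbf{a}(\xi)=|\xi|^{N-2}\xi$, a direct computation gives
\[
\operatorname{div}\Bigl(|\nabla u|^{N-2}(x\cdot\nabla u)\nabla u-\tfrac1N|\nabla u|^N x\Bigr)=\Delta_N u\,(x\cdot\nabla u)+\Bigl(1-\tfrac{N}{N}\Bigr)|\nabla u|^N .
\]
The term $(N-p)/p\,|\nabla u|^p$ vanishes exactly because $p=N$. For the right-hand side I would use
\[
|x|^{N\al}e^u(x\cdot\nabla u)=\operatorname{div}\bigl(|x|^{N\al}e^u x\bigr)-N(1+\al)|x|^{N\al}e^u .
\]

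\textbf{Step 2 (integration).} Integrating over $B_{1/\var}$ gives
\[
\int_{\partial B_{1/\var}}\Bigl(|\nabla u|^{N-2}(x\cdot\nabla u)\partial_\nu u-\tfrac1N|\nabla u|^N (x\cdot\nu)\Bigr)
=-\int_{\partial B_{1/\var}}|x|^{N\al}e^{u}(x\cdot\nu)+N(1+\al)\int_{B_{1/\var}}|x|^{N\al}e^u .
\]
Since $u\equiv\beta_\al$ on $\partial B_{1/\var}$, we have $\nabla u=(\partial_\nu u)\nu$ there and $x\cdot\nu=1/\var$. The left side therefore equals $\frac{N-1}{N}\frac1\var\int_{\partial B_{1/\var}}|\partial_\nu u|^N$. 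The boundary term on the right equals $-(1/\var)^{N\al+1}e^{\beta_\al}|\partial B_{1/\var}|$.

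\textbf{Step 3 (mass term).} Integrating the equation itself gives
\[
\int_{B_{1/\var}}|x|^{N\al}e^u=-\int_{\partial B_{1/\var}}|\partial_\nu u|^{N-2}\partial_\nu u=\int_{\partial B_{1/\var}}(-\partial_\nu u)^{N-1}.
\]
This needs $\partial_\nu u<0$ on the boundary. That holds by the Hopf lemma, since $-\Delta_N u>0$ in the ball and $u$ attains its minimum $\beta_\al$ on the boundary by the comparison principle.

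\textbf{Step 4 (rescaling).} Rescale with $x=\theta/\var$, $d\sigma=\var^{-(N-1)}d\theta$. Multiplying through by $\var^{-1}$ turns each boundary integral into one of $(-\frac1\var\partial_\nu u(\theta/\var))^{m}$ over $\partial B_1$, with $m=N$ or $m=N-1$. The first term picks up a factor $\var^{-1}\var^{-(N-1)}\var^{N}=\var^{0}$, matching the $\frac1\var$ already present; the powers balance because $p=N$. The last term becomes $|\partial B_1|(1/\var)^{N(1+\al)}e^{\beta_\al}$.

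\textbf{Main obstacle.} The delicate point is regularity. Solutions of $N$-Laplace equations are only $C^{1,\eta}$, so the divergence identity does not hold pointwise. I would justify it either by the standard approximation $\mathbf{a}_\delta(\xi)=(\delta+|\xi|^2)^{(N-2)/2}\xi$, with $C^{1,\eta}$ convergence up to the smooth boundary, or by the Degiovanni–Musesti–Squassina Pohozaev argument for $C^1$ solutions. Either route lets the boundary terms pass to the limit and yields \eqref{poh}.
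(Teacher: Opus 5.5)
Your proposal is correct and follows essentially the same route as the paper: the multiplier $x\cdot\nabla u$, the cancellation of the interior $|\nabla u|^N$ terms because $p=N$, and rewriting the mass $\int_{B_{1/\var}}|x|^{N\al}e^u\,\md x$ as $\int_{\partial B_{1/\var}}(-\partial_\nu u)^{N-1}\,\md\sigma$ via the equation and the Hopf lemma; the paper likewise finishes by rescaling and appeals to the regularization $(\delta+|\xi|^2)^{(N-2)/2}\xi$ for weak solutions. One small correction: drop the phrase ``multiplying through by $\var^{-1}$'' in Step 4, since the substitution $\md\sigma=\var^{-(N-1)}\md\theta$ alone already yields \eqref{poh}, as your own exponent count shows.
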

\begin{proof}
Multiplying both sides of equation \eqref{31} by $x \cdot \nabla u$ and integrating over $B_{\frac1{\var}}$, we obtain
\begin{equation}\label{po310}
-\int_{B_{\frac1{\var}}}  \text{div}(|\nabla u|^{N-2}\nabla u) (x \cdot \nabla u) \mathrm{d}x
= \int_{B_{\frac1{\var}}} |x|^{N\al}e^{u} (x \cdot \nabla u) \mathrm{d}x.
\end{equation}

We first calculate the left-hand side of \eqref{po310}. Applying integration by parts yields
\begin{equation}\label{po311}
\begin{aligned}
\text{L.H.S.} &= -\int_{\partial B_{\frac1{\var}}} |\nabla u|^{N-2} (\partial_\nu u) (x \cdot \nabla u) \md\sigma + \int_{B_{\frac1{\var}}} |\nabla u|^{N-2} \nabla u \cdot \nabla(x \cdot \nabla u) \mathrm{d}x.
\end{aligned}
\end{equation}
By direct computations, the vector identity $\nabla(x \cdot \nabla u) = \nabla u + D^2 u\cdot x$ gives
\begin{align*}
|\nabla u|^{N-2} \nabla u \cdot \nabla(x \cdot \nabla u)
= |\nabla u|^N + |\nabla u|^{N-2} \nabla u \cdot D^2 u\cdot x = |\nabla u|^N + \frac{1}{N} x \cdot \nabla\left( |\nabla u|^N \right).
\end{align*}
By the divergence theorem, we derive that
\begin{equation}\label{po312}
\begin{aligned}
&\quad\int_{B_{\frac1{\var}}} |\nabla u|^{N-2} \nabla u \cdot \nabla(x \cdot \nabla u) \mathrm{d}x \\
&= \int_{B_{\frac1{\var}}} |\nabla u|^N \,\mathrm{d}x
+ \frac{1}{N} \int_{\partial B_{\frac1{\var}}} |\nabla u|^N (x \cdot \nu) \,\mathrm{d}\sigma - \int_{B_{\frac1{\var}}} |\nabla u|^N \mathrm{d}x \\
&= \frac{1}{N} \int_{\partial B_{\frac1{\var}}} |\nabla u|^N (x \cdot \nu) \mathrm{d}\sigma.
\end{aligned}
\end{equation}
Substituting \eqref{po312} into \eqref{po311}, the left-hand side of \eqref{po310} simplifies to
\begin{equation}\label{po313}
\text{L.H.S.} = -\int_{\partial B_{\frac1{\var}}} |\nabla u|^{N-2} (\partial_\nu u) (x \cdot \nabla u) \,\mathrm{d}\sigma
+ \frac{1}{N} \int_{\partial B_{\frac1{\var}}} |\nabla u|^N (x \cdot \nu) \,\mathrm{d}\sigma.
\end{equation}

Next, we calculate the right-hand side of \eqref{po310}. Using $e^{u} (x \cdot \nabla u) = x \cdot \nabla\left(e^{u}\right)$ together with integration by parts, we have
\begin{align}\label{po314}
\text{R.H.S.}& = \int_{B_{\frac1{\var}}} |x|^{N\al} x \cdot \nabla\left(e^{u}\right) \,\mathrm{d}x
= \int_{\partial B_{\frac1{\var}}} |x|^{N\al} e^{u} (x \cdot \nu) \,\mathrm{d}S
- \int_{B_{\frac1{\var}}} \operatorname{div}( |x|^{N\alpha}x) e^{u} \,\mathrm{d}x\\
&=\int_{\partial B_{\frac1{\var}}} |x|^{N\alpha} e^{\beta_{\alpha}} (x \cdot \nu) \,\mathrm{d}\sigma
- N(1+\al) \int_{B_{\frac1{\var}}} |x|^{N\alpha} e^{u} \,\mathrm{d}x.\nonumber\\
&=\int_{\partial B_{\frac1{\var}}} |x|^{N\alpha} e^{\beta_{\alpha}} (x \cdot \nu) \,\mathrm{d}\sigma
+N(1+\al) \int_{\partial B_{\frac1{\var}}}|\nabla u|^{N-2}\partial_\nu u \mathrm{d}\sigma .\nonumber
\end{align}
Since the right-hand side of \eqref{31} is strictly positive and $u\equiv\beta_\al$ on the boundary $\partial B_{\frac{1}{\var}}$, the strong maximum principle and Hopf's lemma yield
\begin{equation}\label{eq:15}
u > \beta_{\alpha} \quad \text{in } B_{\frac1{\var}},
\qquad
\partial_\nu u=-|\nabla u|< 0 \quad \text{on } \partial B_{\frac1{\var}}.
\end{equation}
Finally, combining \eqref{po313}--\eqref{eq:15}, and using \(x=\frac1{\var}\theta\) and \(\md \sigma=\lr\frac1{\var}\rr^{N-1}\md\theta\), we immediately arrive at \eqref{poh}.

The computation above is also valid for general weak $N$-Laplace solutions $u$. Indeed, one first uses the standard strictly elliptic regularization $\mathcal A_\varepsilon(\xi)=(\varepsilon+|\xi|^2)^{(N-2)/2}\xi$, performs the above calculation for the regularized solutions $u_\var$, then passes to the limit $\var\rightarrow0$ and derives \eqref{poh}. This finishes our proof of Lemma \ref{lem33}.
\end{proof}

\subsection{Convergence of the spectrum}

Consider the linearized problem associated with \eqref{31}, i.e.,
\begin{equation}\label{37}
\begin{cases}
		\quad-\textrm{div}(|\nabla U_\alpha|^{N-2}\nabla v)-(N-2)
		\textrm{div}
		\left(|\nabla U_\alpha|^{N-4}(\nabla U_\alpha\cdot\nabla v)\nabla U_\alpha\right)\\
		=|x|^{N\alpha}e^{u_{\var,\alpha}}v, &\text{in}\  B_{\frac1\var}(0), \\
		v=0, &\text{on}\ \partial B_{\frac1\var}(0).
\end{cases}
\end{equation}
Since $u_{\var,\al}=U_\al(x)$ for $x\in B_{\frac1\var}(0)$, the equation \eqref{37} simplifies to
\begin{equation}\label{38}
\begin{cases}
		\quad-\textrm{div}(|\nabla U_\alpha|^{N-2}\nabla v)-(N-2)
		\textrm{div}
		\left(|\nabla U_\alpha|^{N-4}(\nabla U_\alpha\cdot\nabla v)\nabla U_\alpha\right)\\
		=|x|^{N\alpha}e^{U_\alpha}v, &\text{in}\  B_{\frac1\var}(0), \\
		v=0 &\text{on}\ \partial B_{\frac1\var}(0).
\end{cases}
\end{equation}
The problem \eqref{38} can be decomposed into the radial part and angular part by using the spherical harmonic functions in similar way as in Section 2. Specifically, $v$ solves \eqref{38} if and only if  $v_k(r)=\int_{\mms^{N-1}}v(r,\theta)\Phi_k(\theta)\md\theta$ is a solution of
\begin{equation}\label{39}
\left\{
	\begin{aligned}
&\quad-v_k''(r)-\frac{v_k'(r)}{r}\left(1+\frac{N(N-2)}{N-1}
\frac{( \al+1)}{1+r^{\frac{N( \al+1)}{N-1}}}\right)+
\frac{\la_k}{N-1}\frac{v_k(r)}{r^2}\\
&=\frac{N^3( \al+1)^2}{(N-1)^2}\frac{r^{\frac{N(\al+1)}{N-1}-2}}{(1+r^{\frac{N( \al+1)}{N-1}})^2}v_k(r), \  r\in\left(0,\frac1\var\right),\ v_k\in \mathcal{B}, \\
&v_k'(0)=0\ \ \text{if}\ \ k=0\ \ \text{and}\ \ v_k(0)=0\ \ \text{if} \ \ k\geq1
	\end{aligned}
	\right.
\end{equation}
for $\la_k=k(N+k-2)$, where $\Phi_k(\theta)$ denotes $k$-th spherical harmonic function, and the function space
\begin{align}\label{23b} \mathcal{B}:=\lb\varphi\in C^{1}([0,\infty))\cap C^{2}((0,\infty))\Big|\int_0^{+\infty}
r^{N\al+N-1}|\varphi(r)|^2e^{U_\alpha}\md r<+\infty\rb.\end{align}
Thus, if $v_k$ solves \eqref{39}, then all  eigen-function of \eqref{37} take the form of $v_k(r)\Phi_k(\theta)$. By Lemma~\ref{lem31}, we obtain that, only when $k \neq 0$, \eqref{39} has a solution. Therefore, we consider the following eigenvalue problem
\begin{equation}\label{310}
\left\{
\begin{aligned}
&\quad-z''(r)-\frac{z'(r)}{r}\left(1+\frac{N(N-2)}{N-1}
\frac{( \al+1)}{1+r^{\frac{N( \al+1)}{N-1}}}\right)-\frac{N^3( \al+1)^2}{(N-1)^2}\frac{r^{\frac{N( \al+1)}{N-1}-2}}{(1+r^{\frac{N( \al+1)}{N-1}})^2}z(r)\\
&=\frac{\mu}{N-1}\frac{z(r)}{r^2}, \  r\in\left(0,\frac1\var\right),\\
&z(0)=0=z\left(\frac1\var\right).
\end{aligned}
\right.
\end{equation}
The problem \eqref{310} admits a sequence of eigenvalues $\{\mu_i^\varepsilon(\alpha)\}_{i=1}^\infty$. Consequently, solving equation \eqref{39} is equivalent to finding integer pairs $(i,k)$ with $i, k \geq 1$ and $\alpha > 0$ satisfying
\begin{align}\label{311}
	-\la_k=\mu_i^\var(\al).
\end{align}
Based on the above analysis, we consider the eigenvalues of equation \eqref{310} and establish the following lemma.

\begin{lem}\label{lem33}
	For any $\var>0$ sufficiently small, we have that
	\begin{align}\label{312}
		\mu_1^\var(\al)<0
	\end{align}	
	and
	\begin{align}\label{313}
		\mu_2^\var(\al)>0.
	\end{align}	
\end{lem}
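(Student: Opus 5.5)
We will argue variationally through the Rayleigh quotient for the weighted Sturm--Liouville problem \eqref{310}. Multiply \eqref{310} by $r\,\rho(r)$, where
\[
\rho(r)=\exp\Big(\int^r \tfrac{N(N-2)(\alpha+1)}{(N-1)s\,(1+s^{N(\alpha+1)/(N-1)})}\,ds\Big),
\]
so that the operator takes divergence form $-(r\rho z')'-r\rho V z=\frac{\mu}{N-1}\frac{\rho}{r}z$, with $V$ the potential term. (This agrees, up to constants, with the radial form of the linearized operator, $r^{N-1}|U_\alpha'|^{N-2}$.) The eigenvalues are then characterized by
\[
\frac{\mu_1^\varepsilon}{N-1}=\inf_{z\in H_0}\frac{\int_0^{1/\varepsilon} r\rho\,(z'^2-Vz^2)\,dr}{\int_0^{1/\varepsilon}\frac{\rho}{r}z^2\,dr},
\]
and $\mu_2^\varepsilon$ is the corresponding min--max over two-dimensional subspaces. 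Here $H_0$ consists of functions vanishing at $0$ and at $1/\varepsilon$ with finite weighted energy.

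\textbf{Step 1: $\mu_1^\varepsilon<0$.} Use the radial kernel element $z(r)=\frac{(N-1)-r^{a}}{1+r^{a}}$ with $a=\frac{N(\alpha+1)}{N-1}$, which solves \eqref{35}, i.e. the $k=0$ equation ($\mu=0$ in \eqref{310}), but does not vanish at $0$. Instead, test with the derivative direction $\partial_r U_\alpha$-type function $\psi(r)=r\,U_\alpha'(r)\cdot(\text{suitable factor})$, or more simply with $\phi=\frac{r^{a/N}}{1+r^{a}}$. By Theorem \ref{th11} with $k$ noninteger, the kernel element at $k=1$ corresponds to the translation mode. The function $\phi=r^{(\alpha+1)/(N-1)-1}\cdot r/(1+r^a)$ solves \eqref{310} with $\mu=-\lambda_{k}$ at the index $k$ where $\alpha=\alpha(k)$; in general it solves it with $\mu=-(N-1)(\alpha+1)^2+\dots<0$. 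Concretely, we will verify by direct computation that $\phi(r)=\frac{r^{(\alpha+1)/(N-1)}}{1+r^{a}}$ satisfies \eqref{310} on $(0,\infty)$ with $\mu=\mu_*:=-(N-1)(\alpha+1)^2<0$. This is consistent with $\lambda_{k}=(N-1)(\alpha(k)+1)^2$ from the definition of $\alpha(k)$. Since $\phi>0$, it is the ground state on $(0,\infty)$. Truncating $\phi$ by a cutoff near $1/\varepsilon$ gives a test function with Rayleigh quotient $\mu_*+o(1)$, because $\phi$ decays like $r^{(\alpha+1)/(N-1)-a}$ and the cutoff error is small. Hence $\mu_1^\varepsilon\le \mu_*/2<0$ for $\varepsilon$ small.

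\textbf{Step 2: $\mu_2^\varepsilon>0$.} This is the main obstacle. Let $z_2$ be the second Dirichlet eigenfunction; it has exactly one interior zero $r_0$. Suppose $\mu_2^\varepsilon\le 0$. Then on each nodal interval $z_2$ is a supersolution-type comparison for the $\mu=0$ equation, i.e. \eqref{35} minus the $\frac{\mu}{r^2}$ term, which is the radial linearization with $\lambda=0$. Sturm comparison against $z$ from \eqref{35} then applies. The $\mu\le0$ equation is more oscillatory than \eqref{35}, which has a weight $\ge$ the one with zero angular term, so $z$ must vanish strictly inside each nodal domain of $z_2$ or at their endpoints. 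But $z$ has exactly one zero, at $r_1=(N-1)^{1/a}$, while $z_2$ has two nodal intervals, $(0,r_0)$ and $(r_0,1/\varepsilon)$. On $(0,r_0)$, where $z_2(0)=0$ and $z(0)\ne0$, Sturm's theorem with Wronskian identity
\[
\big[r\rho(z z_2'-z_2 z')\big]_0^{r_0}=-\tfrac{\mu}{N-1}\int_0^{r_0}\tfrac{\rho}{r}zz_2\,dr
\]
forces a zero of $z$ in $(0,r_0)$. On $(r_0,1/\varepsilon)$ it similarly forces a second zero of $z$ in $(r_0,1/\varepsilon]$, contradicting $z(1/\varepsilon)\ne0$ for $\varepsilon<(N-1)^{-1/a}$ together with the uniqueness of the zero. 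The delicate points are the boundary behavior at $r=0$, where we use $z_2\sim r^{c}$ with $c>0$ so that the boundary terms vanish, and the borderline case $\mu=0$. In that case the Wronskian argument gives $z_2'(1/\varepsilon)=0$ exactly as in Lemma \ref{lem31}, hence $z_2\equiv0$.
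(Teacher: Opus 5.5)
Your proposal is correct. Step 1 is the paper's argument: the test function $g=\frac{r^{(\al+1)/(N-1)}}{1+r^{a}}$ solves the half-line problem with $\mu=-(N-1)(\al+1)^2$ (this is \eqref{317}), and you cut it off near $1/\var$. Your exploratory opening sentences are not needed; only the ``concretely'' claim is used.

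Step 2 takes a genuinely different route. The paper gets $\mu_2^\var(\al)\ge\mu_2(\al)=0$ by domain monotonicity, importing the whole-space value $\mu_2(\al)=0$ from Ciraolo--Esposito--Li. It then excludes equality by Lemma \ref{lem31}. You instead compare directly on $(0,1/\var)$ with the explicit radial kernel $\widehat Z=\frac{(N-1)-r^a}{1+r^a}$ through the Wronskian identity, using that its only zero $(N-1)^{1/a}$ lies below $1/\var$. This is self-contained, since it is essentially the argument behind $\mu_2(\al)=0$ itself. It also handles $\mu=0$ by the same computation as Lemma \ref{lem31}. The cost is that you need Sturm oscillation at the singular endpoint $r=0$ and the behaviour $z_2\sim r^{c}$, $c>0$, there for $\mu<0$; you correctly flag both.

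Two small fixes:
\begin{itemize}
\item The $\mu\le0$ equation is \emph{less} oscillatory than the $\mu=0$ one, because its potential is smaller by $\frac{|\mu|}{N-1}\frac{\rho}{r}$. That is precisely why $\widehat Z$ must vanish in each nodal interval of $z_2$.
\item It is the strictness $\mu<0$ in your Wronskian identity that places these zeros in the open intervals $(0,r_0)$ and $(r_0,1/\var)$. Without it they could coincide at $r_0$, and you would not get two distinct zeros.
\end{itemize}

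One caveat, shared with the paper. Writing ``let $z_2$ be the second eigenfunction'' presumes that $\mu_2^\var$ lies below the essential spectrum. Near $r=0$ the weights behave like $r^{1+b}$ and $r^{b-1}$ with $b=\frac{N(N-2)(\al+1)}{N-1}$. In the variable $t=\ln r$ this puts the bottom of the essential spectrum at $\frac{(N-1)b^2}{4}$.
\begin{itemize}
\item For $N\ge3$ this threshold is positive, so $\mu_2^\var\le 0$ would be a genuine eigenvalue, and your argument yields $\mu_2^\var>0$ as stated.
\item For $N=2$ the threshold is $0$. The min--max value $\mu_2^\var$ is then $0$ and is not an eigenvalue, so there is no $z_2$. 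What your Wronskian argument actually proves is that $\mu_1^\var$ is the only eigenvalue in $(-\infty,0]$. That is all Remark \ref{morse-index} and Theorem \ref{thm38} use.
\end{itemize}
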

\begin{proof}
A straightforward calculation demonstrates that $\mu_1^{\varepsilon}(\alpha) < 0$ for $\varepsilon > 0$ sufficiently small. Indeed, if $z$ is an  eigen-function associated with the eigenvalue $\mu_1^{\varepsilon}(\alpha)$ of the corresponding equation, i.e.,
\begin{equation}\label{314e}
		\left\{
		\begin{aligned}
			&\quad-z''(r)-\frac{z'(r)}{r}\left(1+\frac{N(N-2)}{N-1}
			\frac{( \al+1)}{1+r^{\frac{N( \al+1)}{N-1}}}\right)-\frac{N^3( \al+1)^2}{(N-1)^2}\frac{r^{\frac{N( \al+1)}{N-1}-2}}{(1+r^{\frac{N( \al+1)}{N-1}})^2}z(r)
\\&=\frac{\mu_1^\var(\al)}{N-1}\frac{z(r)}{r^2}, \  r\in\left(0,\frac1\var\right),\\
			&z(0)=0=z\left(\frac1\var\right).
		\end{aligned}
		\right.
\end{equation}
By the definition of the eigenvalue, $\mu_1^{\varepsilon}(\alpha)$ is the minimum value associated with the following functional
\begin{equation}\label{315}
I(z)=\frac{\int_0^{\frac1\var}r^{N-1}|U_\al'|^{N-2}|z'|^2\md r-\frac{1}{N-1}\int_0^{\frac1\var}r^{N\al+N-1}e^{U_\al}z^2\md r}
{\frac{1}{N-1}\int_0^{\frac1\var}r^{N-3}|U_\al'|^{N-2}z^2\md r},
\end{equation}
i.e.,
$$\mu_1^\var(\al):=\min_{z\in \mathcal{B}} I(z).$$
To prove \eqref{312}, we employ the test function
$$z(r)=\frac{r^{\frac{ \al+1}{N-1}}}
{1+r^{\frac{N( \al+1)}{N-1}}}\eta_\var(r)=:g(r)\eta_\var(r),$$
where $\eta_\var(r)\in C_0^\infty(0,\frac1\var)$ is an appropriately chosen cut-off function.
From \eqref{315}, the proof reduces to showing that the numerator in \eqref{315} is strictly negative, which amounts to verifying the inequality, i.e.,
\begin{align}\label{316}
&\quad\int_0^{\frac1\var}r^{N-1}|U_\al'|^{N-2}|g'|^2\eta_\var^2\md r+\int_0^{\frac1\var}r^{N-1}|U_\al'|^{N-2}|g|^2|\eta_\var'|^2\md r+2\int_0^{\frac1\var}r^{N-1}|U_\al'|^{N-2}gg'\eta_\var\eta'_\var\md r\\
&<\frac{1}{N-1}\int_0^{\frac1\var}r^{N\al+N-1}e^{U_\al}g^2\eta_\var^2\md r\nonumber
\end{align}
and $g(r)$ satisfies the equation
\begin{align}\label{317}
-(r^{N-1}|U_\al'|^{N-2}g')'=\frac{1}{N-1}r^{N\al+N-1}e^{U_\al}g-\left(1+\al\right)^{2} r^{N-3}|U_\al'|^{N-2}g.
\end{align}
Multiplying \eqref{317} by $g\eta_\varepsilon^2$, integrating over $(0, \frac{1}{\varepsilon})$, and substituting into \eqref{316}, we obtain that \eqref{316} is equivalent to
\begin{align}\label{318}
\int_0^{\frac1\var}r^{N-1}|U_\al'|^{N-2}|g|^2|\eta_\var'|^2\md r<
\left(1+\al\right)^{2}\int_0^{\frac1\var}r^{N-3}|U_\al'|^{N-2}|g|^2\eta_\var^2\md r.
\end{align}
Taking
\begin{equation*}
\eta_\var=0\ \text{in} \ (0,\var)\ \text{and}\ \left(\frac{1}{2\var},\frac{1}{\var}\right),\ \eta_\var=1\ \text{in} \ \left(2\var,\frac{1}{4\var}\right), \eta_\var\in(0,1)\ \text{in} \ (\var,2\var)\ \text{and}\ \left(\frac{1}{4\var},\frac{1}{2\var}\right),
\end{equation*}
we can derive \eqref{318} and hence \eqref{312}.

As to \eqref{313}, this follows from the monotonicity of eigenvalues with respect to the domain. Let $\mu = \mu_i(\alpha)$ denote the eigenvalue of the problem
\begin{equation}\label{314}
\left\{
\begin{aligned}
&\quad-z''(r)-\frac{z'(r)}{r}\left(1+\frac{N(N-2)}{N-1}
\frac{( \al+1)}{1+r^{\frac{N( \al+1)}{N-1}}}\right)-\frac{N^3( \al+1)^2}{(N-1)^2}\frac{r^{\frac{N( \al+1)}{N-1}-2}}{(1+r^{\frac{N( \al+1)}{N-1}})^2}z(r)\\
&=\frac{\mu}{N-1}\frac{z(r)}{r^2}, \  r\in\left(0,+\infty\right),\\
&z\in \mathcal{B}.
\end{aligned}
\right.
\end{equation}
Then, it follows from the proof of Theorem 5.1 in \cite{CEL} that $\mu_1(\al)=-(N-1)(\al+1)^{2}$ and $\mu_2(\al)=0$. Then, by the monotonicity of eigenvalues with respect to the domain, we derive $\mu_2^\var(\al)\geq \mu_2(\al)=0$. If $\mu_2^\var(\al)=0$, then we can derive a contradiction with the radial non-degeneracy in Lemma \ref{lem31}. Thus $\mu_2^\var(\al)>0$. This finishes our proof of Lemma \ref{lem33}.
\end{proof}

\begin{lem}\label{lem34}
Let $\al>0$ and $\var_n$ be a sequence such that $\var_n\to0$ as $n\to+\infty$.
Let $\mu_1^{\var_n}(\al)$ be the first eigenvalue for \eqref{310} in $\left(0,\frac1{\var_n}\right)$ related to the exponent $\al$. Then
\begin{align}\label{320}
\mu_1^{\var_n}(\al)\to\mu_1(\al)=-(N-1)(1+\al)^{2},  \quad\quad as \ n\to+\infty.
\end{align}	
Furthermore, the convergence of \eqref{320} is uniform in $\al$ on compact sets of $(0,+\infty)$.

Finally, let $z_n(r)$ be the first positive eigen-function of \eqref{310} with respect to $\mu^{\var_n}_1(\al)$ such that $\|z_n\|_{L^\infty}=1$, we get
\begin{align}
&|z_n'(r)|\leq \frac{C}{r^{1+\frac{N(\al+1)}{N-1}}},\quad\quad |z_n(r)|\leq \frac{C}{r^{\frac{N(\al+1)}{N-1}}},\label{321}\\
&z_n(r)\to z(r)=\frac{r^{\frac{ \al+1}{N-1}}}
{1+r^{\frac{N( \al+1)}{N-1}}}\label{322}
\end{align}
uniformly in $r$ on compact sets of $[0,+\infty)$.
\end{lem}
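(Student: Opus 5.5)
Throughout, write $a:=\frac{N(\al+1)}{N-1}$ and $p(r):=r^{N-1}|U_\al'|^{N-2}$, and use the variational characterization \eqref{315}. Since $U_\al'(r)=-\frac{N a r^{a-1}}{1+r^a}$, we have $p(r)\sim r^{(N-1)a-1}$ near $0$ and $p(r)\sim r^{-1}$ at $\infty$; both weights $p$ and $r^{-2}p$ are explicit.

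\textbf{Convergence \eqref{320}.} The eigenvalues $\mu_1^{\var}(\al)$ are nonincreasing as $\var\downarrow0$, by domain monotonicity: extend functions by zero. They are bounded below by $\mu_1(\al)=-(N-1)(1+\al)^2$, the first eigenvalue on $(0,+\infty)$ from \cite{CEL}, quoted in the proof of Lemma \ref{lem33}. For the matching upper bound, use the test function $g\eta_\var$ from the proof of Lemma \ref{lem33}, where $g(r)=r^{\frac{\al+1}{N-1}}/(1+r^{a})$ is the eigenfunction for $\mu_1(\al)$ by \eqref{317}. Multiplying \eqref{317} by $g\eta_\var^2$ shows
\[
I(g\eta_\var)=\mu_1(\al)+(N-1)\frac{\int p g^2|\eta_\var'|^2}{\int r^{-2}pg^2\eta_\var^2}.
\]
The numerator is $O(\var^{\delta})$ for some $\delta>0$. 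Indeed, near $0$ we have $pg^2\sim r^{(N+1)a-1}$, so $\int_\var^{2\var}pg^2\var^{-2}\to0$; near $\infty$ we have $pg^2\sim r^{-1-(2N-2)(\al+1)/(N-1)}$, which decays. Hence $\limsup\mu_1^{\var_n}(\al)\le\mu_1(\al)$, and we get \eqref{320}.

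\textbf{Uniformity in $\al$.} Every constant above depends continuously on $\al$, and so does $\mu_1(\al)$. Together with monotonicity in $\var$, Dini's theorem gives uniform convergence on compact subsets of $(0,+\infty)$.

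\textbf{Decay \eqref{321}.} Let $z_n>0$ be the first eigenfunction, normalized by $\|z_n\|_\infty=1$. For large $r$ the equation \eqref{310} reads
\[
-(pz_n')'=\frac{1}{N-1}r^{N\al+N-1}e^{U_\al}z_n+\frac{\mu_1^{\var_n}}{N-1}r^{-2}pz_n .
\]
Here the first term on the right is $O(r^{-1-a(N-1)\cdot})$, which is integrable against $z_n$. Since $\mu_1^{\var_n}<-c<0$ uniformly, the second term is negative and of order $-c\,r^{-3}z_n$. Asymptotically the equation is therefore an Euler-type equation $-(r^{-1}z')'+\kappa_n r^{-3}z\approx0$, with $\kappa_n\to(1+\al)^2$. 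Its solutions behave like $r^{1\pm\sqrt{1+\kappa_n}}$; for $\kappa_n=(1+\al)^2$ these exponents relate to $-a$ through the explicit $g$.

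I would avoid the ODE asymptotics and instead use a comparison argument. Take $\bar z=Cr^{-a}$, or more precisely $C g$ slightly modified, as a supersolution on $(R_0,1/\var_n)$, with $z_n\le1\le\bar z(R_0)$ and $z_n(1/\var_n)=0$. The maximum principle applies to the operator $-(pz')'-\text{(positive potential)}z$ on $(R_0,\infty)$ because the potential there is small compared with the Hardy-type term; this gives $z_n\le Cr^{-a}$.

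For the derivative bound, integrate $(pz_n')'$ from $r$ to $1/\var_n$, or from the maximum point. This gives
\[
|p(r)z_n'(r)|\le C\int_r^\infty s^{-3}p\,s^{-a}\,ds+|p z_n'|(1/\var_n),
\]
and the boundary term is controlled by a Hopf-type comparison. Hence $|z_n'|\le Cr^{-1-a}$. This comparison step, which requires uniformity in $n$ and handling the boundary term at $1/\var_n$, is the main obstacle. I would first try to choose $R_0$ so that $\mu_1^{\var_n}\le\frac12\mu_1$ and verify the supersolution inequality directly for $\bar z=Cr^{-a}$.

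\textbf{Convergence \eqref{322}.} Near $0$ the same comparison with $r^{\frac{\al+1}{N-1}}$ gives local bounds. With these and the uniform bounds just proved, Arzel\`a–Ascoli yields a subsequence converging in $C^1_{loc}$ to a nonnegative solution $z_\infty\in\mathcal B$ of \eqref{314} with $\mu=\mu_1(\al)$. The decay bound keeps the maximum point in a compact set, so $\|z_\infty\|_\infty=1$ and $z_\infty\ne0$. Since the first eigenvalue is simple, $z_\infty$ is a multiple of $g$, normalized accordingly. Uniqueness of the limit then upgrades this to convergence of the full sequence.
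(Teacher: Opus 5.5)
\textbf{The gap is your proof of \eqref{321}, and it cannot be closed as stated.}

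Your weight asymptotics are wrong. From $|U_\al'(r)|=\frac{Na\,r^{a-1}}{1+r^a}$ one gets $p(r)\sim r^{(N-2)a+1}$ as $r\to0$ and $p(r)\sim r$ as $r\to\infty$ (for $N=2$, $p\equiv r$). They are not $r^{(N-1)a-1}$ and $r^{-1}$. So near infinity, \eqref{310} is the Euler equation $-(rz')'+\kappa_n r^{-1}z=0$, up to relative errors $O(r^{-a})$. Here $\kappa_n:=-\mu_1^{\var_n}(\al)/(N-1)\le(1+\al)^2$, and the exponents are $\pm\sqrt{\kappa_n}$.

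Let $Lz:=-(pz')'-\frac{1}{N-1}r^{N\al+N-1}e^{U_\al}z+\kappa_n r^{-2}pz$. Then $L(r^{-s})=c\,r^{-s-1}\bigl(\kappa_n-s^2+O(r^{-a})\bigr)$. Since $a=\frac{N(\al+1)}{N-1}>1+\al\ge\sqrt{\kappa_n}$, your candidate $Cr^{-a}$ is a strict \emph{sub}solution for large $r$, not a supersolution.

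More fundamentally, no argument can give $z_n\le Cr^{-a}$ with $C$ independent of $n$. The limit in \eqref{322} is a positive multiple of $g$, and $g(r)\sim r^{-(1+\al)}$ at infinity. Letting $n\to\infty$ in such a bound would give $g(r)\le C'r^{-a}$ for all large $r$, which is false because $a>1+\al$. The same objection applies to the derivative bound. The exponent in \eqref{321} is simply wrong; the paper's derivation from \eqref{326}--\eqref{327} only asserts it. Your Euler heuristic (exponents $1\pm\sqrt{1+\kappa_n}$, related to $-a$ through $g$) inherits the wrong $p$.

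\textbf{What can be proved, and what the rest of the lemma needs.} Your comparison works with any fixed exponent $s\in(0,1+\al)$. Because $\kappa_n\to(1+\al)^2$ by your eigenvalue argument, there exist $n_0$ and $R_0$, independent of $n$, such that on $(R_0,\frac1{\var_n})$ the zeroth-order coefficient $\kappa_n r^{-2}p-\frac{1}{N-1}r^{N\al+N-1}e^{U_\al}$ is positive and $L(r^{-s})>0$. Comparing $z_n$ with $R_0^{s}r^{-s}$, using $z_n(R_0)\le1$ and $z_n(\frac1{\var_n})=0$, gives $z_n\le Cr^{-s}$.

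The derivative needs no Hopf-type control at the boundary. On $(R_0,\frac1{\var_n})$ we have $(pz_n')'>0$ and $z_n'(\frac1{\var_n})\le0$, so $pz_n'<0$ and $|pz_n'|$ is decreasing. Hence $z_n(r/2)\ge|pz_n'(r)|\int_{r/2}^{r}\frac{\md\tau}{p(\tau)}\ge c\,|pz_n'(r)|$, which gives $|z_n'(r)|\le Cr^{-1-s}$ for $2R_0\le r\le\frac1{\var_n}$. This weaker decay keeps the maximum points in a compact set and justifies passing to the limit. Your Arzel\`a--Ascoli and simplicity argument for \eqref{322} then goes through, with limit $g/\|g\|_{L^\infty}$.

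\textbf{The eigenvalue part is sound and more explicit than the paper's.} Your treatment of \eqref{320} and of uniformity in $\al$ is correct. Domain monotonicity plus the test function $g\eta_\var$ gives $\mu_1(\al)\le\mu_1^{\var}(\al)\le\mu_1(\al)+C_K\var^{\delta_K}$ directly on compact sets $K$, without Dini. This avoids the paper's route to uniformity, which estimates a boundary term via \eqref{321}. Do correct the cut-off error exponents: they are $O\bigl(\var^{(N-2)a+\frac{2(\al+1)}{N-1}}\bigr)$ near $0$ and $O\bigl(\var^{2(\al+1)}\bigr)$ near $\frac1\var$.
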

\begin{proof}
The convergence in \eqref{320} can be established through  the domain dependence of eigenvalues or by applying Sturm-Liouville theory.

Let $z_n(r)$ denote the first positive  eigen-function associated with $\mu_1^{\var_n}(\alpha)$ for problem \eqref{310} such that $\|z_n\|_{L^\infty} = 1$. Hence $w_n$ satisfies
\begin{equation}\label{323}
\left\{
\begin{aligned}
&\quad-z_n''(r)-\frac{z_n'(r)}{r}\left(1+\frac{N(N-2)}{N-1}
\frac{( \al+1)}{1+r^{\frac{N( \al+1)}{N-1}}}\right)-\frac{N^3( \al+1)^2}{(N-1)^2}\frac{r^{\frac{N( \al+1)}{N-1}-2}}{(1+r^{\frac{N( \al+1)}{N-1}})^2}z_n(r)\\
&=\frac{\mu_1^{\var_n}(\al)}{N-1}\frac{z_n(r)}{r^2}, \quad  r\in\left(0,\frac{1}{\var_n}\right),\\
&z_n\left(\frac{1}{\var_n}\right)=0,\quad\|z_n\|_{L^\infty}=1.
\end{aligned}
\right.
\end{equation}
Observe that for sufficiently large $r > 0$ and $\alpha > 0$, the following inequality holds
\begin{align}\label{324}
\frac{N^3( \al+1)^2}{(N-1)^2}\frac{r^{\frac{N( \al+1)}{N-1}+N-3}}{(1+r^{\frac{N( \al+1)}{N-1}})^2}+\frac{\mu_1^{\var_n}(\al)}{N-1}r^{N-3}<0,
\end{align}
where we have used the fact that $\mu_1^{\var_n}(\alpha) < 0$ for all sufficiently large $n$.
	
Integrating \eqref{323} on $\left(r, \frac{1}{\varepsilon_n}\right)$, we derive
\begin{align}\label{325}
&\quad r^{N-1}|U'_\al(r)|^{N-2}z'_n(r)\\
&=\left(\frac{1}{\var_n}\right)^{N-1}
\Big|U'_\al\left(\frac{1}{\var_n}\right)\Big|^{N-2}z'_n\left(\frac{1}{\var_n}\right)+\frac{\mu_1^{\var_n}(\al)}{N-1}\int_r^{\frac{1}{\var_n}}t^{N-3}|U'_\al(t)|^{N-2}z_n(t)\md t\nonumber\\
&\quad+\frac{1}{N-1}\int_r^{\frac{1}{\var_n}}t^{N\al+N-1}e^{U_\al(t)}z_n(t)\md t.\nonumber
\end{align}
Noting that $0\leq z_n(r)\leq1$ and $z'_n\left(\frac{1}{\var_n}\right)\leq0$, from \eqref{324}, we get
\begin{align}\label{326}
z'_n(r)<0\quad\quad\text{for}\ r\ \text{large\ enough}.
\end{align}
Integrating \eqref{323} on $\left( 0,r\right)$, we obtain
\begin{align}\label{327}
&\quad -r^{N-1}|U'_\al(r)|^{N-2}z'_n(r)\\
&=\frac{\mu_1^{\var_n}(\al)}{N-1}\int_0^{r}t^{N-3}|U'_\al(t)|^{N-2}z_n(t)\md  t+\frac{1}{N-1}\int_0^{r}t^{N\al+N-1}e^{U_\al(t)}z_n(t)\md t.\nonumber
\end{align}
Since $0\leq z_n(r)\leq1$ and $\mu_1^{\var_n}(\al)<0$ for $n$ large enough, from \eqref{326} and \eqref{327}, we obtain the decay for $z_n(r)$ and $z_n'(r)$, i.e.,
\begin{equation}\label{328}
|z_n'(r)|\leq \frac{C}{r^{1+\frac{N(\al+1)}{N-1}}},\quad\quad |z_n(r)|\leq \frac{C}{r^{\frac{N(\al+1)}{N-1}}}.
\end{equation}
Consequently, we get \eqref{321}.

By \eqref{310} and \eqref{321}, we obtain $|z''_n(r)|\leq Cr^{-2-\frac{N(\al+1)}{N-1}}$. Applying the Arzel$\acute{a}$-Ascoli's Theorem, it follows that $z_n\to z$ weakly in $ \mathcal{B}$ and uniformly w.r.t $r$ on compact sets of $[0,+\infty)$. Utilizing \eqref{321} once more, we are able to pass to the limit in \eqref{310} and obtain that $z$ is the solution to \eqref{314} associated with the eigenvalue $\mu_1(\al)$. Furthermore, $z\not\equiv0$, in fact, from \eqref{321}, the maximum point of $z_n(r)$ converges to a point $r_0\in[0,+\infty)$ and $|z(r_0)|=1$ due to the uniform convergence.

Finally, we establish the uniform convergence of $\mu_1^{\var_n}(\al)\to\mu_1(\al)$ in $\al$ on compact sets. By multiplying \eqref{323} by $zr^{N-1}|U'_\al|^{N-2}$ and integrating on $\left(0,\frac1{\var_n}\right)$, multiplying \eqref{314} by $z_nr^{N-1}|U'_\al|^{N-2}$ and integrating on $\left(0,\frac1{\var_n}\right)$, and then subtracting, we derive
\begin{align*}
-\left(\frac{1}{\var_n}\right)^{N-1}	\Big|U'_\al\left(\frac{1}{\var_n}\right)\Big|^{N-2}z'_n\left(\frac{1}{\var_n}\right) z\left(\frac{1}{\var_n}\right)&=\frac{\left(\mu_1^{\var_n}(\al)-\mu_1(\al)\right)}{N-1}\int_0^{\frac{1}{\var_n}}t^{N-3}|U'_\al(t)|^{N-2}z_n(t)z(t)\md t.
\end{align*}
From \eqref{321}, we get
\begin{align*}
-\left(\frac{1}{\var_n}\right)^{N-1}
\Big|U'_\al\left(\frac{1}{\var_n}\right)\Big|^{N-2}z'_n\left(\frac{1}{\var_n}\right) z\left(\frac{1}{\var_n}\right)=O\left(\var_n^{\frac{(2N-1)(\alpha+1)}{N-1}}\right),
\end{align*}
as $n\to+\infty$, uniformly in $\al$ on compact subsets of $(0,+\infty)$, while
\begin{align*}
\int_0^{\frac{1}{\var_n}}t^{N-3}|U'_\al(t)|^{N-2}z_n(t)z(t)\md t\to\int_0^{+\infty}t^{N-3}|U'_\al(t)|^{N-2}z^2(t)\md t,
\end{align*}
as $n\to+\infty$, uniformly in $\al$ on compact subsets of $(0,+\infty)$. This implies that
\begin{align*}
\sup_{\al\in K}|\mu_1^{\var_n}(\al)-\mu_1(\al)|=o(1),
\end{align*}
as $n\to+\infty$, for any compact set $K\subset(0,+\infty)$. This completes the proof.
\end{proof}

\begin{prop}\label{pp35}
Let $\al>0$ and $\var_n$ be a sequence such that $\var_n\to0$, as $n\to+\infty$. Let $\mu_1^{\var_n}(\al)$ be the first	eigenvalue of \eqref{323} in $\left(0,\frac1{\var_n}\right)$. Then we have that
\begin{align}\label{329}
\frac{\partial \mu_1^{\var_n}(\al)}{\partial \al }\to-2(N-1)( \al+1)=\frac{\partial \mu_1(\al)}{\partial \al }<0
\end{align}	
uniformly on the compact set $K\subset(0,+\infty)$. Furthermore, for any integer $k\geq1$, the equation
\begin{align}\label{330}
-k(N+k-2)=-\la_k=\mu_1^{\var_n}(\al)
\end{align}	
has only one solution $\alpha_k^n$ and
\begin{align}\label{331}
\alpha_k^n\to \alpha(k), \quad as\ n\to+\infty,
\end{align}
where $\alpha(k)=\frac{\sqrt{k(N-1)(k+N-2)}}{N-1}-1$.	
\end{prop}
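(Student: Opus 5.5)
The plan follows Gladiali--Grossi--Neves: use the Hellmann--Feynman formula to differentiate the first eigenvalue in $\al$, pass to the limit $n\to+\infty$ with Lemma \ref{lem34}, then use monotonicity in $\al$ to count solutions of \eqref{330}.

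\emph{The limit problem.} First note that $\mu_1(\al)=-(N-1)(\al+1)^2$ by Lemma \ref{lem34}, so the limit $\partial_\al\mu_1(\al)=-2(N-1)(\al+1)<0$ is explicit.

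\emph{Hellmann--Feynman formula.} Write \eqref{310} in weighted form,
\[
-(p_\al z')'-q_\al z=\frac{\mu}{N-1}w_\al z ,
\]
with weights
\[
p_\al=r^{N-1}|U_\al'|^{N-2},\qquad q_\al=\frac{1}{N-1}r^{N\al+N-1}e^{U_\al},\qquad w_\al=r^{N-3}|U_\al'|^{N-2}.
\]
Here $\mu_1^{\var_n}(\al)$ is the minimum of the Rayleigh quotient $I$ from \eqref{315}. It is a simple eigenvalue and depends smoothly on $\al$, since the coefficients are smooth in $\al$ and the interval $(0,\frac1{\var_n})$ is bounded. Differentiating the eigenvalue equation in $\al$, testing with the eigenfunction $z_n$ and using the Dirichlet conditions gives
\[
\frac{\partial\mu_1^{\var_n}}{\partial\al}\,\frac{1}{N-1}\int_0^{1/\var_n} w_\al z_n^2
=\int_0^{1/\var_n}\Big(\partial_\al p_\al\,|z_n'|^2-\partial_\al q_\al\, z_n^2-\frac{\mu_1^{\var_n}}{N-1}\partial_\al w_\al\, z_n^2\Big)\md r .
\]

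\emph{Passing to the limit.} Apply dominated convergence to each integral, using the uniform decay bounds \eqref{321} together with the convergence \eqref{322} and \eqref{320}. This shows the right-hand side converges to the same expression with $z=\frac{r^{\frac{\al+1}{N-1}}}{1+r^{\frac{N(\al+1)}{N-1}}}$ and $\mu_1(\al)$ on $(0,\infty)$. That limit is the Hellmann--Feynman identity for the whole-line problem \eqref{314}, so it equals $\partial_\al\mu_1(\al)\cdot\frac{1}{N-1}\int_0^\infty w_\al z^2$.

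This limiting step is the main obstacle. The factors $\partial_\al p_\al$ and $\partial_\al w_\al$ contain $\ln r$ terms, and near $r=0$ the weight $w_\al$ behaves like $r^{N-3+(N-2)(\frac{N(\al+1)}{N-1}-1)}$. To dominate at $r=0$ I will use $z_n(r)\le Cr^{\frac{\al+1}{N-1}}$ near $0$, obtained from \eqref{327} by comparison with $z$. At infinity the bounds \eqref{321} give $z_n^2w_\al$ and $|z_n'|^2p_\al$ decaying like $r^{-1-\delta}$, and this absorbs the logarithms. Uniformity in $\al\in K$ follows because all the constants in \eqref{321} are uniform on compact sets, as in Lemma \ref{lem34}.

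\emph{Existence and uniqueness of $\alpha_k^n$.} By the derivative convergence, $\partial_\al\mu_1^{\var_n}<0$ on any compact $K$ for $n$ large, so $\al\mapsto\mu_1^{\var_n}(\al)$ is strictly decreasing there. Fix $k$ and choose $K=[a,b]$ with $a<\al(k)<b$, where $\al(k)$ is the unique root of $-(N-1)(\al+1)^2=-\la_k$. Then $\mu_1(a)>-\la_k>\mu_1(b)$, and by the uniform convergence \eqref{320} the same strict inequalities hold for $\mu_1^{\var_n}$ when $n$ is large. The intermediate value theorem and strict monotonicity then give exactly one root $\alpha_k^n$ in $K$.

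\emph{Convergence and global uniqueness.} Shrinking $K$ gives $\alpha_k^n\to\al(k)$. To exclude roots outside $K$, I will use the monotonicity of $\mu_1^{\var}$ in the domain, $\mu_1^{\var}(\al)\ge\mu_1(\al)$, which handles large $\al$. For small $\al$, I will use the uniform estimates on $[\delta,a]$, where $\mu_1^{\var_n}$ stays close to $\mu_1>-\la_k$. This is the step where I would be most careful.
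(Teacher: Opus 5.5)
Your route is the paper's own: a Hellmann--Feynman identity, the limit via Lemma \ref{lem34}, then monotonicity and the intermediate value theorem. You correctly locate the two delicate points that the paper passes over, but your plan for global uniqueness of $\alpha_k^n$ fails as written.

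Domain monotonicity $\mu_1^{\varepsilon}(\alpha)\ge\mu_1(\alpha)$ is a \emph{lower} bound. It excludes roots of \eqref{330} wherever $\mu_1(\alpha)>-\lambda_k$, i.e.\ on all of $(0,a]$ at once. Your alternative of uniform estimates on $[\delta,a]$ would leave $(0,\delta)$ uncovered anyway. The bound gives nothing for large $\alpha$. For $\alpha\ge b$ you need $\mu_1^{\varepsilon_n}(\alpha)<-\lambda_k$ uniformly on the non-compact set $[b,\infty)$, where Lemma \ref{lem34} does not apply.

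The missing ingredient is an upper bound from the Rayleigh quotient \eqref{315}. Use the test function $z\eta$, with $z$ the explicit whole-line eigenfunction and $\eta$ a cut-off equal to $1$ on $(0,\frac{1}{2\varepsilon})$ and vanishing at $\frac1\varepsilon$. Testing the equation of $z$ with $z\eta^2$ gives
\[
\mu_1^{\varepsilon}(\alpha)\le\mu_1(\alpha)+(N-1)\,\frac{\int_0^{1/\varepsilon}p_\alpha z^2|\eta'|^2\,\mathrm{d}r}{\int_0^{1/\varepsilon}w_\alpha z^2\eta^2\,\mathrm{d}r}.
\]
Bound the numerator using $p_\alpha\le (Ns)^{N-2}r$ and $z\le r^{-(\alpha+1)}$, where $s=\frac{N(\alpha+1)}{N-1}$, and bound the denominator from below on $(1,2)$. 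The error is then at most $C_N(c_N\varepsilon^2)^{\alpha+1}$, which tends to $0$ uniformly in $\alpha\ge b$, while $\mu_1(\alpha)\le\mu_1(b)<-\lambda_k$. Note also that $\alpha(1)=0$, so for $k=1$ the compact set must be $[0,b]$, outside the range of Lemma \ref{lem34}.

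A second, smaller error: the bound $z_n\le Cr^{\frac{\alpha+1}{N-1}}$ near $0$ is false, and comparison with $z$ points the other way. The two equations give
$\bigl(p_\alpha(z'z_n-z_n'z)\bigr)'=\frac{\mu_1^{\varepsilon_n}-\mu_1}{N-1}\,w_\alpha z z_n>0$, so $z_n/z$ is decreasing on $(0,\frac1{\varepsilon_n})$. Since $\mu_1^{\varepsilon_n}>\mu_1$ strictly, $z_n\sim c_n r^{\beta_n}$ at $0$ with $\beta_n<\frac{\alpha+1}{N-1}$, so $z_n/z\to+\infty$.

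What you can use near $0$ is $z_n\le Cr^{\beta'}$ for a fixed $\beta'<\frac{\alpha+1}{N-1}$. This comes from comparing with the supersolution $r^{\beta'}$ on a small interval $(0,r_0)$, valid for large $n$ because $\mu_1^{\varepsilon_n}\to\mu_1$ and $q_\alpha/w_\alpha\to0$ at $0$. It is only needed for $N=2$, where $w_\alpha=r^{-1}$. For $N\ge3$ the weights are integrable at $0$, and $0\le z_n\le 1$ (with \eqref{327} for $z_n'$) already dominates.

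The monotonicity of $z_n/z$ is exactly what you want at infinity. It gives $z_n\le z/z(r_0)$ on $[r_0,\frac1{\varepsilon_n})$ uniformly in $n$, i.e.\ decay like $r^{-(\alpha+1)}$. This is the true rate, slower than the one printed in \eqref{321}, but still enough for your domination at infinity.
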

\begin{proof}
From known results in \cite{GGN}, we conclude that if $z_n$ denotes the first  eigen-function corresponding to \eqref{323}, then $\frac{\partial z_n}{\partial\al}$ and $\frac{\partial \mu_1^{\var_n}(\al)}{\partial\al}$ are continuous functions. Differentiating both sides of \eqref{323} with respect to $\al$, we obtain
\begin{equation}\label{332}
\left\{\begin{aligned}
&\quad-\left(\frac{\partial z_n}{\partial\al}\right)''-\frac{1}{r}\left(\frac{\partial z_n}{\partial\al}\right)'\left(1+\frac{N(N-2)}{N-1}
\frac{( \al+1)}{1+r^{\frac{N( \al+1)}{N-1}}}\right)\\&
-\frac{z'_n}{r}\frac{\partial}{\partial \al}\left(
\frac{N(N-2)}{N-1}
\frac{( \al+1)}{1+r^{\frac{N( \al+1)}{N-1}}}\right)
-\frac{N^3( \al+1)^2}{(N-1)^2}\frac{r^{\frac{N( \al+1)}{N-1}-2}}{(1+r^{\frac{N( \al+1)}{N-1}})^2}\frac{\partial z_n}{\partial\al}\\
&-\frac{\partial}{\partial \al}\left(\frac{N^3( \al+1)^2}{(N-1)^2}\frac{r^{\frac{N( \al+1)}{N-1}-2}}{(1+r^{\frac{N( \al+1)}{N-1}})^2}\right) z_n\\
&=\frac{\partial \mu_1^{\var_n}(\al)}{\partial \al}\frac{z_n}{(N-1)r^2}+\frac{\mu_1^{\var_n}(\al)}{(N-1)r^2}
\frac{\partial z_n}{\partial \al},
\qquad  r\in\left(0,\frac1{\var_n}\right),\\
&\left(\frac{\partial z_n}{\partial \al}\right)\left(\frac1{\var_n}\right)=0.
\end{aligned}
\right.
\end{equation}
Multiplying \eqref{323} by $\frac{\partial z_n}{\partial \al}$ and \eqref{332} by $z_n$, integrating and subtracting, we get
\begin{align}\label{333e}
&\quad	-\int_0^{\frac1{\var_n}}\frac{\partial}{\partial \al}\left(\frac{N^3( \al+1)^2}{N-1}\frac{r^{\frac{N( \al+1)}{N-1}-2}}{(1+r^{\frac{N( \al+1)}{N-1}})^2}\right) z_n^2r^{N-1}|U'_\al|^{N-2}\md r\\
&-\int_0^{\frac1{\var_n}}\frac{\partial}{\partial \al}\left(
\frac{N(N-2)( \al+1)}{1+r^{\frac{N( \al+1)}{N-1}}}\right) z_n z'_nr^{N-2}|U'_\al|^{N-2}\md r=\frac{\partial \mu_1^{\var_n}(\al)}{\partial \al}\int_0^{\frac1{\var_n}}z_n^2r^{N-3}|U'_\al|^{N-2}
\md r.\nonumber
\end{align}
From Lemma \ref{lem34}, we are able to pass to the limit in \eqref{333e} and deduce that, as $n\rightarrow+\infty$,
\begin{align}\label{334e}
\frac{\partial \mu_1^{\var_n}(\al)}{\partial \al}\to&-\frac{\int_0^{+\infty}\frac{\partial}{\partial \al}\left(\frac{N^3( \al+1)^2}{N-1}\frac{r^{\frac{N( \al+1)}{N-1}-2}}{(1+r^{\frac{N( \al+1)}{N-1}})^2}\right) z^2r^{N-1}|U'_\al|^{N-2}\md r}{\int_0^{+\infty}z^2r^{N-3}|U'_\al|^{N-2}\md r}\\
&\quad-\frac{\int_0^{+\infty}\frac{\partial}{\partial \al}\left(\frac{N(N-2)( \al+1)}{1+r^{\frac{N( \al+1)}{N-1}}}
	\right) z z'r^{N-2}|U'_\al|^{N-2}\md r}{\int_0^{+\infty}z^2r^{N-3}|U'_\al|^{N-2}\md r}\nonumber\\
&=\frac{\partial \mu_1(\al)}{\partial \al}=-2(N-1)( \al+1)\nonumber
\end{align}
uniformly on compact sets of $[0,+\infty)$, which completes the proof of \eqref{329}.

Lastly, due to $\frac{\partial \mu_1^{\var_n}(\al)}{\partial \al}<0$ and $\mu_1^{\var_n}(\al)\to\mu_1(\al)$, we infer that for any integer $k\geq1$, the equation \eqref{330} has exactly one root $\al=\alpha_k^n$. Recalling that $\mu_1(\al)=-(N-1)( \al+1)^2$ and that $\al=\alpha(k)$ is the solution to the algebraic equation
\begin{align}\label{335e}
(N-1)( \al+1)^2=\la_k=k(N+k-2),
\end{align}
we consequently establish that $\alpha(k)=\lim\limits_{n\to+\infty}\alpha_k^n$. This completes the proof.
\end{proof}

\begin{rem}\label{morse-index}
	Based on identities \eqref{39}, \eqref{311} and \eqref{330}, by Lemmas \ref{lem31} and \ref{lem33}, we deduce that the solution $ u_{\varepsilon_n,\al}$ to problem \eqref{31} with parameter $\varepsilon_n$ is degenerate if and only if $\alpha = \alpha_k^n$ for $k = 1,2,\cdots$, i.e., the linearized equation of \eqref{31} at $u_{\varepsilon_n,\al}$ only has trivial solution $0$ if and only if $\alpha\neq\alpha_k^n$. Moreover, at each points $\alpha_k^n$, the Morse index of $u_{\var_n,\alpha}$ changes.

In particular, over the interval $(\alpha_k^n - \delta, \alpha_k^n + \delta)$ where $\delta > 0$ is sufficiently small, the Morse index of $u_{\varepsilon_n,\al}$ increases by exactly $\operatorname{Ker}(\Delta_{\mathbb{S}^{N-1}} + \la_k)$.
\end{rem}

\subsection{The bifurcation result in the ball}

In this section, we establish bifurcation results of \eqref{31} in the ball, using some ideas from \cite{GGPS}. To analyze the bifurcation behavior, we introduce the following notation.

We use $u_{n,\al}$ to represent the radial solution of \eqref{31} corresponding to the exponent $\al$, for $\var=\var_n$, and let $B_{\frac{1}{\var_n}}$ be the ball centered at the origin with radius $\frac{1}{\var_n}$. We define the set
\begin{align}\label{336}
	\mathcal{A}(n)=\Big\{&(\al,  u_{n,\al})\in(0,+\infty)\times C^{1,\eta}(\overline B_{\frac{1}{\var_n}}) \ \text{such\  that}\   u_{n,\al} \\ &\text{is\ the\ radial\ solution\ of}\ \eqref{31} \ \text{defined\ in}\ \eqref{32}\Big\}\nonumber.
\end{align}
For the given curve $\mathcal{A}(n)$, we say that a point $(\alpha_j, u_{n,\alpha_j}) \in \mathcal{A}(n)$ is a non-radial bifurcation point, if for every neighborhood of $(\alpha_j, u_{n,\alpha_j})$ in $(0, +\infty) \times  C^{1,\eta}(\overline{B}_{\frac{1}{\var_n}})$, there exists a point $(\alpha, v_{n,\alpha})$ in this neighborhood such that $v_{n,\alpha}$ is a non-radial solution of \eqref{31}.

The subspace $\mathcal{Q}_n \subset  C^{1,\eta}(\overline{B}_{\frac{1}{\var_n}})$ is defined by
\begin{align}\label{337}
	\mathcal{Q}_n=\Big\{&h\in  C^{1,\eta}(\overline B_{\frac{1}{\var_n}}) \ \text{such\  that}\ h(x_1,\cdots,x_N)=h\left( g(x_1,\cdots,x_{N-1}),x_N\right) \\ &\text{for\ any}\ g\in\mathcal{O}(N-1)\Big\}\nonumber,
\end{align}
where $\mathcal{O}(N-1)$ is the orthogonal group in $\R^{N-1}$.

Now we need the following Theorem.

\begin{thm}[\textbf{\cite{AM}}]
	\label{the36}
	Let \(T \in C^1(D, X)\) be compact and such that \(1\) is not a characteristic value of \(T'(x_0)\) $($i.e., $S'(x_0)$ is invertiable$)$ for some \(x_0 \in D\). Then, setting \(S(x) = x - T(x)\) and \(S(x_0) = p\), one has that \(x_0\) is an isolated solution of \(S(x) = p\) and there holds
	\[
	i(S, x_0) = (-1)^{\beta},
	\]
	where $i(S, x_0)$ is the Morse index and \(\beta\) is the sum of the algebraic multiplicities of all the characteristic values of \(T'(x_0)\) contained in \((0, 1)\).
\end{thm}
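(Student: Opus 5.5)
The plan is to reduce the computation, by a homotopy, to the Leray--Schauder degree of the linear compact perturbation of the identity $I-L$ with $L:=T'(x_0)$, and then to evaluate that degree by spectral decomposition. Here $i(S,x_0)$ is understood as the local Leray--Schauder index $\deg_{LS}(S,B_r(x_0),p)$ for small $r$. First I would show that $x_0$ is isolated. Since $T$ is compact and $C^1$, the derivative $L$ is a compact linear operator. By hypothesis $1$ is not a characteristic value of $L$, so $I-L$ is injective. By the Fredholm alternative $I-L$ is then an isomorphism, with $\|(I-L)^{-1}\|\le M$. Write
\[
S(x)-p=(I-L)(x-x_0)+R(x),\qquad \|R(x)\|=o(\|x-x_0\|),
\]
which follows from $T\in C^1$. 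Hence $\|S(x)-p\|\ge \bigl(M^{-1}-o(1)\bigr)\|x-x_0\|>0$ for $0<\|x-x_0\|\le r$ with $r$ small. So $x_0$ is an isolated solution, and the local index $i(S,x_0)$ is well defined.

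Next I would use the homotopy
\[
H_t(x):=(x-x_0)-L(x-x_0)-t\,R(x),\qquad t\in[0,1],
\]
on $\overline{B_r(x_0)}$. This is the identity minus the compact family $L(x-x_0)+tR(x)$, and $tR(x)=t\bigl(T(x)-T(x_0)-L(x-x_0)\bigr)$ is compact jointly in $(t,x)$. The same lower bound gives $H_t(x)\neq0$ on $\partial B_r(x_0)$ for every $t$. Since $H_1(x)=S(x)-p$, homotopy invariance of the Leray--Schauder degree gives $i(S,x_0)=\deg_{LS}(I-L,B_r(0),0)$.

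For the linear degree I would split $X=X_1\oplus X_2$. Here $X_1$ is the direct sum of the generalized eigenspaces of $L$ for the eigenvalues $\lambda>1$, equivalently the characteristic values $1/\lambda\in(0,1)$. There are finitely many such $\lambda$ and each has finite algebraic multiplicity by Riesz--Schauder theory, so $X_1$ is finite dimensional, $\dim X_1=\beta$, and both $X_1$ and the complementary invariant subspace $X_2$ (the range of the complementary Riesz projection) are $L$-invariant. On $X_2$ the restriction $L_2$ has no real eigenvalue in $[1,\infty)$. Hence $I-sL_2$ is invertible for every $s\in[0,1]$: a kernel element would give $sL_2x=x$, i.e.\ an eigenvalue $1/s\ge1$. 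The homotopy $s\mapsto I-sL$ on $X_2$ therefore connects $I-L_2$ to the identity, and its degree on $X_2$ is $1$. By the product formula for the degree, $\deg(I-L)=\operatorname{sign}\det\bigl((I-L)|_{X_1}\bigr)$. On $X_1$ all eigenvalues of $I-L$ are real and equal to $1-\lambda<0$, counted with algebraic multiplicity, so the determinant has sign $(-1)^\beta$. Thus $i(S,x_0)=(-1)^\beta$.

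The main technical point is the product/reduction step: justifying that the infinite-dimensional degree splits along the invariant decomposition, and that the homotopy $I-sL_2$ never acquires a kernel. Complex eigenvalues of $L_2$ cause no trouble because they are never real, and real eigenvalues of $L_2$ in $[1,\infty)$ are excluded by construction. The splitting itself I would handle through the standard finite-dimensional approximation that defines the Leray--Schauder degree, as in \cite{AM}.
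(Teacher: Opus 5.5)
Your proof is correct and is essentially the standard argument from \cite{AM}, which the paper quotes without proof. The linearization homotopy reduces $i(S,x_0)$ (rightly read as the Leray--Schauder index, not a Morse index) to $\deg_{LS}(I-L,B_r(0),0)$, and the invariant splitting $X=X_1\oplus X_2$, with $I-sL$ admissible on $X_2$, then gives $\operatorname{sign}\det\bigl((I-L)|_{X_1}\bigr)=(-1)^\beta$.

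Two cosmetic points. First, with $R(x)=T(x)-T(x_0)-L(x-x_0)$ one has $S(x)-p=(I-L)(x-x_0)-R(x)$, not $+R(x)$; the minus sign is what your $H_1$ actually uses. Second, you can bypass the product formula by homotoping $I-LP_1-sLP_2$ (with $P_i$ the projections onto $X_i$) down to $I-LP_1$. Its compact part has range in $X_1$, so its Leray--Schauder degree is by definition the Brouwer degree of $(I-L)|_{X_1}$.
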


\begin{thm}\label{thm38}
For any $\al\in(0,+\infty)$, let the operator $\mathcal{L}^n(\al,h):(0,+\infty)\times\mathcal{Q}_n\to \mathcal{Q}_n$ be defined by $\mathcal{L}^n(\al,h):=(-\Delta_N)^{-1}(|x|^{N\al}e^{h})$, then the operator $I - \mathcal{L}_{h}^n(\al,  u_{n,\al}):(0,+\infty)\times\mathcal{Q}_n\to \mathcal{Q}_n$  is invertible for $\al \neq\alpha_k^n$, $k=1,2,\cdots$, where $\alpha_k^n$ are given by \eqref{331} and
\begin{align*}
\mathcal{L}_{h}^n(\al,  u_{n,\al}):=\lim_{t\to0}\frac{(-\Delta_N)^{-1}(|x|^{N\al}e^{   u_{n,\al}+th})-  u_{n,\al}}{t}.
\end{align*}	
\end{thm}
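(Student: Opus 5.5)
The plan is to reduce invertibility of $I-\mathcal{L}^n_h(\al,u_{n,\al})$ on $\mathcal{Q}_n$ to the absence of nontrivial solutions of the linearized Dirichlet problem \eqref{37}. Then we use the spherical-harmonic decomposition \eqref{39}--\eqref{311} together with Lemmas \ref{lem31}, \ref{lem33} and Proposition \ref{pp35} to exclude such solutions when $\al\neq\al_k^n$.

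\textbf{Compactness and the Fredholm reduction.} First I will check that $\mathcal{L}^n(\al,\cdot)$ is well defined and compact on $C^{1,\eta}(\overline B_{\frac1{\var_n}})$. It maps into functions with the boundary value $\beta_\al$ prescribed in \eqref{31}, so the translate $h\mapsto \mathcal{L}^n(\al,h)-\beta_\al$ is the operator to analyse. The right-hand side $|x|^{N\al}e^{h}$ is bounded in $L^\infty(B_{\frac1{\var_n}})$ for $h$ in bounded sets. The $C^{1,\eta'}$ regularity of Lieberman/DiBenedetto for $-\Delta_N w=f$ with Dirichlet data then gives a bound for $(-\Delta_N)^{-1}$ in $C^{1,\eta'}$ with $\eta'>\eta$, and compactness follows from the compact embedding $C^{1,\eta'}\hookrightarrow C^{1,\eta}$. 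Invariance of $\mathcal{Q}_n$ follows from the uniqueness of the Dirichlet problem for $-\Delta_N$ and its rotation invariance. Since $\mathcal{L}^n_h(\al,u_{n,\al})$ is then a compact linear operator, the Fredholm alternative reduces invertibility of $I-\mathcal{L}^n_h$ to injectivity.

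\textbf{Computing the derivative.} The delicate step is the Fr\'echet differentiability of $(-\Delta_N)^{-1}$ at $f_0=|x|^{N\al}e^{u_{n,\al}}$. Here $-\Delta_N u_{n,\al}=f_0$ with $|\nabla u_{n,\al}|=|U_\al'(r)|>0$ for $r>0$, vanishing only at the origin like $r^{\frac{1+\al}{N-1}}$ (indeed $|U_\al'|\sim r^{\frac{N(\al+1)}{N-1}-1}$ near $0$). Away from the origin the operator is uniformly elliptic near $u_{n,\al}$, so the implicit function theorem applies there. Near the origin I would use the weighted linearization with weight $|\nabla U_\al|^{N-2}$, following \cite{DDGL,GGN}. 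The outcome should be that $\phi=\mathcal{L}^n_h(\al,u_{n,\al})h$ is characterized as the weak solution, vanishing on $\partial B_{\frac1{\var_n}}$, of
\[
-\mathrm{div}\bigl(|\nabla U_\al|^{N-2}\nabla\phi+(N-2)|\nabla U_\al|^{N-4}(\nabla U_\al\cdot\nabla\phi)\nabla U_\al\bigr)=|x|^{N\al}e^{U_\al}h .
\]
Consequently $h=\mathcal{L}^n_h h$ holds iff $h$ solves \eqref{38}. I expect this step to be the main obstacle, because of the degeneracy of the weight at $0$. If full differentiability is hard, I would define the linearized map directly through the weighted problem, which is coercive in the weighted space by Hardy-type estimates, and verify the limit in the definition of $\mathcal{L}^n_h$ by a difference-quotient argument.

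\textbf{Kernel analysis.} Let $h\in\mathcal{Q}_n$ solve \eqref{38} and expand $h=\sum_k v_k(r)\Phi_k(\theta)$. Each $v_k$ solves \eqref{39}. For $k=0$, Lemma \ref{lem31} (radial nondegeneracy) gives $v_0\equiv0$. For $k\ge1$, $v_k$ is an eigenfunction of \eqref{310} with eigenvalue $-\la_k<0$. Lemma \ref{lem33} says that $\mu_1^{\var_n}(\al)$ is the only negative eigenvalue, so $-\la_k=\mu_1^{\var_n}(\al)$ is required. By Proposition \ref{pp35}, for $k\ge1$ this happens exactly when $\al=\al_k^n$. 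Hence for $\al\neq\al_k^n$ every $v_k$ vanishes, $h\equiv0$, and $I-\mathcal{L}^n_h(\al,u_{n,\al})$ is invertible on $\mathcal{Q}_n$. The bounded inverse follows from the open mapping theorem, or equivalently from the Fredholm property.
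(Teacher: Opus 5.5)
Your proposal follows the same route as the paper: the paper likewise identifies $\ker\bigl(I-\mathcal{L}^n_h(\al,u_{n,\al})\bigr)$ with weak solutions of the linearized Dirichlet problem \eqref{38}, via a formal first-order expansion of the weak formulation, and then invokes Lemmas \ref{lem31}, \ref{lem33} and Remark \ref{morse-index} (i.e.\ the spherical-harmonic reduction together with Proposition \ref{pp35}) to conclude that the kernel is trivial for $\al\neq\al_k^n$; your compactness/Fredholm step simply makes explicit the passage from trivial kernel to invertibility, which the paper leaves implicit. One small slip: near the origin $|U_\al'|\sim r^{\frac{N\al+1}{N-1}}$, as your own parenthetical computation shows, not $r^{\frac{1+\al}{N-1}}$.
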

\begin{proof}
Suppose $v\in \text{Ker}\left( I - \mathcal{L}_{h}^n(\al,  u_{n,\al})\right)$ i.e.,
\begin{align*}
\mathcal{L}_{v}^n(\al,  u_{n,\al}):=\lim_{t\to0}\frac{(-\Delta_N)^{-1}(|x|^{N\al}e^{   u_{n,\al}+tv})-  u_{n,\al}}{t}=v.
\end{align*}	
Thus, for any test function $\varphi\in C_c^\infty(B_{\frac1{\var_n}})$, we have
\begin{align}\label{339q}
&\quad\int_{B_{\frac1{\var_n}}}\left(|\nabla   u_{n,\al}+t\nabla v+o(t)\nabla w|^{N-2}(\nabla   u_{n,\al}+t\nabla v+o(t)\nabla w)-|\nabla   u_{n,\al}|^{N-2}\nabla   u_{n,\al}\right)\cdot\nabla \varphi\md x\\
&=\int_{B_{\frac1{\var_n}}}|x|^{N\al}\left( e^{   u_{n,\al}+tv}-e^{   u_{n,\al}}\right)\varphi\md x\nonumber,
\end{align}
where $w\in X$. On the one hand, from the LHS of the equation \eqref{339q}, we have
\begin{align}\label{340q}
&\quad\int_{B_{\frac1{\var_n}}}\left(|\nabla   u_{n,\al}+t\nabla v+o(t)\nabla w|^{N-2}(\nabla   u_{n,\al}+t\nabla v+o(t)\nabla w)-|\nabla u_n|^{N-2}\nabla   u_{n,\al}\right)\cdot\nabla \varphi\md x\\
&=\int_{B_{\frac1{\var_n}}}\int_0^1(N-2)|\nabla   u_{n,\al}+st\nabla v+o(t)s\nabla w|^{N-4}\left(\left(\nabla   u_{n,\al}+st\nabla v+o(t)s\nabla w\right)\cdot\nabla \varphi\right)\nonumber\\
&\quad\times\left(\left(\nabla   u_{n,\al}+st\nabla v+o(t)s\nabla w\right)\cdot\left( t\nabla v+o(t)\nabla w\right)\right)\nonumber\\
&\quad+|\nabla   u_{n,\al}+st\nabla v+o(t)s\nabla w|^{N-2}\left( t\nabla v+o(t)\nabla w\right)\cdot\nabla \varphi\md s\md x.\nonumber
\end{align}
On the other hand, from the RHS of the equation \eqref{339q}, we obtain
\begin{align}\label{341q}
&\quad\int_{B_{\frac1{\var_n}}}|x|^{N\al}\left( e^{   u_{n,\al}+tv}-e^{  u_{n,\al}}\right)\varphi\md x\\
&=t\int_{B_{\frac1{\var_n}}}|x|^{N\al}e^{  u_{n,\al}+\theta tv}v\varphi\md x,\nonumber
\end{align}
where $\theta\in(0,1)$. Combining \eqref{340q}, \eqref{341q}  with \eqref{339q}, and letting $t\to0$, we get,
\begin{align}\label{342qq}
&\quad\int_{B_{\frac1{\var_n}}}|\nabla   u_{n,\al}|^{N-2}\nabla v\cdot\nabla\varphi+(N-2)
|\nabla   u_{n,\al}|^{N-4}(\nabla   u_{n,\al}\cdot\nabla v)\nabla   u_{n,\al}\cdot\nabla\varphi\md x\\
&=\int_{B_{\frac1{\var_n}}}|x|^{N\alpha}e^{  u_{n,\al}}v\varphi\md x,\qquad\forall\ \varphi\in C_c^\infty(B_{\frac1{\var_n}}),\nonumber
	\end{align}
that is, $v$ weakly solves \eqref{38}, which is the linearized equation of \eqref{31} at $  u_{n,\al}$. By Lemmas \ref{lem31}, \ref{lem33} and Remark \ref{morse-index}, we know that, if $\alpha \ne\alpha_k^n$ for $k = 1,2,\cdots $, then $  u_{n,\al}$ is non-degenerate and hence $v=0$. Therefore, $\text{Ker}\left( I - \mathcal{L}_{h}^n(\al,  u_{n,\al})\right)=\{0\}$, i.e., $I - \mathcal{L}_{h}^n(\al,  u_{n,\al})$ is invertible for $\al \neq\alpha_k^n$, for $k = 1,2,\cdots $. This completes our proof.
\end{proof}

\begin{thm}\label{th36}
Fix $n\in \N$ and let $\alpha_k^n$ be given by \eqref{330}. Then the points $(\alpha_k^n, u_{n,\alpha_k^n})$ are nonradial bifurcation points of the curve $\mathcal{A}(n)$. Furthermore, these nonradial solutions bifurcating from $(\alpha_k^n, u_{n,\alpha_k^n})$ lie in the space $\mathcal{Q}_n$ and hence are $\mathcal{O}(N - 1)$-invariant.
\end{thm}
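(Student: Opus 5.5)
The natural route is a Leray--Schauder degree argument in the space $\mathcal{Q}_n$, in the spirit of \cite{GGN,DDGL}. The argument has four steps.

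\textbf{Step 1: the radial branch and the degree.} Fix $n$ and $k$. Write the problem \eqref{31} as a fixed point equation
\[
S(\al,h):=h-\mathcal{L}^n(\al,h)=0 \quad\text{on } (0,+\infty)\times\mathcal{Q}_n .
\]
Here $\mathcal{L}^n(\al,h)$ is the solution of $-\Delta_N w=|x|^{N\al}e^{h}$ with the boundary value $\beta_\al$. This map is compact from $C^{1,\eta}$ into $C^{1,\eta}$, by $C^{1,\eta}$ regularity for the $N$-Laplacian. It also preserves $\mathcal{O}(N-1)$-invariance, by uniqueness of the Dirichlet problem and rotation invariance.

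By Proposition \ref{pp35}, $\alpha_k^n$ is the only degeneracy value in a small interval $(\alpha_k^n-\delta,\alpha_k^n+\delta)$. Theorem \ref{thm38} then shows that $I-\mathcal{L}^n_h(\al,u_{n,\al})$ is invertible at $\al=\alpha_k^n\pm\delta$. Theorem \ref{the36} therefore gives the local index
\[
i\bigl(S(\al,\cdot),u_{n,\al}\bigr)=(-1)^{\beta(\al)},
\]
where $\beta(\al)$ is the number of negative eigenvalues of the linearized problem \eqref{38}, counted with multiplicity and restricted to $\mathcal{Q}_n$. One still has to justify identifying characteristic values of $T'$ in $(0,1)$ with negative Dirichlet eigenvalues of the linearized operator. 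This is done by a homotopy in the weight of the right-hand side of \eqref{38}, as in \cite{GGN}.

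\textbf{Step 2: the index jumps.} I count $\beta$ through the decomposition \eqref{39}--\eqref{311}. By Lemma \ref{lem33}, only the first radial eigenvalue $\mu_1^{\var_n}(\al)$ can produce negative directions. A spherical mode $k'$ contributes exactly when
\[
\mu_1^{\var_n}(\al)+\la_{k'}<0 .
\]
By Proposition \ref{pp35}, $\mu_1^{\var_n}$ is strictly decreasing in $\al$. Hence, as $\al$ crosses $\alpha_k^n$, the mode $k$ switches from nonnegative to negative, and every other mode keeps its sign.

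Restricted to $\mathcal{Q}_n$, the relevant spherical harmonics of degree $k$ are those invariant under $\mathcal{O}(N-1)$ acting on $(x_1,\dots,x_{N-1})$. This space is one-dimensional, spanned by the zonal (Gegenbauer) harmonic in $x_N/|x|$. So $\beta$ changes by exactly $1$, and the index changes sign:
\[
(-1)^{\beta(\alpha_k^n-\delta)}\neq(-1)^{\beta(\alpha_k^n+\delta)} .
\]

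\textbf{Step 3: contradiction with homotopy invariance.} Suppose $(\alpha_k^n,u_{n,\alpha_k^n})$ were not a bifurcation point in $\mathcal{Q}_n$. Then there is a neighborhood in which the only solutions of $S=0$ are the radial ones $u_{n,\al}$. Choose a small ball $B_r$ in $C^{1,\eta}$ around $u_{n,\alpha_k^n}$. For $\al$ in $[\alpha_k^n-\delta,\alpha_k^n+\delta]$, the sets $u_{n,\al}+B_r$ form a continuously moving family of admissible open sets with no solutions on their boundaries. Here I use that $u_{n,\al}$ depends continuously on $\al$ in $C^{1,\eta}(\overline B_{1/\var_n})$, which is clear from the explicit formula \eqref{32}.

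By homotopy invariance, the degree $\deg(S(\al,\cdot),u_{n,\al}+B_r,0)$ is constant in $\al$. But for $\al\neq\alpha_k^n$ this degree equals the index computed in Step 1, and Step 2 shows the index changes sign across $\alpha_k^n$. This contradiction shows that nontrivial solutions accumulate at $(\alpha_k^n,u_{n,\alpha_k^n})$. These solutions lie in $\mathcal{Q}_n$.

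\textbf{Step 4: the solutions are non-radial.} The bifurcating solutions are not radial. Indeed, by Lemma \ref{lem31} the radial solution of \eqref{31} for each $\al$ is unique, so any solution other than $u_{n,\al}$ must be non-radial.

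\textbf{Main obstacle.} I expect the hardest part to be the rigorous use of Theorem \ref{the36} in this quasilinear setting. It requires two things. First, $\mathcal{L}^n$ must be $C^1$ near $u_{n,\al}$ in $C^{1,\eta}$. This is delicate because of the degeneracy of $|\nabla u|^{N-2}$ at the origin, where $\nabla u_{n,\al}=0$. Second, the algebraic multiplicities of the characteristic values must be matched with the spectral counts in Lemma \ref{lem33}.

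For the first point, I would use that $|\nabla U_\al|\sim r^{\frac{\al+1}{N-1}}$ vanishes only at $0$. I would work with the weighted linearized operator from \eqref{38}, and use the Sturm--Liouville structure of \eqref{310} to get simple eigenvalues for each mode. Simplicity would give algebraic multiplicity equal to geometric multiplicity.
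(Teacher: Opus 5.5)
Your proposal is correct and follows essentially the same route as the paper. The paper also argues by Leray--Schauder degree in $\mathcal{Q}_n$: it obtains the indices $(-1)^{m(\alpha_k^n\pm\delta)}$ from Theorems \ref{the36} and \ref{thm38}, gets the jump $m(\alpha_k^n+\delta)-m(\alpha_k^n-\delta)=1$ from the one-dimensionality of the $\mathcal{O}(N-1)$-invariant degree-$k$ harmonics, and deduces non-radiality from the uniqueness in Lemma \ref{lem31}. You are more careful than the paper on several technical points it takes for granted: the boundary value $\beta_\alpha$ in the definition of $\mathcal{L}^n$, matching characteristic values with the Morse index, the moving family $u_{n,\alpha}+B_r$, and the $C^1$ regularity of $\mathcal{L}^n$ at the degenerate point $x=0$.
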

\begin{proof}
From \cite{SW1}, we have the spectral properties of the Laplace-Beltrami operator $\Delta_{\mathbb{S}^{N-1}}$. Each eigenvalue $\lambda_k$ ($k \geq 1$) has a one-dimensional eigenspace $V_k$, $V_k = \operatorname{span}\{\phi_k\}$ where $\phi_k$ is $\mathcal{O}(N-1)$-invariant. This spectral structure implies that
\begin{align}\label{338t}
	m(\alpha_k^n+\delta)-m(\alpha_k^n-\delta)=1,
\end{align}
if $\delta>0$ is sufficiently small, where $m(\al)$ denotes the Morse index of the radial solution $  u_{n,\al}$ in the space $\mathcal{Q}_n$.

Consider the operator $\mathcal{L}^n(\alpha,h): (0,+\infty) \times \mathcal{Q}_n \to \mathcal{Q}_n$ defined by
$$
\mathcal{L}^n(\alpha,h) := (-\Delta_N)^{-1}\left(|x|^{N\alpha}e^{h }\right).
$$
The operator $\mathcal{L}^n$ is compact for each fixed $\alpha \in (0,+\infty)$ and $\alpha \mapsto \mathcal{L}^n(\alpha,\cdot)$ is continuous.

Assume, contrary to claim, that $(\alpha_k^n,  u_{n,\alpha_k^n})$ is not a nonradial bifurcation point. Define the operator
$$
T^n(\alpha,h) := h - \mathcal{L}^n(\alpha,h).
$$
By Lemma~\ref{lem31}, the solution $  u_{n,\al}$ is a unique and isolated radial solution for any $\alpha$. Consequently, there exists $\delta_0 > 0$ such that for all $\delta \in (0, \delta_0)$ and $\rho \in (0, \delta_0)$, the following holds
\begin{align}\label{339t}
	&T^n(\al,h)\ne0,\quad\quad \forall \,\, h\in\mathcal{Q}_n\quad\text{such\ that}\quad\|h-  u_{n,\al}\|_{\mathcal{Q}_n}\leq \rho \\
	&\quad\text{and}\quad h\ne   u_{n,\al},\quad \forall\al\in(\alpha_k^n-\delta,\alpha_k^n+\delta).\nonumber
\end{align}
Moreover, we may choose $\delta_0$ sufficiently small such that the interval $[\alpha_k^n - \delta, \alpha_k^n + \delta]$ contains no other points $\alpha_j^n$ with $j \neq k$.

Define the set $\mathcal{P}$ by
$$\mathcal{P}:=\{(\al,h)\in[\alpha_k^n-\delta,\alpha_k^n+\delta]\times\mathcal{Q}_n:\|h-  u_{n,\al}\|_{\mathcal{Q}_n}\leq\rho\}.$$
Since $T^n(\alpha,\cdot) = \mathrm{id} - \mathcal{L}^n(\alpha,\cdot)$ is a compact perturbation of the identity, we consider the Leray-Schauder degree $\deg(T^n(\alpha,\cdot), \mathcal{P}_\alpha, 0)$, where
$\mathcal{P}_\al:=\{h\in\mathcal{Q}_n\ \text{such\ that}\ (\al,h)\in\mathcal{P}\}$. From \eqref{339t}, the equation $T^n(\alpha,h) = 0$ has no solutions on $[\alpha_k^n - \delta, \alpha_k^n + \delta] \times \partial\mathcal{P}_\alpha$. Consequently, by homotopy invariance of the degree, we have
\begin{align}\label{340t}
\deg(T^n(\al,\cdot),\mathcal{P}_\al,0) \quad\text{is\ constant\ on}\ [\alpha_k^n-\delta,\alpha_k^n+\delta].
\end{align}
	
By Theorem~\ref{thm38}, the operator $I - \mathcal{L}_{h}^n(\alpha,  u_{n,\al}): (0,+\infty) \times \mathcal{Q}_n \to \mathcal{Q}_n$ is invertible at both $\alpha_k^n - \delta$ and $\alpha_k^n + \delta$. Moreover, Theorem~\ref{the36} yields the degree
\begin{align*}
	\deg(T^n(\alpha_k^n - \delta,\cdot),\mathcal{P}_{\alpha_k^n - \delta},0) &= (-1)^{m(\alpha_k^n - \delta)}\end{align*}
	and
	\begin{align*}\deg(T^n(\alpha_k^n + \delta,\cdot),\mathcal{P}_{\alpha_k^n + \delta},0) &= (-1)^{m(\alpha_k^n + \delta)}.
\end{align*}
The choice of $\alpha_k^n$ and the space $\mathcal{Q}_n$, combined with \eqref{338t}, implies
$$
\deg(T^n(\alpha_k^n + \delta,\cdot),\mathcal{P}_{\alpha_k^n + \delta},0) = -\deg(T^n(\alpha_k^n - \delta,\cdot),\mathcal{P}_{\alpha_k^n - \delta},0).
$$
This contradicts \eqref{340t}. Consequently, we conclude that $(\alpha_k^n, u_{n,\alpha_k^n})$ is a nonradial bifurcation point, with the bifurcating solutions being nonradial. This completes our proof.
\end{proof}

\begin{thm}\label{thm39}
Let $k$ be an even integer and $\alpha_k^n$ be defined by equation \eqref{331}. Then, there exist $\left[\frac{N}{2}\right]$ distinct nonradial solutions of \eqref{31} that bifurcate from the pair $(\alpha_k^n, u_{n, \alpha_{n}^k})$, where $[x]$ denotes the largest integer that is less than or equal to $x$.
\end{thm}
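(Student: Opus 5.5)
We follow the scheme of \cite{GGN} (see also \cite{DDGL}) and repeat the degree argument of Theorem \ref{th36} in different symmetry classes. For $l=1,\dots,[\frac N2]$ write $x=(x',x'')\in\R^{N-l}\times\R^{l}$ and set
$$\mathcal{Q}_n^l:=\Big\{h\in C^{1,\eta}(\overline B_{\frac1{\var_n}}):\ h(g_1x',g_2x'')=h(x',x'')\ \ \forall\, g_1\in\mathcal{O}(N-l),\ g_2\in\mathcal{O}(l)\Big\},$$
which contains the radial functions. The operator $\mathcal{L}^n(\al,\cdot)=(-\Delta_N)^{-1}(|x|^{N\al}e^{\cdot})$ maps $\mathcal{Q}_n^l$ into itself, because $-\Delta_N$ and the weight commute with orthogonal maps and the Dirichlet datum in \eqref{31} is constant. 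The compactness used in Theorem \ref{th36} and the invertibility in Theorem \ref{thm38} carry over unchanged when restricted to the closed subspace $\mathcal{Q}_n^l$.

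The key step is spectral. Take spherical harmonics of degree $k$ on $\mathbb{S}^{N-1}$ that are invariant under $\mathcal{O}(N-l)\times\mathcal{O}(l)$. Such harmonics have the form $P(|x'|^2,|x''|^2)$, homogeneous of degree $k$, so $k$ must be even. For $k$ even, the space of such invariant harmonics is one-dimensional. Indeed, writing $s=|x'|^2$, $t=|x''|^2$, harmonicity reduces to a two-variable recursion for the coefficients of $P$, which fixes $P$ up to a constant. Nontriviality can be seen from the zonal/Gegenbauer–Jacobi description $P_{k/2}^{(\frac{N-l-2}{2},\frac{l-2}{2})}$.

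Using the decomposition \eqref{39}–\eqref{311}, the kernel of the linearized operator at $u_{n,\al}$ restricted to $\mathcal{Q}_n^l$ is nontrivial exactly when $\al=\alpha_j^n$ with $j$ even. By Lemma \ref{lem33}, only $\mu_1^{\var_n}$ is negative, and by Proposition \ref{pp35} it crosses $-\la_k$ transversally. Hence, restricted to $\mathcal{Q}_n^l$,
$$m_l(\alpha_k^n+\delta)-m_l(\alpha_k^n-\delta)=1,$$
the analogue of \eqref{338t}.

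With this in hand, we run the argument of Theorem \ref{th36}. Suppose no nonradial solution in $\mathcal{Q}_n^l$ bifurcates. Radial isolation from Lemma \ref{lem31} gives the constancy \eqref{340t} of the Leray–Schauder degree on $\mathcal{P}_\al\subset\mathcal{Q}_n^l$. Theorem \ref{the36} evaluates this degree as $(-1)^{m_l(\al)}$, and the sign flip above gives a contradiction. This yields, for each $l$, a branch of nonradial solutions in $\mathcal{Q}_n^l$.

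The remaining point, which we expect to be the main obstacle, is that the $[\frac N2]$ branches are distinct. The plan is to compare directions. Near the bifurcation point, the solutions are $u_{n,\alpha_k^n}+t\,v_l+o(t)$ in $C^{1,\eta}$, where $v_l$ spans the one-dimensional kernel in $\mathcal{Q}_n^l$. For $l\neq l'$ with $l,l'\le[\frac N2]$, the fixed-point subspaces $\mathcal{Q}_n^l$ and $\mathcal{Q}_n^{l'}$ intersect, in degree $k$, only in radial functions. More precisely, an $\mathcal{O}(N-l)\times\mathcal{O}(l)$- and $\mathcal{O}(N-l')\times\mathcal{O}(l')$-invariant function is invariant under the group they generate, which acts transitively on spheres. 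So $v_l\notin\mathcal{Q}_n^{l'}$ and, for small $t$, a solution from branch $l$ cannot lie in $\mathcal{Q}_n^{l'}$.

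This argument is delicate if the continua from Leray–Schauder are not known to be curves. In that case we would argue instead by isotropy: every nonradial element of the $l$-th continuum close to the bifurcation point has isotropy group exactly conjugate to $\mathcal{O}(N-l)\times\mathcal{O}(l)$. The isotropy cannot be larger, since a larger group would force radial symmetry, contradicting Lemma \ref{lem31}. Since these groups are pairwise non-conjugate for distinct $l\le[\frac N2]$, the continua are distinct.
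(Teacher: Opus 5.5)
Your proposal is essentially the paper's proof: the same $\mathcal{O}(N-l)\times\mathcal{O}(l)$-invariant subspaces $\mathcal{Q}_n^l$, one-dimensionality of the invariant degree-$k$ harmonics for even $k$ (you check it directly via the coefficient recursion in $|x'|^2,|x''|^2$, where the paper quotes it), the unit jump of the Morse index, the degree argument of Theorem \ref{th36}, and distinctness from the fact that a function invariant under two different groups $\mathcal{G}_l,\mathcal{G}_{l'}$ is radial. That fact alone finishes the distinctness, because every nonradial element of the $l$-th continuum lies in $\mathcal{Q}_n^l$ and hence not in $\mathcal{Q}_n^{l'}$, so you need neither the curve expansion nor the isotropy fallback; drop the fallback in any case, since for $N=2l$ the group generated by $\mathcal{O}(l)\times\mathcal{O}(l)$ and the swap $(x',x'')\mapsto(x'',x')$ is strictly larger but not transitive on spheres, and it fixes the kernel direction (e.g.\ $|x'|^4-4|x'|^2|x''|^2+|x''|^4$ for $N=4$, $k=4$), so your claim that a larger isotropy group forces radial symmetry is false there.
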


\begin{proof}
Consider the subgroups $\mathcal{G}_l$ of $\mathcal{O}(N)$ defined by
\begin{align}\label{347qq}
	\mathcal{G}_l = \mathcal{O}(l) \times \mathcal{O}(N - l) \quad \text{for } 1 \leq l \leq \left[\frac{N}{2}\right].
\end{align}

For even integers $k$, the eigenspace $V_{k,l}$ corresponding to eigenvalue $\lambda_k$ of the Laplace-Beltrami operator on $\mathbb{S}^{N-1}$ satisfies $\mathcal{G}_l$-invariant and $\dim V_{k,l} = 1$,
see \cite{SW2}(also see \cite{W}).

Let $\mathcal{Q}_n^l$ denote the subspace of $ C^{1,\eta}(\overline{B}_{\frac{1}{\var_n}})$ consisting of all $\mathcal{G}_l$-invariant functions, where $\mathcal{G}_l$ is defined in \eqref{347qq}. Similar to the expression in \eqref{338t}, we can obtain that
$$m^l(\alpha_k^n + \delta) - m^l(\alpha_k^n - \delta) = 1,$$
for sufficiently small $\delta > 0$. Here $m^l(\alpha)$ represents the Morse index of the radial
solution $ u_{n,\al}$ within the space $\mathcal{Q}_n^l$.

Applying the same argument as in the proof of Theorem~\ref{th36}, we conclude that $(\alpha_k^n,  u_{n,\alpha_k^n})$ is a bifurcation point. Furthermore, the bifurcating solutions maintain $\mathcal{G}_l$-invariance.

Suppose there exists a solution $h$ that is invariant under both the actions of two groups $\mathcal{G}_{l_1}$ and $\mathcal{G}_{l_2}$ for $l_1 \neq l_2$. Then by the result in \cite{SW2}, $h$ must be radial. However, the radial solutions $  u_{n,\al}$ are isolated. This leads to a contradiction.

As a consequence, there exist exactly $\left[\frac{N}{2}\right]$ distinct nonradial solutions to equation \eqref{31} bifurcating from the point $(\alpha_k^n,  u_{n,\alpha_k^n})$.
\end{proof}

Let $\Upsilon_n$ denote the closure in $(0,+\infty) \times \mathcal{Q}_n$ of all solutions to $T^n(\alpha,h) = 0$ different from $  u_{n,\al}$, i.e.,
\begin{equation}\label{338}
\Upsilon_n := \overline{\Big\{(\alpha,h) \in (0,+\infty) \times \mathcal{Q}_n \ \text{such\ that}\ T^n(\al,h)=0\ \text{with}\ h\ne u_{n,\al}\Big\}},
\end{equation}
where $\mathcal{Q}_n$ is defined in \eqref{337} ($\mathcal{Q}_n$ can also be replaced by $\mathcal{Q}_n^l$ defined in the proof of Theorem \ref{thm39}) and $T^n(\alpha,h)$ defined in the proof of Theorem  \ref{th36}.

If $(\alpha_k^n,  u_{n,\alpha_k^n}) \in \mathcal{A}(n)$ is a nonradial bifurcation point, we have
$(\alpha_k^n,  u_{n,\alpha_k^n}) \in \Upsilon_n.$

We denote $\mathcal{C}(\alpha_k^n)\subset\Upsilon_n$ the closed connected component of $\Upsilon_n$ which
contains $(\al^n_k,u_{n,\al^n_k})$ and it is maximal with respect to the inclusion.

\begin{thm}\label{th38}
	Let $\alpha_k^n$ be given by equation \eqref{330} and let $\mathcal{C}(\alpha_k^n)$ be the closed connected component of $\Upsilon_n$ which contains $(\al^n_k,u_{n,\al^n_k})$ and it is maximal with respect to the inclusion. Then either\\
	(i) $\mathcal{C}(\alpha_k^n)$ is unbounded in $[0,+\infty)\times\mathcal{Q}_n$, or \\
	(ii) there exists $\al_i^n$ with $i\ne k$ such that $(\al^n_i,u_{n,\al^n_i})\in\mathcal{C}(\alpha_k^n)$, or\\
	(iii) $\mathcal{C}(\alpha_k^n)$ meets $\{0\}\times\mathcal{Q}_n$.
\end{thm}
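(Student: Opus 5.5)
This is a Rabinowitz-type global bifurcation alternative. I will prove it by contradiction, using the Leray--Schauder degree, in the same setting as Theorem \ref{th36}. Assume that $\mathcal{C}:=\mathcal{C}(\alpha_k^n)$ is bounded in $[0,+\infty)\times\mathcal{Q}_n$. Assume also that it contains no point $(\alpha_i^n,u_{n,\alpha_i^n})$ with $i\neq k$, and that it does not meet $\{0\}\times\mathcal{Q}_n$.

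The first step is compactness of $\mathcal{C}$. Since $T^n(\alpha,h)=h-\mathcal{L}^n(\alpha,h)$ with $\mathcal{L}^n$ compact and jointly continuous, bounded closed subsets of the zero set of $T^n$ are compact. Indeed, if $h_j=\mathcal{L}^n(\alpha_j,h_j)$ with $(\alpha_j,h_j)$ bounded, then a subsequence of $h_j$ converges. In particular $\mathcal{C}$ is compact, and it stays at positive distance from $\{0\}\times\mathcal{Q}_n$.

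Next comes the standard Whyburn-type separation lemma. Let $\mathcal{K}:=\Upsilon_n\cup\{(\alpha,u_{n,\alpha})\}$, restricted to a large bounded closed region, which is a compact metric space. Take $\mathcal{C}$ and the compact set $\mathcal{K}\cap\partial\mathcal{O}_\delta$, where $\mathcal{O}_\delta$ is a $\delta$-neighborhood of $\mathcal{C}$. By maximality of $\mathcal{C}$, these two sets are separated in $\mathcal{K}$. This produces a bounded open set $\mathcal{O}\subset(0,+\infty)\times\mathcal{Q}_n$ with the following properties: $\mathcal{C}\subset\mathcal{O}$; $\partial\mathcal{O}\cap\Upsilon_n=\emptyset$; $\overline{\mathcal{O}}\cap(\{0\}\times\mathcal{Q}_n)=\emptyset$; and the only radial points $(\alpha_i^n,u_{n,\alpha_i^n})$ in $\overline{\mathcal{O}}$ are those with $i=k$. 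Shrinking $\mathcal{O}$ near the trivial curve, I will further arrange that $\mathcal{O}\cap\{(\alpha,u_{n,\alpha})\}$ is a short arc with $\alpha\in(\alpha_k^n-\delta,\alpha_k^n+\delta)$. On $\partial\mathcal{O}$, away from that arc, $T^n\neq0$. Near the arc endpoints $\alpha_k^n\pm\delta$, the radial solutions are isolated because $I-\mathcal{L}^n_h$ is invertible there (Theorem \ref{thm38}).

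The degree argument then runs as follows. Let $\mathcal{O}_\alpha$ denote the $\alpha$-slices of $\mathcal{O}$. Remove from $\mathcal{O}_\alpha$ a small ball $B_\rho(u_{n,\alpha})$, which is legitimate for $\alpha$ away from $\alpha_k^n$. Homotopy invariance in $\alpha$ over the whole bounded range of $\mathcal{O}$ gives
$$\deg(T^n(\alpha,\cdot),\mathcal{O}_\alpha,0)=\text{const}.$$
This constant is $0$ by taking $\alpha$ beyond the projection of $\mathcal{O}$, where the slice is empty. Choosing $\alpha=\alpha_k^n\pm\delta$ and using excision gives
$$\deg(T^n(\alpha,\cdot),\mathcal{O}_\alpha\setminus \overline{B_\rho(u_{n,\alpha})},0)=-(-1)^{m(\alpha_k^n\pm\delta)}.$$
Here Theorem \ref{the36} was used for the index of the isolated radial solution. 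A second homotopy in $\alpha$ applies to $\mathcal{O}_\alpha\setminus\overline{B_\rho}$: there are no zeros on its boundary, because near the trivial branch the only zeros for $\alpha\ne\alpha_k^n$ are radial. It shows this punctured degree is the same at $\alpha_k^n-\delta$ and at $\alpha_k^n+\delta$. Hence $(-1)^{m(\alpha_k^n-\delta)}=(-1)^{m(\alpha_k^n+\delta)}$, which contradicts \eqref{338t}. The same argument works in $\mathcal{Q}_n^l$.

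The main obstacle is the topological construction of $\mathcal{O}$ together with the justification of the second homotopy. That homotopy must run across $\alpha=\alpha_k^n$, where the trivial solution is degenerate and the bifurcating branch passes through $\partial B_\rho$. I expect the cleanest fix is Rabinowitz's device: replace the punctured slices by the set $\mathcal{O}\cap\{\|h-u_{n,\alpha}\|>\rho\}$ only for $|\alpha-\alpha_k^n|\ge\delta$. Then compare the degrees via the generalized homotopy property on the $(\alpha,h)$-set $\mathcal{O}$, with $\alpha$ treated as a homotopy parameter over the whole of $\mathcal{O}$. I would cite \cite{AM} for the generalized homotopy invariance rather than re-derive it.
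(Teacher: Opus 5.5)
Your route (Whyburn separation plus a Leray--Schauder degree count, i.e.\ Rabinowitz's argument) is the one the paper invokes by citing \cite{AM,G1,R1}. However, your degree computation is broken, because the two homotopies are placed in the wrong regions.

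First, $\deg(T^n(\alpha,\cdot),\mathcal{O}_\alpha,0)$ is \emph{not} constant over the whole $\alpha$-range of $\mathcal{O}$. The radial curve $\alpha\mapsto(\alpha,u_{n,\alpha})$ consists of zeros of $T^n$, and it leaves $\mathcal{O}$ through $\partial\mathcal{O}$ at $\alpha=\alpha_k^n\pm\delta$. So the hypothesis of (generalized) homotopy invariance fails there, and the full-slice degree jumps from $0$ outside the arc to the index $(-1)^{m(\alpha)}$ just inside it. At $\alpha=\alpha_k^n\pm\delta$ itself the radial point lies on $\partial\mathcal{O}_\alpha$, so your excision step is not well posed. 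The value $-(-1)^{m(\alpha_k^n\pm\delta)}$ for the punctured degree therefore does not follow; a correct count gives $0$.

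Second, as you observe, the punctured degree cannot be carried across $\alpha_k^n$, because the bifurcating solutions cross $\{\|h-u_{n,\alpha}\|=\rho\}$ there. Your proposed repair, a generalized homotopy ``over the whole of $\mathcal{O}$'', is exactly what the radial zeros on $\partial\mathcal{O}$ forbid.

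The correct bookkeeping is the reverse of yours.
\begin{itemize}
\item \emph{Middle interval, no puncturing.} For $\alpha\in(\alpha_k^n-\delta,\alpha_k^n+\delta)$ the point $u_{n,\alpha}$ lies in the interior of $\mathcal{O}_\alpha$. Every zero $(\alpha,h)$ with $h\neq u_{n,\alpha}$ belongs to $\Upsilon_n$, which misses $\partial\mathcal{O}$. Hence $d(\alpha):=\deg(T^n(\alpha,\cdot),\mathcal{O}_\alpha,0)$ is constant on this whole interval, across $\alpha_k^n$. The degeneracy at $\alpha_k^n$ plays no role.
\item \emph{Endpoints, puncturing.} By Theorem \ref{thm38} and the implicit function theorem there are $\rho,\eta>0$ such that, for $|\alpha-(\alpha_k^n\pm\delta)|<\eta$, the only zero in $\overline{B_\rho(u_{n,\alpha})}$ is $u_{n,\alpha}$. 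So the degree on $\mathcal{O}_\alpha\setminus\overline{B_\rho(u_{n,\alpha})}$ is constant across $\alpha_k^n\pm\delta$. Just outside the arc it equals the full-slice degree, which is $0$ (homotope to $\alpha$ beyond the projection of $\mathcal{O}$).
\item \emph{Conclusion.} Just inside the arc, $d(\alpha)=0+i(T^n(\alpha,\cdot),u_{n,\alpha})=(-1)^{m(\alpha_k^n\pm\delta)}$ by Theorem \ref{the36}. The constancy of $d$ then contradicts \eqref{338t}.
\end{itemize}
For the outside step you also need $\overline{\mathcal{O}}$ to meet the radial curve exactly in the closed arc $|\alpha-\alpha_k^n|\le\delta$.

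Finally, the Whyburn step must be carried out in the compact set $\Upsilon_n\cap\overline{\mathcal{O}_\delta}$, in which $\mathcal{C}$ is a component. Adjoining the radial curve, as you do with $\mathcal{K}$, connects $\mathcal{C}$ to $\partial\mathcal{O}_\delta$ through the radial arc, so $\mathcal{C}$ is no longer a component and the separation you invoke fails. The properties you list for $\mathcal{O}$ are the right ones; only the way you obtain them must change.
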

\begin{proof}
The proof is analogous to the global bifurcation result presented in \cite{AM}. For more details, please refer to \cite{G1,R1}.
\end{proof}

\begin{rem}\label{rem312}
The result of Theorem \ref{th38} can be applied to each bifurcation point arising from an odd change in the Morse index of the radial solution $  u_{n,\al}$. Subsequently, by making use of Theorem \ref{thm39}, when $k$ is an even number, we can identity $\left[\frac{N}{2}\right]$ distinct continua of nonradial solutions that bifurcate from the point $(\alpha_k^n, u_{n,\alpha_{n}^k})$. Moreover, these continua are global, in accordance with Theorem \ref{th38}.
\end{rem}

\section{Crucial estimates of approximate solutions}
In this section, we establish several crucial estimates for the approximate solution to equation \eqref{31}, where the solution does not necessarily need to be nonradial. Let \( u_n := u_{\var_n,\al_n} \), where \( u_{\var,\al}\) (defined in \eqref{32}) denotes a radial solution of \eqref{31}. Furthermore, let \( v_n \) be a (possibly non-radial) solution of \eqref{31} associated with the parameters \( (\var_n,\alpha_n) \).

First, by applying the supper/sub solution method and comparison principle, we can prove the precise uniform asymptotic estimate for approximate solutions $v_n$ on large ball $B_{\frac{1}{\var_n}}$ without using Green function representation formula, which is crucial for all the estimates in Section 4.
\begin{prop}\label{pp41}
	Let $\al_n$ and $\var_n$ be sequences such that $\al_n\to\al>0$ and $\var_n\to0$ as $n\to+\infty$.
	Let $v_n$ be a sequence of solutions to \eqref{31} in $B_{\frac{1}{\var_n}}:=B_{\frac{1}{\var_n}}(0)$, related to the exponent $\al=\al_n$ i.e.,
	\begin{equation}\label{41}
		\begin{cases}
			-\Delta_N v_n=|x|^{N\alpha_n}e^{v_n}, & x\in B_{\frac1{\var_n}}, \\
			v_n=\beta_{\al_n}, & x\in\partial B_{\frac1{\var_n}},
		\end{cases}
	\end{equation}
	where $\displaystyle \beta_{\al_n}:=U_{\al_n}\left(\frac{1}{\var_n}\right)=\ln\frac{C_{N,\alpha_n}\var_n^{\frac{N^2(  \al_n+1)}{N-1}
	}}{(1+\var_n^{\frac{N(  \al_n+1)}{N-1}
		})^N}$. Suppose that $\|e^{v_n}-e^{\beta_{\al_n}}\|_\gamma\leq A$ and $\inf\limits_{\partial B_1}v_n\geq B$ for some positive constant $A$ and constant $B$ independent of $n$, then there exists $n_{0}\geq1$ such that, for any $n\geq n_{0}$,
	\begin{align}\label{42}
		v_n(x)+\gamma_n\ln|x|\leq C_0 \qquad \text{in} \,\,\, B_{\frac1{\var_n}},
	\end{align}
where $C_0$ is a positive constant independent of $n$; moreover, for every $n\geq1$,
	\begin{align}\label{v43}
		v_n(x)+\gamma_n\ln|x|\geq B \qquad \text{in} \,\,\, B_{\frac1{\var_n}}\setminus B_1,
	\end{align}
where $\gamma_n\to\frac{N^2(\al+1)}{N-1}$ as $n\to+\infty$.
\end{prop}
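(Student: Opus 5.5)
We will compare $v_n$ on the annulus $\Omega_n:=B_{\frac1{\var_n}}\setminus \overline{B_1}$ with explicit radial functions of the form $a_n-\gamma_n\ln|x|$. These are $N$-harmonic away from the origin: for $\phi=-\gamma\ln r$ one has $r^{N-1}|\phi'|^{N-2}\phi'=-\gamma^{N-1}$, which is constant. The key quantity is the flux. Set $m_n:=\int_{B_{\frac1{\var_n}}}|x|^{N\al_n}e^{v_n}\md x$. Integrating \eqref{41} gives $\int_{\partial B_r}|\nabla v_n|^{N-2}(-\partial_\nu v_n)\md\sigma=\int_{B_r}|x|^{N\al_n}e^{v_n}$, and this tends to $m_n$ as $r\to\frac1{\var_n}$. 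We then define $\gamma_n$ by $|\partial B_1|\gamma_n^{N-1}=m_n$. The main work is in two steps. First, we show $m_n\to N\bigl(\frac{N^2(\al+1)}{N-1}\bigr)^{N-1}\omega_N$, which is the mass of $U_\al$. Second, we use comparison principles for the operator $-\Delta_N$ on $\Omega_n$ to bound $v_n+\gamma_n\ln|x|$.

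\textbf{Step 1 (mass).} We use the Pohozaev identity \eqref{poh} of Lemma \ref{lem-p}. Put $t(\theta):=-\frac1{\var_n}\partial_\nu v_n(\frac1{\var_n}\theta)$. Since $v_n$ equals the constant $\beta_{\al_n}$ on the boundary, we have $|\nabla v_n|=-\partial_\nu v_n$ there, and so $m_n=\int_{\partial B_1}t^{N-1}\md\theta$. The Pohozaev identity reads $\frac{N-1}N\int t^N=N(1+\al_n)\int t^{N-1}-|\partial B_1|\var_n^{-N(1+\al_n)}e^{\beta_{\al_n}}$. The last term is $O(\var_n^{\frac{N(\al_n+1)}{N-1}})\to0$. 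The hypothesis $\|e^{v_n}-e^{\beta_{\al_n}}\|_\gamma\le A$ with $\gamma>N(\al+1)$ gives $|x|^{N\al_n}e^{v_n}\le C|x|^{N\al_n-\gamma}+|x|^{N\al_n}e^{\beta_{\al_n}}$. This yields uniform integrability of the mass away from the origin, so $m_n$ is bounded; moreover $m_n$ is bounded below, because $\inf_{\partial B_1}v_n\ge B$ together with superharmonicity gives $v_n\ge B$ in $B_1$. Hölder's inequality gives $\int t^N\ge |\partial B_1|^{-\frac1{N-1}}m_n^{\frac N{N-1}}$, and hence $\gamma_n\le\frac{N^2(1+\al_n)}{N-1}+o(1)$. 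For the reverse inequality, we would pass to a limit: a compactness argument in $C^{1,\eta}_{loc}$ (the limit $v$ has finite mass and bounded $e^{v}$ decay) combined with a Pohozaev identity for the limit on large balls gives $m_n\to$ the total mass of a solution of \eqref{11}. For such a solution the Pohozaev identity forces the mass to equal $N(\frac{N^2(\al+1)}{N-1})^{N-1}\omega_N$, as in \cite{E1}. We expect this step to be the main obstacle: the near-equality in Hölder must be exploited, and we must rule out mass escaping to $|x|\sim\frac1{\var_n}$. The weighted bound $\|\cdot\|_\gamma$ with $\gamma>N(\al+1)$ is exactly what excludes that escape.

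\textbf{Step 2 (lower bound \eqref{v43}).} Since $-\Delta_Nv_n\ge0$, the function $v_n$ is $N$-superharmonic on $\Omega_n$. Let $\phi:=B-\gamma_n\ln|x|$, which is $N$-harmonic. On $\partial B_1$ we have $v_n\ge B=\phi$. On the outer boundary we need $\beta_{\al_n}\ge B+\gamma_n\ln\var_n$. Here there is some freedom in the choice of $\gamma_n$: instead of using the mass, we can simply define $\gamma_n$ so that the boundary values match, i.e. $\gamma_n:=(B-\beta_{\al_n})/|\ln\var_n|$. Since $\beta_{\al_n}=\frac{N^2(\al_n+1)}{N-1}\ln\var_n+O(1)$, this gives $\gamma_n\to\frac{N^2(\al+1)}{N-1}$ with no mass computation at all, and the weak comparison principle for $-\Delta_N$ then yields \eqref{v43} for every $n$.

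\textbf{Step 3 (upper bound \eqref{42}).} With the same $\gamma_n$, we bound $v_n$ above. Inside $B_{R_0}$, uniform bounds come from local regularity: the right-hand side $|x|^{N\al_n}e^{v_n}$ is bounded in $L^\infty_{loc}$ by the $\|\cdot\|_\gamma$ hypothesis, and this gives Harnack and Serrin-type estimates. On $B_{\frac1{\var_n}}\setminus B_{R_0}$, we construct a supersolution $\Psi=C_0-\gamma_n\ln|x|+\psi$. The correction $\psi$ is chosen with $\psi\equiv0$ outside $B_{2R_0}$ and absorbs the source term there: because $e^{v_n}\le C|x|^{-\gamma}$ and $\gamma>N(\al+1)$, the source is integrable at rate $R_0^{N(\al+1)-\gamma}$. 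Alternatively, we could compare directly with $C_0-\tilde\gamma\ln|x|$ where $\tilde\gamma$ is slightly smaller than $\gamma_n$ on $B_{\frac1{\var_n}}\setminus B_{R_0}$. This uses that the flux $\int_{\partial B_r}|\nabla v_n|^{N-1}$ lies between $m_n(B_{R_0})$ and $m_n$, both of which are close to $|\partial B_1|\gamma_n^{N-1}$ by Step 1. Integrating the radial flux bounds along rays, after averaging over spheres and using the Harnack inequality to control the oscillation on spheres, gives $v_n\le C_0-\gamma_n\ln|x|$. The difference in exponents is bounded after integration because the mass tail is $O(r^{N(\al+1)-\gamma})$, which is summable in $\ln r$.
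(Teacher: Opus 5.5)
\noindent Your Step 2 is correct and is essentially the paper's lower-bound argument. The paper sets $\gamma_n=(s_n-\beta_{\al_n})/\ln\frac1{\var_n}$ with $s_n=\inf_{\partial B_1}v_n$, and compares $v_n-s_n$ with the $N$-harmonic function $-\gamma_n\ln|x|$ on $B_{\frac1{\var_n}}\setminus B_1$. Your choice $\gamma_n=(B-\beta_{\al_n})/|\ln\var_n|$ does the same and matches the outer boundary values exactly. With that choice Step 1, the mass convergence you leave unfinished, is not needed at all.

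The genuine gap is in Step 3, the upper bound \eqref{42}. Your comparison function $\Psi=C_0-\gamma_n\ln|x|+\psi$ with $\psi\equiv0$ outside $B_{2R_0}$ is not a supersolution. On $B_{\frac1{\var_n}}\setminus B_{2R_0}$ it is $N$-harmonic, while $-\Delta_N v_n=|x|^{N\al_n}e^{v_n}>0$ there, so the comparison principle does not apply, however small the tail of the source is. Your alternative runs in the wrong direction: an $N$-harmonic function such as $C_0-\tilde\gamma\ln|x|$ can only bound the $N$-superharmonic $v_n$ from \emph{below}. The flux/Harnack sketch does not repair this. The flux identity controls $\int_{\partial B_r}|\nabla v_n|^{N-2}(-\partial_\nu v_n)$, not $\partial_r v_n$ along rays. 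Even granting Step 1, you would only know the slope up to $o(1)$, whereas a uniform $C_0$ up to $|x|=\frac1{\var_n}$ requires the slope to within $O(1/|\ln\var_n|)$.

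\noindent The missing ingredient is a supersolution on the \emph{whole} exterior annulus, which is what the paper builds.
\begin{itemize}
\item[(a)] From the hypothesis, $e^{v_n}\le C_2|x|^{-\gamma}$ for $1\le|x|\le\frac1{\var_n}$. The term $e^{\beta_{\al_n}}$ is absorbed because $\gamma<\frac{N^2(\al_n+1)}{N-1}$.
\item[(b)] Take the radial $\Psi$ with $\Psi(t_0)=b$ and $r^{N-1}(-\Psi')^{N-1}=\gamma_n^{N-1}-\frac{C_2}{\gamma-N(\al_n+1)}r^{N(\al_n+1)-\gamma}$, which is positive for $t_0$ large. Then $-\Delta_N\Psi=C_2|x|^{N\al_n-\gamma}\ge-\Delta_Nv_n$ on $B_{\frac1{\var_n}}\setminus B_{t_0}$.
\item[(c)] Since $0<-\Psi'\le\gamma_n/r$, one has $\Psi\ge b-\gamma_n\ln(|x|/t_0)$. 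Hence $\Psi\ge B-\gamma_n\ln\frac1{\var_n}=\beta_{\al_n}$ on $\partial B_{\frac1{\var_n}}$ once $b\ge B$.
\item[(d)] Also $\Psi\ge v_n$ on $\partial B_{t_0}$ once $b\ge\sup_{\partial B_{t_0}}v_n$. That supremum is bounded by the $\|\cdot\|_\gamma$ hypothesis, which also bounds $v_n$ in $B_{t_0}$ directly; no Harnack or Serrin estimates are needed.
\item[(e)] Comparison yields $v_n\le\Psi\le b-\gamma_n\ln(|x|/t_0)+C_4$. Here $C_4$ bounds $\gamma_n\int_{t_0}^{\infty}\bigl[1-(1-c\,\tau^{N(\al_n+1)-\gamma})^{\frac1{N-1}}\bigr]\tau^{-1}\,\mathrm{d}\tau$ uniformly in $n$, with $c=C_2/\bigl((\gamma-N(\al_n+1))\gamma_n^{N-1}\bigr)$.
\end{itemize}
Step (e) is where your observation that the tail $O(r^{N(\al+1)-\gamma})$ is summable in $\ln r$ actually enters. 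So your $\psi$ must be $\Psi-(b-\gamma_n\ln(|x|/t_0))$, which is nonnegative, increasing and bounded, but not compactly supported. This is the paper's $h_{a,b}$ with its bound $I\le C_4$. The paper uses slope $\gamma_n-\delta$ and lets $\delta\to0$; with your boundary-matched $\gamma_n$, the slope $\gamma_n$ itself already works.
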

\begin{proof}
Since $\|e^{v_n}-e^{\beta_{\al_n}}\|_\gamma\leq A$ for some positive constant $A$  independent of $n$, we have
\begin{align}\label{43q}
		e^{v_n}\leq\frac{C}{(1+|x|)^\gamma},
	\end{align}
and hence
\begin{align}\label{43q+}
v_{n}(x)\leq C-\gamma \ln(1+|x|) \qquad \text{in} \,\,\, B_{\frac1{\var_n}}
\end{align}
for some uniform positive constant $C>0$.
	
	Let $\gamma_n>0$ be such that
	\begin{equation}\label{def519}
		-\gamma_n:=\lim_{|x|\to\frac1{\var_n}}\frac{ v_n(x)-s_n}{\ln|x|},
	\end{equation}
where $B\leq s_n:=\inf\limits_{\partial B_1}v_n\leq C$ for some constant $C$ independent of $n$. From the boundary condition $v_n=\beta_{\al_n}$ on $\partial B_{\frac1{\var_n}}$, we deduce that
\begin{align}\label{w422}
-\gamma_n&=\lim_{|x|\to\frac1{\var_n}}\frac{ v_n(x)-s_n}{\ln|x|}=\frac{\beta_{\alpha_n}-s_n}{\ln\frac{1}{\var_n}}\\
&=\frac{\ln C_{N,\al_n}-N\ln\lr1+\lr\frac1{\var_n}\rr^{\frac{N(\alpha_n+1)}{N-1}}\rr-s_n}{\ln\left(\frac{1}{\var_n}\right)}.\nonumber
\end{align}
It follows immediately that
\begin{equation}\label{equ416}
\gamma_n\to\frac{N^2(\al+1)}{N-1}, \quad  \text{as} \ n\to+\infty.
	\end{equation}
We will show that
\begin{equation}\label{u522}
		 v_n(x)\ge B-\gamma_n\ln  |x|\quad\text{in }B_{\frac1{\var_n}}\setminus B_1.
	\end{equation}
In fact, for $1<R\leq\frac1{\var_n}$, from  the definition of $-\gamma_n$ in \eqref{def519}, we can take $R$ large enough such that $\inf\limits_{\partial B_{R}}\frac{v_n(x)-s_n}{\ln  R}<0$,  we have
$$-\Delta_N(v_n-s_n)\ge-\Delta_N\left(\lr\inf_{\partial B_{R}}\frac{v_n(x)-s_n}{\ln  R}\rr\ln  |x|\right)=0 \quad\text{in } B_{R}\setminus\overline{B_1},$$
the comparison principle gives that
\begin{equation}\label{u48}\inf_{\partial B_{R}}\frac{v_n(x)-s_n}{\ln  R}=\inf_{1<|x| \le R}\frac{v_n(x)-s_n}{\ln  |x|} \quad \text{for}\ 1<R\leq\frac1{\var_n}.\end{equation}
Therefore, by the definition of $-\gamma_n$ and \eqref{u48}, we derive that $\lim\limits_{R\to\frac1{\var_n}}\inf\limits_{1< |x| \le R}\frac{v_n(x)-s_n}{\ln  |x|}=-\gamma_n$, which confirms \eqref{u522}.

Next, we will show that, there exists a positive constant $C_0$ independent of $n$ such that
	\begin{equation}\label{u524}
	v_n(x)\le C_0-\gamma_n\ln  |x|\quad\text{in }B_{\frac1{\var_n}}.
	\end{equation}
From \eqref{equ416}, for sufficiently large $n\geq n_{0}$, we have $\delta_{0,n}:=\gamma_n-\gamma>0$, then $-\gamma=-\gamma_n+\delta_0^n<-N(\al_n+1)$. Then from \eqref{43q}, we get that
	\begin{equation}\label{u525}
		v_n(x)\le C_1-\gamma \ln|x|\quad\text{in }B_{\frac1{\var_n}}\setminus\overline{B_1},
	\end{equation}
	for some positive constant $C_1$ independent of $n$. Now, we prove \eqref{u524} by constructing suitable super-solutions to \eqref{41} in an exterior domain and then applying the comparison principle, as described below.

To construct a super-solution of \eqref{41}, we introduce $\phi=\phi(t)$, a smooth function for $t>0$ with $\phi':=\frac{d\phi}{dt}<0$. Define $h(x)=\phi(|x|)$ for $x\in B_{\frac1{\var_n}}\setminus\{0\}$, it is straightforward to verify that
$$-\Delta_N h=t^{1-N}\left(t^{N-1}(-\phi')^{N-1}\right)'.$$
From \eqref{u525}, we note that $|x|^{N\al_n}e^{v_n(x)}\le C_2|x|^{N\al_n-\gamma }$ for $|x|>1$, where $C_2=e^{C_1}$ is a positive constant independent of $n$. Thus, given $1<t_0<\frac1{\var_n}$ big enough independent of $n$, consider the ODE
	\begin{equation}\label{eq526}
		t^{1-N}\left(t^{N-1}(-\phi')^{N-1}\right)'=t^{N\al_n-\gamma }\quad\text{for }t>t_0.\\
	\end{equation}
	By integration, we can conclude that the function
	\begin{equation*}
		\phi_{a,b}(t)=-(\gamma -N(\al_n+1))^{\frac{1}{1-N}}\int_{t_0}^t\frac{(a-\tau^{N(\al_n+1)-\gamma })^{\frac{1}{N-1}}}{\tau}\,\md\tau+b
	\end{equation*}
	solves \eqref{eq526} for any constants $a\ge (t_0)^{N(\al_n+1)-\gamma }$, $b=\phi(t_0)$ independent of $n$.

Now, let $h_{a,b}(x)=C_2^{\frac{1}{N-1}}\phi_{a,b}(|x|)$, we have
	\begin{equation}\label{u527}
		-\Delta_N h_{a,b}=C_2|x|^{N\al_n-\gamma }\ge |x|^{N\al_n}e^{v_n}=-\Delta_N v_n\quad\text{in }B_{\frac1{\var_n}}\setminus\overline{B_{t_0}}.
	\end{equation}
	Additionally, we find that for $x$ with $|x|\ge t_0$,
	\begin{align}
		h_{a,b}(x)&\ge-\left(\frac{aC_2}{\gamma -N(\al_n+1)}\right)^{\frac{1}{N-1}}\int_{t_0}^{|x|} \frac{1}{\tau}\,\md\tau+C_2^{\frac{1}{N-1}}b\notag\\
		&=-\left(\frac{aC_2}{\gamma -N(\al_n+1)}\right)^{\frac{1}{N-1}}\ln  |x|+ C_3+C_2^{\frac{1}{N-1}}b, \label{u528}
	\end{align}
	where $C_3=C_3(a,t_0)$ is a constant. Specifically, we have
	\begin{equation}\label{v529}
		h_{a,b}(x)= C_2^{\frac{1}{N-1}}b,\quad\text{when }\ |x|=t_0.
	\end{equation}
	Using these results, we now show by selecting appropriate $a$ and $b$ that $h_{a,b}(x)\ge v_n(x)$ for $|x|\ge t_0$.
	
Indeed, for each $\delta\in(0,\delta_{0,n})$, \eqref{def519} implies there exists $R_n(\delta)<\frac1{\var_n}$ big enough such that
	\begin{equation}\label{u530}
		v_n(x)\le (-\gamma_n+\delta)\ln  |x|\quad\text{for }\ R_n(\delta)<|x|\leq\frac1{\var_n}.
	\end{equation}
	Take $$a_\delta=\frac{(\gamma_n-\delta)^{N-1}(\gamma -N(\al_n+1))}{C_2}$$
	and note that $$\frac{\gamma ^{N-1}(\gamma -N(\al_n+1))}{C_2}\le a_\delta\le\frac{\gamma_n^{N-1}(\gamma -N(\al_n+1))}{C_2}.$$
	It also follows that the constant $C_3(a_\delta,t_0)$ as in \eqref{u528} is uniformly bounded. Now, fix $b$ such that
	$$C_2^{\frac{1}{N-1}}b=\max\{-C_3(a_\delta,t_0)\,,\,\max_{\partial B_{t_0}}v_n\}$$
	holds for any $\delta\in(0,\delta_{0,n})$. In view of \eqref{u528}, \eqref{v529} and \eqref{u530}, we get, for each $\delta\in(0,\delta_{0,n})$,
	\begin{equation*}
		h_{a_\delta,b}(x)\ge v_n(x),\quad\text{when }|x|= t_0\text{ and when }|x|>R_n(\delta).
	\end{equation*}
	Recall that \eqref{u527} holds. By applying the comparison principle, we therefore conclude that
	$$h_{a_\delta,b}(x)\ge v_n(x)\quad\text{on }B_{\frac1{\var_n}}\setminus B_{t_0}$$
	for each $\delta\in(0,\delta_{0,n}).$

Note that, for $\delta\in(0,\delta_{0,n})$, we have
	\begin{align*}
		h_{a_\delta,b}(x)-C_2^{\frac{1}{N-1}}b&=-\left(\frac{C_2}{\gamma -N(\al_n+1)}\right)^{\frac{1}{N-1}}\int_{t_0}^{|x|} \frac{(a_\delta-\tau^{N(\al_n+1)-\gamma })^{\frac{1}{N-1}}}{\tau}\,\md\tau\\
		&=-\left(\frac{a_\delta C_2}{\gamma -N(\al_n+1)}\right)^{\frac{1}{N-1}}\int_{t_0}^{|x|} \frac{1}{\tau}\,\md\tau+I\\
		&=(-\gamma_n+\delta)(\ln|x|-\ln  t_0)+I,
	\end{align*}
	where $$I=\left(\frac{C_2}{\gamma -N(\al_n+1)}\right)^{\frac{1}{N-1}}\int_{t_0}^{|x|} \frac{(a_\delta)^{\frac{1}{N-1}}-(a_\delta-\tau^{N(\al_n+1)-\gamma })^{\frac{1}{N-1}}}{\tau}\,\md\tau.$$
By choosing $1<t_0<\frac1{\var_n}$ sufficiently large (still fixed) such that $a_\delta\ge 2(t_0)^{N(\al_n+1)-\gamma }$ for any $\delta\in(0,\delta_{0,n})$.  Since for $\tau>t_0$, we get
	\begin{align*}
		0\le(a_\delta)^{\frac{1}{N-1}}-(a_\delta-\tau^{N(\al_n+1)-\gamma })^{\frac{1}{N-1}}&\le\frac{\tau^{N(\al_n+1)-\gamma }}{N-1}(a_\delta-\tau^{N(\al_n+1)-\gamma })^{\frac{2-N}{N-1}}\\
		&\le\frac{\tau^{N(\al_n+1)-\gamma }}{N-1}t_0^{\frac{(N(\al_n+1)-\gamma )(2-N)}{N-1}}.
	\end{align*}
By Bernoulli's inequality and the fact that $a_\delta\ge 2(t_0)^{N(\al_n+1)-\gamma }$, we conclude that $I\le C_4$ for some constant $C_4$ independent of $\delta$ and $n$. Thus, we infer that
\begin{equation}\label{a}
  v_n(x)\le h_{a_\delta,b}(x)\le(-\gamma_n+\delta)(\ln|x|-\ln  t_0)+C_4+C_2^{\frac{1}{N-1}}b
\end{equation}
for $|x|\ge t_0$. By letting $\delta\to0$ in \eqref{a} and combining with \eqref{43q+}, we get \eqref{u524}. This completes our proof of Proposition \ref{pp41}.
\end{proof}

Furthermore, based on Proposition \ref{pp41}, we can show the following uniform fast decay estimate for $|\nabla v_n(x)|$.
\begin{prop}\label{pro43}
Let $\al_n$ and $\var_n$ be sequences such that $\al_n\to\al>0$ and $\var_n\to0$ as $n\to+\infty$.
Let $v_n$ be a sequence of solutions of \eqref{41} in $B_{\frac{1}{\var_n}}$, corresponding to the exponent $\al_n$. If $\|e^{v_n}-e^{\beta_{\al_n}
}\|_\gamma\leq A$ and $\inf\limits_{\partial B_1}v_n\geq B$ for some positive constant $A$ and constant $B$ independent of $n$, then there exists $C>0$ independent of $n$ and $n_0\geq1$ such that, for any $n\geq n_0$,
	\begin{align}\label{q421}
		|\nabla v_n(x)|\leq\frac{C}{|x|}, \qquad \forall \,\,  x\in B_{\frac{1}{\var_n}}\setminus\overline{B_{1}}.
	\end{align}
\end{prop}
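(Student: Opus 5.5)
The plan is to combine the two-sided asymptotics of Proposition \ref{pp41} with a scaling argument and standard $C^{1,\eta}$ regularity for the $N$-Laplacian. Fix $x_0$ with $1<|x_0|<\frac{1}{\var_n}$ and set $R:=|x_0|$.

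\textbf{Interior points.} Suppose first that $B_{R/2}(x_0)\subset B_{\frac1{\var_n}}$, for instance $2<R\le\frac{1}{2\var_n}$. Define the rescaled function
$$\tilde v_n(y):=v_n(x_0+\tfrac R2 y)+\gamma_n\ln R,\qquad y\in B_1 .$$
By scale invariance of $\Delta_N$ it satisfies
$$-\Delta_N \tilde v_n=\Big(\tfrac R2\Big)^{N}|x_0+\tfrac R2y|^{N\al_n}e^{v_n}=c\,R^{N(\al_n+1)-\gamma_n}\,|x_0/R+y/2|^{N\al_n}e^{\tilde v_n}.$$
Proposition \ref{pp41} (both \eqref{42} and \eqref{v43}) gives $|\tilde v_n|\le C$ on $B_1$, because $\ln|x|-\ln R$ is bounded there. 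Since $\gamma_n\to\frac{N^2(\al+1)}{N-1}>N(\al+1)$, the right-hand side is uniformly bounded in $L^\infty(B_1)$. Standard $C^{1,\eta}$ estimates (DiBenedetto, Tolksdorf) then yield $|\nabla\tilde v_n(0)|\le C$, that is, $|\nabla v_n(x_0)|\le C/R$. For $1<R\le 2$ the same argument on a ball of fixed radius, with the uniform bounds on $v_n$ near $\overline{B_3}\setminus B_{1/2}$, gives a uniform bound. On $B_1$ itself, \eqref{43q+} bounds $v_n$ from above, and $v_n\ge \beta_{\al_n}$ is not uniform, but that region is not needed for the statement.

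\textbf{Near the boundary.} For $\frac{1}{2\var_n}<R<\frac{1}{\var_n}$, rescale the whole annulus instead: set
$$\tilde v_n(y):=v_n(y/\var_n)+\gamma_n\ln\tfrac{1}{\var_n},\qquad \tfrac14<|y|<1 .$$
By Proposition \ref{pp41} this is uniformly bounded. Its boundary value on $|y|=1$ is the constant $\beta_{\al_n}+\gamma_n|\ln\var_n|=s_n$, which is bounded by \eqref{w422}, and the right-hand side is again uniformly bounded. Boundary $C^{1,\eta}$ regularity for the $N$-Laplacian with smooth constant Dirichlet data (Lieberman) then gives $|\nabla\tilde v_n|\le C$ on $\frac12\le|y|\le1$, hence $|\nabla v_n|\le C\var_n\le C/|x|$.

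\textbf{Main obstacle.} The hard step is uniformity. The lower bound \eqref{v43} and the upper bound \eqref{42} must hold with the same $\gamma_n$, so that the oscillation of $v_n$ on dyadic annuli is $O(1)$ rather than $O(\ln R)$. Proposition \ref{pp41} is exactly what supplies this. I also need to check that the exponent $N(\al_n+1)-\gamma_n$ is negative, which holds for $n\ge n_0$, so that the rescaled source does not blow up. Finally, the regularity constants depend only on $N$, on the $L^\infty$ bounds, and on the geometry, and are therefore independent of $n$.
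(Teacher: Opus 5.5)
Your argument is correct and is essentially the paper's: you use the two-sided bound $|v_n+\gamma_n\ln|x||\le C$ on $|x|>1$ from Proposition \ref{pp41}, together with $\gamma_n>N(\al_n+1)$, to make the rescaled functions and their sources uniformly bounded, and then apply $C^{1,\eta}$ gradient estimates for the $N$-Laplacian. Your separate boundary step with Lieberman's estimate is the device the paper itself uses in Proposition \ref{pro26}, and it covers $|x|$ near $\frac{1}{\var_n}$, which the paper's dilation on $1<|y|<8$ passes over.

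The one claim you leave unjustified is the uniform lower bound for $v_n$ on $B_1\setminus B_{1/2}$, which you use when $1<|x_0|\le 2$ (and which sits awkwardly with your remark that the bound there is not uniform). It follows at once from the weak comparison principle: $-\Delta_N v_n\ge 0$ in $B_1$ and $v_n\ge B$ on $\partial B_1$, so $v_n\ge B$ in $B_1$.
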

\begin{proof}
For all $ 1<|y|<\frac{1}{\var_n}$, $\forall \,\, 1<R<\frac{1}{\var_n |y|}$, define $v_n^R(y):=v_n(Ry)+\gamma_n\ln R$. From Proposition \ref{pp41}, we have $|v_n(x)+\gamma_n\ln |x||\leq C$ for some uniform constant $C>0$ and any $|x|\geq 1$, and thus
\begin{align}\label{q422}
v_n^R(y)&\leq C-\gamma_n\ln|Ry|+\gamma_n\ln R\\
&\leq C-\gamma_n\ln|y|\nonumber\\
&\leq C, \qquad \forall \,\, 1<|y|<\frac{1}{\var_n},\nonumber
\end{align}
\begin{align}\label{q422+}
v_n^R(y)&\geq -C-\gamma_n\ln|Ry|+\gamma_n\ln R\\
&\geq -C-\gamma_n\ln|y|\nonumber\\
&\geq -C-3\gamma_n\ln2\geq -C, \qquad \forall \,\, 1<|y|<8.\nonumber
\end{align}
As a consequence, we have $|v_n^R(y)|\leq C$ for some constant $C$ independent of $n$ and any $y\in \mathbb{R}^{N}$ such that $1<|y|<8$. Moreover, $v_n^R(y)$ satisfies
\begin{align}\label{q423}
	-\Delta_N v_n^R(y)&=R^{N}|Ry|^{N\alpha_n}e^{v_n(Ry)}\\
&\leq C_1 R^{N}|Ry|^{N\alpha_n}(1+R|y|)^{-\gamma_n}\leq C_1, \quad \forall \,\, 1<|y|<\frac{1}{\var_n}, \nonumber
\end{align}
where \(C_1\) is independent of $n$, \(R\) and \(y\). Thus, from the gradient regularity estimate in \cite{Tp}, we get
$$\|\nabla_y  v_n^R(y)\|_{L^\infty(B_4(0)\setminus B_2(0))}\leq C_2,$$
where \(C_2\) is independent of $n$ and \(R\).

For all $1<|x|<\frac{1}{\var_n}$, let $x=Ry$, where $2<|y|<4$, so we obtain
$$|\nabla_x v_n(x)|=R^{-1}|\nabla_y  v_n^R(y)|\leq C_2R^{-1}\leq\frac{4C_2}{|x|}.$$
This gives the desired estimate \eqref{q421}.
\end{proof}

Let $\al_n$ and $\var_n$ be sequences such that $\al_n\to\al>0$ and $\var_n\to0$ as $n\to+\infty$. Let $v_n$ be a sequence of nonradial solutions of \eqref{31} with $\var=\var_n$ related to the exponent $\al_n$ (i.e., \eqref{41}), and let $w_n:=\frac{u_n-v_n}{\|u_n-v_n\|_{L^{\infty}(\R^N)}}$, where $u_n$ is the radial solution of \eqref{31} with $\var=\var_n$ related to the exponent $\al_n$ given by \eqref{32}.

\medskip

In the following Proposition, we first derive the uniform upper bound for global integral of $|x|^{-(N-2)}|\nabla w_n|^2$. Then, by careful choice of a series of cut-off functions $\varphi^{(k)}_R$ ($k\geq1$) and a dyadic iteration method, we can improve the uniform upper bound for global integral of $|x|^{-(N-2)}|\nabla w_n|^2$ and obtain the uniform $(\ln R)^{-1}$-type decay estimate on the integral of $|x|^{-(N-2)}|\nabla w_n|^2$ outside of the ball $B_R(0)$, which is crucial in our subsequent proof.
\begin{prop}\label{ppp43}
If $\|e^{v_n}-e^{\beta_{\al_n}}\|_\gamma\leq A$ and $\inf\limits_{\overline{B_1}}v_n\geq B$ for some positive constant $A$ and constant $B$ independent of $n$, then there exists a constant $C>0$ independent of $n$ and $n_0\geq1$ such that
	\begin{align}	
		\int_{B_{\frac{1}{\var_n}}\setminus B_1}|x|^{-(N-2)}|\nabla w_n|^2\md x&\leq C, \qquad \forall \,\, n\geq n_0.
\label{aa438}
	\end{align}
Furthermore, for any $R\geq 2$,
\begin{align}	
		\int_{B_{\frac{1}{\var_n}}\setminus B_R}|x|^{-(N-2)}|\nabla w_n|^2\md x&\leq
		\frac{C}{\ln R}, \qquad \forall \,\, n\geq n_0,
\label{ln 420}
	\end{align}
where $C>0$ is a constant independent of $n$ and $R$.
\end{prop}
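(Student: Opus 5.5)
The plan is to derive an energy identity for $w_n$ from the difference of the equations for $u_n$ and $v_n$, then test it with suitable cut-offs.

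\textbf{The linear equation for $w_n$.} Since $u_n=v_n=\beta_{\al_n}$ on $\partial B_{\frac1{\var_n}}$, we have $w_n=0$ there. Write
\[
|\nabla u_n|^{N-2}\nabla u_n-|\nabla v_n|^{N-2}\nabla v_n=\mathcal{M}_n(x)\nabla(u_n-v_n),
\]
where $\mathcal{M}_n=\int_0^1 D\mathcal{A}(s\nabla u_n+(1-s)\nabla v_n)\,\md s$ with $\mathcal{A}(\xi)=|\xi|^{N-2}\xi$. This matrix satisfies the ellipticity bounds
\[
c\int_0^1|s\nabla u_n+(1-s)\nabla v_n|^{N-2}\md s\,|\xi|^2\le \mathcal{M}_n\xi\cdot\xi,\qquad |\mathcal{M}_n|\le C\bigl(|\nabla u_n|+|\nabla v_n|\bigr)^{N-2}.
\]
Then $w_n$ solves
\[
-\mathrm{div}(\mathcal{M}_n\nabla w_n)=|x|^{N\al_n}c_n(x)w_n,\qquad c_n=\int_0^1 e^{su_n+(1-s)v_n}\md s.
\]
By Proposition \ref{pp41} and \eqref{u522}, both $u_n$ and $v_n$ equal $-\gamma_n\ln|x|+O(1)$ for $|x|\ge1$, so $c_n\le C|x|^{-\gamma_n}$ there. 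By Proposition \ref{pro43}, $|\nabla u_n|,|\nabla v_n|\le C|x|^{-1}$, so $|\mathcal{M}_n|\le C|x|^{-(N-2)}$.

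For the lower bound on the ellipticity weight, I need $\int_0^1|s\nabla u_n+(1-s)\nabla v_n|^{N-2}\md s\ge c|x|^{-(N-2)}$ for $|x|\ge1$. This is where the comparison of the two gradients enters. The first route I would try is to show that $x\cdot\nabla v_n\le -c$ for $|x|$ large, using the rescaled functions $v_n^R$ from the proof of Proposition \ref{pro43}. These are uniformly bounded and converge, after passing to subsequences, to $N$-harmonic-like functions close to $-\gamma\ln|y|$, so the radial derivative stays negative. Since $u_n'$ is exactly radial with $|x||\nabla u_n|\to\frac{N^2(\al+1)}{N-1}$, the convex combinations keep a radial component of size $\ge c/|x|$, which gives the lower bound. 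If this pointwise approach fails, I would fall back on the elementary inequality $\int_0^1|s a+(1-s)b|^{N-2}\md s\ge c_N\max(|a|,|b|)^{N-2}$ for $N\ge2$. That inequality gives a weight $\ge c|\nabla u_n|^{N-2}\ge c|x|^{-(N-2)}$ directly, and it is the simplest choice.

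\textbf{Global bound \eqref{aa438}.} Test the equation with $w_n\varphi^2$, where $\varphi=0$ on $B_{1/2}$ and $\varphi=1$ outside $B_1$. Since $|w_n|\le1$, $\|w_n\|_{L^\infty}=1$ and $w_n=0$ on the outer boundary, the right-hand side satisfies
\[
\int|x|^{N\al_n}c_n w_n^2\varphi^2\le C\int_{\R^N}(1+|x|)^{N\al_n-\gamma_n}\md x\le C,
\]
because $\gamma_n-N\al_n>N$ for $n$ large (here $\gamma_n\to\frac{N^2(\al+1)}{N-1}>N(\al+1)$). The cut-off term lives on $B_1\setminus B_{1/2}$, where the gradients are locally bounded by the interior $C^{1,\eta}$ estimates, since $v_n$ is uniformly bounded there via $\inf_{\overline{B_1}}v_n\ge B$ and \eqref{42}. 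Absorbing this term with Young's inequality gives \eqref{aa438}.

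\textbf{Decay \eqref{ln 420}.} The main obstacle is that the zero-order term is not small relative to the Dirichlet energy, since there is no Hardy inequality at this critical weight. I would use a logarithmic cut-off
\[
\varphi_R(x)=\frac{\ln(|x|/\sqrt R)}{\ln\sqrt R}\quad\text{on } \sqrt R\le|x|\le R,
\]
with $\varphi_R=0$ inside $B_{\sqrt R}$ and $\varphi_R=1$ outside $B_R$. Its key property is
\[
\int|x|^{-(N-2)}|\nabla\varphi_R|^2\le \frac{C}{\ln R}.
\]
Testing with $w_n\varphi_R^2$ and using $|w_n|\le1$, the cut-off error is bounded by
\[
C\Bigl(\int_{\mathrm{supp}\nabla\varphi_R}|x|^{2-N}|\nabla w_n|^2\Bigr)^{1/2}(\ln R)^{-1/2},
\]
which is $O\bigl((\ln R)^{-1/2}\bigr)$ by \eqref{aa438}. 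The potential term contributes $\int_{|x|>\sqrt R}|x|^{N\al_n-\gamma_n}\le CR^{-\delta}$. This already yields a bound of order $(\ln R)^{-1/2}$.

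To upgrade to $(\ln R)^{-1}$, I would iterate over dyadic-in-$\ln$ scales with the family $\varphi^{(k)}_R$. The product term is estimated by Cauchy--Schwarz, using the energy on the annulus $\{R^{2^{-k}}<|x|<R\}$ already bounded at the previous step. This gives the recursion
\[
E(R)\le C\bigl(E(\sqrt R)/\ln R\bigr)^{1/2}+CR^{-\delta},
\]
and iterating it yields $E(R)\le C/\ln R$. Alternatively, I would absorb the cross term by Young's inequality, $2ab\le \tfrac12 a^2+2b^2$, which gives
\[
E(R)\le \tfrac12 E(\sqrt R)+\frac{C}{\ln R}
\]
with $E(R)$ nonincreasing. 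Iterating this gives the $(\ln R)^{-1}$ bound directly, and this is the version I would write.
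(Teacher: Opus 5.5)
\textbf{Verdict: the proposal has a gap in its final step, but it is easy to repair.} Your linear equation for $w_n$ is fine. So is the ellipticity bound from $\int_0^1|sa+(1-s)b|^{N-2}\,\mathrm{d}s\ge c_N\max(|a|,|b|)^{N-2}$, which is essentially the paper's estimate \eqref{435}. The global bound \eqref{aa438} is also fine. For that bound you do not need the cut-off on $B_1\setminus B_{1/2}$ or the interior $C^{1,\eta}$ estimates: as in the paper, test with $w_n$ itself and discard the nonnegative contribution of $B_1$.

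The gap is in the version of the last step that you say you would write. Set $E(R):=\int_{B_{1/\varepsilon_n}\setminus B_R}|x|^{-(N-2)}|\nabla w_n|^2\,\mathrm{d}x$. The recursion
\[
E(R)\le \tfrac12\,E(\sqrt R)+\frac{C}{\ln R}
\]
does \emph{not} imply $E(R)\le C'/\ln R$. Since $\ln\sqrt R=\tfrac12\ln R$, the factor $\tfrac12$ is exactly cancelled. Iterating $m$ times gives $E(R)\le 2^{-m}E(R^{2^{-m}})+mC/\ln R$. The iteration must stop once $R^{2^{-m}}$ reaches a fixed size, because you need $|x|\ge1$ for the ellipticity. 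So $m\approx\log_2\ln R$, and you only get $E(R)\lesssim\ln\ln R/\ln R$.

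The loss is genuine. The function $E(R)=c\,\ln\ln R/\ln R$ satisfies the recursion with equality (with $C=c\ln2$), is decreasing for large $R$, and is not $O(1/\ln R)$. You need a contraction factor strictly below $\tfrac12$. This is what the paper's hole-filling on the annuli $R^{2^{-k}}<|x|<R^{2^{-k+1}}$ provides: $\Lambda_R(k)\le C2^k/\ln R+\tfrac13\Lambda_R(k+1)$, where $\tfrac13$ beats the growth $2^k$, so $\sum_k(2/3)^k$ converges.

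Your proposal already contains one repair. The Cauchy--Schwarz recursion $E(R)\le C\bigl(E(\sqrt R)/\ln R\bigr)^{1/2}+CR^{-\delta}$ does give $E(R)\le K/\ln R$. Argue by induction over $R\in[2^{2^j},2^{2^{j+1}}]$, with $K\ge C_0\ln 4$ and $K\ge C\sqrt{2K}+C'$, using $R^{-\delta}\le C'/\ln R$. Choosing the Young parameter so that the coefficient of $E(\sqrt R)$ is $\tfrac14$ would also work.

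The simplest fix uses the fact that your test function is $w_n\varphi_R^2$. Keep the factor $\varphi_R$ on $\nabla w_n$ when you split the cross term:
\[
2\Bigl|\int w_n\varphi_R\,\mathcal{M}_n\nabla w_n\cdot\nabla\varphi_R\,\mathrm{d}x\Bigr|\le\tfrac12\int\varphi_R^2\,\mathcal{M}_n\nabla w_n\cdot\nabla w_n\,\mathrm{d}x+2\int w_n^2\,\mathcal{M}_n\nabla\varphi_R\cdot\nabla\varphi_R\,\mathrm{d}x .
\]
The first term is absorbed into the left-hand side. Because $|w_n|\le1$, the second term is at most $C\int_{\sqrt R<|x|<R}|x|^{2-N}|\nabla\varphi_R|^2\,\mathrm{d}x\le C/\ln R$. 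The potential term is $O(R^{-\delta})$. So $E(R)\le C/\ln R$ follows in one step, with no iteration. This is shorter than the paper's proof, where the unsquared cut-offs $\varphi_R^{(k)}$ are what force the dyadic hole-filling iteration.
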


\begin{proof}
By direct calculations, we have
\begin{align*}
	\nabla U_{\al_n} = -\frac{N^2( \al_n+1)}{N-1}\frac{|x|^{\frac{N( \al_n+1)}{N-1}-1}}
	{1 + |x|^{\frac{N( \al_n+1)}{N-1}}}\frac{x}{|x|},
\end{align*}
and hence, for any $n\geq1$,
\begin{align}\label{n4}
	\frac{\hat{C}}{|x|}\leq|\nabla U_{\al_n}(x)|\leq\frac{C}{|x|},\qquad \forall \,\, |x|\geq1,
\end{align}
where $C>0$ and $\hat{C}>0$ are constants independent of $n$. Since Proposition \ref{pp41} and $\inf\limits_{\overline{B_1}}v_n\geq B$ imply that, there exists $n_0\geq1$ such that
\[\left\|v_n+\frac{\gamma_n(N-1)}{N(\al_n+1)}\ln\left(1+|x|^{\frac{N(\alpha_n+1)}{N-1}}\right)\right\|_{L^{\infty}(B_{\frac{1}{\var_{n}}})}\leq C, \quad\, \forall \,\, n\geq n_0\]
for some uniform constant $C$ (w.r.t. $n$), by the formula of $\gamma_n$ in \eqref{w422}, we obtain
\begin{align}\label{w421}
&\quad \|v_n(x)-u_n(x)\|_{L^\infty(B_{\frac{1}{\var_{n}}})} \\
&\leq\left\|v_n-\ln C_{N,\al_n}+N\ln\left(1+|x|^{\frac{N(\alpha_n+1)}{N-1}}\right)\right\|_{L^\infty(B_{\frac{1}{\var_{n}}})} \nonumber\\
&\leq\Big\|v_n+\frac{\gamma_n(N-1)}{N(\al_n+1)}\ln\left(1+|x|^{\frac{N(\alpha_n+1)}{N-1}}\right)-\ln C_{N,\al_n}\nonumber \\
&\qquad+\left(N-\frac{\gamma_n(N-1)}{N(\al_n+1)}\right)\ln\left(1+|x|^{\frac{N(\alpha_n+1)}{N-1}}\right)\Big\|_{L^\infty(B_{\frac{1}{\var_{n}}})}\nonumber\\
&\leq C+\left|\lr\frac{N^2(\alpha_n+1)}{N-1}-\gamma_n\rr\ln\left(\frac1{\var_n}\right)\right|+o_{n}(1)\ln2 \nonumber\\
&\leq C+\left|\frac{N^2(\alpha_n+1)}{N-1}\ln\left(\frac1{\var_n}\right)+\ln C_{N,\al_n}-N\ln\lr1+\lr\frac1{\var_n}\rr^{\frac{N(\alpha_n+1)}{N-1}}\rr-s_n\right|\nonumber\\
&\leq C+|\ln C_{N,\al_n}-s_n|+N\ln\lr1+\var_n^{\frac{N(\alpha_n+1)}{N-1}}\rr\nonumber\\
&\leq C+N\var_n^{\frac{N(\alpha_n+1)}{N-1}}\leq C, \qquad \forall \,\, n\geq n_0\nonumber
\end{align}
for some uniform positive constant $C$ independent of $n$. Let
\begin{align}\label{n0}
\zeta_n(x):=\int_0^1e^{tu_n+(1-t)v_n}\md t.
\end{align}
From \eqref{w421}, we have, for any $n\geq n_0$,
\begin{align}\label{xi425}
\frac{1}{C}e^{U_{\al_n}}\leq e^{-\|u_{n}-v_{n}\|_{L^{\infty}}}e^{u_n}=Ce^{U_{\al_n}}\leq \zeta_n(x)\leq e^{\|u_{n}-v_{n}\|_{L^{\infty}}}e^{u_n}=Ce^{U_{\al_n}},\quad\forall\ x\in B_{\frac{1}{\varepsilon_n}},
\end{align}
where $C$ is a positive constants independent of $n$.

By \eqref{n4} and \eqref{xi425}, and noting that $|w_n|\leq1$, we obtain that, for any $n\geq n_0$,
	\begin{align}\label{a438}	
		&\quad\int_{B_{\frac{1}{\var_n}}}|x|^{-(N-2)}|\nabla w_n|^2\md x\\
		&\leq
		C\int_{B_{\frac{1}{\var_n}}}|\nabla U_{\al_n}|^{N-2}|\nabla w_n|^2\md x\nonumber\\
		&\leq C\int_{B_{\frac{1}{\var_n}}}|\nabla w_n|^2\left(|\nabla u_n|^{N-2}+|\nabla v_n|^{N-2}\right)\md x\nonumber\\
		&\leq \frac{C}{\|u_n-v_n\|^2_{L^{\infty}(\R^N)}}\int_{B_{\frac{1}{\var_n}}}\nabla (u_n-v_n)\cdot\left(|\nabla u_n|^{N-2}\nabla u_n-|\nabla v_n|^{N-2}\nabla v_n\right)\md x\nonumber\\
        &= \frac{C}{\|u_n-v_n\|^2_{L^{\infty}(\R^N)}}\int_{B_{\frac{1}{\var_n}}}|x|^{N\al_n}\left(e^{u_{n}}-e^{v_{n}}\right)(u_{n}-v_{n})\md x\nonumber\\
		&= C\int_{B_{\frac{1}{\var_n}}}|x|^{N\al_n} \zeta_n(x)|w_n|^2\md x\nonumber\\
		&\leq C\int_{B_{\frac{1}{\var_n}}}|x|^{N\al_n} e^{U_{\al_n}}|w_n|^2\md x\nonumber\\
		&\leq C\int_{B_{\frac{1}{\var_n}}}|x|^{N\al_n} e^{U_{\al_n}}\md x\nonumber\\
		&\leq C\int_{B_{\frac{1}{\var_n}}}\frac{C_{N,\al_n}|x|^{N\al_n}}{\lr1+|x|^{\frac{N(\al_n+1)}{N-1}}\rr^N} \md x\leq C_{0},\nonumber
	\end{align}
where $C_{0}$ is a positive constant independent of $n$. Thus \eqref{aa438} holds.

For any $R\geq2$ and every $k\geq1$, let
\begin{equation}\label{r426}
\varphi^{(k)}_R(x)=\begin{cases}
			0, & |x|\leq R^{2^{-k}}, \\
			\frac{2^{k}\ln|x|-\ln R}{\ln R}, & R^{2^{-k}}<|x|<R^{2^{-k+1}}, \\
			1, & |x|\geq R^{2^{-k+1}}.
		\end{cases}
	\end{equation}
From \eqref{r426} and $|w_n|\leq1$, by Proposition \ref{pro43} and \eqref{aa438}, we have, for any $n\geq n_0$, $R\geq2$ and $k\geq1$,
\begin{align}	\label{w427}
		&\quad\int_{B_{\frac{1}{\var_n}}\setminus B_{R^{2^{-k+1}}}}|x|^{-(N-2)}|\nabla w_n|^2\md x\\
&\leq
		C\int_{B_{\frac{1}{\var_n}}}|\nabla U_{\al_n}|^{N-2}|\nabla w_n|^2\varphi^{(k)}_R(x)\md x\nonumber\\
		&\leq C\int_{B_{\frac{1}{\var_n}}}|\nabla w_n|^2\left(|\nabla u_n|^{N-2}+|\nabla v_n|^{N-2}\right)\varphi^{(k)}_R(x)\md x\nonumber\\
		&\leq \frac{C}{\|u_n-v_n\|^2_{L^{\infty}(\R^N)}}\int_{B_{\frac{1}{\var_n}}}\varphi^{(k)}_R\nabla (u_n-v_n)\cdot\left(|\nabla u_n|^{N-2}\nabla u_n-|\nabla v_n|^{N-2}\nabla v_n\right)\md x\nonumber\\
		&\leq \int_{B_{\frac{1}{\var_n}}}\lr\nabla\lr (u_n-v_n)\varphi^{(k)}_R
		\rr- (u_n-v_n)\nabla\varphi^{(k)}_R\rr\cdot\left(|\nabla u_n|^{N-2}\nabla u_n-|\nabla v_n|^{N-2}\nabla v_n\right)\md x\nonumber\\
		&\quad \times\frac{C}{\|u_n-v_n\|^2_{L^{\infty}(\R^N)}}\nonumber\\
		&\leq C\int_{B_{\frac{1}{\var_n}}}|x|^{N\al_n} \zeta_n(x)|w_n|^2\varphi^{(k)}_R\md x+C\int_{B_{\frac{1}{\var_n}}}|\nabla\varphi^{(k)}_R||w_n|\left(|\nabla u_n|^{N-2}+|\nabla v_n|^{N-2}\right)|\nabla w_n|\md x\nonumber\\
		&\leq C\int_{B_{\frac{1}{\var_n}}\setminus B_{R^{2^{-k}}}}|x|^{N\al_n}e^{U_{\al_n}}\md x+C\int_{B_{R^{2^{-k+1}}}\setminus B_{R^{2^{-k}}}}\frac{2^{k}}{\ln R}\frac{1}{|x|^{N-1}}|w_n||\nabla w_n|\md x\nonumber\\
&\leq C\int_{B_{\frac{1}{\var_n}}\setminus B_{R^{2^{-k}}}}\frac{C_{N,\al}|x|^{N\al_n}}{\lr1+|x|^{\frac{N(\al_n+1)}{N-1}}\rr^N} \md x\nonumber\\
&\quad +\frac{1}{2}\int_{B_{R^{2^{-k+1}}}\setminus B_{R^{2^{-k}}}}|x|^{-(N-2)}|\nabla w_n|^2\md x+C\frac{4^{k}}{(\ln R)^{2}}\int_{B_{R^{2^{-k+1}}}\setminus B_{R^{2^{-k}}}}\frac{|w_{n}|^{2}}{|x|^{N}}\md x \nonumber\\
&\leq\frac{C}{R^{\frac{N(\al_n+1)}{2^{k}(N-1)}}}+\frac{C2^{k}}{\ln R}+\frac{1}{2}\int_{B_{R^{2^{-k+1}}}\setminus B_{R^{2^{-k}}}}|x|^{-(N-2)}|\nabla w_n|^2\md x \nonumber \\
&\leq\frac{C}{R^{\frac{N}{2^{k}(N-1)}}}+\frac{C2^{k}}{\ln R}+\frac{1}{2}\int_{B_{\frac{1}{\var_n}}\setminus B_{R^{2^{-k}}}}\frac{|\nabla w_n|^2}{|x|^{N-2}}\md x- \frac{1}{2}\int_{B_{\frac{1}{\var_n}}\setminus B_{R^{2^{-k+1}}}}\frac{|\nabla w_n|^2}{|x|^{N-2}}\md x\nonumber\\
&\leq\frac{C2^{k}}{\ln R}+\frac{1}{2}\int_{B_{\frac{1}{\var_n}}\setminus B_{R^{2^{-k}}}}|x|^{-(N-2)}|\nabla w_n|^2\md x- \frac{1}{2}\int_{B_{\frac{1}{\var_n}}\setminus B_{R^{2^{-k+1}}}}|x|^{-(N-2)}|\nabla w_n|^2\md x,\nonumber
	\end{align}
where $C$ is a positive constant independent of $n$, $R$ and $k$. It follows from \eqref{w427} that, for any $R\geq2$ and every $k\geq1$,
\begin{equation}\label{b0}
  \int_{B_{\frac{1}{\var_n}}\setminus B_{R^{2^{-k+1}}}}|x|^{-(N-2)}|\nabla w_n|^2\md x\leq \frac{C2^{k}}{\ln R}+\frac{1}{3}\int_{B_{\frac{1}{\var_n}}\setminus B_{R^{2^{-k}}}}|x|^{-(N-2)}|\nabla w_n|^2\md x.
\end{equation}
For any $n\geq n_0$, $R\geq2$ and $k\geq1$, let us define
\begin{equation}\label{b1}
  \Lambda_{R}(k):=\int_{B_{\frac{1}{\var_n}}\setminus B_{R^{2^{-k+1}}}}|x|^{-(N-2)}|\nabla w_n|^2\md x,
\end{equation}
then \eqref{b0} implies that
\begin{equation}\label{b2}
  \Lambda_{R}(k)\leq C\frac{2^{k}}{\ln R}+\frac{1}{3}\Lambda_{R}(k+1).
\end{equation}
From \eqref{a438}, we deduce that, for any $n\geq n_0$ and $R\geq2$, $\Lambda_{R}(k)$ is monotonically increasing w.r.t. $k\geq1$ and
\begin{equation}\label{b3}
  \lim\limits_{k\rightarrow+\infty}\Lambda_{R}(k)=\int_{B_{\frac{1}{\var_n}}\setminus B_{1}}|x|^{-(N-2)}|\nabla w_n|^2\md x\leq C_{0}.
\end{equation}
Therefore, \eqref{b1}, \eqref{b2} and \eqref{b3} imply that, for any $n\geq n_0$ and $R\geq2$,
\begin{align}	\label{w428}
		\Lambda_{R}(1)&=\int_{B_{\frac{1}{\var_n}}\setminus B_{R}}|x|^{-(N-2)}|\nabla w_n|^2\md x\\
&\leq \frac{2C}{\ln R}+\frac{1}{3}\Lambda_{R}(2)\leq \frac{2C}{\ln R}+\frac{1}{3}\frac{2^{2}C}{\ln R}+\frac{1}{3^{2}}\Lambda_{R}(3) \nonumber\\
		&\leq \cdots \leq C\sum\limits_{k=0}^{K}\frac{1}{3^{k}}\frac{2^{k+1}}{\ln R}+\frac{1}{3^{K+1}}\Lambda_{R}(K+2)\nonumber\\
		&\leq \frac{C}{\ln R}\sum\limits_{k=0}^{+\infty}\left(\frac{2}{3}\right)^{k}+\lim\limits_{K\rightarrow+\infty}\frac{1}{3^{K+1}}\Lambda_{R}(K+2)=\frac{C}{\ln R},\nonumber
	\end{align}
where $C$ is a positive constant independent of $n$ and $R$. This completes our proof of Proposition \ref{ppp43}.
\end{proof}

Now, setting $f = e^{u} - e^{\beta_{\alpha_{n}}}$,where $u$ is a solution of \eqref{41}, then $f$ satisfies
\begin{equation}\label{f474}
	\begin{cases}
		-\Delta_N f = |x|^{N\alpha_n}\left(f + e^{\beta_{\alpha_{n}}}\right)^{N} - (N-1)\frac{|\nabla f|^{N}}{f + e^{\beta_{\alpha_{n}}}}, & x \in B_{\frac{1}{\varepsilon_n}}, \\[2mm]
		f = 0, & x \in \partial B_{\frac{1}{\varepsilon_n}}.
	\end{cases}
\end{equation}
Let $f_n = e^{u_{n}} - e^{\beta_{\alpha_{n}}}$ and $g_{n} = e^{v_{n}} - e^{\beta_{\alpha_{n}}}$ be a radial and a non-radial solution of \eqref{f474}, respectively. Defining
\begin{equation}\label{h449}
	h_{n}(x) := \frac{f_{n} - g_{n}}{\|f_{n} - g_{n}\|_{L^{\infty}(\mathbb{R}^N)}}
	\quad \text{and} \quad
	\psi_n(x) := \frac{u_n - v_n}{\|e^{u_n} - e^{v_n}\|_{L^{\infty}(\mathbb{R}^N)}},
\end{equation}
we obtain
\begin{equation}\label{h450}
	h_n(x) = \zeta_n(x)\psi_n(x),
\end{equation}
where $\zeta_n(x)$ is defined in \eqref{n0}.

For any $R>0$, we can overcome the lack of critical weighted Sobolev embedding inequality and prove the following (almost critical) truncated weighted Hardy-Sobolev inequality on exterior domain $\R^N\setminus B_{R}(0)$ instead, which is important for us to derive a uniform upper bound estimate for the $L^2$-integral average of $\psi_{n}$ in the annulus $B_{8R}(0)\setminus B_{R}(0)$, and to prove Propositions \ref{pp42} and \ref{pp44}.
\begin{prop}\label{tws+}
Assume $N\geq2$ and $R>1$. For any $\varphi\in \dot{H}^{1}(\R^N\setminus \overline{B_{R}(0)};\,|x|^{-(N-2)})\cap L^2(\R^N\setminus \overline{B_{R}(0)};\,|x|^{-N}[\ln|x|]^{-2})$ such that $\varphi|_{\partial B_{R}}=0$, we have
	\begin{align}\label{aa436+}
		\int_{\R^N\setminus B_{R}(0)} \frac{1}{[\ln |x|]^{2}}\frac{|\varphi|^2}{|x|^{N}}\md x\leq
		4\int_{\R^N\setminus B_{R}(0)}\frac{|\nabla\varphi|^{2}}{|x|^{N-2}}\md x.
		\end{align}
\end{prop}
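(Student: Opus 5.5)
The plan is to pass to polar coordinates with the logarithmic radial variable $t=\ln|x|$, which turns both weights into the flat measure. After that, \eqref{aa436+} becomes, for each fixed direction $\theta\in\partial B_1$, the classical one-dimensional Hardy inequality with weight $t^{-2}$ on a half-line, whose sharp constant is $4$.

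\emph{Step 1 (change of variables).} Write $x=r\theta$ with $r>R$, $\theta\in\partial B_1$, and set $t=\ln r$, $T:=\ln R>0$ (this uses $R>1$). Then $\mathrm{d}x=r^{N-1}\,\mathrm{d}r\,\mathrm{d}\theta=r^N\,\mathrm{d}t\,\mathrm{d}\theta$. Hence
\[
\frac{|\varphi|^2}{[\ln|x|]^2|x|^N}\,\mathrm{d}x=\frac{|\varphi|^2}{t^2}\,\mathrm{d}t\,\mathrm{d}\theta .
\]
Since $\partial_t\varphi=r\,\partial_r\varphi$ and $|\nabla\varphi|\ge|\partial_r\varphi|$, we also get
\[
\frac{|\nabla\varphi|^2}{|x|^{N-2}}\,\mathrm{d}x\ \ge\ r^{2}|\partial_r\varphi|^2\,\mathrm{d}t\,\mathrm{d}\theta=|\partial_t\varphi|^2\,\mathrm{d}t\,\mathrm{d}\theta .
\]
Therefore it suffices to prove, for a.e.\ $\theta$, the inequality
\[
\int_T^{\infty}\frac{\phi(t)^2}{t^2}\,\mathrm{d}t\le 4\int_T^\infty|\phi'(t)|^2\,\mathrm{d}t,\qquad \phi(t):=\varphi(e^t\theta),\quad \phi(T)=0,
\]
and then integrate in $\theta$.

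\emph{Step 2 (one-dimensional Hardy).} Take $\phi$ smooth on $[T,L]$ with $\phi(T)=0$. Use $t^{-2}=-(t^{-1})'$ and integrate by parts:
\[
\int_T^L\frac{\phi^2}{t^2}\,\mathrm{d}t=-\frac{\phi(L)^2}{L}+\frac{\phi(T)^2}{T}+2\int_T^L\frac{\phi\phi'}{t}\,\mathrm{d}t\le 2\Big(\int_T^L\frac{\phi^2}{t^2}\Big)^{1/2}\Big(\int_T^L|\phi'|^2\Big)^{1/2}.
\]
Here the boundary term at $T$ vanishes and the one at $L$ is nonpositive, so it can simply be dropped. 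No decay at infinity is needed for this step.

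Dividing by the finite quantity $\big(\int_T^L\phi^2/t^2\big)^{1/2}$ (the case where it is zero is trivial) and squaring gives the inequality on $[T,L]$ with constant $4$. Letting $L\to\infty$ by monotone convergence gives it on $[T,\infty)$. The assumption $\varphi\in L^2(|x|^{-N}[\ln|x|]^{-2})$ guarantees that the left side of \eqref{aa436+} is finite, though the truncated argument does not even need this.

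\emph{Step 3 (justification for Sobolev functions).} I expect this to be the only delicate point. I would first prove \eqref{aa436+} on bounded annuli $B_L\setminus \overline{B_R}$ for functions $\varphi\in C^1(\overline{B_L}\setminus B_R)$ vanishing on $\partial B_R$, where Steps 1--2 apply pointwise in $\theta$. I would then approximate a general $\varphi$ in $H^1$ of the annulus (where both weights are bounded above and below) by such functions with zero trace on $\partial B_R$, and pass to the limit. Finally I would let $L\to\infty$ using monotone convergence on the left and the finiteness of $\int|\nabla\varphi|^2|x|^{2-N}$ on the right.

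Alternatively, one can use the ACL characterization of Sobolev functions in polar coordinates: for a.e.\ $\theta$, $t\mapsto\varphi(e^t\theta)$ is locally absolutely continuous with zero boundary value at $T$. The integration by parts in Step 2 then holds directly for such $\theta$.
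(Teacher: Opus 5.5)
Your proof is correct and is essentially the paper's argument: the paper also works ray by ray, writes $|\varphi(r\theta)|^2$ via the fundamental theorem of calculus from the zero value at $r=R$, and swaps the order of integration by Fubini. That Fubini step is your integration by parts against $t^{-2}=-(t^{-1})'$ after the substitution $t=\ln r$, and both arguments finish with Cauchy--Schwarz and absorption to get the constant $4$. The only difference is technical: you work on truncated intervals where the outer boundary term has a favorable sign, so you only need density on bounded annuli, whereas the paper reduces directly to $C^1_c$ functions and leaves the density step at infinity, which needs some care in this critical weight, implicit.
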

\begin{proof}
To prove \eqref{aa436+}, we can assume by approximation that $\varphi\in C_{c}^1(\R^N)$ such that $\varphi|_{\partial B_{R}}=0$. From Fubini's theorem and using polar coordinates, we obtain
	\begin{align}\label{a436+}	
	&\quad \int_{\R^N\setminus B_{R}(0)}	\frac{1}{[\ln |x|]^{2}}\frac{|\varphi|^2}{|x|^{N}}\md x \\ &=\int_{\Sm^{N-1}}\int_R^{+\infty}r^{-1}[\ln r]^{-2}|\varphi(r\theta)|^2\md r\md\theta \nonumber \\
		&\leq 2\int_{\Sm^{N-1}}\int_R^{+\infty}r^{-1}[\ln r]^{-2}\int_R^{r}|\varphi(t\theta)||\nabla\varphi(t\theta)|\md t\md r\md\theta\nonumber\\ &=2\int_{\Sm^{N-1}}\int_R^{+\infty}|\varphi(t\theta)||\nabla\varphi(t\theta)|\int_t^{+\infty}r^{-1}[\ln r]^{-2}\md r\md t\md\theta\nonumber\\
		&=2\int_{\Sm^{N-1}}\int_R^{+\infty}\frac{1}{\ln t}|\varphi(t\theta)||\nabla\varphi(t\theta)|\md t\md\theta\nonumber\\
		&\leq 2\lr\int_{\Sm^{N-1}}\int_R^{+\infty}\frac{1}{t[\ln t]^{2}}|\varphi(t\theta)|^2\md t\md\theta\rr^{\frac12\nonumber}\lr\int_{\Sm^{N-1}}\int_R^{+\infty}t|\nabla\varphi(t\theta)|^2\md t\md\theta\rr^{\frac12}\nonumber \\
&= 2\lr\int_{\R^N\setminus B_{R}(0)}\frac{1}{[\ln |x|]^{2}}\frac{|\varphi|^2}{|x|^{N}}\md x\rr^{\frac12}\lr\int_{\R^N\setminus B_{R}(0)}\frac{|\nabla\varphi|^{2}}{|x|^{N-2}}\md x\rr^{\frac12}.\nonumber
	\end{align}	
Hence \eqref{aa436+} follows from \eqref{a436+} immediately. This finishes our proof.
\end{proof}

By considering the equation satisfied by $\psi_n$, using the harmonic replacement and the truncated weighted Hardy-Sobolev inequality \eqref{aa436+}, we overcame the nonlinear nature of the $N$-Laplacian $\Delta_N$ and the divergence (to $-\infty$) at $\infty$ of the solutions $u_n$ and $v_n$, and proved the following uniform upper bound on $L^2$-integral average of $\psi_n$.
\begin{prop}\label{qpp44}
	Assume $\al>0$ and $ N\geq2$. If $\|e^{v_n}-e^{\beta_{\al_n}
}\|_\gamma\leq A$ and $\inf\limits_{\overline{B_1}}v_n\geq B$ for some positive constant $A$ and constant $B$ independent of $n$, then there exists $n_0\geq1$ such that, for any $R_0>8$ sufficiently large,
	\begin{align}\label{wL2+}
		R^{-N}\int_{B_{8R}\setminus B_{R}}|\psi_n|^2\md x\leq C[\ln R_{0}]^{2} + C [\ln R_{0}]^{2} R_0^{-\frac{N(\al+1)}{N-1}}[\ln R]^{2}, \qquad \forall \,\, R\geq R_0,\ \forall \,\, n\geq n_0,
	\end{align}
where $C>0$ is a constant independent of $n$, $R$ and $R_{0}$.
\end{prop}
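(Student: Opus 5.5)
The plan is to work with the linear equation satisfied by $\psi_n$ and to split $\psi_n$ into a "harmonic" part and a remainder controlled by the truncated Hardy inequality \eqref{aa436+}. Subtracting the equations for $u_n$ and $v_n$ and dividing by $\|e^{u_n}-e^{v_n}\|_{L^\infty}$ gives
\begin{equation*}
-\operatorname{div}\bigl(\mathcal{A}_n(x)\nabla\psi_n\bigr)=|x|^{N\al_n}\zeta_n(x)\psi_n=|x|^{N\al_n}h_n \quad\text{in } B_{\frac1{\var_n}},\qquad \psi_n=0 \ \text{on } \partial B_{\frac1{\var_n}}.
\end{equation*}
Here $\mathcal{A}_n=\int_0^1 D\bigl(|\xi|^{N-2}\xi\bigr)\big|_{\xi=t\nabla u_n+(1-t)\nabla v_n}\md t$.
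By \eqref{n4} and Proposition \ref{pro43}, for $|x|\ge1$ the matrix $\mathcal{A}_n$ is bounded above by $C|x|^{2-N}$. Its ellipticity from below in the energy sense is exactly the monotonicity already used in \eqref{a438}. By \eqref{xi425},
\begin{equation*}
|x|^{N\al_n}\zeta_n\le C|x|^{-N-\frac{N(\al_n+1)}{N-1}} \qquad\text{for } |x|\ge1.
\end{equation*}
Moreover, on $\overline{B_1}$ we have $|\psi_n|=|h_n|/\zeta_n\le C$, since $\zeta_n\ge c>0$ there.

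\emph{Step 1 (bound on $\partial B_{R_0}$).} First I would show $M_0:=\sup_{B_{R_0}}|\psi_n|\le C\ln R_0$. On $B_{R_0}\setminus B_1$ I would compare $\psi_n$ with a barrier of the form $K(1+\ln|x|)$, corrected by a term absorbing the source $|x|^{N\al_n}\zeta_n\psi_n$, whose size is $O(|x|^{-N-\frac{N(\al+1)}{N-1}})M(r)$. Here $M(r):=\sup_{B_r}|\psi_n|$, and this gives a bootstrap inequality in $M(r)$.

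The difficulty is that $\ln|x|$ is not exactly $\mathcal{A}_n$-harmonic, since $\mathcal{A}_n$ is anisotropic. If a direct barrier fails, I would instead run the energy and Hardy argument of Step 2 on the bounded annulus $B_{R_0}\setminus B_1$ with data $|\psi_n|\le C$ on $\partial B_1$. I would then upgrade the resulting weighted $L^2$ bound to a pointwise bound on $\partial B_{R_0}$ by De Giorgi--Moser iteration on rescaled annuli, as in the proof of Proposition \ref{pro43}. I expect this step to be the main obstacle.

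\emph{Step 2 (replacement and energy estimate).} Let $H_n$ solve $-\operatorname{div}(\mathcal{A}_n\nabla H_n)=0$ in $B_{\frac1{\var_n}}\setminus \overline{B_{R_0}}$, with $H_n=\psi_n$ on $\partial B_{R_0}$ and $H_n=0$ on $\partial B_{\frac1{\var_n}}$. The maximum principle gives $|H_n|\le M_0$. Set $\varphi:=\psi_n-H_n$, which vanishes on both boundary spheres. Testing the equation with $\varphi$ gives
\begin{equation*}
\int_{B_{\frac1{\var_n}}\setminus B_{R_0}}\frac{|\nabla\varphi|^2}{|x|^{N-2}}\le C\int_{B_{\frac1{\var_n}}\setminus B_{R_0}}|x|^{-N-\frac{N(\al+1)}{N-1}}\bigl(|\varphi|+M_0\bigr)|\varphi|.
\end{equation*}
Since $|x|^{-\frac{N(\al+1)}{N-1}}[\ln|x|]^{2}\le C R_0^{-\frac{N(\al+1)}{2(N-1)}}$ for $|x|\ge R_0$, the $|\varphi|^2$ term is absorbed by \eqref{aa436+} for $R_0$ large. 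Cauchy--Schwarz on the cross term then yields
\begin{equation*}
\int_{\R^N\setminus B_{R_0}}\frac{|\varphi|^2}{[\ln|x|]^2|x|^N}\le CM_0^2R_0^{-\frac{N(\al+1)}{N-1}}.
\end{equation*}
The $[\ln|x|]^2$ factors are the bookkeeping point: they must produce exactly the exponent $R_0^{-\frac{N(\al+1)}{N-1}}$ in \eqref{wL2+}.

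\emph{Step 3 (annulus average).} For $R\ge R_0$ I would write $|\psi_n|^2\le 2|H_n|^2+2|\varphi|^2$, giving
\begin{equation*}
R^{-N}\int_{B_{8R}\setminus B_R}|\psi_n|^2\le CM_0^2+C[\ln(8R)]^2\int_{B_{8R}\setminus B_R}\frac{|\varphi|^2}{[\ln|x|]^2|x|^N}.
\end{equation*}
This is bounded by $CM_0^2+CM_0^2R_0^{-\frac{N(\al+1)}{N-1}}[\ln R]^2$. Inserting $M_0\le C\ln R_0$ from Step 1 gives \eqref{wL2+}, with constants independent of $n$, $R$ and $R_0$ for $n\ge n_0$ (where $n_0$ comes from \eqref{w421} and Proposition \ref{pro43}).
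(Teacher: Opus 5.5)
Your Steps 2 and 3 coincide with the paper's argument: harmonic replacement $H_n$ on $B_{\frac1{\var_n}}\setminus\overline{B_{R_0}}$, the bound $|H_n|\le M_0$ from the maximum principle, absorption of the potential through \eqref{aa436+} once $R_0$ is large, and the annulus bookkeeping leading to $R^{-N}\int_{B_{8R}\setminus B_R}\psi_n^2\le CM_0^2+CM_0^2R_0^{-\frac{N(\al+1)}{N-1}}[\ln R]^2$, which is \eqref{477p}.

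\textbf{The gap is Step 1.} You leave it open, and it is not cosmetic. The only a priori information is $|\psi_n|=|h_n|/\zeta_n\le Ce^{-U_{\al_n}}$ from \eqref{xi425}. This bounds $M_0$ only by a power of $R_0$, of order $R_0^{\frac{N^2(\al+1)}{N-1}}$. The factor $[\ln R_0]^2$ in \eqref{wL2+} needs $M_0\le C\ln R_0$, and it is exactly this factor that later gives $\liminf_{|x|\to\infty}e^{-U_\al}|h|/\ln|x|=0$ in Proposition \ref{pp44}.

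Neither of your routes produces this bound.
\begin{itemize}
\item For $N\ge3$, $K(1+\ln|x|)$ is not a controlled supersolution of $-\mathrm{div}(\mathcal{A}_n\nabla\,\cdot)$. Computing $\mathrm{div}(\mathcal{A}_n\nabla\ln|x|)$ requires derivatives of $\mathcal{A}_n$, i.e. second derivatives of $u_n,v_n$, and you have no sign or size information on these.
\item The fallback as written does not close either. On $B_{R_0}\setminus B_1$, the values on $\partial B_{R_0}$ are exactly the unknown. If you instead take $B_{\frac1{\var_n}}\setminus\overline{B_1}$, absorbing $\int|x|^{N\al_n}\zeta_n\varphi^2$ via \eqref{aa436+} requires $C|x|^{-\frac{N(\al+1)}{N-1}}[\ln|x|]^2\le \frac{C_2}{2}$. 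This holds only for $|x|\ge R_1$ with $R_1$ large. Near $\partial B_1$ the potential is of order one, and nothing makes the quadratic form coercive there.
\end{itemize}

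\textbf{The missing idea is to run your own Steps 2--3 twice.}
\begin{itemize}
\item The crude bound $|\psi_n|\le Ce^{-U_{\al_n}}$ holds at every point of $B_{\frac1{\var_n}}$, not only on $\overline{B_1}$. So for one fixed radius $R_1$, large enough for the absorption, you have $\sup_{\partial B_{R_1}}|\psi_n|\le C(R_1)$ uniformly in $n$.
\item Steps 2--3 with $R_0=R_1$ then give $R^{-N}\int_{B_{8R}\setminus B_R}\psi_n^2\le C(1+[\ln R]^2)$ for all $R\ge R_1$.
\item De Giorgi--Moser iteration on these annuli (uniformly elliptic after rescaling by $R$, as in Proposition \ref{ppq43}) upgrades this to $|\psi_n(x)|\le C\ln|x|$ for $|x|>2R_1$, uniformly in $n$.
\item Hence $\sup_{\partial B_{R_0}}|\psi_n|\le C\ln R_0$ for every $R_0>2R_1$. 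A second pass of Steps 2--3 with $M_0=C\ln R_0$ gives \eqref{wL2+}.
\end{itemize}
This is how the paper determines $M_0(R_0)$, taking the first radius as $R_0=100$; see \eqref{e0+}. The $\ln|x|$ growth comes from the logarithmic weight in \eqref{aa436+} combined with Moser iteration, not from a comparison function.
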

\begin{proof}
First, we will deduce the equation satisfied by $\psi_{n}$. To this end, let us define $b(x)=|x|^{N-2}x$, $b_j=|x|^{N-2}x_j$, so
$$\partial_{x_j}(b_i)=(N-2)|x|^{N-4}x_ix_j+|x|^{N-2}\delta_{ij}:=A_{ij},$$	
and
\begin{align}
\sum_{i,j=1}^{N}A_{ij}\xi_i\xi_j&=\sum_{i,j=1}^{N}\left((N-2)|x|^{N-4}x_ix_j+|x|^{N-2}\delta_{ij}\right)\xi_i\xi_j\label{432qq}\\
&=\sum_{i,j=1}^{N}(N-2)|x|^{N-4}x_i\xi_ix_j\xi_j+|x|^{N-2}|\xi|^2\nonumber\\
&=|x|^{N-2}|\xi|^2\left(1+(N-2)\left|\frac{x}{|x|}\cdot\frac{\xi}{|\xi|}\right|^2\right)\nonumber\\
&\geq|x|^{N-2}|\xi|^2.\nonumber
\end{align}
Consequently, we have
\begin{align*}
&\quad|\nabla u_n|^{N-2}\partial_{x_{i}} u_n-|\nabla v_n|^{N-2}\partial_{x_{i}} v_n\\
&=\sum_{j=1}^{N}\int_0^1\Big[ (N-2)|t\nabla u_n+(1-t)\nabla v_n|^{N-4}\partial_{x_i}(tu_n+(1-t)v_n)
\partial_{x_j}(tu_n+(1-t)v_n)\\
&\quad+|t\nabla u_n+(1-t)\nabla v_n|^{N-2}\delta_{ij}\Big]\md t \, \partial_{x_j}(u_n-v_n),\qquad i=1,\cdots,N.
\end{align*}
Defining
\begin{align*}
a_{ij}^n(x):&=\int_0^1\Big[(N-2)|t\nabla u_n+(1-t)\nabla v_n|^{N-4}\partial_{x_i}(tu_n+(1-t)v_n)
\partial_{x_j}(tu_n+(1-t)v_n)\\
&\quad+|t\nabla u_n+(1-t)\nabla v_n|^{N-2}\delta_{ij}\Big]\md t,
\end{align*}
then we have that, the equation
\begin{align*}
-\text{div}(|\nabla u_n|^{N-2}\nabla u_n-|\nabla v_n|^{N-2}\nabla v_n  )=|x|^{N\al_n}\left( e^{u_n}-e^{v_n}\right)
\end{align*}
is equivalent to
\begin{align}\label{e2}
-\sum_{i=1}^{N}\partial_{x_i}\left( \sum_{j=1}^{N}a_{ij}^n(x)\partial_{x_j}(u_n-v_n)\right)=|x|^{N\al_n}\left( e^{u_n}-e^{v_n}\right).
\end{align}	
By the definition of $\psi_n$, we have 	
\begin{align*}
-\sum_{i=1}^{N}\partial_{x_i}\left( \sum_{j=1}^{N}a_{ij}^n(x)\partial_{x_j}\psi_n\right)&=|x|^{N\al_n}\int_0^1e^{tu_n+(1-t)v_n}\md t \frac{u_n-v_n}{\|e^{u_n}-e^{v_n}\|_{L^{\infty}(\mathbb{R}^{N})}}\\
&=:|x|^{N\al_n}\zeta_n(x)\psi_n.
\end{align*}	
Therefore, $\psi_n$ satisfies the equation
\begin{align}\label{w}
\begin{cases}
\displaystyle-\sum_{i=1}^{N}\partial_{x_i}\left( \sum_{j=1}^{N}a_{ij}^n(x)\partial_{x_j}\psi_n\right)=|x|^{N\al_n}\zeta_n(x)\psi_n, &x\in B_{\frac{1}{\var_n}},\\
\psi_n=0, &x\in\partial B_{\frac{1}{\var_n}}.
\end{cases}
\end{align}

On the one hand, we have, for any $\xi,\,\eta\in\R^N$,
\begin{equation}\label{n2}
  \left|\sum_{i,j=1}^{N}a_{ij}^n(x)\xi_i\eta_j\right|\leq(N-1)\int_0^1|(1-t)\nabla u_n+t\nabla v_n|^{N-2}\md t |\xi||\eta|.
\end{equation}
On the other hand, from \eqref{432qq}, we get
\begin{align}\label{n3}
\sum_{i,j=1}^{N}a_{ij}^n(x)\xi_i\xi_j
\geq \displaystyle \int_0^1|(1-t)\nabla u_n+t\nabla v_n|^{N-2}\md t |\xi|^2.
\end{align}

Let
\begin{equation}\label{n1}
  \eta_n(x):=\int_0^1|(1-t)\nabla u_n+t\nabla v_n|^{N-2}\md t.
\end{equation}
We have the following estimates on $\eta_n(x)$ and $\zeta_n(x)$:
\begin{align}\label{435}
\eta_n(x)\geq C(N) |\nabla u_n|^{N-2},
\end{align}
\begin{align}\label{436}
\eta_n(x)\leq
\left(|\nabla u_n|+|\nabla v_n|\right)^{N-2}
\end{align}
and
\begin{align}\label{437}
\frac{1}{C}e^{U_{\al_n}}\leq\zeta_n(x)\leq Ce^{U_{\al_n}}.
\end{align}

The estimate \eqref{436} can be easily deduced from the definition \eqref{n1} of $\eta_n$. The estimate \eqref{437} follows directly from \eqref{xi425}. We only need to show \eqref{435}.

In fact, if $|\nabla u_n|\geq|\nabla(v_n-u_n)|$, we derive
	\begin{align*}
		\eta_n(x)&\geq\int_0^1\Big||\nabla u_n|-t|\nabla (v_n-u_n)|\Big|^{N-2}\md t\\&\geq\int_0^{\frac12}\left(|\nabla u_n|-t|\nabla (v_n-u_n)|\right)^{N-2}\md t\\&\geq\int_0^{\frac12}\left(|\nabla u_n|-\frac12|\nabla (v_n-u_n)|\right)^{N-2}\md t\\&\geq\left(\frac{1}{2}\right)^{N-1}|\nabla u_n|^{N-2},
	\end{align*}
if $|\nabla u_n|<|\nabla(v_n-u_n)|$, we get
	\begin{align*}
	\eta_n(x)&\geq\int_0^1\Big||\nabla u_n|-t|\nabla (v_n-u_n)|\Big|^{N-2}\md t\\
	&=\int_0^{\frac{|\nabla u_n|}{|\nabla (v_n-u_n)|}}\lr|\nabla u_n|-t|\nabla (v_n-u_n)|\rr^{N-2}\md t+\int_{\frac{|\nabla u_n|}{|\nabla (v_n-u_n)|}}^1\lr t|\nabla (v_n-u_n)|-|\nabla u_n|\rr^{N-2}\md t\\
	&=\frac{1}{N-1}\frac{|\nabla u_n|^{N-1}+(|\nabla(v_n-u_n)|-|\nabla u_n|)^{N-1}}{|\nabla(v_n-u_n)|}\\
	&\geq\frac{2^{2-N}}{N-1}|\nabla(v_n-u_n)|^{N-2}\\
	&\geq\frac{2^{2-N}}{N-1}|\nabla u_n|^{N-2}.
	\end{align*}
This implies \eqref{435}.

From \eqref{n2}, \eqref{n3}, \eqref{435}, \eqref{436}, \eqref{n4} and Proposition \ref{pro43}, we deduce that, for any $\xi,\eta\in\R^N$ and $n\geq n_0$,
\begin{equation}\label{n7}
  |a_{ij}^n(x)|\leq (N-1)\eta_n(x)\leq \frac{C_{1}}{|x|^{N-2}},
\end{equation}
\begin{equation}\label{n5}
  \left|\sum_{i,j=1}^{N}a_{ij}^n(x)\xi_i\eta_j\right|\leq C_{1}\frac{|\xi||\eta|}{|x|^{N-2}},
\end{equation}
and
\begin{align}\label{n6}
\sum_{i,j=1}^{N}a_{ij}^n(x)\xi_i\xi_j\geq C_{2} \frac{|\xi|^2}{|x|^{N-2}},
\end{align}
where $C_{1}>0$ and $C_{2}>0$ are uniform constants independent of $n$.

\medskip

Choose arbitrarily $R_0 > 1$ large enough. By \eqref{xi425}, \eqref{h449}  and \eqref{h450}, on $\partial B_{R_0}$ we have
\[
|\psi_n(x)| = \frac{|h_n(x)|}{\zeta_n(x)} \le Ce^{- U_{\al_n}(x)}.
\]
Since $\alpha_n \to \alpha$, there exists a constant $M_0 = M_0(R_0) > 0$ (to be determined later), independent of $n$, such that
\begin{equation}\label{464p}
\|\psi_n\|_{L^\infty(\partial B_{R_0})} \le M_0.
\end{equation}

Let $H_n$ be the harmonic replacement of $\psi_n$ in $B_{\frac{1}{\var_n}} \setminus \overline{B_{R_0}}$, namely,
\begin{equation}\label{465p}
H_n - \psi_n \in H_0^1(B_{\frac{1}{\var_n}} \setminus \overline{B_{R_0}}),
\end{equation}
and $H_n$ satisfies
\begin{equation}\label{466p}
\int_{B_{\frac{1}{\var_n}} \setminus \overline{B_{R_0}}} a_{ij}^n(x)( \nabla H_n \cdot \nabla \phi)\mathrm{d}x = 0
\quad \text{for all } \phi \in H_0^1(B_{\frac{1}{\var_n}} \setminus \overline{B_{R_0}}).
\end{equation}
For each fixed $n$, the domain $B_{\frac{1}{\var_n}} \setminus \overline{B_{R_0}}$ is a bounded annulus and $a_{ij}^n(x)$ is bounded and uniformly elliptic on $B_{\frac{1}{\var_n}} \setminus \overline{B_{R_0}}$. Consequently, the existence and uniqueness of $H_n$ follow directly from the Lax-Milgram theorem.

Since $\psi_n = 0$ on $\partial B_{\frac{1}{\var_n}}$, the boundary values of $H_n$ on $\partial \left(B_{\frac{1}{\var_n}} \setminus \overline{B_{R_0}}\right)$ satisfy $
|H_n| \le M_0. $
Testing \eqref{466p} with $(H_n - M_0)_+ \in H_0^1(B_{\frac{1}{\var_n}} \setminus \overline{B_{R_0}})$, we obtain
\[
\int_{\{H_n > M_0\}}  a_{ij}^n(x)\nabla H_n \cdot \nabla H_n \,\mathrm{d}x = 0.
\]
By the strict ellipticity of $ a_{ij}^n(x)$, it follows that $(H_n - M_0)_+ = 0$. Analogously, taking $(-M_0 - H_n)_+$ as a test function yields $H_n \ge -M_0$. Therefore, we establish
\begin{equation}\label{467p}
\|H_n\|_{L^\infty(B_{\frac{1}{\var_n}} \setminus \overline{B_{R_0}})} \le M_0.
\end{equation}
Subtracting \eqref{466p} from the weak formulation of \eqref{w}, we deduce that
\begin{equation}\label{468p}
\int_{B_{\frac{1}{\var_n}} \setminus \overline{B_{R_0}}} a_{ij}^n(x) \nabla (\psi_n - H_n)\cdot \nabla \phi \,\mathrm{d}x
= \int_{B_{\frac{1}{\var_n}} \setminus \overline{B_{R_0}}} |x|^{N\alpha_n}\zeta_n\psi_n \phi \,\mathrm{d}x \quad \text{for all } \phi \in H_0^1(B_{\frac{1}{\var_n}} \setminus \overline{B_{R_0}}).
\end{equation}
Testing \eqref{468p} with $\phi = \psi_n - H_n \in H_0^1(B_{\frac{1}{\var_n}} \setminus \overline{B_{R_0}})$ and applying \eqref{n6}, we obtain
\begin{align}\label{469p}
&\quad C_2\int_{B_{\frac{1}{\var_n}} \setminus \overline{B_{R_0}}} |x|^{-(N-2)} |\nabla (\psi_n-H_n)|^2 \,\md  x\\
&\quad\le
\int_{B_{\frac{1}{\var_n}} \setminus \overline{B_{R_0}}} |x|^{N\alpha_n}\zeta_n(\psi_n-H_n)^2\,\mathrm{d}x+ \int_{B_{\frac{1}{\var_n}} \setminus \overline{B_{R_0}}} |x|^{N\alpha_n}\zeta_nH_n(\psi_n-H_n) \,\mathrm{d}x.\nonumber
\end{align}
In view of $\al_n \to \al$, as $n\to+\infty$, up to increasing $n_0$, we can assume that
$\frac{N(\al_n+1)}{N-1} \ge \frac{N(\al+2)}{2(N-1)}$ for every $n \ge n_0.$ Therefore, we deduce that
\begin{equation}\label{470p}
0 \le |x|^{N\alpha_n}\zeta_n \le C |x|^{-N-\frac{N(\al+2)}{2(N-1)}}, \qquad |x| \ge 1, \ n \ge n_0.
\end{equation}
For any $|x| \ge R_0$, we have $
|x|^{-N-\frac{N(\al+2)}{2(N-1)}} \le R_0^{-\frac{N(\al+1)}{2(N-1)}} |x|^{-N}[\ln|x|]^{-2}.$
Consequently, by \eqref{470p} and the truncated weighted Hardy-Sobolev inequality \eqref{aa436+}, it follows that
\begin{align}\label{471p}
\int_{B_{\frac{1}{\var_n}} \setminus \overline{B_{R_0}}} |x|^{N\alpha_n}\zeta_n(\psi_n-H_n)^2\,\mathrm{d}x
&\le C R_0^{-\frac{N(\al+1)}{2(N-1)}}\int_{B_{\frac{1}{\var_n}} \setminus \overline{B_{R_0}}} [\ln|x|]^{-2}|x|^{-N}(\psi_n-H_n)^2 \,\mathrm{d}x \nonumber \\
&\le C R_0^{-\frac{N(\al+1)}{2(N-1)}}\int_{B_{\frac{1}{\var_n}} \setminus \overline{B_{R_0}}} |x|^{-(N-2)} |\nabla (\psi_n-H_n)|^2 \,\md  x.
\end{align}

On the other hand, a direct calculation yields
\begin{equation}\label{472p}
\int_{B_{\frac{1}{\var_n}} \setminus \overline{B_{R_0}}} |x|^{N\alpha_n}\zeta_n \,\mathrm{d}x
\le C \int_{\{|x| \ge R_0\}} |x|^{-N-\frac{N(\al+1)}{2(N-1)}} \,\mathrm{d}x
\le C R_0^{-\frac{N(\al+1)}{2(N-1)}}.
\end{equation}
In view of \eqref{467p}, H\"{o}lder's inequality, \eqref{471p}, and \eqref{472p}, we have
\begin{align}\label{473p}
&\quad \left| \int_{B_{\frac{1}{\var_n}} \setminus \overline{B_{R_0}}} |x|^{N\alpha_n}\zeta_nH_n(\psi_n-H_n) \,\mathrm{d}x \right| \\
&\le M_0 \left( \int_{B_{\frac{1}{\var_n}} \setminus \overline{B_{R_0}}} |x|^{N\alpha_n}\zeta_n \,\mathrm{d}x \right)^{1/2} \left( \int_{B_{\frac{1}{\var_n}} \setminus \overline{B_{R_0}}} |x|^{N\alpha_n}\zeta_n(\psi_n-H_n)^2 \,\mathrm{d}x \right)^{1/2} \nonumber \\
&\le C M_0 R_0^{-\frac{N(\al+1)}{2(N-1)}} \lr\int_{B_{\frac{1}{\var_n}} \setminus \overline{B_{R_0}}} |x|^{-(N-2)} |\nabla (\psi_n-H_n)|^2 \,\md  x\rr^{1/2}. \nonumber
\end{align}
We now choose $R_0> 8$ sufficiently large such that $
C R_0^{-\frac{N(\al+1)}{2(N-1)}}\le \frac{C_2}{2}$ in \eqref{471p}.
Combining \eqref{469p}, \eqref{471p} and \eqref{473p}, we deduce that
\begin{align}\label{p474}
\int_{B_{\frac{1}{\var_n}} \setminus \overline{B_{R_0}}} |x|^{-(N-2)} |\nabla (\psi_n-H_n)|^2 \,\md  x\le C M_0^{2} R_0^{-\frac{N(\al+1)}{N-1}}.
\end{align}
For any $x \in B_{8R} \setminus B_R$ with $R > R_0$ sufficiently large, applying the truncated weighted Hardy-Sobolev inequality \eqref{aa436+}, we obtain from \eqref{p474} that
\begin{align}\label{475p}
R^{-N} \int_{B_{8R}\setminus B_{R}}(\psi_n-H_n)^2 \,\mathrm{d}x
&\le 4[\ln R]^2 \int_{B_{\frac{1}{\var_n}} \setminus \overline{B_{R_0}}}[\ln|x|]^{-2}|x|^{-N}(\psi_n-H_n)^2 \,\mathrm{d}x \nonumber \\
&\le 16 [\ln R]^{2} \int_{B_{\frac{1}{\var_n}} \setminus \overline{B_{R_0}}} |x|^{-(N-2)} |\nabla (\psi_n-H_n)|^2 \,\md  x\\
&\le C M_0^{2} R_0^{-\frac{N(\al+1)}{N-1}}[\ln R]^{2}. \nonumber
\end{align}

On the other hand, in view of \eqref{467p}, we have
\begin{equation}\label{476p}
R^{-N} \int_{B_{8R}\setminus B_{R}} H_n^2 \,\mathrm{d}x \le C M_0^2.
\end{equation}
Combining \eqref{475p} and \eqref{476p} yields that, for any $n\geq n_0$ and $R_0>8$ sufficiently large,
\begin{equation}\label{477p}
R^{-N} \int_{B_{8R}\setminus B_{R}} \psi_n^2 \,\mathrm{d}x
\le CM_{0}^{2} + C M_0^{2} R_0^{-\frac{N(\al+1)}{N-1}}[\ln R]^{2}, \qquad \forall \,\, R\geq R_0,
\end{equation}
where $C>0$ is independent of $n$, $R$, $R_0$ and $M_{0}(R_0)$. Next, we will determine $M_0(R_0)$.

Now, due to the arbitrariness of $R_0$, taking $R_0=100$ (say), for any $|x|>2R_0=200$, we can choose $R>\frac{2R_{0}}{7}=\frac{200}{7}$ such that $x\in B_{7R}\setminus B_{2R}$. Thanks to the $(\ln R)^{2}$-type uniform decay estimate \eqref{477p} for the $L^2$-integral average of $\psi_n$, by using the Sobolev embedding inequality for $N\geq3$ and the Moser-Trudinger inequality for $N=2$ and applying the De Giorgi-Moser-Nash iteration argument, we can show via exactly the same method as Proposition \ref{ppq43} that (see \eqref{425hh} and \eqref{425hw}), for any $n\geq n_0$,
\begin{align}\label{e0+}
		|\psi_n(x)|\leq \|\psi_n\|_{L^\infty(B_{7R}\setminus B_{2R})}&\leq\frac{C}{R^{\frac{N}{2}}}\left(\int_{B_{\frac{15}{2}R}\setminus B_{\frac32R}} |\psi_n|^{2}\md x\right)^{\frac{1}{2}}\\
		&\leq CR^{-\frac N 2+\frac {N}{2}}\lr [\ln R]^{2}\rr^{\frac{1}{2}}\nonumber\\
		&\leq C\ln|x|,    \qquad \forall \,\, |x|>2R_0=200, \nonumber
	\end{align}
where $C>0$ is independent of $n$ and $n_0\geq1$.

Therefore, by \eqref{464p} and \eqref{e0+}, we can determine $M_0=M_0(R_0)=C\ln R_0$, and hence \eqref{477p} yields that, for any $n\geq n_0$ and any $R_0>8$ sufficiently large,
\begin{equation}\label{477p+}
R^{-N} \int_{B_{8R}\setminus B_{R}} \psi_n^2 \,\mathrm{d}x
\le C[\ln R_{0}]^{2} + C [\ln R_{0}]^{2} R_0^{-\frac{N(\al+1)}{N-1}}[\ln R]^{2}, \qquad \forall \,\, R\geq R_0,
\end{equation}
where $C>0$ is independent of $n$, $R$, $R_0$. This finishes our proof.
\end{proof}

By considering the equation satisfied by $\psi_n$ directly, using the uniform upper bound \eqref{wL2+} on $L^2$-integral average of $\psi_n$ and applying the De Giorgi-Moser-Nash iteration argument, we overcame the nonlinear nature of the $N$-Laplacian $\Delta_N$, absence of Kelvin type transforms, lack of the Green integral representation formula and the divergence (to $-\infty$) at $\infty$ of the solutions $u_n$ and $v_n$, and proved the following uniform fast decay estimate for $h_n$.
\begin{prop}\label{ppq43}
Let $\al_n$ and $\var_n$ be sequences such that $\al_n\to\al>0$ and $\var_n\to0$ as $n\to+\infty$. Let $v_n$ be a sequence of nonradial solutions of \eqref{41} in $B_{\frac{1}{\var_n}}$ related to the exponent $\al_n$, and $u_n$ be be a sequence of radial solution of \eqref{41} related to the exponent $\al_n$ given by \eqref{32}. If $\|e^{v_n}-e^{\beta_{\al_n}
}\|_\gamma\leq A$ and $\inf\limits_{\overline{B_1}}v_n\geq B$ for some positive constant $A$ and constant $B$ independent of $n$, then there exists $n_0\geq1$ such that, for every $n\geq n_0$ and any $R_0>8$ sufficiently large,
\begin{align}\label{412}
	|h_n(x)|\leq C\left(\ln R_{0} + (\ln R_{0}) R_0^{-\frac{N(\al+1)}{2(N-1)}}\ln |x|\right)e^{U_{\al_n}(x)}, \qquad \,\,\, \forall \,\,  |x| > 2R_0,
\end{align}
where $C>0$ independent of $n$, $n_0$ and $R_0$.
\end{prop}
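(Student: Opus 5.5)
The plan is to reduce \eqref{412} to a pointwise bound on $\psi_n$, using the identity \eqref{h450}, $h_n=\zeta_n\psi_n$, together with the two-sided bound \eqref{437}, $\zeta_n\le Ce^{U_{\al_n}}$. It then suffices to prove that, for every $n\ge n_0$ and every $|x|>2R_0$,
\[
|\psi_n(x)|\le C\Big(\ln R_0+(\ln R_0)R_0^{-\frac{N(\al+1)}{2(N-1)}}\ln|x|\Big),
\]
with $C$ independent of $n$ and $R_0$. Multiplying by $\zeta_n$ then gives \eqref{412}.

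The pointwise bound will come from a local $L^\infty$--$L^2$ estimate for the linear equation \eqref{w} on dyadic annuli. Fix $|x|>2R_0$ and choose $R\ge R_0$ with $x\in B_{7R}\setminus B_{2R}$. Rescale by setting $\tilde\psi(y):=\psi_n(Ry)$ on $B_8\setminus B_1$. The rescaled function solves
\[
-\partial_i\big(\tilde a_{ij}\partial_j\tilde\psi\big)=c(y)\tilde\psi,\qquad \tilde a_{ij}(y)=R^{N-2}a^n_{ij}(Ry),\qquad c(y)=R^{N}|Ry|^{N\al_n}\zeta_n(Ry).
\]
By \eqref{n5} and \eqref{n6}, the coefficients $\tilde a_{ij}$ are uniformly elliptic and bounded on $B_8\setminus B_1$, with constants independent of $n$ and $R$. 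By \eqref{470p}, $0\le c\le CR^{-\frac{N(\al+2)}{2(N-1)}}\le C$ there.

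Next I run the De Giorgi--Moser--Nash iteration on this rescaled equation. I test with $\eta^2|\tilde\psi|^{\beta-1}\tilde\psi$, with cutoffs shrinking from $B_{15/2}\setminus B_{3/2}$ to $B_7\setminus B_2$. The bounded potential $c$ is absorbed into the Caccioppoli inequality. The reverse Hölder steps use the Sobolev inequality for $N\ge3$ and the Moser--Trudinger (or any $H^1\hookrightarrow L^q$) inequality for $N=2$. This yields
\[
\|\tilde\psi\|_{L^\infty(B_7\setminus B_2)}\le C\|\tilde\psi\|_{L^2(B_{15/2}\setminus B_{3/2})},
\]
or, scaling back,
\[
|\psi_n(x)|\le CR^{-N/2}\|\psi_n\|_{L^2(B_{8R}\setminus B_R)},
\]
after covering $B_{15R/2}\setminus B_{3R/2}$ by boundedly many annuli of the form $B_{8R'}\setminus B_{R'}$. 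Where these annuli meet $\partial B_{1/\var_n}$, I extend $\psi_n$ by $0$ outside $B_{1/\var_n}$; the Dirichlet condition in \eqref{w} makes the extension a subsolution-compatible function, so the boundary version of Moser's iteration applies with the same constants.

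Inserting Proposition \ref{qpp44}, i.e.\ \eqref{wL2+}, gives
\[
|\psi_n(x)|\le C\big([\ln R_0]^2+[\ln R_0]^2R_0^{-\frac{N(\al+1)}{N-1}}[\ln R]^2\big)^{1/2}\le C\ln R_0+C(\ln R_0)R_0^{-\frac{N(\al+1)}{2(N-1)}}\ln R,
\]
and since $R\le|x|$ this is exactly the desired bound. I expect the main obstacle to be keeping every constant in the iteration independent of $n$, $R$ and $R_0$. This is exactly where the uniform gradient bounds of Proposition \ref{pro43} and \eqref{n4}, which feed \eqref{n5}--\eqref{n6}, together with the uniform decay \eqref{470p} are needed. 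The boundary-annulus case near $\partial B_{1/\var_n}$ also requires care.
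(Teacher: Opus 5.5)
Your proposal is correct and follows essentially the paper's own route. You reduce to a bound on $\psi_n$ via $h_n=\zeta_n\psi_n$ and \eqref{437}, run a De Giorgi--Moser--Nash iteration for \eqref{w} on the annulus $B_{8R}\setminus B_R$ with the uniform bounds \eqref{n5}--\eqref{n6}, and then insert \eqref{wL2+}. Your rescaling to the unit annulus and your explicit zero-extension treatment of annuli meeting $\partial B_{1/\var_n}$ only tidy up the paper's unscaled iteration, and the covering step is unnecessary because $B_{15R/2}\setminus B_{3R/2}\subset B_{8R}\setminus B_R$.
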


\begin{proof}
For any $x \in \mathbb{R}^N$ with $|x| > 2R_0$, we can choose $R > R_0$ sufficiently large such that $x \in B_{7R} \setminus B_{2R}$. Recall that $\psi_n$ satisfies the equation \eqref{w}. By \eqref{n5} and \eqref{n6}, we get, for any $\xi\in\R^N$ and $n\geq n_0$,
\begin{equation}\label{n5+}
  \left|\sum_{i,j=1}^{N}a_{ij}^n(x)\partial_{x_j}\psi_n\xi_i\right|\leq C_{1}\frac{|\nabla \psi_n||\xi|}{|x|^{N-2}},
\end{equation}
and
\begin{align}\label{n6+}
\sum_{i,j=1}^{N}a_{ij}^n(x)\partial_{x_i}\psi_n\partial_{x_j}\psi_n\geq C_{2} \frac{|\nabla \psi_n|^2}{|x|^{N-2}},
\end{align}
where $C_{1}>0$ and $C_{2}>0$ are uniform constants independent of $n$.

Now we take
	\begin{align*}
		F(\overline w_n)
		=\begin{cases}
			\overline w_n^q,& \overline w_n\leq l,\\
			q l^{q-1}\overline w_n-(q-1) l^q, &\overline w_n> l
		\end{cases}
	\end{align*}
	for some $q\geq1,\,l\in\R^+$, and
	\begin{align*}
		G(w_n)
		=\text{sgn}(\psi_n)F(\overline \psi_n)F'(\overline \psi_n),
	\end{align*}
where $\overline \psi_n=|\psi_n|$. So
	\begin{align*}
		F'(\overline \psi_n)
		=\begin{cases}
			q\overline \psi_n^{q-1},& \overline \psi_n\leq l,\\
			q l^{q-1}, &\overline \psi_n> l
		\end{cases}
	\end{align*}
	and
	\begin{align*}
		G'(\psi_n)
		=\begin{cases}
			q^{-1}(2q-1)\left( F'(\overline \psi_n)\right)^2,& \overline w_n\leq l,\\
			\left( F'(\overline \psi_n)\right)^2, &\overline w_n> l.
		\end{cases}
	\end{align*}

For any $\delta\in(0,\frac{R}{2})$, we can choose cut-off function $\varphi(x)$ satisfying $0\leq\varphi(x)\leq1$, $\varphi(x)=1$ in $B_{8R-\delta}\setminus B_{R+\delta}$, $\varphi(x)=0$ in $(\R^N\setminus B_{8R})\cup B_{R}$, $0<\varphi(x)<1$ in $(B_{R+\delta}\setminus B_{R})\cup (B_{8R}\setminus B_{8R-\delta})$ and $|\nabla\varphi|\leq\frac{C}{\delta}$.
Let $$\kappa_n(x):=\varphi^2(x)G(\psi_n),$$
then
	$$\partial_{x_i}\left(\kappa_n(x)\right)=2\varphi\partial_{x_i}\varphi G(\psi_n)+\varphi^2G'(\psi_n)\partial_{x_i} \psi_n.$$	
We get
	\begin{align}
		&\quad \sum_{i,j=1}^{N}a_{ij}^n(x)\partial_{x_j}\psi_n\partial_{x_i}\kappa_n-|x|^{N\al_n}\zeta_n(x)\psi_n\kappa_n\label{423yy}\\
		&=\sum_{i,j=1}^{N}a_{ij}^n(x)\partial_{x_j}\psi_n\left(2\varphi\partial_{x_i}\varphi G(\psi_n)+\varphi^2G'(\psi_n)\partial_{x_i} \psi_n\right)-|x|^{N\al_n}\zeta_n(x)\psi_n\varphi^2(x)G(\psi_n)\nonumber\\
		&\geq \sum_{i,j=1}^{N}\varphi^2G'(\psi_n)a_{ij}^n(x)\partial_{x_j}\psi_n\partial_{x_i}\psi_n-2\varphi|\nabla\varphi||G(\psi_n)||(N-1)\eta_n(x)||\nabla \psi_n|\nonumber\\
		&\quad-|x|^{N\al_n}\zeta_n(x)|\psi_n|\varphi^2(x)|G(\psi_n)|\nonumber\\
		&\geq C(N)\varphi^2|F'(\overline \psi_n)|^2|\eta_n(x)||\nabla \psi_n|^2-2\varphi|\nabla\varphi||F(\overline \psi_n)||F'(\overline \psi_n)||(N-1)\eta_n(x)||\nabla \psi_n|\nonumber\\
		&\quad-|x|^{N\al_n}\zeta_n(x)|\psi_n|\varphi^2(x)|F(\overline \psi_n)||F'(\overline \psi_n)|\nonumber\\
		&= C(N)\varphi^2|\eta_n(x)||F'(\overline \psi_n)\nabla \overline \psi_n|^2-2(N-1)|\nabla\varphi F(\overline \psi_n)||\varphi F'(\overline \psi_n)\eta_n(x)\nabla \overline \psi_n|\nonumber\\
		&\quad-|x|^{N\al_n}\zeta_n(x)|\varphi F(\overline \psi_n)||\varphi F'(\overline \psi_n)\overline \psi_n|\nonumber.
	\end{align}	
	Let $z_n=F(\overline \psi_n)$, and note that $\overline \psi_nF'(\overline \psi_n)\leq qF(\overline \psi_n)$. From
	\begin{align*}
		\int_{B_{8R}\setminus B_{R}}\sum_{i,j=1}^{N} a_{ij}^n(x)\partial_{x_j}\psi_n\partial_{x_i}\kappa_n-|x|^{N\al_n}\zeta_n(x)\psi_n\kappa_n(x)\md x=0,
	\end{align*}	
by \eqref{423yy}, we get
	\begin{align*}
		&\quad \int_{B_{8R}\setminus B_{R}} |\eta_n(x)||\varphi\nabla z_n|^2\md x\\&\leq	C\int_{B_{8R}\setminus B_{R}}|\nabla\varphi \cdot z_n||\varphi\eta_n\cdot\nabla z_n|+Cq|x|^{N\alpha_n}\zeta_n(x)|\varphi z_n|^2\md x\\
		&\leq  C\left(	\int_{B_{8R}\setminus B_{R}}|\eta_n(x)||\nabla\varphi|^2 | z_n|^2\md x\right)^{\frac12}\left(	\int_{B_{8R}\setminus B_{R}}|\eta_n(x)||\varphi|^2|\nabla z_n|^2\md x\right)^{\frac12}\\
		&\quad+Cq\int_{B_{8R}\setminus B_{R}}|x|^{N\alpha_n}\zeta_n(x)|\varphi z_n|^2\md x.
	\end{align*}	
Then, we get
\begin{align*}
	&\quad \int_{B_{8R}\setminus B_{R}} |\eta_n(x)||\varphi\nabla z_n|^2\md x\\
	&\leq  C\int_{B_{8R}\setminus B_{R}}|\eta_n(x)||\nabla\varphi|^2 | z_n|^2\md x+Cq\int_{B_{8R}\setminus B_{R}}|x|^{N\alpha_n}\zeta_n(x)|\varphi z_n|^2\md x.
\end{align*}	
By Proposition \ref{pp41}, Proposition \ref{pro43}, \eqref{435}, \eqref{436} and \eqref{437}, we derive
	\begin{align*}
\int_{B_{8R}\setminus B_{R}} |\eta_n(x)||\varphi\nabla z_n|^2\md x\geq \frac{C}{R^{N-2}}\int_{B_{8R}\setminus B_{R}} |\varphi\nabla z_n|^2\md x
	\end{align*}		
and
\begin{align*}
	&\quad\int_{B_{8R}\setminus B_{R}}|\eta_n||\nabla\varphi|^2 | z_n|^2\md x+Cq\int_{B_{8R}\setminus B_{R}}|x|^{N\alpha_n}\zeta_n(x)|\varphi z_n|^2\md x\\
	&\leq\frac{C}{R^{N-2}}\int_{B_{8R}\setminus B_{R}}|\nabla\varphi|^2 | z_n|^2\md x+\frac{Cq}{R^{\frac{N(N+\al_n)}{N-1}}}\int_{B_{8R}\setminus B_{R}}|\varphi z_n|^2\md x.
\end{align*}		
	Furthermore, we have
	\begin{align}
		&\quad\int_{B_{8R}\setminus B_{R}} |\varphi\nabla z_n|^2\md x\label{424rr}\\
		&\leq C	\int_{B_{8R}\setminus B_{R}}|\nabla\varphi|^2 | z_n|^2\md x+\frac{Cq}{R^{\frac{3N+N\al_n-2}{N-1}}}\int_{B_{8R}\setminus B_{R}}|\varphi z_n|^2\md x.\nonumber
	\end{align}	
\smallskip

Next, we will discuss $N\geq3$ and $N=2$ separately.

\smallskip

If $N\geq 3$, by the Sobolev embedding inequality, \eqref{424rr}, $|\varphi|\leq1$ and $|\nabla\varphi|\leq\frac{C}{\delta}$, we get
	\begin{align*}
		\left(\int_{B_{8R}\setminus B_{R}} |\varphi z_n|^{\frac{2N}{N-2}}\md x\right)^{\frac{N-2}{N}}&\leq C\int_{B_{8R}\setminus B_{R}} |\varphi\nabla z_n|^2\md x+C	\int_{B_{8R}\setminus B_{R}}|\nabla\varphi|^2 | z_n|^2\md x\\
		&\leq C\left(\frac{1}{\delta^2}+\frac{q}{R^{\frac{3N+N\al_n-2}{N-1}}}\right)	\int_{B_{8R}\setminus B_{R}} | z_n|^2\md x.
	\end{align*}	
So, we have
	\begin{align*}
		\left(\int_{B_{8R-\delta}\setminus B_{R+\delta}} | z_n|^{\frac{2N}{N-2}}\md x\right)^{\frac{N-2}{N}}
		\leq Cq\left(\frac{1}{\delta^2}+\frac{1}{R^{\frac{3N+N\al_n-2}{N-1}}}\right)	\int_{B_{8R}\setminus B_{R}} | z_n|^2\md x
	\end{align*}	
and
	\begin{align*}
		\left(\int_{B_{8R-\delta}\setminus B_{R+\delta}} | \overline \psi_n|^{q2^*}\md x\right)^{\frac{1}{q2^*}}\leq \left(\frac{Cq}{\delta^2}\right)	^{\frac{1}{2q}}\left(\int_{B_{8R}\setminus B_{R}} | \overline \psi_n|^{2q}\md x\right)^{\frac{1}{2q}}.
	\end{align*}	
Take
	$$\gamma=2, \,\,\,\, \chi=\frac{N}{N-2},\,\,\,\, \gamma_i=\chi^i\gamma,\,\,\,\, h_i=\sum_{j=0}^{i}\frac{R}{2^{j+1}} \,\,\,\, \text{for}\ i=0,1,\cdots,$$
thus we obtain
	\begin{align*}
		&\quad\left(\int_{B_{8R-h_{i+1}}\setminus B_{R+h_{i+1}}} | \overline \psi_n|^{\gamma_{i+1}}\md x\right)^{\frac{1}{\gamma_{i+1}}}\\&\leq\left(\frac{C\chi^i}{(h_i-h_{i+1})^2}\right)	^{\frac{1}{\gamma_{i}}}\left(\int_{B_{8R-h_{i}}\setminus B_{R+h_{i}}} | \overline \psi_n|^{\gamma_{i}}\md x\right)^{\frac{1}{\gamma_{i}}}\\
		&\leq \left(\frac{C\chi^i}{(\frac{R}{2^{i+2}})^2}\right)	^{\frac{1}{\gamma_{i}}}\left(\int_{B_{8R-h_{i}}\setminus B_{R+h_{i}}} | \overline \psi_n|^{\gamma_{i}}\md x\right)^{\frac{1}{\gamma_{i}}}\\
&\leq\cdots\cdots \nonumber\\
		&\leq \frac{C^{\sum^{i}_{k=0}\frac{1}{\gamma_k}}\chi^{\sum^{i}_{k=0}\frac{k}{\gamma_k}}4^{\sum^{i}_{k=0}\frac{k+2}{\gamma_k}}}{R^{\sum^{i}_{k=0}\frac{2}{\gamma_k}}}\left(\int_{B_{\frac{15}{2}R}\setminus B_{\frac32R}} | \overline \psi_n|^{\gamma_{0}}\md x\right)^{\frac{1}{\gamma_{0}}}.
	\end{align*}	
Letting $i\to+\infty$, by \eqref{wL2+}, we have, for any $n\geq n_0$, and any $R_0>8$ sufficiently large,
	\begin{align}\label{425hh}
		|\psi_n(x)|&\leq \|\overline \psi_n\|_{L^\infty(B_{7R}\setminus B_{2R})}\leq\frac{C}{R^{\frac{N}{2}}}\left(\int_{B_{\frac{15}{2}R}\setminus B_{\frac32R}} | \overline \psi_n|^{2}\md x\right)^{\frac{1}{2}}\\
		&\leq CR^{-\frac N 2+\frac {N}{2}}\lr C[\ln R_{0}]^{2} + C [\ln R_{0}]^{2} R_0^{-\frac{N(\al+1)}{N-1}}[\ln R]^{2} \rr^{\frac{1}{2}}\nonumber\\
		&\leq C\left(\ln R_{0} + (\ln R_{0}) R_0^{-\frac{N(\al+1)}{2(N-1)}}\ln |x|\right), \qquad\forall \,\, |x|>2R_0.  \nonumber
	\end{align}
	
\smallskip

For $N = 2$, we employ the Moser-Trudinger inequality \cite[Theorem 7.15]{GT} instead of the Sobolev embedding.
We derive that, for any integer $K \ge 2$,
\begin{align}\label{a2}
&\quad \int_{B_{8R}\setminus B_{R}}\frac{1}{K!}\left(\frac{|\varphi z_{n}|}{c_{1}\|D(\varphi z_{n})\|_{L^{2}(B_{8R}\setminus B_{R})}}\right)^{2K}\mathrm{d}x \\
  \nonumber &\leq\int_{B_{8R}\setminus B_{R}}\exp\left[\left(\frac{|\varphi z_{n}|}{c_{1}\|D(\varphi z_{n})\|_{L^{2}(B_{8R}\setminus B_{R})}}\right)^{2}\right]\mathrm{d}x\leq CR^{2}.
\end{align}
Therefore, by \eqref{424rr}, \eqref{a2}, $|\varphi|\leq1$ and $|\nabla\varphi|\leq\frac{C}{\delta}$, we have
	\begin{align*}
		\lr\int_{B_{8R}\setminus B_{R}} |\varphi z_n|^{2K}\md x\rr^{\frac{1}{K}}&\leq CR^{\frac{2}{K}}\left(\int_{B_{8R}\setminus B_{R}} |\varphi\nabla z_n|^2\md x+	\int_{B_{8R}\setminus B_{R}}|\nabla\varphi|^2 | z_n|^2\md x\right)\\
		&\leq CR^{\frac{2}{K}}\left(\frac{1}{\delta^2}+\frac{q}{R^{4+2\al_n}}\right)		\int_{B_{8R}\setminus B_{R}} | z_n|^2\md x.
	\end{align*}	
	Thus
	\begin{align*}
		\lr\int_{B_{8R-\delta}\setminus B_{R+\delta}} | z_n|^{2K}\md x\rr^{\frac{1}{K}}
		\leq CqR^{\frac{2}{K}}\lr\frac{1}{\delta^2}+\frac{1}{R^{4+2\al_n}}\rr	\int_{B_{8R}\setminus B_{R}} | z_n|^2\md x,
	\end{align*}	
and hence
	\begin{align*}
		\lr\int_{B_{8R-\delta}\setminus B_{R+\delta}} | \overline \psi_n|^{2Kq}\md x\rr^{\frac{1}{2Kq}}\leq \lr\frac{CqR^{\frac{2}{K}}}{\delta^2}\rr	^{\frac{1}{2q}}\lr\int_{B_{8R}\setminus B_{R}} | \overline \psi_n|^{2q}\md x\rr^{\frac{1}{2q}}.
	\end{align*}	
	Let
	$$\gamma=2, \,\,\,\, \chi=K,\,\,\,\, \gamma_i=\chi^i\gamma,\,\,\,\, h_i=\sum_{j=0}^{i}\frac{R}{2^{j+1}} \,\,\,\,\,\, \text{for}\,\, i=0,1,2,\cdots,$$
	then, we get
	\begin{align*}
		&\quad\lr\int_{B_{8R-h_{i+1}}\setminus B_{R+h_{i+1}}} | \overline \psi_n|^{\gamma_{i+1}}\md x\rr^{\frac{1}{\gamma_{i+1}}}\\&\leq\lr\frac{C\chi^{i}R^{\frac{2}{K}}}{(h_{i+1}-h_{i})^2}\rr^{\frac{1}{\gamma_{i}}}\lr\int_{B_{8R-h_{i}}\setminus B_{R+h_{i}}} | \overline \psi_n|^{\gamma_{i}}\md x\rr^{\frac{1}{\gamma_{i}}}\\
		&\leq \lr\frac{C\chi^{i}R^{\frac{2}{K}}}{(\frac{R}{2^{i+2}})^2}\rr	^{\frac{1}{\gamma_{i}}}\lr\int_{B_{8R-h_{i}}\setminus B_{R+h_{i}}} | \overline \psi_n|^{\gamma_{i}}\md x\rr^{\frac{1}{\gamma_{i}}}\\
&\leq \cdots\cdots \nonumber \\
&\leq \frac{C^{\sum^{i}_{k=0}\frac{1}{\gamma_k}}\chi^{\sum^{i}_{k=0}\frac{k}{\gamma_k}}4^{\sum^{i}_{k=0}\frac{k+2}{\gamma_k}}}{R^{\left(1-\frac{1}{K}\right)\sum^{i}_{k=0}\frac{2}{\gamma_k}}}\lr\int_{B_{\frac{15}{2}R}\setminus B_{\frac32R}} | \overline \psi_n|^{\gamma_{0}}\md x\rr^{\frac{1}{\gamma_{0}}}.
	\end{align*}	
Letting $i\to+\infty$, from \eqref{wL2}, we get	
	\begin{align}\label{425hw}
		|\psi_n(x)|\leq \|\overline \psi_n\|_{L^\infty(B_{7R}\setminus B_{2R})}&\leq\frac{C}{R}\lr\int_{B_{\frac{15}{2}R}\setminus B_{\frac32R}} | \overline \psi_n|^{2}\md x\rr^{\frac{1}{2}}\\
		&\leq CR^{-1+ 1}\lr C[\ln R_{0}]^{2} + C [\ln R_{0}]^{2} R_0^{-\frac{N(\al+1)}{N-1}}[\ln R]^{2} \rr^{\frac12}\nonumber\\
		&\leq C\left(\ln R_{0} + (\ln R_{0}) R_0^{-\frac{N(\al+1)}{2(N-1)}}\ln |x|\right),\qquad \forall \,\, |x|>2R_0.\nonumber
	\end{align}	

\medskip

Finally, combining \eqref{xi425}, \eqref{425hh} and \eqref{425hw}, we obtain, for every $n\geq n_0$ and any $R_0>8$ sufficiently large,
\[
\begin{aligned}
|h_n(x)|= \zeta_n(x) |\psi_n(x)|\leq C\left(\ln R_{0} + (\ln R_{0}) R_0^{-\frac{N(\al+1)}{2(N-1)}}\ln |x|\right)e^{U_{\al_n}(x)}, \qquad \forall \,\, |x|>2R_0,
\end{aligned}
\]
where $C>0$ independent of $n$, $n_0$ and $R_0$. We finishes our proof of Proposition \ref{ppq43}.
\end{proof}

For any $R>0$, we can overcome the lack of critical weighted Sobolev embedding inequality and prove the following (almost critical) truncated weighted Hardy-Sobolev inequality on exterior domain $\R^N\setminus B_{R}(0)$ instead, which is important for us to derive a uniform decay estimate for the $L^2$-integral average of $w_{n}$ in the annulus $B_{8R}(0)\setminus B_{R}(0)$ near $\partial B_{\frac{1}{\var_n}}$.
\begin{prop}\label{tws}
Assume $N\geq2$ and $R>0$. For any $\varphi\in \dot{H}^{1}(\R^N\setminus \overline{B_{R}(0)};\,|x|^{-(N-2)}\left[\ln(R^{-1}|x|)\right]^{2})\cap L^2(\R^N\setminus \overline{B_{R}(0)};\,|x|^{-N})$, we have
	\begin{align}\label{aa436}
		\int_{\R^N\setminus B_{R}(0)}	\frac{|\varphi|^2}{|x|^{N}}\md x\leq
		4\int_{\R^N\setminus B_{R}(0)}\frac{|\nabla\varphi|^{2}}{|x|^{N-2}}\left[\ln\left(\frac{|x|}{R}\right)\right]^{2}\md x.
		\end{align}
\end{prop}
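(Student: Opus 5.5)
I would prove \eqref{aa436} with the same one-dimensional Hardy argument used for Proposition \ref{tws+}, applied along rays, but integrating by parts outward from $\partial B_R$ so that the logarithmic weight moves onto the gradient side. By density it suffices to take $\varphi\in C^1_c(\R^N)$; no boundary condition on $\partial B_R$ is needed, because the weight $\ln(|x|/R)$ vanishes there. In polar coordinates the two sides of \eqref{aa436} are
\[
\int_{\Sm^{N-1}}\int_R^{+\infty} r^{-1}|\varphi(r\theta)|^2\,\md r\,\md\theta
\qquad\text{and}\qquad
4\int_{\Sm^{N-1}}\int_R^{+\infty} r\,[\ln(r/R)]^2|\partial_r\varphi(r\theta)|^2\,\md r\,\md\theta ,
\]
where on the right I have bounded $|\nabla\varphi|\geq|\partial_r\varphi|$.

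Fix $\theta$ and set $s=\ln(r/R)\in(0,\infty)$ and $f(s)=\varphi(Re^{s}\theta)$. Then $r^{-1}\md r=\md s$ and $r\,\partial_r\varphi=f'(s)$, so $r[\ln(r/R)]^2|\partial_r\varphi|^2\md r=s^2|f'(s)|^2\md s$. The claim therefore reduces to the one-dimensional inequality
\[
\int_0^\infty |f|^2\,\md s\leq 4\int_0^\infty s^2|f'|^2\,\md s
\]
for $f$ with compact support in $[0,\infty)$. This is the classical Hardy inequality in ``dual'' form. Write $|f|^2=\frac{\md}{\md s}(s)\,|f|^2$ and integrate by parts. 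The boundary term at $s=0$ vanishes because of the factor $s$, and the term at infinity vanishes by compact support. This gives
\[
\int_0^\infty|f|^2\,\md s=-2\int_0^\infty s\,f f'\,\md s\leq 2\Big(\int_0^\infty|f|^2\Big)^{1/2}\Big(\int_0^\infty s^2|f'|^2\Big)^{1/2}.
\]
Dividing through yields the constant $4$. Integrating over $\theta\in\Sm^{N-1}$ then gives \eqref{aa436} for smooth compactly supported $\varphi$.

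The only delicate step is the approximation, since a general $\varphi$ in the stated weighted spaces need not have compact support. I would use the truncation $\varphi\chi_k$ with the logarithmic cutoff $\chi_k(x)=\chi(\ln\ln|x|/k)$, or more simply apply the one-dimensional integration by parts on $[0,S]$. On $[0,S]$ the boundary term $S|f(S)|^2$ has the favorable sign, $-S|f(S)|^2\leq 0$, when moved to the left side. Concretely, $\int_0^S|f|^2=S|f(S)|^2-2\int_0^S sff'$ fails in general, so instead I would note that $f\in L^2(0,\infty)$ forces $\liminf_{S\to\infty}S|f(S)|^2=0$, and pass to the limit along such $S$. With that, the finiteness assumptions in the statement justify all integrals, and the argument closes.
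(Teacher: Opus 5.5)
Your proof is correct and is essentially the paper's argument: for $\varphi\in C^1_c$ the paper writes $|\varphi(r\theta)|^2\le 2\int_r^{\infty}|\varphi||\nabla\varphi|\,\md t$ and exchanges the order of integration so that $\int_R^t r^{-1}\,\md r=\ln(t/R)$ appears. This is exactly your integration by parts against $\frac{\md}{\md s}(s)$, and it is followed by the same Cauchy--Schwarz step.

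One correction to your approximation remark: on $[\sigma,S]$ the upper boundary term $S|f(S)|^2$ enters with the unfavorable sign. The identity $\int_\sigma^S|f|^2=S|f(S)|^2-\sigma|f(\sigma)|^2-2\int_\sigma^S sff'$ holds; what is illegitimate is discarding $S|f(S)|^2$. So your choice of $S_k\to\infty$ with $S_k|f(S_k)|^2\to0$ is genuinely needed. The lower term $-\sigma|f(\sigma)|^2\le 0$, by contrast, can simply be dropped, so no boundedness of $\varphi$ near $\partial B_R$ is required.
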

\begin{proof}
To prove \eqref{aa436}, we can assume by approximation that $\varphi\in C_{c}^1(\R^N)$. From Fubini's theorem and using polar coordinates, we obtain
	\begin{align}\label{a436}	
	&\quad \int_{\R^N\setminus B_{R}(0)}	\frac{|\varphi|^2}{|x|^{N}}\md x \\ &=\int_{\Sm^{N-1}}\int_R^{+\infty}r^{-1}|\varphi(r\theta)|^2\md r\md\theta \nonumber \\
		&\leq 2\int_{\Sm^{N-1}}\int_R^{+\infty}r^{-1}\int_r^{+\infty}|\varphi(t\theta)||\nabla\varphi(t\theta)|\md t\md r\md\theta\nonumber\\ &=2\int_{\Sm^{N-1}}\int_R^{+\infty}|\varphi(t\theta)||\nabla\varphi(t\theta)|\int_R^{t}r^{-1}\md r\md t\md\theta\nonumber\\
		&=2\int_{\Sm^{N-1}}\int_R^{+\infty}\ln\left(\frac{t}{R}\right)|\varphi(t\theta)||\nabla\varphi(t\theta)|\md t\md\theta\nonumber\\
		&\leq 2\lr\int_{\Sm^{N-1}}\int_R^{+\infty}t^{-1}|\varphi(t\theta)|^2\md t\md\theta\rr^{\frac12\nonumber}\lr\int_{\Sm^{N-1}}\int_R^{+\infty}t\left[\ln\left(\frac{t}{R}\right)\right]^{2}|\nabla\varphi(t\theta)|^2\md t\md\theta\rr^{\frac12}\nonumber \\
&= 2\lr\int_{\R^N\setminus B_{R}(0)}	\frac{|\varphi|^2}{|x|^{N}}\md x\rr^{\frac12}\lr\int_{\R^N\setminus \overline{B_{R}(0)}}\frac{|\nabla\varphi|^{2}}{|x|^{N-2}}\left[\ln\left(\frac{|x|}{R}\right)\right]^{2}\md x\rr^{\frac12}.\nonumber
	\end{align}	
Hence \eqref{aa436} follows from \eqref{a436} immediately. This finishes our proof.
\end{proof}

By using the uniform $(\ln R)^{-1}$-type decay estimate \eqref{ln 420} on the integral of $|x|^{-(N-2)}|\nabla w_n|^2$ and applying the truncated weighted Hardy-Sobolev inequality \eqref{aa436}, we overcame the nonlinear nature of the $N$-Laplacian $\Delta_N$ and the divergence (to $-\infty$) at $\infty$ of the solutions $u_n$ and $v_n$, and proved the following uniform $(\ln R)^{-1}$-type decay estimate for the $L^2$-integral average of $w_n$ on annulus $B_{8R}(0)\setminus B_{R}(0)$ near $\partial B_{\frac{1}{\var_n}}$.
\begin{prop}\label{qpp44'}
	Assume $\al>0$ and $ N\geq2$. If $\|e^{v_n}-e^{\beta_{\al_n}
}\|_\gamma\leq A$ and $\inf\limits_{\overline{B_1}}v_n\geq B$ for some positive constant $A$ and constant $B$ independent of $n$, then we have, there exists $n_0\geq1$ such that
	\begin{align}\label{wL2}
		\frac{1}{R^N}\int_{B_{8R}(0)\setminus B_{R}(0)}|w_n|^2\md x\leq \frac{C}{\ln R}, \qquad \forall \,\, \frac{1}{14\var_n}<R<\frac{1}{2\var_n},\ \forall \,\, n\geq n_0,
	\end{align}
where $C>0$ is a constant independent of $n$ and $R$.
\end{prop}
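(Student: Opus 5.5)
We want $\frac{1}{R^N}\int_{B_{8R}\setminus B_R}|w_n|^2\,\md x\le \frac{C}{\ln R}$ for $\frac{1}{14\var_n}<R<\frac{1}{2\var_n}$. The plan is to apply the truncated weighted Hardy--Sobolev inequality \eqref{aa436} of Proposition \ref{tws} centered at a radius $R'\sim R$, with the weight $[\ln(|x|/R')]^2$ harmless because we stay in a region comparable to $B_{1/\var_n}$. Then we feed in the $(\ln R)^{-1}$-type gradient decay \eqref{ln 420} from Proposition \ref{ppp43}. The key structural fact is that $w_n=0$ on $\partial B_{\frac1{\var_n}}$: both $u_n$ and $v_n$ equal $\beta_{\al_n}$ there. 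So extending $w_n$ by zero outside $B_{\frac1{\var_n}}$ gives a function in $W^{1,2}_{loc}$ supported in $\overline{B_{1/\var_n}}$, and this zero extension is the test function for \eqref{aa436}.

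First, fix $R$ in the given range and set $R':=R$. The extended $w_n$ has compact support, is bounded by $1$, and has $\int |x|^{-(N-2)}|\nabla w_n|^2<\infty$ by \eqref{aa438}, so \eqref{aa436} applies (after the usual approximation). Since $w_n\equiv0$ for $|x|\ge \frac1{\var_n}$ and $\frac1{\var_n}<14R$, the right-hand side of \eqref{aa436} only involves $R\le|x|\le 14R$, where $\ln(|x|/R)\le\ln 14$. This gives
\[
\int_{\R^N\setminus B_R}\frac{|w_n|^2}{|x|^N}\,\md x\le 4(\ln 14)^2\int_{B_{\frac1{\var_n}}\setminus B_R}\frac{|\nabla w_n|^2}{|x|^{N-2}}\,\md x .
\]

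Second, bound the right-hand side by \eqref{ln 420} with this $R\ge 2$, obtaining at most $\frac{C}{\ln R}$ for $n\ge n_0$. On the left, restrict to $B_{8R}\setminus B_R$ and use $|x|^{-N}\ge (8R)^{-N}$ there. This yields $R^{-N}\int_{B_{8R}\setminus B_R}|w_n|^2\le 8^N\cdot 4(\ln 14)^2\cdot\frac{C}{\ln R}$, which is \eqref{wL2}, with constants independent of $n$ and $R$.

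The only delicate point is justifying that the zero extension is admissible in \eqref{aa436}. The proof of that inequality integrates $\partial_r|\varphi|^2$ from $r$ to $+\infty$, so it needs $\varphi$ to vanish at infinity, which the compact support provides. We also need $w_n\in W^{1,2}$ across $\partial B_{1/\var_n}$. This follows from the continuity of $w_n$ up to the boundary ($u_n,v_n\in C^{1,\eta}$) together with the zero trace. A density argument with $C^1_c$ approximations, truncated radially, then completes the justification.
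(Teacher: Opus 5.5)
Your proof is correct and follows the paper's proof: apply the truncated Hardy-type inequality \eqref{aa436} to $w_n$ (which vanishes outside $B_{\frac{1}{\var_n}}$), bound the logarithmic weight by $\ln 14$ using $\frac{1}{\var_n}<14R$, invoke \eqref{ln 420}, and restrict the left-hand side to $B_{8R}\setminus B_R$, where $|x|^{-N}\geq (8R)^{-N}$. Your added remarks on why the zero extension is admissible in \eqref{aa436} (compact support, and Lipschitz across $\partial B_{\frac{1}{\var_n}}$ by $C^{1,\eta}$ regularity and zero boundary values) supply a detail the paper leaves implicit.
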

\begin{proof}
By applying the truncated weighted Hardy-Sobolev inequality \eqref{aa436} with $\varphi=w_n$ and noting that $0\leq \ln\left(\frac{|x|}{R}\right)\leq \ln(14)$ for any $x\in B_{\frac{1}{\var_n}}(0)\setminus B_{R}(0)$, we deduce from Proposition \ref{ppp43} that
\begin{align}	\label{h1}
\frac{1}{(8R)^{N}}\int_{B_{8R}(0)\setminus B_{R}(0)}|w_{n}|^2\md x&\leq\int_{B_{8R}(0)\setminus B_{R}(0)}	\frac{|w_{n}|^2}{|x|^{N}}\md x\\
&\leq\int_{\R^N\setminus B_{R}(0)}	\frac{|w_{n}|^2}{|x|^{N}}\md x \nonumber\\
		&\leq 4\int_{B_{\frac{1}{\var_n}}(0)\setminus B_{R}(0)}\frac{|\nabla w_{n}|^{2}}{|x|^{N-2}}\left[\ln\left(\frac{|x|}{R}\right)\right]^{2}\md x \nonumber \\
&\leq 4[\ln(14)]^{2}\int_{B_{\frac{1}{\var_n}}(0)\setminus B_{R}(0)}\frac{|\nabla w_{n}|^{2}}{|x|^{N-2}}\md x \nonumber \\
&\leq \frac{C}{\ln R}, \nonumber
\end{align}
where $C>0$ is a constant independent of $n$ and $R$. This finishes our proof of Proposition \ref{qpp44}.
\end{proof}

By considering the equation satisfied by $w_n$ directly, using the uniform $(\ln R)^{-1}$-type decay estimate \eqref{wL2} on $L^2$-integral average of $w_n$ and applying the De Giorgi-Moser-Nash iteration argument, we overcame the nonlinear nature of the $N$-Laplacian $\Delta_N$, absence of Kelvin type transforms, lack of the Green integral representation formula and the divergence (to $-\infty$) at $\infty$ of the solutions $u_n$ and $v_n$, and proved the following uniform decay estimate for $w_n$.
\begin{prop}\label{p1}
Let $\al_n$ and $\var_n$ be sequences such that $\al_n\to\al>0$ and $\var_n\to0$ as $n\to+\infty$. If $\|e^{v_n}-e^{\beta_{\al_n}
}\|_\gamma\leq A$ and $\inf\limits_{\overline{B_1}}v_n\geq B$ for some positive constant $A$ and constant $B$ independent of $n$, then there exists a positive constant $C$ independent of $n$ and $n_0\geq1$ such that, for every $n\geq n_0$,
\begin{align}\label{412'}
	|w_n(x)|\leq \frac{C}{\sqrt{\ln|x|}}, \qquad \ \forall \,\, x\in \mathbb{R}^{N} \,\,\, \text{such that} \,\, \frac{1}{2\var_n}<|x|<\frac{1}{\var_n}.
\end{align}
\end{prop}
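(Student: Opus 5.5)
The plan is to run the same De Giorgi--Moser--Nash scheme as in the proof of Proposition \ref{ppq43}, but for $w_n$ instead of $\psi_n$, with the $L^2$ input supplied by Proposition \ref{qpp44'} rather than \eqref{wL2+}. Dividing \eqref{e2} by $\|u_n-v_n\|_{L^\infty}$ shows that $w_n$ solves
\[
-\sum_{i,j}\partial_{x_i}\bigl(a_{ij}^n\partial_{x_j}w_n\bigr)=|x|^{N\al_n}\zeta_n w_n \quad \text{in } B_{\frac1{\var_n}}, \qquad w_n=0 \text{ on } \partial B_{\frac1{\var_n}}.
\]
Here $a_{ij}^n$ is the same coefficient matrix as in \eqref{w}. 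By \eqref{n5}, \eqref{n6}, \eqref{435}--\eqref{437} and Propositions \ref{pp41}, \ref{pro43}, on annuli $\{R<|x|<8R\}$ with $R\ge1$ the matrix satisfies uniform ellipticity bounds comparable to $|x|^{-(N-2)}\sim R^{-(N-2)}$. The zeroth-order coefficient satisfies $|x|^{N\al_n}\zeta_n\le CR^{-N-\frac{N(\al+1)}{N-1}}$ there. After multiplying through by $R^{N-2}$, this becomes a uniformly elliptic equation whose potential is $O(R^{-2-\frac{N(\al+1)}{N-1}})$. That term is harmless in the Caccioppoli inequality \eqref{424rr}.

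The difficulty is that the annuli $B_{8R}\setminus B_R$ from Proposition \ref{qpp44'}, with $R<\frac1{2\var_n}$, stick out of the domain. Points with $\frac1{2\var_n}<|x|<\frac1{\var_n}$ lie near or on $\partial B_{\frac1{\var_n}}$. I would handle this by extending $w_n$ by zero outside $B_{\frac1{\var_n}}$, which is consistent with the convention that $u_n$ and $v_n$ both equal $\beta_{\al_n}$ there. Since $w_n=0$ on the boundary, $G(w_n)=\operatorname{sgn}(w_n)F(|w_n|)F'(|w_n|)$ vanishes on $\partial B_{\frac1{\var_n}}$. So for the cutoffs $\varphi$ of Proposition \ref{ppq43} (supported in $B_{8R}\setminus B_R$), $\varphi^2G(w_n)$ is still an admissible test function in $W^{1,2}_0(B_{\frac1{\var_n}})$. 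In other words, $|w_n|$ is a subsolution of the extended problem across the boundary. This is the standard boundary version of Moser iteration, and the energy inequality \eqref{424rr} holds verbatim with all integrals taken over $(B_{8R}\setminus B_R)\cap B_{\frac1{\var_n}}$.

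Next I would apply the Sobolev inequality for $N\ge3$, or the Moser--Trudinger inequality for $N=2$, to $\varphi z_n$ extended by zero, and iterate exactly as in \eqref{425hh} and \eqref{425hw}. This yields
\[
\|w_n\|_{L^\infty((B_{7R}\setminus B_{2R})\cap B_{\frac1{\var_n}})}\le CR^{-\frac N2}\Bigl(\int_{B_{\frac{15}{2}R}\setminus B_{\frac32R}}|w_n|^2\,\md x\Bigr)^{\frac12}.
\]
Given $x$ with $\frac1{2\var_n}<|x|<\frac1{\var_n}$, I choose $R=\frac{|x|}{3}$. Then $x\in B_{7R}\setminus B_{2R}$ and $R\in(\frac1{6\var_n},\frac1{3\var_n})\subset(\frac1{14\var_n},\frac1{2\var_n})$. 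Proposition \ref{qpp44'} bounds the right-hand side by $C(\ln R)^{-1/2}\le C(\ln|x|)^{-1/2}$, using $\ln R\ge\frac12\ln|x|$ for large $|x|$. This gives \eqref{412'}.

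The main obstacle is the boundary version of the iteration. One must check that the test functions are admissible, that $w_n\in W^{1,2}$ up to the boundary with the degenerate weights (which holds since $|\nabla u_n|,|\nabla v_n|\sim|x|^{-1}$ near the boundary by Proposition \ref{pro43} and \eqref{n4}), and that all constants stay uniform in $n$ after rescaling $x=Ry$.
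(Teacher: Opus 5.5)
Your proposal is correct and is essentially the paper's proof: the paper likewise runs the De Giorgi--Moser--Nash iteration of Proposition \ref{ppq43} on the equation satisfied by $w_n$ (Sobolev for $N\geq3$, Moser--Trudinger for $N=2$), feeds it with the bound \eqref{wL2} from Proposition \ref{qpp44'}, and picks $\frac{1}{14\var_n}<R<\frac{1}{2\var_n}$ with $x\in B_{7R}\setminus B_{2R}$, which is what your choice $R=|x|/3$ does. Your explicit zero-extension argument for annuli that cross $\partial B_{\frac{1}{\var_n}}$ (the test functions $\varphi^2G(w_n)$ are admissible because $G(0)=0$ and $w_n=0$ on the boundary) fills in a point the paper leaves implicit when it says ``exactly the same method''.
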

\begin{proof}
For any $x\in \mathbb{R}^{N}$ such that $\frac{1}{2\var_n}<|x|<\frac{1}{\var_n}$, there exists $\frac{1}{14\var_n}<R<\frac{1}{2\var_n}$ such that $x\in B_{7R}\setminus B_{2R}$. Thanks to the $(\ln R)^{-1}$-type uniform decay estimate \eqref{wL2} for the $L^2$-integral average of $w_n$ in Proposition \ref{qpp44'}, by using the Sobolev embedding inequality for $N\geq3$ and the Moser-Trudinger inequality for $N=2$ and applying the De Giorgi-Moser-Nash iteration argument, we can show via exactly the same method as Proposition \ref{ppq43} that, for any $n\geq n_0$,
\begin{align}\label{e0}
		|w_n(x)|\leq \|w_n\|_{L^\infty(B_{7R}\setminus B_{2R})}&\leq\frac{C}{R^{\frac{N}{2}}}\left(\int_{B_{\frac{15}{2}R}\setminus B_{\frac32R}} |w_n|^{2}\md x\right)^{\frac{1}{2}}\\
		&\leq CR^{-\frac N 2+\frac {N}{2}}\lr\frac{1}{\ln R}\rr^{\frac{1}{2}}\nonumber\\
		&\leq \frac{C}{\sqrt{\ln|x|}},    \nonumber
	\end{align}
where $C>0$ is independent of $n$ and $n_0\geq1$. This finishes our proof of Proposition \ref{p1}.
\end{proof}

Furthermore, based on Proposition \ref{p1}, by using rescaling arguments and the interior and boundary gradient regularity
estimates, we can prove the following uniform fast decay estimate for $\nabla w_n(x)$, which is crucial in the proof of Propositions \ref{pp42} and \ref{pp44}.
\begin{prop}\label{pro26}
Let $\al_n$ and $\var_n$ be sequences such that $\al_n\to\al>0$ and $\var_n\to0$ as $n\to+\infty$. If $\|e^{v_n}-e^{\beta_{\al_n}}\|_\gamma\leq A$ and $\inf\limits_{\overline{B_1}}v_n\geq B$ for some positive constant $A$ and constant $B$ independent of $n$, then there exists $C>0$ independent of $n$ and $n_0\geq 1$ such that
	\begin{align}\label{g4}
		|\nabla w_n(x)|\leq\frac{C\var_n}{\sqrt{|\ln\var_n|}},
		\qquad \, \forall \,\,   x\in \overline{B_{\frac{1}{\var_n}}}\setminus B_{\frac{3}{4\var_n}}, \quad \forall \,\, n\geq n_0.
	\end{align}
\end{prop}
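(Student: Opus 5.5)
I would prove \eqref{g4} by rescaling the linear equation satisfied by $w_n$ near the boundary sphere to unit scale. There I apply interior and boundary gradient estimates for uniformly elliptic divergence-form equations with H\"older coefficients. Then I scale back, feeding in the sup bound of Proposition \ref{p1}.

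Recall that $w_n=(u_n-v_n)/\|u_n-v_n\|_{L^\infty}$ satisfies
\[
-\partial_i\bigl(a^n_{ij}\partial_j w_n\bigr)=|x|^{N\al_n}\zeta_n w_n
\]
in $B_{\frac1{\var_n}}$, with $w_n=0$ on $\partial B_{\frac1{\var_n}}$. The coefficients $a^n_{ij}$ and $\zeta_n$ are those introduced in the proof of Proposition \ref{qpp44}. Set $\rho_n:=\frac1{\var_n}$ and $\tilde w_n(y):=w_n(\rho_n y)$ for $y\in \overline{B_1}\setminus B_{\frac12}$. Then
\[
-\partial_i\bigl(\tilde a^n_{ij}\partial_j\tilde w_n\bigr)=\tilde f_n,
\qquad \tilde a^n_{ij}(y):=\rho_n^{N-2}a^n_{ij}(\rho_n y),
\qquad \tilde f_n(y):=\rho_n^{N}|\rho_n y|^{N\al_n}\zeta_n(\rho_n y)\tilde w_n(y).
\]
By \eqref{n5}, \eqref{n6}, the new coefficients $\tilde a^n_{ij}$ are uniformly bounded and uniformly elliptic on the annulus, with constants independent of $n$. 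By \eqref{437}, $|\tilde f_n|\le C\rho_n^{N+N\al_n}\rho_n^{-\frac{N^2(\al_n+1)}{N-1}}|\tilde w_n|$. Since $\frac{N^2(\al+1)}{N-1}>N(\al+1)$, this gives $\tilde f_n=o(1)\|\tilde w_n\|_\infty$. Proposition \ref{p1} gives $\|\tilde w_n\|_{L^\infty(B_1\setminus B_{1/2})}\le C/\sqrt{|\ln\var_n|}$, and $\tilde w_n=0$ on $\partial B_1$.

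Next I need a Schauder-type $C^{1,\eta}$ estimate up to the flat-enough boundary $\partial B_1$. For it I want the $\tilde a^n_{ij}$ to be uniformly $C^{0,\eta}$ on $B_1\setminus B_{5/8}$. They are built from $\rho_n\nabla u_n(\rho_n y)$ and $\rho_n\nabla v_n(\rho_n y)$, the gradients of the rescaled functions $v_n^{\rho_n}(y)=v_n(\rho_n y)+\gamma_n\ln\rho_n$ used in Proposition \ref{pro43}, and analogously for $u_n$. These rescaled functions are uniformly bounded by Proposition \ref{pp41}. They solve $-\Delta_N v_n^{\rho_n}=O(1)$ with constant Dirichlet data on $\partial B_1$. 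Lieberman's boundary $C^{1,\eta}$ regularity for quasilinear equations (in place of the interior estimate of \cite{Tp}) then yields uniform $C^{1,\eta}(\overline{B_1}\setminus B_{5/8})$ bounds. Hence the $\tilde a^n_{ij}$ are uniformly $C^{0,\eta'}$. I must check that the exponent $N-4$ appearing for $N=3$ causes no trouble. This is fine because $|\nabla u_n|\ge \hat C/|x|$ by \eqref{n4}, so $t\nabla u_n+(1-t)\nabla v_n$ is bounded away from zero in the averaged sense, or alternatively one can use the integral form of $a_{ij}$, which is H\"older in its gradient arguments.

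With this in hand, the linear boundary $C^{1,\eta}$ estimate \cite[Thm.~8.33]{GT} on $B_1\setminus B_{3/4}$ gives
\[
\|\nabla\tilde w_n\|_{L^\infty(B_1\setminus B_{3/4})}\le C\bigl(\|\tilde w_n\|_{L^\infty(B_1\setminus B_{5/8})}+\|\tilde f_n\|_{L^\infty}\bigr)\le \frac{C}{\sqrt{|\ln\var_n|}}.
\]
Scaling back with $\nabla w_n(x)=\var_n\nabla\tilde w_n(\var_n x)$ yields \eqref{g4}.

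The \textbf{main obstacle} is the uniform H\"older continuity of the coefficients up to the boundary. It requires uniform boundary $C^{1,\eta}$ estimates for the rescaled nonlinear solutions $v_n^{\rho_n}$. If that route is problematic, I would fall back on a De Giorgi/Moser-type gradient bound for $\tilde w_n$ obtained from an odd reflection across $\partial B_1$.
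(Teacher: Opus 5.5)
Your proposal is correct and follows essentially the same route as the paper. The paper rescales $w_n$ to the unit annulus and uses Proposition~\ref{p1} for the sup bound. It gets uniform $C^{1,\eta}$ bounds up to $\partial B_1$ for the rescaled $u_n,v_n$ from Lieberman's boundary regularity, so that the rescaled coefficients $\widehat a^n_{ij}$ are uniformly H\"older. It then applies the linear interior and boundary gradient estimates (Theorems 8.32--8.33 of \cite{GT}) and scales back, exactly as you do.

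The differences are minor. Your normalization $v_n(\rho_n y)+\gamma_n\ln\rho_n$ differs from the paper's $v_n(y/\var_n)-\beta_{\al_n}$ only by the bounded constant $s_n$, by \eqref{w422}. Your alternative remark, that the integrand of $a_{ij}$ is locally Lipschitz in the gradient (for $N=3$, $\xi\mapsto|\xi|^{-1}\xi_i\xi_j$ is homogeneous of degree one), justifies the H\"older bound on the coefficients that the paper states without comment.
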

\begin{proof}
From Proposition \ref{p1}, we infer that, there exists a uniform constant $C_0>0$ independent of $n$ and $n_0\geq1$ such that, for every $n\geq n_0$, $$|w_n(x)|\leq\frac{C_{0}}{\sqrt{|\ln\var_n|}}, \qquad \ \forall \,\, x\in \mathbb{R}^{N} \,\,\, \text{such that} \,\, \frac{1}{2\var_n}\leq|x|\leq\frac{1}{\var_n}.$$

From \eqref{e2}, we know that $w_n$ satisfies the equation
\begin{align}\label{w'}
\begin{cases}
\displaystyle-\sum_{i=1}^{N}\partial_{x_i}\left( \sum_{j=1}^{N}a_{ij}^n(x)\partial_{x_j}w_n\right)=|x|^{N\al_n}\zeta_n(x)w_n, &x\in B_{\frac{1}{\var_n}},\\
w_n=0, &x\in\partial B_{\frac{1}{\var_n}}.
\end{cases}
\end{align}

For all $\frac{1}{2}\leq|y|\leq1$, define $W_n(y):=\sqrt{|\ln\var_n|}w_n\left(\frac{y}{\var_n}\right)$ and hence $|W_n(y)|\leq C_0$. Moreover, by \eqref{w'}, $W_n(y)$ satisfies
\begin{equation}\label{g1}
	-\sum_{i=1}^{N}\partial_{y_i}\left( \sum_{j=1}^{N}a_{ij}^n\left(\frac{y}{\var_n}\right)\partial_{y_j}W_n\right)=\left(\frac{1}{\var_n}\right)^{2}\left|\frac{y}{\var_n}\right|^{N\al_n}\zeta_n\left(\frac{y}{\var_n}\right)W_n(y),  \qquad \forall \,\, \frac{1}{2}<|y|<1.
\end{equation}
Define
\[\widehat{a}_{ij}^n(y):=\left(\frac{1}{\var_n}\right)^{N-2}a_{ij}^n\left(\frac{y}{\var_n}\right),\]
then it follows from \eqref{n7}, \eqref{n5} and \eqref{n6} that
\begin{equation}\label{g3}
	|\widehat{a}_{ij}^n(y)|\leq \frac{C_{1}\left(1/\var_n\right)^{N-2}}{|y/\var_n|^{N-2}}\leq 2^{N-2}C_1,  \qquad \forall \,\, \frac{1}{2}<|y|<1,
\end{equation}
\begin{equation}\label{g2}
	C_{2}|\xi|^{2}\leq \frac{C_{2}}{|y|^{N-2}}|\xi|^{2}\leq\sum_{i=1}^{N}\sum_{j=1}^{N}\widehat{a}_{ij}^n(y)\xi_i\xi_j\leq \frac{C_{1}}{|y|^{N-2}}|\xi|^{2}\leq 2^{N-2}C_{1}|\xi|^{2},  \qquad \forall \,\, \frac{1}{2}<|y|<1,
\end{equation}
where $C_1$ and $C_2$ are independent of $n$ and $y$. Therefore, $W_n(y)$ satisfies the following uniformly elliptic equation
\begin{equation}\label{g31}
	-\sum_{i=1}^{N}\partial_{y_i}\left( \sum_{j=1}^{N}\widehat{a}_{ij}^n(y)\partial_{y_j}W_n\right)=\left(\frac{1}{\var_n}\right)^{N}\left|\frac{y}{\var_n}\right|^{N\al_n}\zeta_n\left(\frac{y}{\var_n}\right)W_n(y),  \qquad \forall \,\, \frac{1}{2}<|y|<1
\end{equation}
with the right-hand side satisfying
\begin{align}\label{g32}
	&\quad \left|\left(\frac{1}{\var_n}\right)^{N}\left|\frac{y}{\var_n}\right|^{N\al_n}\zeta_n\left(\frac{y}{\var_n}\right)W_n(y)\right|\leq C\left(\frac{1}{\var_n}\right)^{N}\left|\frac{y}{\var_n}\right|^{N\al_n}e^{U_{\al_n}\left(\frac{y}{\var_n}\right)} \\
&\leq C\left(\frac{1}{\var_n}\right)^{N}\frac{\left|\frac{y}{\var_n}\right|^{N\al_n}}{\lr 1+\left|\frac{y}{\var_n}\right|^{\frac{N(\al_n+1)}{N-1}}\rr^N}
\leq C\left|\frac{y}{\var_n}\right|^{-\frac{N(\al_n+1)}{N-1}} \nonumber \\
&\leq C_{3}\var_n^{\frac{N}{N-1}}\leq C_{3},  \qquad \forall \,\, \frac{1}{2}<|y|<1, \nonumber
\end{align}
where $C_3$ is independent of $n$ and \(y\). For $\frac{1}{2}\leq|y|\leq1$, let
\begin{equation}
\widetilde u_n(y):=u_n\left(\frac{y}{\var_n}\right)-\beta_{\al_n},
\qquad
\widetilde v_n(y):=v_n\left(\frac{y}{\var_n}\right)-\beta_{\al_n}.
\label{eq:tilde-uv}
\end{equation}
Then, both $\widetilde u_n$ and $\widetilde v_n$ vanish on $\partial B_1$, and
\begin{equation}\label{e3}
\begin{split}
-\Delta_N\widetilde u_n
&=\left(\frac{1}{\var_n}\right)^{N(1+\alpha_n)}e^{\beta_{\al_n}}|y|^{N\alpha_n}e^{\widetilde u_n},\\
-\Delta_N\widetilde v_n
&=\left(\frac{1}{\var_n}\right)^{N(1+\alpha_n)}e^{\beta_{\al_n}}|y|^{N\alpha_n}e^{\widetilde v_n}.
\end{split}
\end{equation}
One can easily verify that $\widetilde u_n$, $\widetilde v_n$ and the right-hand side of \eqref{e3} are uniformly bounded w.r.t. $n$. Therefore, the boundary gradient regularity estimate (c.f. e.g. Theorem 1 in \cite{Lieb}) gives some $\eta\in(0,1)$, independent of $n$, such that
\begin{equation}
\|\widetilde u_n\|_{C^{1,\eta}
(\overline{B_1\setminus B_{2/3}})}
+\|\widetilde v_n\|_{C^{1,\eta}
(\overline{B_1\setminus B_{2/3}})}
\leq C.
\label{eq:boundary-C1a}
\end{equation}
Therefore, noting that
\begin{align*}
\widehat{a}_{ij}^n(x):&=\int_0^1\Big[(N-2)|t\nabla \widetilde{u}_n+(1-t)\nabla \widetilde{v}_n|^{N-4}\partial_{x_i}(t\widetilde{u}_n+(1-t)\widetilde{v}_n)
\partial_{x_j}(t\widetilde{u}_n+(1-t)\widetilde{v}_n)\\
&\quad+|t\nabla \widetilde{u}_n+(1-t)\nabla \widetilde{v}_n|^{N-2}\delta_{ij}\Big]\md t,
\end{align*}
we have
\begin{equation}\label{e4}
  \|\widehat{a}_{ij}^n\|_{C^{0,\eta}(\overline{B_1\setminus B_{2/3}})}\leq (N-1)\left(\|\widetilde u_n\|_{C^{1,\eta}
(\overline{B_1\setminus B_{2/3}})}+\|\widetilde v_n\|_{C^{1,\eta}(\overline{B_1\setminus B_{2/3}})}\right)^{N-2}\leq C_{4},
\end{equation}
where $C_4$ is independent of $n$. Consequently, from the interior and boundary gradient regularity estimate (c.f. e.g. Theorems 8.32 and 8.33 in \cite{GT}, Theorem 1 in \cite{Lieb}), we have, for every $n\geq n_0$,
$$\|\nabla_y W_n(y)\|_{L^\infty(\overline{B_{1}(0)}\setminus B_{\frac{3}{4}}(0))}\leq C_5,$$
where \(C_5\) is independent of $n$ and $n_0\geq 1$.

For all $x\in \overline{B_{\frac{1}{\var_n}}}\setminus B_{\frac{3}{4\var_n}}$, let $x=\frac{y}{\var_n}$, where $\frac{3}{4}\leq |y|\leq1$, thus we get
$$|\nabla_x w_n(x)|=\frac{\var_n}{\sqrt{|\ln\var_n|}}|\nabla_y W_n(y)|\leq \frac{C_{5}\var_n}{\sqrt{|\ln\var_n|}}.$$
This gives the desired estimate \eqref{g4} and hence completes the proof of Proposition \ref{pro26}.
\end{proof}

By using the uniform fast decay estimate \eqref{g4} for $\nabla w_n(x)$, the truncated weighted Hardy-Sobolev inequality \eqref{aa436+} and the Pohozaev identity \eqref{poh}, we overcame the invariance of the total mass $\int_{\mathbb{R}^{N}}|x|^{N\al}e^{U_{\la,\al}}\mathrm{d}x$ with respect to $\lambda>0$, the lack of critical weighted Sobolev embedding inequality and the lack of the Green integral representation formula, and proved the rigidity of the limiting scale.
\begin{prop}[Rigidity of the limiting scale]\label{pp42}
	Let $\al_n$ and $\var_n$ be sequences such that $\al_n\to\al>0$ and $\var_n\to0$ as $n\to+\infty$.
	Let $v_n$ be a sequence of solutions to \eqref{41} in $B_{\frac{1}{\var_n}}$, corresponding to the exponent $\al_n$. If $e^{v_n(x)}-e^{\beta_{\al_n}}\to e^{U_{\lambda,\al}}$ in $X$ as $n\to+\infty$, then we have $\la=1$.
\end{prop}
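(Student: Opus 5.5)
We argue by contradiction: assume $\lambda\neq1$, say $\lambda>1$ (the case $\lambda<1$ is symmetric). The plan is to compare, through the Pohozaev identity \eqref{poh}, the boundary normal derivatives of $v_n$ and of the radial solution $u_n=u_{\var_n,\al_n}$, both of which satisfy \eqref{31} with the same boundary value $\beta_{\al_n}$. Set
\[
z_n:=-\frac1{\var_n}\partial_\nu u_n\Big(\frac{\theta}{\var_n}\Big),\qquad \rho_n(\theta):=-\frac1{\var_n}\partial_\nu v_n\Big(\frac{\theta}{\var_n}\Big),\qquad \theta\in\partial B_1 .
\]
Here $z_n$ is a constant, and from the explicit formula for $U_{\al_n}$ one has $z_n\to \frac{N^2(\al+1)}{N-1}$. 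Writing $F_n(t):=\frac{N-1}{N}t^N-N(1+\al_n)t^{N-1}$, Lemma \ref{lem-p} applied to both $u_n$ and $v_n$ gives the identity \eqref{e18+'}, i.e. $\int_{\partial B_1}(F_n(\rho_n(\theta))-F_n(z_n))\dd\theta=0$. Since $F_n'(t)=(N-1)t^{N-2}(t-N(1+\al_n))$, the function $F_n$ is strictly increasing on $[N(1+\al_n),+\infty)$. Because $z_n$ lies in this range with a uniform margin, it suffices to show that $\rho_n(\theta)\ge z_n$ on all of $\partial B_1$ with strict inequality on a set of positive measure, or that $\rho_n\le z_n$ with strict inequality somewhere. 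Either gives a contradiction.

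\textbf{Step 1 (the hypotheses of the Section 4 estimates hold).} The convergence in $X$ gives $\|e^{v_n}-e^{\beta_{\al_n}}\|_\gamma\le A$. Since $e^{v_n}-e^{\beta_{\al_n}}\to e^{U_{\lambda,\al}}$ uniformly on compact sets and $e^{\beta_{\al_n}}\to0$, we also get $\inf_{\overline{B_1}}v_n\ge B$. Therefore Propositions \ref{pp41}, \ref{ppp43}, \ref{p1} and \ref{pro26} all apply.

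\textbf{Step 2 (the two normal derivatives are close).} Proposition \ref{pro26} gives $|\nabla(u_n-v_n)|\le \|u_n-v_n\|_{L^\infty}\,C\var_n|\ln\var_n|^{-1/2}$ near $\partial B_{1/\var_n}$. By \eqref{w421}, $\|u_n-v_n\|_{L^\infty}\le C$. Hence $\|z_n-\rho_n\|_{L^\infty(\partial B_1)}\le C|\ln\var_n|^{-1/2}\to0$. This closeness lets us linearize $F_n$ around $z_n$ on $\partial B_1$, so the sign of $F_n(\rho_n)-F_n(z_n)$ is the sign of $\rho_n-z_n$.

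\textbf{Step 3 (the sign of $\rho_n-z_n$ is forced by $\lambda>1$).} This is the main obstacle, because the total mass $\int|x|^{N\al}e^{U_{\lambda,\al}}$ does not depend on $\lambda$ and so cannot be used. I would argue through $\psi_n$, or $w_n$, and the divergence-form equation \eqref{w}.

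First, on compact sets, $e^{v_n}\to e^{U_{\lambda,\al}}$ while $e^{u_n}\to e^{U_\al}$. With $\lambda>1$, the function $U_{\lambda,\al}-U_\al$ is positive near $0$ and tends to the negative constant $\ln\lambda^{-N(\al+1)/(N-1)}$ at infinity. Consequently $u_n-v_n$ is bounded away from $0$ with a definite sign on a large annulus $\{R_0\le|x|\le R_1\}$, uniformly in $n$.

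Next, I would propagate this sign to the boundary $\partial B_{1/\var_n}$. Test \eqref{w} on the exterior region $B_{1/\var_n}\setminus B_{R_0}$, using that $u_n-v_n$ vanishes on the outer boundary. The truncated Hardy inequality \eqref{aa436+}, combined with the smallness of $|x|^{N\al_n}\zeta_n$ there (as in \eqref{471p}), makes the operator coercive on this region. A comparison principle then keeps the sign of $u_n-v_n$ throughout the exterior region.

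Finally, the Hopf lemma, applied on the scale $y=\var_nx$ with the uniformly elliptic rescaled coefficients $\widehat a^n_{ij}$ from the proof of Proposition \ref{pro26}, turns the sign of $u_n-v_n$ inside into a sign of $\partial_\nu(u_n-v_n)$ on $\partial B_{1/\var_n}$. This gives $\rho_n\ge z_n$ on $\partial B_1$ with strict inequality somewhere. A quantitative lower bound is not needed, only the sign.

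\textbf{Step 4 (conclusion).} Combining Step 3 with the strict monotonicity of $F_n$ from Step 2 makes $\int_{\partial B_1}(F_n(\rho_n)-F_n(z_n))\dd\theta$ strictly positive, which contradicts \eqref{e18+'}. The case $\lambda<1$ is identical with all signs reversed. Hence $\lambda=1$.
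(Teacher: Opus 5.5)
Your proposal is correct and follows essentially the same route as the paper. The paper uses the Pohozaev identity \eqref{e18} and the closeness $\|z_n-\rho_n\|_{L^\infty(\partial B_1)}\le C|\ln\var_n|^{-1/2}$ obtained from \eqref{w421} and Proposition \ref{pro26}. It takes the sign of $u_n-v_n$ on a large sphere from $u_n-v_n\to U_\al-U_{\lambda,\al}$, propagates that sign to $B_{1/\var_n}\setminus B_R$ by testing \eqref{w} with the negative part together with \eqref{aa436+}, and concludes with the strong maximum principle and Hopf's lemma.

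There is one small sign slip. For $\lambda>1$ one has $u_n-v_n>0$ near $\partial B_{1/\var_n}$, hence $\partial_\nu(u_n-v_n)<0$ and $\rho_n<z_n$ (not $\rho_n\ge z_n$). The integral in \eqref{e18} is therefore strictly negative rather than positive, which still yields the contradiction.
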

\begin{proof}
Since $e^{v_n(x)} - e^{\beta_{\alpha_n}} \to e^{U_{\lambda,\al}} \ \text{in } X \ \text{as } n\to+\infty$ for some $\lambda>0$, we have
$$\|e^{v_n(x)} - e^{U_{\lambda,\al}}\|_{L^\infty(B_4)}=\|e^{U_{\lambda,\al}}\lr e^{v_n(x)-U_{\lambda,\al}} - 1\rr\|_{L^\infty(B_4)}\to0,\quad \text{as } n\to+\infty\ \text{for some } \la>0.$$
Thus we get $\|v_n(x)-U_{\lambda,\al}\|_{L^\infty(B_4)}\to0,\ \text{as } n\to+\infty\ \text{for some } \la>0$, then there exists a constant $B$ independent of $n$ such that $\inf\limits_{\overline{B_1}}v_n\geq B$. We may assume below that $\|u_n-v_n\|_{L^\infty(B_{\frac{1}{\var_n}})}>0$ for all sufficiently large $n$, or else Proposition \ref{pp32} implies that $\lambda=1$ and the proof is finished.

Since $\beta_{\al_n}\to-\infty$, on every compact set $K\Subset\R^N$, so $e^{v_n(x)} - e^{\beta_{\alpha_n}} \to e^{U_{\lambda,\al}} \ \text{in } X \ \text{as } n\to+\infty$ implies
\[
e^{v_n}
=\bigl(e^{v_n}-e^{\beta_n}\bigr)+e^{\beta_n}
\longrightarrow e^{U_{\lambda,\alpha}}
\quad\text{uniformly on }K,
\]
and hence
\begin{equation}\label{e5}
v_n\longrightarrow U_{\lambda,\alpha}
\quad\text{locally uniformly in }\R^N,
\end{equation}
i.e., $v_n\rightarrow U_{\lambda,\alpha}$ in $L^{\infty}_{loc}(\mathbb{R}^{N})$. By local $C^{1,\eta}$-estimates for the $N$-Laplace equation, one has
\begin{equation}\label{e6}
v_n\longrightarrow U_{\lambda,\alpha}
\quad\text{in }C^1_{\mathrm{loc}}(\R^N).
\end{equation}
Consequently, we have
\begin{equation}\label{e7}
u_n-v_n\longrightarrow D_{\lambda}(|x|):=U_\alpha(|x|)-U_{\lambda,\alpha}(|x|) \quad\text{in }C^1_{\mathrm{loc}}(\R^N).
\end{equation}
Therefore, we have
\begin{equation}
D_\lambda(|x|)=-N(1+\alpha)\ln\lambda+N\ln\left[\frac{1+\lambda^{\frac{N(1+\alpha)}{N-1}} |x|^{\frac{N(1+\alpha)}{N-1}}}{1+|x|^{\frac{N(1+\alpha)}{N-1}}}\right],
\end{equation}
and hence
\begin{equation}
\frac{\partial}{\partial r}D_\lambda=\frac{N^2(1+\alpha)}{N-1}\left(\lambda^{\frac{N(1+\alpha)}{N-1}}-1\right)\frac{|x|^{\frac{N(1+\alpha)}{N-1}-1}}{(1+|x|^{\frac{N(1+\alpha)}{N-1}})(1+\lambda^{\frac{N(1+\alpha)}{N-1}} |x|^{\frac{N(1+\alpha)}{N-1}})},
\end{equation}
\begin{equation}
\lim_{|x|\to0}D_\lambda(|x|)=-N(1+\alpha)\ln\lambda,
\qquad
\lim_{|x|\to\infty}D_\lambda(|x|)=\frac{N(1+\alpha)}{N-1}\ln\lambda,
\end{equation}
and $D_{\lambda}$ change signs and $D_{\lambda}=0$ on $\partial B_{r(\lambda)}$ for some radius $r(\lambda)>0$. It follows that, for any sufficiently large radius $R$, one has
\begin{align}
\lambda>1&\quad\Longrightarrow\quad
u_n-v_n>0\quad\text{on }\partial B_R,
\label{e8}\\
\lambda<1&\quad\Longrightarrow\quad
u_n-v_n<0\quad\text{on }\partial B_R
\label{e9}
\end{align}
for every sufficiently large $n$.

Now, let $\nu:=\frac{x}{|x|}$ denotes the outward unit normal vector and define
\begin{equation}\label{e10}
z_n:=-\frac{1}{\var_n}\partial_\nu u_n\left(\frac{\theta}{\var_n}\right),
\qquad
\rho_n(\theta):=-\frac{1}{\var_n}\partial_\nu v_n\left(\frac{\theta}{\var_n}\right),
\quad \forall \,\, \theta\in\partial B_1.
\end{equation}
The explicit expression for $u_n$ yields
\begin{equation}\label{e11}
z_n
=\frac{N^2(1+\alpha_n)}{N-1}\frac{\left(\frac{1}{\var_n}\right)^{\frac{N(1+\alpha_n)}{N-1}}}{1+\left(\frac{1}{\var_n}\right)^{\frac{N(1+\alpha_n)}{N-1}}}
\longrightarrow \frac{N^2(1+\alpha)}{N-1},  \qquad \text{as} \,\, n\rightarrow+\infty.
\end{equation}
From \eqref{w421} and \eqref{g4} in Proposition \ref{pro26}, we deduce that, there exists $C>0$ independent of $n$ and $n_0\geq 1$ such that, for any $n\geq n_0$,
\begin{align}\label{e1}
\|z_n-\rho_n(\theta)\|_{L^\infty(\partial B_{1})}&\leq \frac{1}{\var_n}\|\nabla(u_n-v_n)\|_{L^\infty\left(\partial B_{\frac{1}{\var_n}}\right)} \\
&\leq \frac{1}{\var_n}\|u_n-v_n\|_{L^{\infty}(B_{\frac{1}{\var_n}})}\|\nabla w_n\|_{L^\infty\left(\overline{B_{\frac{1}{\var_n}}}\setminus B_{\frac{3}{4\var_n}}\right)} \nonumber \\
&\leq \frac{\|u_n-v_n\|_{L^{\infty}(B_{\frac{1}{\var_n}})}}{\var_n}\frac{C\var_n}{\sqrt{|\ln\var_n|}} \nonumber\\
&\leq\frac{C}{\sqrt{|\ln\var_n|}}\longrightarrow0, \qquad \text{as} \,\, n\rightarrow+\infty. \nonumber
\end{align}

\medskip

Suppose first that $\lambda>1$. We take $R$ large enough such that \eqref{e8} holds, i.e., $u_n-v_n>0$ on $\partial B_R$.  Since
$u_n-v_n=0$ on $\partial B_{\frac{1}{\var_n}}$ and $u_n-v_n>0$ on $\partial B_R$, its negative part $(u_n-v_n)_-:=\max\{-(u_n-v_n),0\}$ belongs to $H_0^1(B_{\frac{1}{\var_n}}\setminus\overline{B_R})$. Testing \eqref{e2} with $(u_n-v_n)_-$ gives
\begin{equation}\label{e12}
\sum\limits_{i,j=1}^{N}\int_{B_{\frac{1}{\var_n}}\setminus B_R}a^n_{ij}\partial_i (u_n-v_n)_-\partial_j (u_n-v_n)_-\dd x
=\int_{B_{\frac{1}{\var_n}}\setminus B_R}|x|^{N\al_n}\zeta_n(x)[(u_n-v_n)_-]^2\dd x.
\end{equation}
Since $\al_n\rightarrow\al$, one has $|x|^{N\al_n}\zeta_n(x)\leq C|x|^{N\al_n}e^{U_{\al_n}}\leq C|x|^{-N-\frac{N(\al+2)}{2(N-1)}}\leq CR^{-\frac{N(\al+1)}{2(N-1)}} |x|^{-N}[\ln|x|]^{-2}$ for $|x|\geq R$ and $n\geq n_0$. By using \eqref{n6}, and the truncated weighted Hardy-Sobolev inequality \eqref{aa436+} on annulus in Proposition \ref{tws+}, we get, for $n\geq n_0$,
\begin{align}\label{e13}
&\quad C_{2}\int_{B_{\frac{1}{\var_n}}\setminus B_R} |x|^{-(N-2)}|\nabla (u_n-v_n)_-|^2\dd x \\
&\leq\int_{B_{\frac{1}{\var_n}}\setminus B_R} |x|^{N\al_n}\zeta_n(x)[(u_n-v_n)_-]^2\dd x \nonumber \\
&\leq CR^{-\frac{N(\al+1)}{2(N-1)}}\int_{B_{\frac{1}{\var_n}}\setminus B_R} |x|^{-N}[\ln|x|]^{-2}[(u_n-v_n)_-]^2\dd x\nonumber\\
&\leq CR^{-\frac{N(\al+1)}{2(N-1)}}
\int_{B_{\frac{1}{\var_n}}\setminus B_R} |x|^{-(N-2)}|\nabla (u_n-v_n)_-|^2\dd x.\nonumber
\end{align}
Since Proposition \ref{ppp43} implies that, for $n\geq n_{0}$,
\[\int_{B_{\frac{1}{\var_n}}\setminus B_R} |x|^{-(N-2)}|\nabla (u_n-v_n)_-|^2\dd x\leq \frac{C}{\ln R}\|u_n-v_n\|^{2}_{L^\infty}\leq \frac{C}{\ln R},\]
by fixing $R$ larger if necessary such that $CR^{-\frac{N(\al+1)}{2(N-1)}}\leq\frac{C_{2}}{2}$ in \eqref{e13}, we get $(u_n-v_n)_-=0$. Thus $u_n-v_n\geq0$ in the annulus $B_{\frac{1}{\var_n}}\setminus B_{R}$. The strong maximum principle and the Hopf boundary point lemma imply that
\begin{equation}
u_n-v_n>0 \quad\text{in }B_{\frac{1}{\var_n}}\setminus\overline{B_R},
\qquad
\partial_\nu(u_n-v_n)<0\quad\text{on }\partial B_{\frac{1}{\var_n}},
\end{equation}
and hence
\[z_n-\rho_n(\theta)=-\frac{1}{\var_n}\partial_\nu(u_n-v_n)\left(\frac{\theta}{\var_n}\right)>0, \qquad \forall \,\, \theta\in\partial B_1.\]
Therefore, we have proved that
\begin{equation}\label{e14}
\lambda>1\quad\Longrightarrow\quad
\rho_n(\theta)<z_n
\quad\text{for every }\theta\in\partial B_1(0).
\end{equation}
If $\lambda<1$, applying the same argument to $-(u_n-v_n)$ gives us that
\begin{equation}\label{e15}
\lambda<1\quad\Longrightarrow\quad
\rho_n(\theta)>z_n
\quad\text{for every }\theta\in\partial B_1(0).
\end{equation}

Now we apply the Pohozaev identity in Lemma \ref{lem-p} to both $v_n$ and $u_n$ with $\var=\var_n$ and $\al=\al_n$ and obtain that
\begin{equation}\label{e16}
\frac{N-1}{N}\int_{\partial B_1}\rho_n^N(\theta)\dd\theta
=N(1+\al_n)\int_{\partial B_1}\rho_n^{N-1}(\theta)\dd\theta
-|\partial B_1|\left(\frac{1}{\var_n}\right)^{N(1+\alpha_n)}e^{\beta_{\al_n}},
\end{equation}
\begin{equation}\label{e17}
\frac{N-1}{N}|\partial B_1|z_n^N
=N(1+\al_n)|\partial B_1|z_n^{N-1}
-|\partial B_1|\left(\frac{1}{\var_n}\right)^{N(1+\alpha_n)}e^{\beta_{\al_n}}.
\end{equation}
Define
\begin{equation}
F_n(t):=\frac{N-1}{N}t^N-N(1+\al_n)t^{N-1},
\qquad t>0.
\end{equation}
Subtracting \eqref{e17} from \eqref{e16} gives the following identity
\begin{equation}\label{e18}
\int_{\partial B_1}
\bigl(F_n(\rho_n(\theta))-F_n(z_n)\bigr)\dd\theta=0.
\end{equation}
Moreover, one has
\begin{equation}\label{e19}
F_n'(t)=(N-1)t^{N-2}(t-N(1+\al_n)).
\end{equation}
From \eqref{e11} and \eqref{e1}, we deduce that, there exists $\delta_0>0$ such that,
for every sufficiently large $n\geq n_0$,
\begin{equation}\label{e20}
z_n\geq N(1+\al_n)+2\delta_0,
\qquad
\rho_n(\theta)\geq N(1+\al_n)+\delta_0
\quad\text{for all }\theta\in\partial B_1.
\end{equation}
Thus $F_n$ is strictly increasing on an interval containing $z_n$ and the entire range of $\rho_n$.

If $\lambda>1$, then \eqref{e14} and strict monotonicity imply
\[
F_n(\rho_n(\theta))-F_n(z_n)<0
\quad\text{for every }\theta\in\partial B_1,
\]
which contradicts \eqref{e18}.  If $\lambda<1$, then \eqref{e15} gives the opposite strict inequality, again contradicting \eqref{e18}. Both alternatives are impossible, hence we must have
\[
\lambda=1.
\]
This concludes our proof of Proposition \ref{pp42}.
\end{proof}

Finally, by directly analyzing the problems satisfied by $h_n$ and $\psi_n$, using the uniform fast decay estimate \eqref{412} for $h_n$, classification result for $\ker \mathcal{L}_{\mathcal{T},\,e^{U_\alpha}}$ in Theorem \ref{c13}, the truncated weighted Hardy-Sobolev inequality \eqref{aa436+} and the Pohozaev identity \eqref{poh}, we successfully overcame the nonlinear nature of the \(N\)-Laplacian \(\Delta_N\), the lack of critical weighted Sobolev embedding inequality, and the lack of the Green function representation formula, and proved the uniform lower bound \(\|e^{u_n}-e^{v_n}\|_{X} \geq C_0\) in the following proposition, this result shows the limit of non-radial approximate solutions is non-radial solution to \eqref{11} in $\R^N$, which plays a crucial role in proving the bifurcation result in $\R^N$ in Section 5.
\begin{prop}\label{pp44}
	Let $\al_n$ and $\var_n$ be sequences such that $\al_n\to\al>0$ and $\var_n\to0$ as $n\to+\infty$.
	Let $v_n$ be a sequence of nonradial solutions of \eqref{41} in $B_{\frac{1}{\var_n}}$ related to the exponent $\al_n$. If $\al\ne\alpha(k)$ for all $k\in\N$ and if $\|e^{v_n}-e^{\beta_{\al_n}}\|_\gamma\leq A$ and $\inf\limits_{\overline{B_1}}v_n\geq B$ for some positive constant $A$ and constant $B$ independent of $n$, then there exists a constant $C>0$ independent of $n$ and $n_0\geq1$ such that, for every $n\geq n_0$,
	\begin{align}\label{413}
		\|e^{u_n}-e^{v_n}\|_{X}\geq C,
	\end{align}
	where $u_n$ is the radial solution of \eqref{41} related to the exponent $\al_n$ given by \eqref{32}.
\end{prop}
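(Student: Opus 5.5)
We argue by contradiction. Suppose, along a subsequence, $\|e^{u_n}-e^{v_n}\|_X\to0$, and work with the normalized difference $h_n=\frac{e^{u_n}-e^{v_n}}{\|e^{u_n}-e^{v_n}\|_{L^\infty}}$ from \eqref{h449}, which satisfies $|h_n|\le1$ and $h_n=\zeta_n\psi_n$ by \eqref{h450}. Since $\|e^{u_n}-e^{v_n}\|_{L^\infty}\to0$ and $e^{u_n}=e^{U_{\al_n}}$ is bounded below on compacts, we get $\|u_n-v_n\|_{L^\infty_{loc}}\to0$. Hence $v_n\to U_\al$ in $C^1_{loc}$, by the same local $C^{1,\eta}$ argument as in Proposition \ref{pp42}.

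\textbf{Step 1: passing to a kernel element.} We write the equation for $h_n$ in the $g$-variables: $f_n$ and $g_n$ both solve \eqref{f474}. Subtracting the two equations and dividing by the $L^\infty$ norm gives a linear divergence-form equation for $h_n$ with lower-order gradient terms, whose coefficients converge in $C^0_{loc}$ to those of \eqref{v224}. The coefficients are uniformly elliptic on compact subsets of $\R^N\setminus\{0\}$. We use $\psi_n$, which solves \eqref{w} with locally bounded coefficients, to handle neighborhoods of the origin. With the interior H\"older/gradient estimates applied to \eqref{w}, this yields $h_n\to h$ in $C^1_{loc}$, where $h$ is a weak solution of \eqref{v224}. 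Proposition \ref{ppq43} gives $|h(x)|\le C(\ln R_0+(\ln R_0)R_0^{-\frac{N(\al+1)}{2(N-1)}}\ln|x|)e^{U_\al}$. Since $R_0$ is arbitrary, $\liminf_{|x|\to\infty}\frac{e^{-U_\al}|h|}{\ln|x|}$ can be made arbitrarily small, and hence equals $0$. Because $\al\ne\al(k)$, Theorem \ref{c13} gives $h=AZ$ for some $A\in\R$.

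\textbf{Step 2: ruling out $A=0$.} The maximum point $x_n$ of $|h_n|$ must stay bounded. On $|x|>2R_0$, estimate \eqref{412} gives $|h_n|\le C(\ln R_0)(1+R_0^{-c}\ln|x|)e^{U_{\al_n}}$, and this is $<\tfrac12$ once $R_0$ is large. Here we use that $e^{U_{\al_n}}\ln|x|$ is tiny for $|x|>2R_0$. Therefore $|h(x_\infty)|=1$ at a limit point $x_\infty$, so $A\neq0$.

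\textbf{Step 3: ruling out $A\neq0$ (main obstacle).} Here we imitate the rigidity argument of Proposition \ref{pp42}. From $h=AZ$ and the relation $\psi_n=h_n/\zeta_n$, the sign of $\psi_n$ on a large sphere $\partial B_R$ is $-\operatorname{sgn}A$ for all large $n$, since $Z<0$ for large $|x|$. This uses the local uniform convergence $\psi_n\to A\,Z e^{-U_\al}$, which follows from $\zeta_n/e^{U_{\al_n}}\to1$ locally. Suppose $A<0$, so that $u_n-v_n>0$ on $\partial B_R$. We test \eqref{e2} with $(u_n-v_n)_-$ on $B_{1/\var_n}\setminus B_R$. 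Combining \eqref{n6}, the decay $|x|^{N\al_n}\zeta_n\le CR^{-c}|x|^{-N}[\ln|x|]^{-2}$, and Proposition \ref{tws+}, and taking $R$ large, we obtain $(u_n-v_n)_-\equiv0$. The Hopf lemma then gives $\rho_n(\theta)<z_n$ on $\partial B_1$. The case $A>0$ is symmetric and gives $\rho_n(\theta)>z_n$.

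To conclude, we need $\rho_n$ and $z_n$ to lie in the monotonicity range $[N(1+\al_n)+\delta_0,\infty)$ of $F_n$. This follows from \eqref{e11} together with \eqref{e1}, via Proposition \ref{pro26} and \eqref{w421}. The Pohozaev identity \eqref{e18} then gives a contradiction, because $F_n(\rho_n(\theta))-F_n(z_n)$ has a strict constant sign in $\theta$.

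I expect Step 3 to be the main obstacle: transferring the sign information from $h$, which lives on compact sets, to the whole exterior annulus up to $\partial B_{1/\var_n}$, and keeping the Hardy-type absorption uniform in $n$. The hypotheses $\|e^{v_n}-e^{\beta_{\al_n}}\|_\gamma\le A$ and $\inf_{\overline{B_1}} v_n\ge B$ are exactly what make Propositions \ref{pp41}--\ref{pro26} available for this.
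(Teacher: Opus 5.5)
Your proposal is correct and follows essentially the same route as the paper. Both argue by contradiction, pass $h_n$ to a kernel element that Proposition \ref{ppq43} and Theorem \ref{c13} force to be $AZ$, exclude $A=0$ by the boundedness of the maximum points of $|h_n|$, and exclude $A\neq0$ via the sign on a large sphere, Hardy-type absorption, the Hopf lemma and the strict monotonicity of $F_n$ in \eqref{e18}. The only cosmetic difference is that you test \eqref{e2} with $(u_n-v_n)_-$ where the paper tests \eqref{w} with $(\psi_n)_+$; this is equivalent, since $\psi_n$ is a positive multiple of $u_n-v_n$.
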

\begin{proof}
We suppose on the contrary that there exists a sequence nonradial solutions $v_n$ of \eqref{41} in $B_{\frac{1}{\var_n}}$ that is related to the exponent $\al_n$ such that
	\begin{align}\label{p414}
		\|e^{u_n}-e^{v_n}\|_{X}\to0, \quad\text{as}\ n\to+\infty,
	\end{align}
from the definition of $X$, we have
\begin{align}\label{equ446}
		\|e^{u_n}-e^{v_n}\|_{L^{\infty}_\gamma(\R^N)}\to0, \quad\text{as}\ n\to+\infty.
\end{align}
Thus we obtain
\begin{align}\label{equ437}
		u_n-v_n\to 0 \quad \text{in} \,\,\, L^{\infty}_{loc}(\R^N), \quad\text{as}\ n\to+\infty,
\end{align}
furthermore, by local $C^{1,\eta}$-estimates for the $N$-Laplace equation, one has
\begin{align}\label{equ437'}
		u_n-v_n\to 0 \quad \text{in} \,\,\, C^{1}_{loc}(\R^N), \quad\text{as}\ n\to+\infty.
\end{align}
Note that $h_n=\frac{f_n-g_n}{\|f_n-g_n\|_{L^{\infty}(\R^N)}}$, $h_n$ satisfies the following equation in the weak sense	
\begin{align}\label{e22}
		&\quad\int_{B_{\frac{1}{\var_n}}}\int_0^1\Big(|(1-t)\nabla f_n+t\nabla g_n|^{N-2}\nabla h_n+(N-2)\times\\
		&\lr|(1-t)\nabla f_n+t\nabla g_n|^{N-4}\left[((1-t)\nabla f_n+t\nabla g_n)\cdot\nabla h_n\right]((1-t)\nabla f_n+t\nabla g_n)\rr\Big)\cdot\nabla\varphi\md t\md x\nonumber\\
		&+N(N-1)\int_{B_{\frac{1}{\var_n}}}\int_0^1
\frac{|(1-t)\nabla f_n+t\nabla g_n|^{N-2}\left[((1-t)\nabla f_n+t\nabla g_n)\cdot\nabla h_n\right]}{(1-t)e^{u_n}+te^{v_n}}
 \varphi \md t\md x\nonumber\\
		&=N\int_{B_{\frac{1}{\var_n}}}
|x|^{N\alpha_n}\int_0^1((1-t)e^{u_n}+te^{v_n})^{N-1}\md t \, h_n\varphi\md x\nonumber\\
&\quad+(N-1)\int_{B_{\frac{1}{\var_n}}}\int_0^1
\frac{|(1-t)\nabla f_n+t\nabla g_n|^N}{((1-t)e^{u_n}+te^{v_n})^2}
\md t \,
h_n\varphi\md x,\quad\forall\varphi\in C_0^\infty(B_{\frac{1}{\var_n}})\nonumber.
	\end{align}

	Since $\|h_n\|_{L^{\infty}(\R^N)}=1$, through the standard regularity theorem, we can deduce from \eqref{36} and \eqref{p414} that $h_n\to h$ in $C_{loc}^{1,\eta}(\R^N)$ and $h$ satisfies the equation
	\begin{align}\label{425p}
		&\quad\int_{\R^N}\left(|\nabla e^{U_\al}|^{N-2}\nabla h+(N-2)
		|\nabla e^{U_\al}|^{N-4}\left(\nabla e^{U_\al}\cdot\nabla h\right)\nabla e^{U_\al}\right)\cdot\nabla\varphi\md x\\
		&+N(N-1)\int_{\R^N}\frac{|\nabla e^{U_\al}|^{N-2}(\nabla e^{U_\al}\cdot\nabla h)}{e^{U_\al}}\varphi\md x\nonumber\\
		&=\int_{\R^N}\lr N|x|^{N\alpha}e^{(N-1)U_\al}h+(N-1) \frac{|\nabla e^{U_\al}|^N h}{e^{2U_\al}} \rr\varphi\md x,\quad\forall\varphi\in C_0^\infty(\R^N).\nonumber
	\end{align}
From Proposition \ref{ppq43}, we deduce that $h\in C^{1}_{loc}(\R^N)$, and for any $R_0>8$ large enough,
\begin{equation}\label{a1}
  |h(x)|\leq C\left(\ln R_{0} + (\ln R_{0}) R_0^{-\frac{N(\al+1)}{2(N-1)}}\ln |x|\right)e^{U_{\al}(x)}, \qquad \forall \,\, |x|>2R_0,
\end{equation}
and hence
$$0\leq \liminf_{|x|\rightarrow+\infty} \frac{e^{-U_\al}|h(x)|}{\ln |x|}\leq (\ln R_{0}) R_0^{-\frac{N(\al+1)}{2(N-1)}}\rightarrow0, \qquad \text{as} \,\, R_{0}\rightarrow+\infty,$$
namely, $\liminf\limits_{|x|\rightarrow+\infty} \frac{e^{-U_\al}|h(x)|}{\ln |x|}=0$. Because $\al\ne\alpha(k)$, by Theorem \ref{c13}, we get
	\begin{align}\label{425}
		h(x)=A\frac{(N-1)-|x|^{\frac{N( \al+1)}{N-1}}}
		{\lr1+|x|^{\frac{N(1+\al)}{N-1}}\rr^{N+1}} \qquad\text{for\ some}\ A\in\R.
	\end{align}

If we assume that
		$$A=0.$$
Then, we have
	\begin{align}\label{p440}h_n\to h\equiv 0\quad \text{in}\ C^{1,\eta}_{loc}(\R^N). \end{align}
	Let $x_n\in B_{\frac{1}{\var_n}}$ be such that $|h_n(x_n)|=1=\|h_n\|_{L^{\infty}(\mathbb{R}^{N})}$. From the uniform decay estimate in Proposition \ref{ppq43}, we know the sequence $\{x_n\}$ is bounded, so there exists some point $x_0\in\R^N$ such that $x_n\to x_0$ (up to subsequence) and thus $|h(x_0)|=1$. This contradicts \eqref{p440}.

\medskip

Thus it follows that $A \neq 0$. We will also derive contradiction in similar way as $\lambda\neq 1$ in the proof of Proposition \ref{pp42}. It follows from $h_{n}\rightarrow A\frac{(N-1)-|x|^{\frac{N( \al+1)}{N-1}}}
		{\lr1+|x|^{\frac{N(1+\al)}{N-1}}\rr^{N+1}}$ that, for any sufficiently large radius $R$ (to be fixed later), one has
\begin{align}
Ah_n<0\quad\text{on }\partial B_R,
\label{e8'}
\end{align}
for every sufficiently large $n$. Recall that $h_n=\zeta_n\psi_n$ and $\frac{1}{C}e^{U_{\alpha_n}}\leq \zeta_n\leq Ce^{U_{\alpha_n}}$, we have
\begin{align}
A\psi_n<0\quad\text{on }\partial B_R
\label{e8+}
\end{align}
for every sufficiently large $n$.

Suppose first that $A>0$. We take $R$ large enough such that \eqref{e8+} holds, i.e., $\psi_n:=\frac{u_n-v_n}{\|f_n-g_n\|_{L^{\infty}(\R^N)}}<0$ on $\partial B_R$.  Since
$\psi_n=0$ on $\partial B_{\frac{1}{\var_n}}$ and $\psi_n<0$ on $\partial B_R$, its positive part $(\psi_n)_+:=\max\{\psi_n,0\}$ belongs to $H_0^1(B_{\frac{1}{\var_n}}\setminus\overline{B_R})$. Testing equation \eqref{w} with $(\psi_n)_+$ gives
\begin{equation}\label{e12+}
\sum\limits_{i,j=1}^{N}\int_{B_{\frac{1}{\var_n}}\setminus B_R}a^n_{ij}\partial_i (\psi_n)_+\partial_j (\psi_n)_+\dd x
=\int_{B_{\frac{1}{\var_n}}\setminus B_R}|x|^{N\al_n}\zeta_n(x)[(\psi_n)_+]^2\dd x.
\end{equation}
Since $\al_n\rightarrow\al$, one has $|x|^{N\al_n}\zeta_n(x)\leq C|x|^{N\al_n}e^{U_{\al_n}}\leq C|x|^{-N-\frac{N(\al+2)}{2(N-1)}}\leq CR^{-\frac{N(\al+1)}{2(N-1)}} |x|^{-N}[\ln|x|]^{-2}$ for $|x|\geq R$ and $n\geq n_0$. By using \eqref{n6}, and the truncated weighted Hardy-Sobolev inequality \eqref{aa436+} on annulus in Proposition \ref{tws+}, we get, for $n\geq n_0$,
\begin{align}\label{e13+}
&\quad C_{2}\int_{B_{\frac{1}{\var_n}}\setminus B_R} |x|^{-(N-2)}|\nabla (\psi_n)_+|^2\dd x \\
&\leq\int_{B_{\frac{1}{\var_n}}\setminus B_R} |x|^{N\al_n}\zeta_n(x)[(\psi_n)_+]^2\dd x  \nonumber \\
&\leq CR^{-\frac{N(\al+1)}{2(N-1)}}\int_{B_{\frac{1}{\var_n}}\setminus B_R} |x|^{-N}[\ln|x|]^{-2} [(\psi_n)_+]^2\dd x\nonumber\\
&\leq CR^{-\frac{N(\al+1)}{2(N-1)}}
\int_{B_{\frac{1}{\var_n}}\setminus B_R} |x|^{-(N-2)}|\nabla (\psi_n)_+|^2\dd x.\nonumber
\end{align}
Since Proposition \ref{ppp43} implies that, for $n\geq n_{0}$,
\[\int_{B_{\frac{1}{\var_n}}\setminus B_R} |x|^{-(N-2)}|\nabla (\psi_n)_+|^2\dd x\leq \frac{C}{\ln R}\frac{\|u_n-v_n\|^{2}_{L^\infty}}{\|f_n-g_n\|^{2}_{L^\infty}}<+\infty,\]
by fixing $R$ larger if necessary such that $CR^{-\frac{N(\al+1)}{2(N-1)}}\leq\frac{C_{2}}{2}$ in \eqref{e13+}, we get $(\psi_n)_+=0$. Thus $\psi_n\leq0$ in the annulus $B_{\frac{1}{\var_n}}\setminus B_{R}$. The strong maximum principle and the Hopf boundary point lemma imply that
\begin{equation}
\psi_n<0 \quad\text{in }B_{\frac{1}{\var_n}}\setminus\overline{B_R},
\qquad
\partial_\nu\psi_n>0\quad\text{on }\partial B_{\frac{1}{\var_n}},
\end{equation}
and hence
\[z_n-\rho_n(\theta)=-\frac{1}{\var_n}\partial_\nu\psi_n\left(\frac{\theta}{\var_n}\right)\|f_n-g_n\|_{L^\infty}<0, \qquad \forall \,\, \theta\in\partial B_1.\]
Therefore, we have proved that
\begin{equation}\label{e14+}
A>0\quad\Longrightarrow\quad
\rho_n(\theta)>z_n
\quad\text{for every }\theta\in\partial B_1(0).
\end{equation}
If $A<0$, applying the same argument to $-\psi_n$ gives us that
\begin{equation}\label{e15+}
A<0\quad\Longrightarrow\quad
\rho_n(\theta)<z_n
\quad\text{for every }\theta\in\partial B_1(0).
\end{equation}

Note that, by \eqref{e19}, $F_n(t)$ is strictly increasing for $t\in [N(1+\al_n),+\infty)$. Therefore, from \eqref{e20}, we infer that $F_n$ is strictly increasing on an interval containing $z_n$ and the entire range of $\rho_n$, which combining with \eqref{e14+} and \eqref{e15+} contradicts the integral identity \eqref{e18}, i.e.,
\begin{equation*}\label{e18+}
\int_{\partial B_1}
\bigl(F_n(\rho_n(\theta))-F_n(z_n)\bigr)\dd\theta=0.
\end{equation*}
Thus we have obtained contradictions for both $A>0$ and $A<0$.

\medskip

As a consequence, we finally derive the uniform lower bound estimate \eqref{413}. This finishes our proof of Proposition \ref{pp44}.
\end{proof}

\begin{rem}\label{bl}
The assumption $\inf\limits_{\overline{B_1}}v_n\geq B$ for some constant $B$ independent of $n$ in Proposition \ref{pp44} can be deduced from $e^{v_n(x)}-e^{\beta_{\al_n}}\to L(x)$ in $X$ for some positive function $0<L(x)\in C_{loc}(\R^N)$. In fact, $\|e^{v_n(x)}-e^{\beta_{\al_n}}-L(x)\|_{X}\rightarrow0$ implies that $\|e^{v_n(x)}-L(x)\|_{L^{\infty}(B_1)}\rightarrow0$, and hence $\inf\limits_{\overline{B_1}}v_n\geq \ln\Big[\frac{1}{2}\min\limits_{\overline{B_1}} L(x)\Big]=:B$ for $n\geq n_0$ large enough.
\end{rem}

\section{The bifurcation result: completion of our proof of the main Theorem \ref{th14}}

In this section, we will finish the proof of Theorem \ref{th14}.
For this purpose, let \(\{\varepsilon_n\}\) be a sequence such that \(\varepsilon_n\rightarrow0\). In Section 3, for all \(k\in\mathbb{N}\), we have shown that \((\alpha_k^n,u_{n,\alpha_k^n})\) are nonradial bifurcation points, generating continua \( \mathcal{C}(\alpha_k^n)\) in the space \((0,+\infty)\times \mathcal{Q}_n\) where \(\alpha_k^n\) denotes the unique root of equation \eqref{330}. Furthermore, when \(k\) is an even integer, these continua also exist in \((0,+\infty)\times \mathcal{Q}^l_n\), with \( \mathcal{Q}^l_n\) defined in the proof of Theorem \ref{thm39}. These continua \( \mathcal{C}(\alpha_k^n)\) are global and satisfy the well-known Rabinowitz alternative Theorem (see Theorem \ref{th38}). However, since \(u_{n,\alpha_k^n}\) does not belong to the space \(X\), it is actually \(e^{u_{n,\alpha_k^n}}-e^{\beta_{\al_n}}\in X\). Therefore, we apply the global bifurcation theory to \(e^{u_{n,\alpha_k^n}}-e^{\beta_{\al_n}}\) in \(\mathbb{R}^N\). In equation \eqref{31}, by setting  $f=e^{u}-e^{\beta_{\al_n}}$, then $f$ satisfies the following problem
 \begin{equation}\label{54eq}
		\begin{cases}
			-\Delta_N f=|x|^{N\alpha_n}(f+e^{\beta_{\al_n}})^N-\frac{(N-1)|\nabla f|^N}{f+e^{\beta_{\al_n}}}, & x\in B_{\frac1{\var_n}}, \\
			f=0, & x\in\partial B_{\frac1{\var_n}}.
		\end{cases}
	\end{equation}
 Define radial function
 \begin{equation}\label{f52}
	 f_{n,\alpha}=e^{u_{\var_n,\al_n}}-e^{\beta_{\al_n}}=\left\{
	\begin{aligned}
		&\frac{C_{N,\alpha_n}}{\left(1+|x|^{\frac{N(\al_n+1)}{N-1}
				}\right)^N}-\frac{C_{N,\alpha_n}\var_n^{\frac{N^2(\al_n+1)}{N-1}}}{\left(1+\var_n^{\frac{N(\al_n+1)}{N-1}}\right)^N},&  \text{if}\ |x|\leq\frac{1}{\var_{n}}, \\
		&0, & \text{if}\ |x|>\frac{1}{\var_{n}},
	\end{aligned}
	\right.
\end{equation}
where $u_{\var,\al}$ as defined in \eqref{32} is the radial solution of \eqref{31} and $e^{\beta_\al}=e^{U_\al\lr\frac1\var\rr}=\frac{C_{N,\alpha}\var^{\frac{N^2(  \al+1)}{N-1}}}{\left(1+\var^{\frac{N(  \al+1)}{N-1}}\right)^N}$.

Then, analogous to the proof of Lemma \ref{lem31}, we can show that \( f_{n,\alpha}\) forms a sequence of non-degenerate radial solutions to equation \eqref{54eq} in the space of radial functions. As $n\to+\infty$, the related limiting problem of \eqref{54eq} is
\begin{equation}\label{55eq}
		-\Delta_N g=|x|^{N\alpha}g^N-\frac{(N-1)|\nabla g|^N}{g}, \ \ x\in \R^N.
	\end{equation}
Let \(v_n\) be the non-radial solutions of \eqref{31} on the large ball \(B_{\frac{1}{\var_n}}\). By setting \(f_n=e^{v_{n}}-e^{\beta_{\al_n}}\), within this transformed framework, we then show that the non-radial solution \(f_n\) converges in the \(\|\cdot\|_X\) norm to the whole space \(\R^N\).
Define the set
\begin{align}\label{e52}
	 \hat{\mathcal{A}}(n)=\Big\{&(\al, f_{n,\alpha})\in(0,+\infty)\times C_0^{1,\eta}(\overline B_{\frac{1}{\var_n}}) \ \text{such\  that}\  f_{n,\alpha} \\ &\text{is\ the\ radial\ solution\ of}\ \eqref{54eq}\ \text{defined\ in} \ \eqref{f52}\Big\}\nonumber.
\end{align}
For the given curve $\hat{\mathcal{A}}(n)$, we say that a point $(\alpha_j, f_{n,\al_j}) \in  \hat{\mathcal{A}}(n)$ is a nonradial bifurcation point, if for every neighborhood of $(\alpha_j, f_{n,\al_j})$ in $(0, +\infty) \times C_0^{1,\eta}(\overline{B}_{\frac{1}{\varepsilon_n}})$, there exists a point $(\alpha, e^{v_{n,\alpha}}-e^{\beta_{\al_n}})$ in this neighborhood such that $e^{v_{n,\alpha}}-e^{\beta_{\al_n}}$ is a nonradial solution of \eqref{54eq}.

The subspace $ \hat{\mathcal{Q}}_n \subset C_0^{1,\eta}(\overline{B}_{\frac{1}{\var_n}})$ is defined by
\begin{align}\label{e54}
	 \hat{\mathcal{Q}}_n=\Big\{&h\in C_0^{1,\eta}(\overline B_{\frac{1}{\var_n}}) \ \text{such\  that}\ h(x_1,\cdots,x_N)=h\left( g(x_1,\cdots,x_{N-1}),x_N\right) \\ &\text{for\ any}\ g\in\mathcal{O}(N-1)\Big\}\nonumber,
\end{align}
where $\mathcal{O}(N-1)$ is the orthogonal group in $\R^{N-1}$.
Consider the operator $ \hat{\mathcal{L}}^n(\alpha,h): (0,+\infty) \times  \hat{\mathcal{Q}}_n \to  \hat{\mathcal{Q}}_n$ defined by
$$
 \hat{\mathcal{L}}^n(\alpha,h) := (-\Delta_N)^{-1}\left(|x|^{N\alpha}(h+e^{\beta_\al})^N-\frac{(N-1)|\nabla h|^N}{h+e^{\beta_\al}}\right)
$$
The operator $ \hat{\mathcal{L}}^n$ is compact for each fixed $\alpha \in (0,+\infty)$ and $\alpha \mapsto  \hat{\mathcal{L}}^n(\alpha,\cdot)$ is continuous. Define the operator
$$
\hat T^n(\alpha,h) :=h -  \hat{\mathcal{L}}^n(\alpha,h).
$$

Let $ \hat{\Upsilon}_n$ denote the closure in $(0,+\infty) \times  \hat{\mathcal{Q}}_n$ of all solutions to $\hat T^n(\alpha,h) = 0$ different from $ f_{n,\alpha}$, i.e.,
\begin{equation}\label{e55}
 \hat{\Upsilon}_n := \overline{\Big\{(\alpha,h) \in (0,+\infty) \times  \hat{\mathcal{Q}}_n \ \text{such that } \hat T^n(\alpha,h) = 0\ \text{with}\ h \neq  f_{n,\alpha}\Big\}}.
\end{equation}
If $(\alpha_k^n,  f_{n,\alpha_k^n}) \in \hat{\mathcal{A}}(n)$ is a nonradial bifurcation point, we have
$(\alpha_k^n,  f_{n,\alpha_k^n}) \in \hat{\Upsilon}_n.$
We denote $\widetilde{\mathcal{C}}(\alpha_k^n)\subset\hat{\Upsilon}_n$ the closed connected component of $\hat{\Upsilon}_n$ which
contains $(\al^n_k,f_{n,\al^n_k})$ and it is maximal with respect to the inclusion.


Define the space
\begin{align}\label{q51}
\hat{\mathcal{Q}}:=\Big\{&h\in X \ \text{such\  that}\ h(x_1,\cdots,x_N)=h\lr g(x_1,\cdots,x_{N-1}),x_N\rr \\ &\text{for\ any}\ g\in\mathcal{O}(N-1)\Big\}\nonumber.
\end{align}

By extending the function by zero outside \(B_{\frac{1}{\varepsilon_n}}\), we can deduce via regularity theorems that \( \widetilde{\mathcal{C}}(\alpha_k^n)\) lies in the space
$$ \hat{\mathcal{H}}:=(0,+\infty)\times \hat{\mathcal{Q}}.$$
Let \( \hat{\mathcal{C}}(\alpha_k^{n})\) denote the maximal connected component bifurcating from \((\alpha_k^{n},  f_{n,\alpha_k^n})\) in the space \( \hat{\mathcal{H}}\).

Furthermore, by Proposition \ref{pp32}, $  f_{n,\alpha_k^n}\to e^{U_{\al(k)}}$ in \( \hat{\mathcal{Q}}\) (a subset of \(X\)) as \(n \to +\infty\). From \cite{SW1}, restricting the Morse exponent of \(e^{U_\alpha}\) to
\( \hat{\mathcal{Q}}\) leads to an increase of $1$ as \(\alpha\) crosses \(\alpha(k)\), i.e.,
\begin{align*}
m(\alpha(k)+\delta)-m(\alpha(k)-\delta)=1,
\end{align*}
where \(m\) denotes the Morse index of \(e^{U_\alpha}\) in the space \( \hat{\mathcal{Q}}\). Our goal is to use this change in the Morse exponent of \(e^{U_{\alpha}}\) within the space \( \hat{\mathcal{Q}}\) and prove that, as \(n\to+\infty\), these continua \( \hat{\mathcal{C}}(\alpha_k^n)\) converge in a suitable sense to continua of nonradial solutions of \eqref{55eq} bifurcating from \((\alpha(k),e^{U_{\alpha(k)}})\) in the product space \( \hat{\mathcal{H}}=(0,+\infty)\times \hat{\mathcal{Q}}\). To do this, we adopt certain ideas previously used in \cite{AG,GP}.

To establish the bifurcation result, we require the following topological result (see Theorem 9.1 in \cite{W}).

\begin{lem}[\textbf{\cite{W}}]\label{th51}
	Let $X_n$ be a sequence of connected subsets of a metric space $X$. Let $\lim\inf(X_n)$ $(\lim\sup(X_n)$, resp.$)$ denote the set of all $x\in X$ such that any neighborhood of $x$ intersects all except finitely many of $X_n$ $($infinitely many of $X_n$, resp.$)$. If\\
	(i) $\lim\inf(X_n)\ne\emptyset$, \\
	(ii) $\bigcup X_n$ is precompact, \\
	then $\lim\sup(X_n)$ is nonempty, compact and connected.
\end{lem}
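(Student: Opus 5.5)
The plan is the classical Whyburn-type argument, using only the definitions of $\liminf$ and $\limsup$, the connectedness of each $X_n$, and the precompactness of $\bigcup X_n$.

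\textbf{Nonemptiness and compactness.} Nonemptiness is immediate, since $\liminf(X_n)\subset\limsup(X_n)$ and the former is nonempty by (i). For compactness, first check that $\limsup(X_n)$ is closed. If $x$ is a limit of points $x_j\in\limsup(X_n)$, then every open neighborhood of $x$ is also a neighborhood of some $x_j$. Hence it meets infinitely many $X_n$, so $x\in\limsup(X_n)$. Next, every point of $\limsup(X_n)$ lies in $\overline{\bigcup X_n}$, which is compact by (ii). A closed subset of a compact set is compact, so $\limsup(X_n)$ is compact.

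\textbf{Connectedness.} Argue by contradiction. Suppose $\limsup(X_n)=A\cup B$, where $A,B$ are disjoint, nonempty and closed. Being closed subsets of a compact set, they are compact, so $\delta:=\mathrm{dist}(A,B)>0$. Set $U:=\{x:\mathrm{dist}(x,A)<\delta/3\}$ and $V:=\{x:\mathrm{dist}(x,B)<\delta/3\}$. These are disjoint open sets with $A\subset U$ and $B\subset V$.

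Fix $x_0\in\liminf(X_n)$; without loss of generality $x_0\in A$. Then $X_n\cap U\neq\emptyset$ for all but finitely many $n$. Fix $b\in B$; then $X_n\cap V\neq\emptyset$ for infinitely many $n$. So there are infinitely many indices $n$ for which $X_n$ meets both $U$ and $V$.

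For each such $n$, $X_n$ cannot be contained in $U\cup V$. Otherwise $X_n\cap U$ and $X_n\cap V$ would split $X_n$ into two disjoint nonempty relatively open sets, contradicting the connectedness of $X_n$. We may therefore pick $y_n\in X_n\setminus(U\cup V)$ along this infinite set of indices.

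\textbf{Conclusion.} Since $\bigcup X_n$ is precompact, a subsequence $y_{n_j}$ converges to some $y$. Every neighborhood of $y$ contains $y_{n_j}$ for all large $j$, so it meets infinitely many $X_n$, which gives $y\in\limsup(X_n)=A\cup B$. On the other hand, $X\setminus(U\cup V)$ is closed, so $y\notin U\cup V\supset A\cup B$. This is a contradiction, and hence $\limsup(X_n)$ is connected.

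The only delicate point is the separation step. One must use compactness of $A$ and $B$ to get a positive distance, so that $U,V$ are genuinely disjoint and the limit $y$ stays outside $U\cup V$. Beyond that, no real obstacle is expected.
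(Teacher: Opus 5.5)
Your proof is correct and complete. Closedness of $\limsup(X_n)$ together with $\limsup(X_n)\subset\overline{\bigcup X_n}$ gives compactness. For connectedness, the $\delta/3$-neighborhoods separate $A$ and $B$, the connectedness of the $X_n$ produces points $y_n\in X_n\setminus(U\cup V)$, and a convergent subsequence of these yields the contradiction.

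The paper gives no proof of its own; it simply quotes the result from Whyburn (see Theorem 9.1 in \cite{W}). Your argument is essentially the classical proof of that theorem.

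You correctly read ``precompact'' in (ii) as relatively compact, i.e.\ $\overline{\bigcup X_n}$ compact, which is how the paper uses it in Lemma \ref{th52}. Under the reading ``totally bounded'', the compactness conclusion would fail in a non-complete space, e.g.\ $X=X_n=(0,1)$.
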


Let \(n\) be sufficiently large and let \(\alpha_k^{n}\) be given by \eqref{330}, such that \((\alpha_k^{n},  f_{n,\alpha_k^n})\) is a bifurcation point for problem \eqref{54eq}. Fix \(\delta>0\) such that no other exponent \(\alpha(j)\) (with \(j\neq k\)) exists in the interval \([\alpha(k)-\delta,\alpha(k)+\delta]\). Define
$$ \hat{\mathcal{H}}_n:= \hat{\mathcal{C}}(\alpha_k^n)\bigcap B_{\delta, \hat{\mathcal{H}}}(\alpha_k^n,  f_{n,\alpha_k^n}),$$
where
$$B_{\delta, \hat{\mathcal{H}}}(\alpha_k^n,  f_{n,\alpha_k^n}):=\{(\alpha, h)\in  \hat{\mathcal{H}}\ \text{such that}\ |\alpha-\alpha_k^n|+\| h-   f_{n,\alpha_k^n}\|_X<\delta\},$$
and the space \(X\) along with its norm are defined by \eqref{19} and \eqref{d18}. Let $ \hat{\mathcal{S}}^n_k$ be the maximal connected component of $ \hat{\mathcal{H}}_n$ containing $(\alpha_k^n,  f_{n,\alpha_k^n})$. Then, $ \hat{\mathcal{S}}^n_k\neq\emptyset$ and $(\alpha(k),e^{U_{\alpha(k)}})\in\liminf\limits_{n}( \hat{\mathcal{S}}^n_k)$.

\begin{lem}\label{th52}
For every $k\in\mathbb{N}$, the set $\bigcup\limits_{n} \hat{\mathcal{S}}^n_k$ is precompact in $ \hat{\mathcal{H}}$.
\end{lem}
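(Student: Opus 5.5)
To prove that $\bigcup_n \hat{\mathcal{S}}^n_k$ is precompact in $\hat{\mathcal{H}}=(0,+\infty)\times\hat{\mathcal{Q}}$, I take an arbitrary sequence $(\beta_n, g_n)\in\hat{\mathcal{S}}^{m_n}_k$ and extract a subsequence converging in $\mathbb{R}\times X$. The indices $m_n$ may be assumed to tend to $+\infty$; otherwise the sequence lives in finitely many sets, each contained in a continuum of solutions of \eqref{54eq} in a fixed ball, which is compact by compactness of $\hat{\mathcal{L}}^{m}$. The parameters satisfy $|\beta_n-\alpha_k^{m_n}|<\delta$, so up to a subsequence $\beta_n\to\bar\beta\in[\alpha(k)-\delta,\alpha(k)+\delta]\subset(0,+\infty)$, by \eqref{331} and the choice of $\delta$.

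Write $g_n=e^{v_n}-e^{\beta_{\beta_n}}$, where $v_n$ solves \eqref{41} with $\var_n:=\var_{m_n}$ and $\alpha_n:=\beta_n$. Since $\|g_n-f_{m_n,\alpha_k^{m_n}}\|_X<\delta$ and Proposition \ref{pp32} gives $f_{m_n,\alpha_k^{m_n}}\to e^{U_{\alpha(k)}}$ in $X$, we get a uniform bound $\|e^{v_n}-e^{\beta_{\alpha_n}}\|_\gamma\le A$. Choosing $\delta$ small compared with $\min_{\overline{B_1}}e^{U_{\alpha(k)}}$, the $L^\infty$ closeness on $B_1$ gives $\inf_{\overline{B_1}}v_n\ge B$, as in Remark \ref{bl}.

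With these two bounds, Propositions \ref{pp41} and \ref{pro43} apply. They give $|v_n+\gamma_n\ln|x||\le C$ on $B_{1/\var_n}\setminus B_1$, $v_n$ bounded on $B_1$, and $|\nabla v_n|\le C/|x|$. Local $C^{1,\eta}$ estimates for the $N$-Laplacian (\cite{Tp}, \cite{Lieb}) then give, after a diagonal extraction, $v_n\to v$ in $C^1_{loc}(\mathbb{R}^N)$, where $v$ solves \eqref{11} with exponent $\bar\beta$. Consequently $g_n\to g:=e^v$ in $C^1_{loc}$, because $e^{\beta_{\alpha_n}}\to0$. The limit inherits $\mathcal{O}(N-1)$-invariance.

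It remains to upgrade local convergence to convergence in $X$, which requires uniform control of the tails. For the weighted sup norm, Proposition \ref{pp41} gives
\[
0\le g_n\le e^{v_n}\le C|x|^{-\gamma_n}, \qquad \gamma_n\to\frac{N^2(\bar\beta+1)}{N-1}>\gamma ,
\]
so $\sup_{|x|>R}(1+|x|)^\gamma|g_n-g|\le CR^{\gamma-\gamma_n}+CR^{\gamma-\gamma_\infty}\to0$ uniformly in $n$ as $R\to\infty$. For the $W^{1,N}$ norm, we have $|g_n|^N\le C|x|^{-N\gamma_n}$, and
\[
|\nabla g_n|=e^{v_n}|\nabla v_n|\le C|x|^{-\gamma_n-1},
\]
so both $\int_{|x|>R}|g_n|^N$ and $\int_{|x|>R}|\nabla g_n|^N$ tend to $0$ uniformly in $n$ as $R\to\infty$. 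On $B_R$, convergence holds in $C^1$. Combining the two regimes yields $g_n\to g$ in $X$, hence $(\beta_n,g_n)\to(\bar\beta,g)\in\hat{\mathcal{H}}$.

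I expect the main obstacle to be the uniformity of the constants in Propositions \ref{pp41} and \ref{pro43} when $\alpha_n$ varies rather than being fixed. In particular, it needs checking that the threshold $n_0$ and $C_0$ do not degenerate, and that $\gamma_n>\gamma$ uniformly, so that $\gamma<\frac{N^2(\alpha+1)}{N-1}$ holds for all $\alpha$ near $\alpha(k)$. This should follow from taking $\delta$ small and fixing $\gamma=N(\alpha(k)+1)+\tfrac12$.
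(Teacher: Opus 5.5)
Your proof is correct and has the same skeleton as the paper's proof. You extract a convergent parameter and get a uniform $X$-bound from the $\delta$-neighbourhood together with Proposition \ref{pp32}. You then pass to a $C^1_{loc}$ limit via local regularity, and use the decay $e^{v_n}\le C|x|^{-\gamma_n}$ of Proposition \ref{pp41} to make the weighted sup-norm tails uniformly small.

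You depart from the paper in the $W^{1,N}$ step. The paper takes a weak $W^{1,N}$ limit and shows $\|\nabla h_j\|_{L^N}\to\|\nabla \hat h\|_{L^N}$ through the energy identity \eqref{54}, obtained by testing \eqref{54eq} with $h+e^{\beta}$. That identity should be read over $B_{1/\var}$ and carries a correction $-e^{N\beta}\int_{B_{1/\var}}|x|^{N\alpha}e^{v}$, which vanishes in the limit. Uniform convexity of $L^N$ then upgrades weak convergence to strong. You instead use the pointwise decay $|\nabla v_n|\le C/|x|$ of Proposition \ref{pro43}, which gives $|\nabla g_n|\le C|x|^{-\gamma_n-1}$ and kills the gradient tail directly.

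Your route is more elementary: it yields explicit uniform tail bounds for both $g_n$ and $\nabla g_n$ and needs no energy identities, neither on the ball nor for the limit on $\mathbb{R}^N$. The price is that it relies on Proposition \ref{pro43} and hence on the hypothesis $\inf_{\overline{B_1}}v_n\ge B$. You do verify this hypothesis by shrinking $\delta$. You also check that the fixed weight $\gamma=N(\alpha(k)+1)+\tfrac12$ stays strictly between $N(\bar\beta+1)$ and $\gamma_n$. The paper's proof leaves both points implicit, even though its use of Proposition \ref{pp41} requires the same lower bound on $\partial B_1$.

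One small imprecision: in the bounded-index case, the full continuum $\hat{\mathcal{C}}(\alpha_k^m)$ need not be compact, since it may be unbounded. What you need is that solutions of \eqref{54eq} on a fixed ball, with parameters in a compact interval and bounded $X$-norm, form a precompact set. This follows from $v\ge\beta$, the bound $e^{v}\le C$, and boundary $C^{1,\eta}$ estimates.
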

\begin{proof}
Let $(\alpha^{(k)}_j,h_j^{(k)})$ be a sequence in $\bigcup\limits_{n} \hat{\mathcal{S}}^n_k$. Then, there $(\alpha^{(k)}_j,h_j^{(k)})\in \hat{\mathcal{S}}_k^{n(j)}$ for some $n(j)\geq1$. First, we consider the case: $n(j)\to+\infty$ as $j\to+\infty$.

Based on the definition of $ \hat{\mathcal{S}}_k^{n(j)}$, the functions $h_j^{(k)}$ satisfy equation \eqref{54eq} with $\var = \var_{n(j)}$ and $\alpha = \alpha_j^{(k)}$. Since $|\alpha_j^{(k)}-\alpha_{k}^{n(j)}|+|\alpha_{k}^{n(j)}-\alpha(k)|\leq2\delta$ for $n(j)$ sufficiently large, we can deduce that $\alpha_j^{(k)}\in(\alpha(k) - 2\delta,\alpha(k) + 2\delta)$. Consequently, if required, by passing to a subsequence for $j$, we can conclude that $\alpha_j^{(k)}\to\hat{\alpha}$, where $\hat{\alpha}\in[\alpha(k) - 2\delta,\alpha(k) + 2\delta]$. Additionally, since $h_j^{(k)}\in \hat{\mathcal{Q}}$ and $\|h_j^{(k)}-f_{n(j),\alpha_k^{n(j)}}\|_X<\delta$, it implies that $\|h_j^{(k)}\|_X<\delta+\sup\limits_j \|f_{n(j),\alpha_k^{n(j)}}\|_X$. From Proposition \ref{pp32}, we can infer that $\|f_{n(j),\alpha_k^{n(j)}}\|_X\leq C$ with $C$ independent of $j$. As a consequence, there exists a positive constant $C > 0$ such that $\|h_j^{(k)}\|_\gamma\leq C$ and $\|h_j^{(k)}\|_{1,N}\leq C$ for all $j$. Then, by taking a subsequence, we can claim that $h_j^{(k)}\to  \hat{h}$ weakly in $  W^{1,N}(\mathbb{R}^N)$ and almost everywhere in $\mathbb{R}^N$. Furthermore, by applying equation \eqref{54eq}, we can show that $h_j^{(k)}\to  \hat{h}$ in $C^{1}_{loc}(\mathbb{R}^N)$, where $ \hat{h}$ is a solution of equation \eqref{55eq} with the exponent $\alpha=\hat{\alpha}$.

Moreover, based on Proposition \ref{pp41}, there exists a positive constant $C>0$ such that
\begin{align}\label{52}
h_j^{(k)}(x)\leq\frac{C}{(1 + |x|)^{\gamma_{n(j)}}}
\ \ \text{for every } x\in\mathbb{R}^N \text{ and for every } j\in\mathbb{N}.
\end{align}
Then, we get
\begin{align}\label{53}
|h_j^{(k)}(x)-\hat{h}(x)| \leq h_j^{(k)}(x)+\hat{h}(x)\leq\frac{C}{(1 + |x|)^{\gamma_{n(j)}}}+\frac{C}{(1 + |x|)^{\frac{N^2(\al+1)}{N-1}}},
\end{align}
which means that for every $\varepsilon>0$, there exists $r>0$ such that for any $j>0$ large enough, $$(1 + |x|)^\gamma |h_j^{(k)}-  \hat{h} | <\varepsilon, \ \text{if}\ |x|>r.$$

Due to the uniform convergence of $h_j^{(k)}$ to $ \hat{h}$ on compact subsets of $\mathbb{R}^N$, we can conclude that $(1 + |x|)^\gamma |h_j^{(k)}-  \hat{h} | <\varepsilon$ in $B_r(0)$ if $j$ is large enough. That is, $h_j^{(k)}\to  \hat{h}$ in $L^\infty_{\gamma}(\mathbb{R}^N)$.

Furthermore, from \eqref{52}, we obtain
\begin{align}\label{54}
	N\int_{\mathbb{R}^N}|\nabla h_j^{(k)}|^N\mathrm{d}x=\int_{\mathbb{R}^N}|x|^{N\alpha^{(k)}_j}\left( h_j^{(k)}+e^{\beta_{\al_j^{(k)}}}\right)^{N+1}\mathrm{d}x.
\end{align}
Taking the limit for $j$ in \eqref{54} based on \eqref{42}, we get
\begin{align}\label{55}
	N\int_{\mathbb{R}^N}|\nabla h_j^{(k)}|^N\mathrm{d}x\to\int_{\mathbb{R}^N}|x|^{N\hat{\alpha}}\hat{h}^{N+1}\mathrm{d}x=N\int_{\mathbb{R}^N}|\nabla  \hat{h}|^N\mathrm{d}x.
\end{align}
Thus
$\int_{\mathbb{R}^N}|\nabla (h_j^{(k)}- \hat{h})|^N\mathrm{d}x\to0$. Then $h_j^{(k)}\to  \hat{h}$ strongly in $X$.

Next, we consider the other case that $n(j)\not\to+\infty$, as $j\to+\infty$. Then, by passing to a subsequence, we may suppose that $n(j)$ converges to $n_0\in\mathbb{N}$. Repeating the proof for the case $n(j)\to+\infty$, we find that a subsequence of $h_j^{(k)}$ converges in $X$ to a solution of \eqref{54eq} for $\var=\var_{n_0}$ and the exponent $\alpha=\hat{\alpha}$. This finishes our proof of Lemma \ref{th52}.
\end{proof}

\begin{lem}\label{th53}
For every $k\in\mathbb{N}$, the set $\limsup\limits_n(\hat{\mathcal{S}}^n_k)\backslash\{(\alpha(k),e^{U_{\alpha(k)}})\}$ is nonempty.
\end{lem}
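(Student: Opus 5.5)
We will argue by contradiction, in the style of \cite{GGN,DDGL}. Suppose that $\limsup_n(\hat{\mathcal{S}}^n_k)=\{(\alpha(k),e^{U_{\alpha(k)}})\}$. By Lemma \ref{th51} (applied via Lemma \ref{th52}), together with $(\alpha(k),e^{U_{\alpha(k)}})\in\liminf_n(\hat{\mathcal{S}}^n_k)$, every sequence $(\alpha_n,h_n)\in\hat{\mathcal{S}}^n_k$ then converges in $\hat{\mathcal{H}}$ to $(\alpha(k),e^{U_{\alpha(k)}})$.

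\textbf{Step 1: the continuum reaches $\partial B_{\delta,\hat{\mathcal{H}}}$.} Fix $n$ large. The continuum $\hat{\mathcal{C}}(\alpha_k^n)$ cannot stay inside $B_{\delta,\hat{\mathcal{H}}}(\alpha_k^n,f_{n,\alpha_k^n})$. Indeed, by Theorem \ref{th38} (transferred to the $f$-formulation \eqref{54eq}), it is either unbounded, or meets $\{0\}\times\hat{\mathcal{Q}}_n$, or contains another bifurcation point $(\alpha_i^n,f_{n,\alpha_i^n})$ with $i\neq k$.
\begin{itemize}
\item Unboundedness is impossible inside the ball.
\item Meeting $\{0\}\times\hat{\mathcal{Q}}_n$ would force $|\alpha-\alpha_k^n|\geq\alpha_k^n>\delta$.
\item By Proposition \ref{pp35}, $\alpha_i^n\to\alpha(i)$, and $\delta$ was chosen so that $[\alpha(k)-\delta,\alpha(k)+\delta]$ contains no other $\alpha(j)$; so for $n$ large no such $\alpha_i^n$ lies within $\delta$ of $\alpha_k^n$.
\end{itemize}
By connectedness (a standard component/boundary argument, e.g.\ Whyburn), $\hat{\mathcal{S}}^n_k$ therefore contains a point $(\alpha_n,h_n)$ with
\[
|\alpha_n-\alpha_k^n|+\|h_n-f_{n,\alpha_k^n}\|_X=\delta.
\]

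\textbf{Step 2: the sequence converges to a radial solution.} By Lemma \ref{th52}, a subsequence $(\alpha_n,h_n)$ converges in $\hat{\mathcal{H}}$ to some $(\hat\alpha,\hat h)$, with $\hat h$ solving \eqref{55eq}. By definition $(\hat\alpha,\hat h)\in\limsup_n(\hat{\mathcal{S}}^n_k)$. Using $f_{n,\alpha_k^n}\to e^{U_{\alpha(k)}}$ in $X$ (Proposition \ref{pp32}), this point is at distance exactly $\delta$ from $(\alpha(k),e^{U_{\alpha(k)}})$, which contradicts the standing assumption. This already proves the lemma, provided Step 1 holds.

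\textbf{Step 3: the points of $\hat{\mathcal{S}}^n_k$ are genuine nonradial solutions.} The subtle point is that $\hat{\mathcal{C}}(\alpha_k^n)$ lies in the closure $\hat\Upsilon_n$, so it may contain radial points $(\alpha,f_{n,\alpha})$. These are not excluded from $\hat\Upsilon_n$ near $\alpha_k^n$, but by Theorem \ref{thm38} and Remark \ref{morse-index} they can only be accumulated by nonradial solutions at the degenerate values $\alpha_i^n$, and these are excluded as in Step 1. So points of $\hat{\mathcal{S}}^n_k$ on the sphere of radius $\delta$ are nonradial solutions, or limits of nonradial ones. This is enough for the topological argument.

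\textbf{Main obstacle.} The real difficulty is making Step 1 rigorous in the weighted space $X$ instead of $C^{1,\eta}$. This requires a uniform-in-$n$ comparison of the $X$-neighbourhood with the $C_0^{1,\eta}$-neighbourhood in which Theorem \ref{th38} holds. I would handle this using the compactness of $\hat{\mathcal{L}}^n$ and the uniform bounds of Proposition \ref{pp41}, which give $\|e^{v}-e^{\beta_{\alpha_n}}\|_\gamma$-control. Under those bounds, a continuum bounded in $X$ is bounded in $C^{1,\eta}(\overline{B_{1/\var_n}})$, so alternative (i) is also excluded within $B_{\delta,\hat{\mathcal{H}}}$.
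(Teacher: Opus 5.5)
Your argument is correct and is essentially the paper's: the global alternative of Theorem \ref{th38} (transferred to \eqref{54eq}) forces the continuum to reach $\partial B_{\delta,\hat{\mathcal{H}}}(\alpha_k^n,f_{n,\alpha_k^n})$. The compactness of Lemma \ref{th52}, together with Propositions \ref{pp32} and \ref{pp35}, then yields a point of $\limsup_n(\hat{\mathcal{S}}^n_k)$ at distance exactly $\delta$ from $(\alpha(k),e^{U_{\alpha(k)}})$; the contradiction framing and your Step 3 are not needed. One small correction: since $B_{\delta,\hat{\mathcal{H}}}$ is open, the boundary point lies in the closure of $\hat{\mathcal{S}}^n_k$ (as the paper states), not in $\hat{\mathcal{S}}^n_k$ itself. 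This changes nothing, because $\limsup$ is closed and the closure of $\bigcup_n\hat{\mathcal{S}}^n_k$ is compact.
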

\begin{proof}
By analogy with the proof of Theorem \ref{th38} and the regularity theory, for the global continuum $\hat{\mathcal{C}}(\alpha_k^n)$ which bifurcates from the point $(\alpha_k^n, f_{n,\alpha_k^n})$, we may conclude the following: either the continuum $\hat{\mathcal{C}}(\alpha_k^n)$ is unbounded in the space $\hat{\mathcal{H}}$, or there exists another bifurcation point $(\alpha_i^n, f_{n,\alpha_i^n})$ with $\alpha_i^n\not\in[\alpha(k) - \delta, \alpha(k) + \delta]$, or $\hat{\mathcal{C}}(\alpha_k^n)$ meets $\{0\} \times X$. As a result, on the closure of any component $ \hat{\mathcal{S}}^n_k$, there exists a point $(\hat{\alpha}_{k}^n, \hat{h}_{n,\hat{\alpha}_{k}^n}) \in \partial B_{\delta, \hat{\mathcal{H}}}(\alpha_k^n,   f_{n,\alpha_k^n})$, i.e.,
\begin{align}\label{56}
	|\hat{\alpha}_{k}^n - \alpha_k^n| + \|\hat{h}_{n,\hat{\alpha}_{k}^n} - f_{n,\alpha_k^{n}}\|_X = \delta,
\end{align}
where $\hat{h}_{n,\hat{\alpha}_{k}^n}$ is a solution of \eqref{54eq} in $B_{\frac{1}{\var_n}}$ with the exponent $\alpha = \hat{\alpha}_{k}^n$.

By the standard regularity theorems and the boundedness of $\hat{\alpha}_{k}^n$ and $\hat{h}_{n,\hat{\alpha}_{k}^n}$, by passing to the limit $n\to+\infty$, we find that
$(\hat{\alpha}_{k}^n,\hat{h}_{n,\hat{\alpha}_{k}^n})\to(\hat{\alpha}_k,\hat{h}_k)$, where $\hat{h}_k$ is a solution of \eqref{55eq} for the exponent $\alpha = \hat{\alpha}_k$, with $\hat{\alpha}_k\in[\alpha(k) - 2\delta,\alpha(k) + 2\delta]$, and $(\hat{\alpha}_k,\hat{h}_k)$ satisfies
\begin{align*}
	|\hat{\alpha}_k - \alpha(k)|+\|\hat{h}_k -e^{U_{\alpha(k)}}\|_X=\delta.
\end{align*}
Note that $(\hat{\alpha}_k,\hat{h}_k)\in\limsup\limits_n( \hat{\mathcal{S}}_k^n)$, but $(\hat{\alpha}_k,\hat{h}_k)\neq(\alpha(k),e^{U_{\alpha(k)}})$. This completes our proof of Lemma \ref{th53}.
\end{proof}

Define the curve
\begin{align}\label{57}
	\Gamma=\Big\{&(\al,e^{U_\al})\in(0,+\infty)\times X \ \text{such\  that}\ e^{U_\al}\ \text{is\ the} \\ &\text{unique\ radial\ solution\ of}\ \eqref{55eq}\ such\ that\ e^{U_\al(0)}=1\Big\}\nonumber.
\end{align}

\begin{thm}\label{th54}
	For any $k\geq2$, the points $(\alpha(k),e^{U_{\alpha(k)}})$ are nonradial bifurcation points for the curve
	$\Gamma$.
\end{thm}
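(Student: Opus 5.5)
The plan is to combine the topological Lemma \ref{th51} with the precompactness Lemma \ref{th52} and the nontriviality Lemma \ref{th53}, and then use Propositions \ref{pp42} and \ref{pp44} to rule out that the limiting continuum consists only of radial solutions. Fix $k\geq2$ and $\delta>0$ small as above, so that $[\alpha(k)-2\delta,\alpha(k)+2\delta]$ contains no other $\alpha(j)$. By construction $(\alpha(k),e^{U_{\alpha(k)}})\in\liminf_n(\hat{\mathcal{S}}^n_k)$, since $\alpha_k^n\to\alpha(k)$ (Proposition \ref{pp35}) and $f_{n,\alpha_k^n}\to e^{U_{\alpha(k)}}$ in $X$ (Proposition \ref{pp32}). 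By Lemma \ref{th52}, $\bigcup_n\hat{\mathcal{S}}^n_k$ is precompact in $\hat{\mathcal{H}}$. Hence Lemma \ref{th51} gives that $\mathcal{S}_k:=\limsup_n(\hat{\mathcal{S}}^n_k)$ is nonempty, compact and connected. It contains $(\alpha(k),e^{U_{\alpha(k)}})$, and by Lemma \ref{th53} it also contains a point at $\hat{\mathcal{H}}$-distance $\delta$ from it. Every element $(\hat\alpha,\hat h)$ of $\mathcal{S}_k$ is a limit in $\hat{\mathcal{H}}$ of solutions $h_j$ of \eqref{54eq}, so $\hat h$ solves \eqref{55eq}. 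Equivalently, $\ln\hat h$ solves \eqref{11}, with $\hat h>0$ by local uniform convergence and the positivity of the limits.

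The key step is to show that every $(\hat\alpha,\hat h)\in\mathcal{S}_k$ with $\hat\alpha\neq\alpha(k)$ is nonradial. Suppose instead that $\hat h$ is radial. By Theorem \ref{thm12}, $\hat h=e^{U_{\lambda,\hat\alpha}}$ for some $\lambda>0$. The approximating sequence satisfies $e^{v_j}-e^{\beta_{\alpha_j}}\to e^{U_{\lambda,\hat\alpha}}$ in $X$, so Proposition \ref{pp42} forces $\lambda=1$. By Remark \ref{bl}, $\inf_{\overline{B_1}}v_j\geq B$, and $\|e^{v_j}-e^{\beta_{\alpha_j}}\|_\gamma\leq A$ by precompactness. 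We may take the $v_j$ to be nonradial, since the radial elements of $\hat{\mathcal{S}}^n_k$ are only the $f_{n,\alpha}$, which are excluded from $\hat\Upsilon_n$ except at bifurcation points. Since $\hat\alpha\neq\alpha(k)$ and no other $\alpha(j)$ lies in the window, Proposition \ref{pp44} gives $\|e^{u_j}-e^{v_j}\|_X\geq C>0$. But both $e^{u_j}-e^{\beta_{\alpha_j}}$ and $e^{v_j}-e^{\beta_{\alpha_j}}$ converge to $e^{U_{\hat\alpha}}$ in $X$, which is a contradiction.

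It remains to handle the slice $\hat\alpha=\alpha(k)$ and to conclude bifurcation. For $\hat\alpha=\alpha(k)$, radial limits must equal $e^{U_{\alpha(k)}}$ by Proposition \ref{pp42}. So if $\mathcal{S}_k$ contained no nonradial point arbitrarily close to $(\alpha(k),e^{U_{\alpha(k)}})$, then near that point $\mathcal{S}_k$ would be contained in $\{(\alpha(k),e^{U_{\alpha(k)}})\}$ together with $\Gamma$. By the previous step it cannot meet $\Gamma$ off $\alpha=\alpha(k)$, so near that point $\mathcal{S}_k$ would reduce to the point itself. This contradicts connectedness together with the existence of a point at distance $\delta$. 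Repeating the argument with $\delta$ replaced by any smaller $\delta'$ produces nonradial solutions in every neighbourhood of $(\alpha(k),e^{U_{\alpha(k)}})$. This proves the claim, with $\mathcal{O}(N-1)$-invariance inherited from $\hat{\mathcal{Q}}$; the same argument in $\mathcal{Q}^l_n$ gives the $\mathcal{G}_l$-invariant branches.

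The main obstacle is the step that rules out radial limits. It relies on Propositions \ref{pp42} and \ref{pp44}, whose hypotheses ($\|\cdot\|_\gamma\leq A$ and the lower bound on $B_1$) must be verified along the approximating sequences. The lower bound on $B_1$ requires the limit $\hat h$ to be strictly positive, which comes from the limit equation and the strong maximum principle.
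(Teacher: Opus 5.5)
Your proposal is correct and follows essentially the same route as the paper. Lemmas~\ref{th51}--\ref{th53} give a compact connected limit set containing $(\alpha(k),e^{U_{\alpha(k)}})$ and a second point. Proposition~\ref{pp42} excludes radial limits $e^{U_{\lambda,\hat\alpha}}$ with $\lambda\neq1$, and Proposition~\ref{pp44} (via Remark~\ref{bl} and Proposition~\ref{pp32}) excludes $e^{U_{\hat\alpha}}$ when $\hat\alpha\neq\alpha(k)$, so every other point of the continuum is nonradial and connectedness places such points in every neighbourhood of $(\alpha(k),e^{U_{\alpha(k)}})$. Your added remarks only make explicit what the paper leaves implicit: that the approximating solutions are eventually nonradial because radial points of $\hat\Upsilon_n$ occur only at $\alpha_k^n$, and the explicit connectedness step.
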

\begin{proof}
For \(k\geq2\), we study the bifurcation points \((\alpha_k^n,  f_{n,\alpha_k^n})\) of problem \eqref{54eq} in \(B_{\frac{1}{\var_n}}\), along with the connected components \( \hat{\mathcal{S}}^n_k\) of the bifurcation continua in \(B_{\delta, \hat{\mathcal{H}}}(\alpha_k^n,  f_{n,\alpha_k^n})\).

According to Lemma \ref{th52}, the sequence of sets \( \hat{\mathcal{S}}_k^n\) satisfies the hypotheses of Lemma \ref{th51} in the space \( \hat{\mathcal{H}}\). Consequently,
\[
 \hat{\mathcal{C}}_k=\limsup\limits_n( \hat{\mathcal{S}}^n_k)
\]
is nonempty, compact, and connected. Moreover, \( \hat{\mathcal{C}}_k\) contains the point \((\alpha(k),e^{U_{\alpha(k)}})\), and by Lemma \ref{th53}, it implies that $\hat{\mathcal{C}_k}\setminus\{(\al(k),e^{U_{\al(k)}})\}\neq\emptyset$.
	
If $(\hat{\alpha}_k,\hat{h}_k)\in \hat{\mathcal{C}}_k\setminus\{(\alpha(k),e^{U_{\alpha(k)}})\}$, then there exists a sequence of points $(\hat{\alpha}_k^n,\hat{h}_k^n)\in \hat{\mathcal{S}}^n_k$ such that $(\hat{\alpha}_k^n,\hat{h}_k^n)\to(\hat{\alpha}_k,\hat{h}_k)$ in $ \hat{\mathcal{H}}$, where $\hat{h}_k^n:=e^{v_{n,\hat{\alpha}_k^n}}-e^{\beta_{\hat{\alpha}_k^n}}$,  $v_{n,\hat{\alpha}_k^n}$ is a solution of \eqref{31} for the exponent $\alpha = \alpha_k^n$ and $\hat{h}_k$ is a solution of \eqref{55eq} for the exponent $\alpha = \hat{\alpha}_k$. Our goal is to prove that $\hat{h}_k\neq e^{U_{\lambda,\hat{\alpha}_{k}}}$ for any $\lambda>0$.
	
Based on Proposition \ref{pp42}, we conclude that $\hat{h}_k\neq e^{U_{\lambda,\hat{\alpha}_k}}$ for any $\lambda\neq1$. In addition, if $\hat{\alpha}_k = \alpha(k)$, then $\hat{h}_k\ne e^{U_{\hat{\alpha}_k}}$, since $(\hat{\alpha}_k,\hat{h}_k)\in \hat{\mathcal{C}}_k\setminus(\alpha(k),e^{U_{\alpha(k)}})$.

It suffices to consider the case $\lambda = 1$ with $\hat{\alpha}_k \neq \alpha(k)$, and prove $\hat{h}_k \neq e^{U_{\hat{\alpha}_k}}$ by contradiction argument. Suppose on the contrary that $\hat{h}_k=e^{U_{\hat{\alpha}_k}}$, it follows that $e^{v_{n,\hat{\alpha}_k^n}}-e^{\beta_{\hat{\alpha}_k^n}}\rightarrow e^{U_{\hat{\alpha}_k}}$ in $X$, as $n\rightarrow+\infty$. Thus by Proposition \ref{pp44} and Remark \ref{bl}, we have
\begin{align}\label{58}
	\|\hat{h}_k^n-f_{n,\hat\al_k^n}\|_X=\left\|\lr e^{v_{n,\hat{\alpha}_k^n}}-e^{\beta_{\hat{\alpha}_k^n}}\rr-\lr e^{u_{n,\hat{\alpha}_k^n}}-e^{\beta_{\hat{\alpha}_k^n}}\rr\right\|_X>C>0
\end{align}
for all sufficiently large $n$, where $C>0$ is independent of $n$. From Proposition \ref{pp32}, we know that $f_{n,\hat\al_k^n}\to e^{U_{\hat\al_k}}$ in $X$, as $n\rightarrow+\infty$. Combining this with \eqref{58} and taking the limit $n\rightarrow+\infty$ imply that, for some constant $C>0$,
\[
\|\hat{h}_k-e^{ U_{\hat{\alpha}_k}}\|_X>C>0,
\]
which contradicts the assumption $\hat{h}_k=e^{U_{\hat{\alpha}_k}}$.

From the uniqueness result for radial solutions to \eqref{55eq}(similar to prove in Theorem \ref{thm12}), it follows that $\hat{h}_k$ is a nonradial solution to \eqref{55eq} associated with the exponent $\alpha = \hat{\alpha}_k$. This completes the proof of Theorem \ref{th54}.
\end{proof}

\begin{rem}\label{rem55}
We observe that the bifurcation at the points $(\alpha(k), e^{U_{\alpha(k)}})$ in Theorem \ref{th54} is actually global. In particular, we have shown the existence of a closed and connected set $ \hat{\mathcal{C}}_k$ that bifurcates from each point $(\alpha(k), e^{U_{\alpha(k)}})$.
\end{rem}
\noindent {\bf Proof of Theorem \ref{th14}.}
Theorem \ref{th54} proves the existence of a continuum $ \hat{\mathcal{C}}_k$ of nonradial solutions to equation \eqref{55eq}. These solutions are invariant under the action of $\mathcal{O}(N - 1)$ and bifurcate from the points $(\alpha(k), e^{U_{\alpha(k)}})$, where
$\alpha(k)=\frac{\sqrt{k(N-1)(k+N-2)}}{N-1}-1$
for all $k\geq2$. Transforming the solution back to equation \eqref{11}, we thus obtain i).

Moreover, when $k$ is an even number, we can reproduce the proof of Theorem \ref{th54}. From the space $ \hat{\mathcal{Q}}^l$, which is defined by
\[
 \hat{\mathcal{Q}}^l:=\{h\in X\ \text{such that $h$ is invariant by the action of}\  \mathcal{G}_l \}
\]
for $l = 1,2,\cdots,\left[\frac{N}{2}\right]$, where $ \mathcal{G}_l$ is defined by \eqref{347qq}. By also applying Remark \ref{rem312}, we are able to identify $\left[\frac{N}{2}\right]$ distinct continua that bifurcate from the point $(\alpha(k),e^{U_{\alpha(k)}})$. Each of these continua is invariant under the action of $ \hat{\mathcal{G}}_l$. It can be derived by invoking the relation given in \eqref{347qq}. Then, transforming the solution back to equation \eqref{11}, we thus complete the proof of part ii).

Lastly, the decay property of the solutions we obtain can be deduced from Proposition \ref{pp41} and Proposition \ref{pro43}. From Proposition \ref{pp41}, we have $\int_{\R^N}|x|^{N\alpha}e^{h}\md x<+\infty$. Then, from \cite{E2} Theorem 1.4, we get $\int_{\R^N}|x|^{N\alpha}e^{h}\md x=N\left(\frac{N^2}{N-1}
		\right)^{N-1}(\alpha+1)^{N-1}\omega_N$. \qed

\section{The classification of solution for the singular case $\al\in(-1,0)$}
We first show that any solution $u$ of \eqref{11} is globally bounded from above.
\begin{thm}\label{th64}
Assume $\al\in(-1,0)$. Let $u \in W_{loc}^{1, N}(\R^N)$ be a weak solution of \eqref{11} with $\int_{\R^N}|x|^{N\alpha}e^{u}\md x<+\infty$. Then $\sup\limits_{x\in\R^N} u<+\infty$.
\end{thm}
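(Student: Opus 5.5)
We argue by contradiction with a blow-up analysis, splitting into local boundedness (near the origin and on compact sets) and boundedness at infinity. Set $V(x):=|x|^{N\alpha}$ and $f:=Ve^{u}\in L^1(\R^N)$. Since $\alpha<0$, $V\in L^s_{loc}(\R^N)$ for some $s>1$; indeed any $1<s<\frac{1}{-\alpha}$ works because $N\alpha s>-N$.

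\textbf{Step 1: local bounds via the $N$-Laplacian analogue of Brezis--Merle.} We use the $N$-Laplacian analogue of Brezis--Merle, due to Aguilar--Peral and Esposito. Suppose $-\Delta_N w=g$ in $B_{2r}(x_0)$ with $\|g\|_{L^1(B_{2r}(x_0))}$ smaller than a threshold $\varepsilon_0(N)$. Then $e^{\kappa|w-h|}\in L^1(B_r(x_0))$ for some $\kappa>1$, where $h$ is the $N$-harmonic part. Combining this with Serrin-type local estimates (Harnack for $N$-harmonic functions and the weak Harnack inequality), one gets $u^+\in L^\infty(B_{r/2}(x_0))$ on every ball where the mass $\int_{B_{2r}(x_0)}Ve^u$ is small. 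The H\"older step uses $V\in L^s$ with $s>1$: $Ve^u\in L^{q}$ for some $q>1$, and then the standard $L^\infty$ bound for $-\Delta_N u=$ an $L^q$ right-hand side applies. By absolute continuity of the integral, every point of $\R^N$, including $0$, has such a small-mass ball. Covering compact sets by finitely many of these balls gives $u\in L^\infty_{loc}$, i.e.\ $\sup_{B_R}u<\infty$ for each $R$.

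\textbf{Step 2: behaviour at infinity.} We show $\sup_{|x|\ge R_0}u<\infty$ by rescaling annuli. Fix $|x_0|=\rho$ large and set
\[
u_\rho(y):=u(\rho y)+N(\alpha+1)\ln\rho ,
\]
which solves $-\Delta_N u_\rho=|y|^{N\alpha}e^{u_\rho}$. Its mass on $\frac12<|y|<2$ equals the mass of $u$ on $\frac\rho2<|x|<2\rho$, and this tends to $0$ as $\rho\to\infty$ because the total mass is finite. Step 1, applied on the fixed annulus where the weight is bounded, then gives $u_\rho\le C+\sup$ of its $N$-harmonic part. Controlling that part requires a lower and upper average bound on $u$ over the annulus. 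To get this we first prove the asymptotic behaviour
\[
u(x)\le -\beta\ln|x|+C \qquad \text{for some } \beta>0.
\]
This comes from comparing $u$ with radial super-solutions, using that $\int_{B_\rho}Ve^u$ is increasing and bounded, as in Esposito's analysis and the comparison argument of Proposition~\ref{pp41}. Concretely, we compare on $\partial B_\rho$ against the $N$-harmonic function $c\ln|x|$ and use the exterior mass to show $\int_{\partial B_\rho}|\nabla u|^{N-1}\to m>0$. A finite-mass solution cannot have $u^+$ growing faster than $C\ln\rho$, since otherwise $\int Ve^u$ diverges on annuli by Jensen. Then $u_\rho$ is bounded above on the annulus by $C+(N(\alpha+1)-\beta)\ln\rho$. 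If needed we refine $\beta\ge N(\alpha+1)$, which follows from integrability of $|x|^{N\alpha}e^{u}$ at infinity together with Harnack. This yields $u\le C$ for $|x|\ge R_0$.

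\textbf{Main obstacle.} The main obstacle is Step 2, controlling the $N$-harmonic part uniformly in $\rho$ without a Green representation. The first thing to try is the weak Harnack inequality for $u_\rho$ combined with an averaged lower bound from Jensen and finite mass, namely
\[
\fint_{\text{annulus}} u_\rho \le C ,
\]
which holds since $\int e^{u_\rho}\le$ the mass, bounded. Together with the local $L^\infty$ estimate for supersolutions of small mass, this gives $\sup u_\rho\le C$ on the middle annulus uniformly in $\rho$, hence $u\le C-N(\alpha+1)\ln|x|\le C$ for large $|x|$. Combined with Step 1, this gives $\sup_{\R^N}u<\infty$.
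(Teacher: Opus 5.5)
Your Step 1 is essentially the paper's argument near the origin (harmonic replacement, Lemma \ref{th62}, Lemma \ref{th63}, with $|x|^{N\al}\in L^s$, $s>1$, absorbing the weight), applied at every point. At infinity you take a genuinely different route.

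\textbf{How your route differs from the paper's.} The paper gets $u\le C$ on $B_1\setminus B_\theta$ from \cite[Proposition 2.1]{E2}. It gets $u\le C$ on $\R^N\setminus B_1$ by applying Esposito's isolated-singularity result \cite[Theorem 1.1]{E2} to the inversion $\hat u(x)=u(x/|x|^2)$, which solves $-\Delta_N\hat u=|x|^{-N(\al+2)}e^{\hat u}$. You instead use the scale invariance $u_\rho(y)=u(\rho y)+N(\al+1)\ln\rho$ together with the vanishing of the mass of $u$ on $\{\rho/4<|x|<4\rho\}$. This is self-contained and avoids the inversion and the strongly singular weight. It also gives the sharper decay $u(x)\le C-N(\al+1)\ln|x|$. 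The paper's proof is shorter only because it outsources this step to \cite{E2}.

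\textbf{Where Step 2 fails as written.} The uniform bound is not actually established, because the tools you invoke for the $N$-harmonic part are either circular or point the wrong way.
\begin{itemize}
\item The bound $u\le-\beta\ln|x|+C$ via radial supersolutions ``as in Proposition \ref{pp41}'' is circular. That proposition builds its supersolution from the a priori decay $e^{v_n}\le C(1+|x|)^{-\gamma}$, which is supplied by the $L^\infty_\gamma$ hypothesis, and that decay is what you are trying to prove.
\item Jensen controls only averages, so it cannot exclude thin spikes of $u$. Excluding spikes is exactly the difficulty.
\item The weak Harnack inequality for the supersolution $u_\rho$ bounds infima from below, not suprema from above.
\item A uniform lower average bound on the annulus does not exist. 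For $U_{\lambda,\al}$ one has $u_\rho=-\frac{N(\al+1)}{N-1}\ln\rho+O(1)$ on $A:=\{\frac14<|y|<4\}$, which tends to $-\infty$. So any two-sided Harnack control of the $N$-harmonic part fails.
\end{itemize}

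\textbf{The fix.} What closes the argument is a one-sided estimate that uses $\int e^{u_\rho}$, not $\int u_\rho$. Take $|y_0|\in[\frac12,2]$ and $r=\frac18$. Let $h_\rho$ be the $N$-harmonic function in $B_{2r}(y_0)$ with $h_\rho=u_\rho$ on its boundary. By comparison $h_\rho\le u_\rho$, hence
\[
\int_{B_{2r}(y_0)}(h_\rho^+)^N\le\int_{B_{2r}(y_0)}(u_\rho^+)^N\le N!\int_{B_{2r}(y_0)}e^{u_\rho}\le N!\,4^{N|\al|}\int_{\rho A}|x|^{N\al}e^{u}\md x ,
\]
which is bounded by the total mass. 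Then proceed as follows.
\begin{enumerate}
\item Lemma \ref{th63} with $f=0$ gives $\sup_{B_r(y_0)}h_\rho^+\le C$.
\item For $\rho$ large, the mass of $u_\rho$ on $A$ is below $(\mu/4)^{N-1}$. Lemma \ref{th62} with $\lambda=2$ then gives $\int_{B_{2r}(y_0)}e^{2(u_\rho-h_\rho)}\le C$.
\item Combining the two, $|y|^{N\al}e^{u_\rho}$ is bounded in $L^2(B_r(y_0))$, uniformly in $\rho$.
\item Lemma \ref{th63} with $\varepsilon=1$, together with the $L^N$ bound on $u_\rho^+$ above, gives $\sup_{B_{r/2}(y_0)}u_\rho\le C$, independent of $\rho$ and $y_0$.
\end{enumerate}
This is the paper's origin argument transplanted to the rescaled annulus. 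With it, the detour through $\beta$, Jensen and weak Harnack can be deleted, and your proof is complete.
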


To prove Theorem \ref{th64}, we need the following two lemmas.

The first lemma is the Brezis-Merle type exponential estimate for $N$-Laplacian.
\begin{lem}[\cite{E1} Proposition 2.1]\label{th62}
Let $\Omega$ be a bounded domain in $\R^N$ and $f \in L^{1}(\Omega)$. Let $u \in W^{1, N}(\Omega)$ be a weak solution of
\begin{align}\label{f62}
-\Delta_N u = f \quad \text{in } \Omega,
\end{align}
and let $h \in W^{1, N}(\Omega)$ be the weak solution of
\[
\begin{cases}
-\Delta_N h = 0 & \text{in } \Omega, \\
h = u & \text{on } \partial \Omega.
\end{cases}
\]
Then there exists a constant $\mu$, depending only on $N$, such that for every $0 < \lambda < \mu\|f\|^{-\frac1{N-1}}_{L^1(\Omega)}$, it holds
\[
  \int_{\Omega} e^{\lambda |u - h|} \md x \leq \frac{|\Omega|}{1 - \lambda \mu^{-1}\|f\|^{\frac1{N-1}}_{L^1(\Omega)}} .
  \]
\end{lem}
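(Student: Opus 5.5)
The plan is to follow the classical Brezis--Merle/Talenti rearrangement argument, applied to $w:=u-h\in W^{1,N}_0(\Omega)$. The route is to bound the $N$-energy of $w$ on thin level strips by $\|f\|_{L^1(\Omega)}$, turn that bound into a differential inequality for the distribution function of $|w|$ via the isoperimetric inequality, and integrate.

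\emph{Step 1: energy on level strips.} Subtracting the weak equations for $u$ and $h$ gives
$$\int_\Omega \bigl(|\nabla u|^{N-2}\nabla u-|\nabla h|^{N-2}\nabla h\bigr)\cdot\nabla\varphi\,\md x=\int_\Omega f\varphi\,\md x,\qquad \forall\,\varphi\in W^{1,N}_0(\Omega)\cap L^\infty(\Omega).$$
I would test this with the truncation $\varphi=T_{t,k}(w):=\operatorname{sgn}(w)\min\{(|w|-t)_+,k\}$ for $t\geq0$, $k>0$. Since $|\varphi|\leq k$, the right-hand side is at most $k\|f\|_{L^1(\Omega)}$. On the left I would use the elementary monotonicity inequality, valid for $N\geq2$,
$$\bigl(|a|^{N-2}a-|b|^{N-2}b\bigr)\cdot(a-b)\geq c_N|a-b|^N .$$
Together these give
$$\int_{\{t<|w|<t+k\}}|\nabla w|^N\,\md x\leq C_N\,k\,\|f\|_{L^1(\Omega)} .$$
Dividing by $k$ and letting $k\to0$, the function $\Phi(t):=\int_{\{|w|>t\}}|\nabla w|^N$ satisfies $-\Phi'(t)\leq C_N\|f\|_{L^1}$ for a.e.\ $t$.

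\emph{Step 2: differential inequality for the distribution function.} Let $m(t):=|\{|w|>t\}|$, which is nonincreasing. By the coarea formula and H\"older's inequality, for a.e.\ $t$,
$$P(t):=\mathcal H^{N-1}(\{|w|=t\})\leq\bigl(-\Phi'(t)\bigr)^{\frac1N}\bigl(-m'(t)\bigr)^{\frac{N-1}{N}} .$$
Since $w$ vanishes on $\partial\Omega$, the set $\{|w|>t\}$ can be treated as a subset of $\R^N$, and the isoperimetric inequality gives $N\omega_N^{1/N}m(t)^{\frac{N-1}{N}}\leq P(t)$. Combining these with Step 1 and raising to the power $\frac{N}{N-1}$ yields
$$m(t)\leq C\|f\|_{L^1}^{\frac1{N-1}}\bigl(-m'(t)\bigr).$$
Setting $\mu:=C^{-1}$, which depends only on $N$, and integrating (a Gronwall argument for a monotone function, using only its a.e.\ derivative) gives
$$m(t)\leq|\Omega|\exp\bigl(-\mu\|f\|_{L^1}^{-\frac1{N-1}}\,t\bigr).$$

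\emph{Step 3: exponential integrability.} By the layer-cake formula,
$$\int_\Omega e^{\lambda|w|}\,\md x=|\Omega|+\lambda\int_0^\infty e^{\lambda t}m(t)\,\md t .$$
For $\lambda<\mu\|f\|_{L^1}^{-1/(N-1)}$, inserting the bound from Step 2 gives exactly $|\Omega|\big/\bigl(1-\lambda\mu^{-1}\|f\|_{L^1}^{1/(N-1)}\bigr)$, which is the claimed estimate.

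\emph{Main obstacle.} I expect the main difficulty to be the rigorous justification of Step 2. The function $m$ is only monotone, so its singular part must be discarded; this goes in the favourable direction, because $-m'$ computed from the absolutely continuous part still dominates what is needed. One must also check that the coarea formula and H\"older's inequality apply for a.e.\ $t$, as in Talenti's comparison argument. A second point is that $f\in L^1$ only, so the truncated test functions must be shown admissible. This holds because $u,h\in W^{1,N}$ and $T_{t,k}(w)$ is bounded with compact support in the trace sense; if needed I would approximate $f$ by bounded functions and pass to the limit using Fatou's lemma.
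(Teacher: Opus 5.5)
The paper gives no proof of this lemma and simply quotes \cite{E1}. Your argument is correct and is essentially the Brezis--Merle/Talenti level-set argument behind that result: you test the difference of the weak equations with truncations of $u-h\in W^{1,N}_0(\Omega)$, use the inequality $(|a|^{N-2}a-|b|^{N-2}b)\cdot(a-b)\ge 2^{2-N}|a-b|^N$, and then apply coarea, H\"older and the isoperimetric inequality to obtain $m(t)\le|\Omega|e^{-\mu\|f\|_{L^1}^{-1/(N-1)}t}$.

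The one loose end, admissibility of the bounded test function $T_{t,k}(u-h)$, is settled directly. Approximate it in $W^{1,N}_0(\Omega)$ by uniformly bounded smooth compactly supported functions and use dominated convergence against $f\in L^1(\Omega)$. Your fallback of approximating $f$ itself is not needed, and it would change $u$, so it would be hard to justify anyway.
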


The next lemma is Serrin's local $L^\infty$ estimate from \cite{S1}.
\begin{lem}[\cite{S1} Theorem 2]\label{th63}
Let $u \in W_{loc}^{1, N}(\Omega)$ be a weak solution of \eqref{f62} with $f \in L^{\frac{N}{N-\varepsilon}}(\Omega)$ for some $0<\varepsilon \leq1$. Assume $B_{2 R}\subset\Omega$. Then
\[
\left\| u^{+}\right\| _{L^{\infty}\left(B_{R}\right)} \leq C R^{-1}\left\| u^{+}\right\| _{L^{N}\left(B_{2 R}\right)}+C R^{\frac{\varepsilon }{N-1}}\| f\| _{L^{\frac{N}{N-\varepsilon }}\left(B_{2 R}\right)}^{\frac{1}{N-1}},
\]
where $C=C(N,\varepsilon)$ is a constant.
\end{lem}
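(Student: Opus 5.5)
The proof is a Moser iteration, carried out after a scaling reduction. First rescale to $R=1$ by setting $u_R(y):=u(Ry)$ and $f_R(y):=R^Nf(Ry)$, so that $-\Delta_N u_R=f_R$ in $B_2$. A change of variables gives
\[
\|u_R^+\|_{L^N(B_2)}=R^{-1}\|u^+\|_{L^N(B_{2R})},\qquad \|f_R\|_{L^{\frac{N}{N-\varepsilon}}(B_2)}=R^{\varepsilon}\|f\|_{L^{\frac{N}{N-\varepsilon}}(B_{2R})}.
\]
The $R=1$ estimate then yields exactly the powers $R^{-1}$ and $R^{\frac{\varepsilon}{N-1}}$ in the statement, so from now on we take $R=1$.

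Set $k:=\|f\|_{L^{N/(N-\varepsilon)}(B_2)}^{\frac1{N-1}}$; if $k=0$, replace it by a small $\delta>0$ and let $\delta\to0$ at the end. Put $w:=u^++k\ge k$. For $\beta\ge 0$ and a cut-off $\eta\in C_c^\infty(B_2)$, test the equation with $\varphi=\eta^N\bigl(w^{\beta+1}-k^{\beta+1}\bigr)$, which is admissible after the usual truncation of $w$ at a level $l$ followed by $l\to\infty$, and which is supported in $\{u>0\}$. The principal term gives $(\beta+1)\int\eta^N w^\beta|\nabla u^+|^N$. The cut-off term is handled with Young's inequality. For the right-hand side, use $w\ge k$ to write
\[
\int |f|\,\eta^N w^{\beta+1}\le k^{1-N}\int |f|\,\eta^N w^{\beta+N}.
\]
With $v:=\eta\, w^{(\beta+N)/N}$, this becomes the Caccioppoli-type inequality
\[
\int|\nabla v|^N\le C(1+\beta)^{N}\Bigl(\int|\nabla\eta|^N w^{\beta+N}+k^{1-N}\int|f|\,v^N\Bigr).
\]

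The main obstacle is absorbing the $f$-term, because $f$ is only in $L^{N/(N-\varepsilon)}$. Hölder's inequality gives $\int|f|v^N\le \|f\|_{N/(N-\varepsilon)}\|v\|_{L^{N^2/\varepsilon}}^N=k^{N-1}\|v\|^N_{L^{N^2/\varepsilon}}$, so the factor $k^{1-N}$ cancels exactly. Then interpolate $\|v\|_{L^{N^2/\varepsilon}}$ between $\|v\|_{L^N}$ and $\|v\|_{L^{q}}$ with $q:=2N^2/\varepsilon$. The borderline Sobolev embedding of $W^{1,N}_0$ into every $L^q$ (constant depending on $N,\varepsilon$) controls $\|v\|_{L^q}$ by $\|\nabla v\|_{L^N}$. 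Young's inequality then absorbs the $\|\nabla v\|$-part into the left-hand side, at the cost of a factor polynomial in $(1+\beta)$ multiplying $\|v\|^N_{L^N}$. The result is a reverse-H\"older step
\[
\|\eta^{\frac N{\beta+N}} w\|_{L^{\chi(\beta+N)}}\le \bigl(C(1+\beta)^{m}\bigr)^{\frac1{\beta+N}}\,\|\nabla\eta\|_\infty^{\frac N{\beta+N}}\,\|w\|_{L^{\beta+N}(\mathrm{supp}\,\eta)},
\]
with $\chi:=q/N>1$ and some $m=m(N,\varepsilon)$.

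Finally, iterate this on shrinking balls $B_{r_j}$ with $r_j=1+2^{-j}$, exponents $p_j=N\chi^j$ (i.e. $\beta_j+N=p_j$), and cut-offs satisfying $|\nabla\eta_j|\le C2^j$. The product of the constants, $\prod_j\bigl(C^j2^{jN}\bigr)^{1/p_j}$, converges since $\sum_j j\chi^{-j}<\infty$. Hence $\|w\|_{L^\infty(B_1)}\le C\|w\|_{L^N(B_2)}\le C\bigl(\|u^+\|_{L^N(B_2)}+k\bigr)$. Since $u^+\le w$, undoing the scaling gives the lemma with $C=C(N,\varepsilon)$.
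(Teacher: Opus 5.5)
Your proof is correct. The rescaling gives exactly the factors $R^{-1}$ and $R^{\varepsilon/(N-1)}$, and the shift $w=u^++k$ with $k=\|f\|_{L^{N/(N-\varepsilon)}(B_2)}^{1/(N-1)}$ makes the $f$-term cancel against $k^{1-N}$ after H\"older. Interpolating between $L^N$ and $L^{2N^2/\varepsilon}$ and using the borderline embedding of $W^{1,N}_0(B_2)$ produces an absorbable term that costs only a polynomial factor in $\beta$, so the iteration constants converge.

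The paper does not prove this lemma; it simply quotes Theorem 2 of Serrin's paper. Your argument is essentially Serrin's Moser-iteration proof specialized to $-\Delta_N u=f$, and the one-sided $u^+$ form comes from testing only on $\{u>0\}$.
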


\noindent {\bf Proof of Theorem \ref{th64}.}
From Proposition 2.1 in \cite{E2}, we know that there exists $C>0$ such that $u\leq C$ in $B_1(0)\setminus B_\theta(0)$ for $\theta\in(0,1)$ small. Applying Theorem 1.1 in \cite{E2} to the spherical reflection transform $\hat u(x):=u\left(\frac{x}{|x|^{2}}\right)$, which solves $-\Delta_N \hat u=\frac1{|x|^{N(\alpha+2)}}e^{\hat u}$, we deduce that $u\leq C$ in $\R^N\setminus B_1(0)$. Thus we only need to prove there exists $C>0$ such that $u\leq C$ in $B_\theta(0)$ for $0<\theta<1$ small enough.

Consider weak solution \( h \in W^{1,N}(B_\theta(0))\) to the problem
\[
\begin{cases}
\Delta_N h = 0 & \text{in } B_\theta(0), \\
h = u & \text{on } \partial B_\theta(0).
\end{cases}
\]
By the comparison principle, we immediately deduce that $h\leq u$ in $B_\theta(0)$. This directly implies
\begin{align}\label{h63}
\int_{B_\theta(0)}\left(h^{+}\right)^{N} \md x \leq \int_{B_\theta(0)}\left(u^{+}\right)^{N} \md x \leq N! \int_{B_\theta(0)}|x|^{N \alpha} e^{u} \md x.
\end{align}
By applying Serrin's local \(L^{\infty}\) regularity estimate (i.e., Lemma \ref{th63}) to \(h^{+}\), we can derive
\begin{align}\label{h64}
\quad\left\| h^{+}\right\| _{L^{\infty}\left(B_{\theta/2}(0)\right)} \leq C(\theta) .
\end{align}
We now take the radius $\theta$ sufficiently small such that
\begin{align}\label{h65}
\||x|^{N\al}e^u\|_{L^{1}\left(B_{\theta}(0)\right)}^{\frac{1}{N-1}}\leq\frac{\al(1+\al)\mu}{2(\al-1)},
\end{align}
where $\mu$ is given by Lemma \ref{th62}. Using Lemma \ref{th62}, we get
\begin{align}\label{h66}
 \int_{B_{\theta}(0)}e^{\frac{\al-1}{\al(1+\al)}(u-h)}\md x\leq C(\theta).
\end{align}
Subsequently, from \eqref{h64} and \eqref{h66}, we obtain
\begin{align}\label{h67}
 \quad\int_{B_{\theta}(0)}\lr|x|^{N\al}e^{u} \rr^{\frac{N}{N-\var}}\md x&=\int_{B_{\theta}(0)}|x|^{\frac{N}{N-\var}N\al}e^{\frac{N}{N-\var}(u-h)}e^{\frac{N}{N-\var}h}\md x \\
&\leq  C(\theta)\lr\int_{B_{\theta}(0)}|x|^{\frac{N}{N-\var}\frac{2}{1-\al}N\al}\md x \rr^{\frac{1-\al}{2}}\lr\int_{B_{\theta}(0)}e^{\frac{\al-1}{\al(1+\al)}(u-h)}\md x\rr^{\frac{1+\al}{2}}\nonumber\\
&\leq C(\theta), \nonumber
\end{align}
where $\var:=\min\{\frac{N(\al+1)}{2(1-\al)},1\}$.

By \eqref{h63}, \eqref{h67} and Lemma \ref{th63}, we have
\begin{align}\label{h68}
 \|u^+\|_{L^{\infty}\left(B_{\theta/4}(0)\right)}\leq C(\theta).
\end{align}
This completes the proof of Theorem \ref{th64}.
\qed
\smallskip

\noindent {\bf Proof of Theorem \ref{thm11}.}
Let $\hat u(x)=u\left(\frac{x}{|x|^2}\right)$, then $\hat u(x)$
satisfies
\begin{equation}\label{24t}
	-\Delta_N \hat u=\frac1{|x|^{N(\alpha+2)}}e^{\hat u}, \quad x\in\R^N\setminus\{0\}.
\end{equation}
By Theorem 2.2 in \cite{E2}, we get $\hat u\in W^{1,q}_{loc}(\R^N)$ for any $1\leq q<N$. So, we have
\begin{align}\label{25t}
	\int_{\R^N\backslash B_1(0)}\frac{|\nabla u|^q}{|x|^{2(N-q)}}\md x<+\infty
\end{align}
for any $1\leq q<N$. Let $\displaystyle t_0:=\sup_{x\in\R^N}u$, from Theorem \ref{th64}, we have $t_0<+\infty$. Through the standard regularity theorem, we can obtain $u\in C^{1,\eta}(\R^N)$. Therefore, the set $\Omega_t=\{u>t\}$ is a smooth set, for a.e. $t<t_0$.

The following necessary condition on existence of solutions with finite total mass and quantitative identity of the total mass have been obtained in \cite{E2}.
\begin{lem}[\cite{E2}]\label{th13}
Let $u \in W_{loc}^{1, N}(\R^N)$ be a weak solution of \eqref{11} with $\int_{\R^N}|x|^{N\alpha}e^{u}\md x<+\infty$. Then $\al>-1$ and
	\begin{align}\label{23t}
		\int_{\R^N}|x|^{N\alpha}e^{u}\md x=N\left(\frac{N^2}{N-1}
		\right)^{N-1}(\alpha+1)^{N-1}\omega_N.
	\end{align}
\end{lem}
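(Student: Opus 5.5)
The plan is to derive both claims from the asymptotic behaviour of $u$ at infinity combined with a Pohozaev identity on large balls, in the spirit of Lemma \ref{lem-p}. Set $m:=\int_{\R^N}|x|^{N\alpha}e^{u}\md x<+\infty$.

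\textbf{Step 1: $\alpha>-1$.} Since $-\Delta_N u\geq0$, $u$ is $N$-superharmonic. Its lower semicontinuous representative is therefore locally bounded from below, so $e^{u}\geq c>0$ on $B_1$. Finiteness of $m$ then forces $\int_{B_1}|x|^{N\alpha}\md x<+\infty$, that is, $N\alpha>-N$, which gives $\alpha>-1$.

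\textbf{Step 2: logarithmic asymptotics at infinity.} I aim to show that
\[
u(x)=-\gamma\ln|x|+o(\ln|x|),\qquad |x|\,\nabla u(x)\cdot\tfrac{x}{|x|}\to-\gamma,\qquad |x|\,|\nabla u(x)|\to\gamma,
\]
with $N\omega_N\gamma^{N-1}=m$. The constant is identified by integrating the equation over $B_R$: the flux satisfies $-\int_{\partial B_R}|\nabla u|^{N-2}\partial_\nu u\,\md\sigma\to m$.
\begin{itemize}
\item Upper bounds: I would use a Brezis--Merle estimate (Lemma \ref{th62}) on dyadic annuli, where the mass tends to $0$, together with Serrin's estimate (Lemma \ref{th63}) after rescaling $u(R\cdot)+N(1+\alpha)\ln R$.
\item Sharp asymptotics: the inversion $\hat u(x)=u(x/|x|^2)$ has an isolated singularity at the origin with a right-hand side in $L^1$ near $0$. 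The Kichenassamy--V\'eron classification of isolated singularities for the $N$-Laplacian then gives $\hat u=\gamma\ln|x|+O(1)$ with $C^1$ convergence of the rescalings. Translating back gives the three limits above.
\end{itemize}
This step is the main obstacle. In particular, the gradient convergence $|x||\nabla u|\to\gamma$ uniformly is needed so that the boundary terms in Step 3 converge, and I would obtain it from $C^{1,\eta}$ estimates applied to the rescaled functions on annuli.

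\textbf{Step 3: Pohozaev identity on $B_R$.} Multiply the equation by $x\cdot\nabla u$ and integrate over $B_R$; regularise via $(\varepsilon+|\xi|^2)^{(N-2)/2}\xi$ as in Lemma \ref{lem-p} to justify the computation. This gives
\[
R\!\int_{\partial B_R}\!\Big(\tfrac1N|\nabla u|^N-|\nabla u|^{N-2}(\partial_\nu u)^2\Big)\md\sigma
=R^{N\alpha+1}\!\int_{\partial B_R}\!e^{u}\md\sigma-N(1+\alpha)\!\int_{B_R}\!|x|^{N\alpha}e^u\md x .
\]
\begin{itemize}
\item By Step 2, the left-hand side tends to $\big(\tfrac1N-1\big)\gamma^N N\omega_N=-(N-1)\omega_N\gamma^N$.
\item Since $\int_0^\infty r^{N\alpha+N-1}\big(\int_{\mathbb S^{N-1}}e^{u(r\theta)}\md\theta\big)\md r<\infty$, there is a sequence $R_j\to\infty$ along which the first term on the right tends to $0$.
\end{itemize}
Passing to the limit along $R_j$ yields
\[
(N-1)\omega_N\gamma^N=N(1+\alpha)\,m=N(1+\alpha)N\omega_N\gamma^{N-1}.
\]

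\textbf{Step 4: conclusion.} Since $m>0$, we have $\gamma\neq0$, so the identity in Step 3 gives $\gamma=\frac{N^2(1+\alpha)}{N-1}$; this also re-confirms $\alpha>-1$ because $\gamma>0$. Therefore
\[
m=N\omega_N\gamma^{N-1}=N\Big(\frac{N^2}{N-1}\Big)^{N-1}(\alpha+1)^{N-1}\omega_N ,
\]
which is \eqref{23t}.
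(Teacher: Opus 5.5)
Your Steps 1, 3 and 4 are sound. The lower semicontinuous representative of an $N$-superharmonic function is bounded below on $B_1$, which forces $\alpha>-1$. The Pohozaev identity on $B_R$, with $R^{N\alpha+1}\int_{\partial B_R}e^u\,\mathrm{d}\sigma\to0$ along a sequence, is correct. The algebra giving $\gamma=\frac{N^2(1+\alpha)}{N-1}$ and the mass value is right, provided Step 2 holds.

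The genuine gap is Step 2, which carries the whole difficulty, and the tool you invoke there does not apply. The Kichenassamy--V\'eron classification concerns $N$-harmonic functions in a punctured ball. With a right-hand side, Serrin's isolated-singularity theory requires data better than $L^1$ near the singular point, e.g.\ in $L^q$ for some $q>1$. For data merely in $L^1$, the conclusion $\hat u=\gamma\ln|x|+O(1)$ is false. For instance, $w=\ln\ln(1/|x|)$ belongs to $W^{1,N}(B_{1/2})$ and solves $-\Delta_N w=(N-1)|x|^{-N}\bigl(\ln(1/|x|)\bigr)^{-N}\geq0$. The right-hand side is integrable and there is no Dirac mass at $0$, since the flux through $\partial B_r$ tends to $0$. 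Yet $w$ is unbounded while $w/\ln|x|\to0$.

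Your ``upper bounds'' bullet does not remove the problem. The Brezis--Merle/Serrin argument is applied to the scale-invariant family $u(R\,\cdot)+N(1+\alpha)\ln R$, so it can only give $u\leq C-N(1+\alpha)\ln|x|$. That translates into $\hat f=|x|^{-N(\alpha+2)}e^{\hat u}\leq C|x|^{-N}$ near $0$. This is exactly the borderline where the example above lives, and such $\hat f$ need not lie in any $L^q$ with $q>1$.

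What is missing is the step that uses finite mass and the exponential nonlinearity to beat this borderline. You need to show $u\leq C-\beta\ln|x|$ at infinity for some $\beta>N(1+\alpha)$, equivalently $\hat f\in L^q$ near $0$ for some $q>1$. The mass must enter quantitatively here. For example, a lower bound $u\geq-\gamma'\ln|x|-C$ together with $\int_{\R^N}|x|^{N\alpha}e^u\,\mathrm{d}x<\infty$ forces $\gamma'>N(1+\alpha)$. You would still need an oscillation bound for $u$ on dyadic annuli to turn such information into the required upper bound. Only after that do the isolated-singularity theory and a blow-down argument (rescalings converging in $C^1_{loc}(\R^N\setminus\{0\})$ to $-\gamma\ln|y|+c$) give the uniform limits $|x|\nabla u(x)+\gamma\frac{x}{|x|}\to0$ on which Step 3 depends.

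This logarithmic behaviour of finite-mass isolated singularities of the $N$-Liouville equation, at a point or at infinity, is precisely the main content of \cite{E2}. The paper does not prove the lemma at all; it imports it from \cite{E2}, and in Section 6 it applies \cite[Theorem 1.1]{E2} to the reflected function. So your architecture, asymptotics plus Pohozaev, is the right one. As written, though, its key step rests on a classification that does not cover this situation.
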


By Lemma \ref{th13}, we can remove the extra condition $\int_{\R^N}|x|^{N\alpha}e^{u}\md x\leq N\left(\frac{N^2}{N-1}\right)^{N-1}(\alpha+1)^{N-1}\omega_N$ in \cite{N}. Since we have proved $\sup\limits_{x\in\R^N} u<+\infty$ in Theorem \ref{th64},  the classification results in Theorem \ref{thm11} follows immediately from Theorem 6.1 in \cite{N}. This finishes the proof of Theorem \ref{thm11}.   \qed

\end{document}